\documentclass[preprint,3p]{amsart}
\usepackage[margin=1.2in]{geometry}
\usepackage{wrapfig}
\usepackage{amssymb}

\usepackage{float}

\usepackage{stmaryrd}
\usepackage{amsmath,amsfonts,amsthm,amstext}
\usepackage[latin1]{inputenc}
\usepackage{fancybox}
\usepackage{niceframe}
\usepackage{array}
\usepackage{newlfont}
\usepackage{verbatim}
\usepackage[dvips]{psfrag}
\usepackage{color}
\usepackage{float}
\usepackage[scriptsize]{caption}
\usepackage{indentfirst}
\usepackage[normalem]{ulem}
\usepackage{pst-text}
\usepackage{graphicx}
\usepackage{graphics}
\usepackage{overpic}
\usepackage{hyperref}
\usepackage{enumerate}
\usepackage{pst-node}
\usepackage{tikz-cd} 

\graphicspath{{./Figures/}}
\usepackage{wrapfig}
%%%%%%%%%%%%%%%%%%%%%%%%%%%%%%%%%
% NOVOS COMANDOS E TEOREMAS
%%%%%%%%%%%%%%%%%%%%%%%%%%%%%%%%%
\newcommand{\R}{\ensuremath{\mathbb{R}}}
\newcommand{\C}{\ensuremath{\mathbb{C}}}
\newcommand{\N}{\ensuremath{\mathbb{N}}}

\newcommand{\Z}{\ensuremath{\mathbb{Z}}}

\newcommand{\s}{\Sigma}

\newcommand{\de}{\delta}
\newcommand{\e}{\varepsilon}

\newcommand{\rn}[1]{\mathbb{R}^{#1}}

\newcommand{\er}{\mathcal{O}}

\newcommand{\ag}{\alpha}
\newcommand{\bg}{\beta}

\newcommand{\dg}{\delta}

%\newcommand{\Rp}{\textrm{Re}}
%%%%%%%%%%%%%%%%%%%%%%%%%%%%%%%%%%%%%%%%%%%%%%%%%%%%%%%%%%%%%%%%%%%%%%%%%%%%%%%%%%%%
%\newtheorem {theorem} {Theorem} [section]
%\newtheorem {prop}  {Proposition}[section]
%\newtheorem {corollary}  {Corollary}[section]
%\newtheorem {lemma}  {Lemma}[section]
%\newtheorem {definition}  {Definition}[section]
%\newtheorem {remark}  {Remark}[section]
%\newtheorem {example} {Example}[section]
%\newtheorem {conjecture} {Conjecture}[section]
%\newtheorem {prob} {Problem}[section]
\newtheorem {theorem} {Theorem} [section]
\newtheorem {prop}[theorem]{Proposition}

\newtheorem {lemma} [theorem] {Lemma}

\newtheorem {remark} [theorem] {Remark}

\makeatletter
\DeclareFontFamily{U}{tipa}{}
\DeclareFontShape{U}{tipa}{m}{n}{<->tipa10}{}
\newcommand{\arc@char}{{\usefont{U}{tipa}{m}{n}\symbol{62}}}%

\newcommand{\arc}[1]{\mathpalette\arc@arc{#1}}

\newcommand{\arc@arc}[2]{%
	\sbox0{$\m@th#1#2$}%
	\vbox{
		\hbox{\resizebox{\wd0}{\height}{\arc@char}}
		\nointerlineskip
		\box0
	}%
}
\makeatother

\DeclareMathOperator{\sech}{sech}

\DeclareMathOperator{\arcsinh}{arcsinh}

\DeclareMathOperator{\Rp}{Re}
\DeclareMathOperator{\Ip}{Im}  
% % % % % % % % % % % % % % % % % % % % % % % % % % % % % % % % % % % % % % %
\definecolor{verde}{rgb}{0.0,0.5,0.0}
\definecolor{azul}{rgb}{0,0,128}
\definecolor{roxo}{rgb}{0.44,0.16,0.39}
\definecolor{vinho}{rgb}{0.5,0.0,0.13}
\definecolor{lilas1}{rgb}{0.6,0.33,0.73}
\definecolor{rosa}{rgb}{0.84,0.04,0.33}
\definecolor{mostarda}{rgb}{0.91,0.41,0.17}
\definecolor{mostarda2}{rgb}{1.0,0.66,0.07}

\newtheorem {mtheorem} {Theorem} 

\begin{document}

%Critical velocity in Soliton-defect interaction models: rigorous results
%\title[Interaction of sine-Gordon kinks with defects]{Interaction of sine-Gordon kinks with defects: Rigorous Results}

\title[Critical velocity in kink-defect interaction models]{Critical velocity in kink-defect interaction models: Rigorous results}

\author[O. M. L. Gomide]{Ot\'avio M. L. Gomide}
\address[OMLG]{Department of Mathematics, Unicamp, IMECC\\ Campinas-SP, 13083-970, Brazil}
\email{otaviomleandro@hotmail.com}

\author[M. Guardia]{Marcel Guardia}
\address[MG]{ Departament de Matemàtica Aplicada I, Universitat Politècnica de Catalunya, Diagonal 647, 08028 Barcelona, Spain}
\email{marcel.guardia@upc.edu}

\author[T. M. Seara]{Tere M. Seara}
\address[TS]{Departament de Matemàtica Aplicada I, Universitat Politècnica de Catalunya, Diagonal 647, 08028 Barcelona, Spain }
\email{tere.m-seara@upc.edu }

%@phdthesis{phdthesis,            
%	author       = {Peter Joslin}, 
%	title        = {The title of the work},
%	school       = {The school of the thesis},
%	year         = 1993,
%	address      = {The address of the publisher},
%	month        = 7,
%	note         = {An optional note}
%}

\maketitle

\begin{abstract}
In this work we study a model of interaction of kinks of the sine-Gordon equation with a weak defect. 
We obtain rigorous results concerning the so-called critical velocity derived in \cite{GH04} by a geometric approach. 
More specifically, we prove that a heteroclinic orbit in the energy level $0$ of a $2$-dof Hamiltonian $H_\e$ is destroyed giving rise to heteroclinic 
connections between certain elements (at infinity) for exponentially small (in $\e$) energy levels. 
In this setting Melnikov theory does not apply because there are exponentially small phenomena.
%The techniques used to reach our objectives rely on Melnikov Theory, exponentially small phenomena, complex parameterizations of invariant manifolds and asymptotic analysis.
\end{abstract}
%\tableofcontents

\section{Introduction}

%\subsection{Main Goal}

Given an evolutionary partial differential equation, a traveling wave is a solution which travels with constant speed and shape. 
There are several types of traveling waves which are important in modeling physical phenomena. 
In particular, we give special attention to kinks, also referred as solitons.
A soliton is a spatially localized traveling wave which usually appears as a result of a balance between a nonlinearity and dispersion. 
%One remarkable property of solitons is that, after an interaction with another soliton, they keep the same amplitude, width and speed, and only are shifted in space. 
In fact, kinks are traveling waves which travel from one asymptotic state to another. %, i.e. the kink solution goes to a constant at infinity. 
In the last years, solitons have attracted the focus of researchers due to their significant role in many scientific fields as optical fibers, 
fluid dynamics, plasma physics and others (see \cite{GSW02,KRZ86, W10} and references therein).

In this work, we study a model of interaction between kinks (traveling waves) of the sine-Gordon equation and a weak defect. 
The defect is modeled as a small perturbation given by a Dirac delta function. Such interaction has also been studied for the nonlinear Schrödinger equation in \cite{HMZ07,HMZ072}.

We consider the finite-dimensional reduction of the equation given by a 2-degrees of freedom Hamiltonian $H$ proposed in \cite{FKV92,GH04}. Following a geometric approach, we  give conditions on the energy of the system to admit kink-like solutions. 
% More precisely, we obtain an asymptotic expression for the critical energy $h_c$ such that the system admits kinks with small amplitude only for $h\geq h_c$. 

%Roughly speaking, our methods rely on computing the exponentially small transversality of invariant manifolds $W^{u,s}$ of  certain objects (critical points and periodic orbits) at infinity.

\subsection{The model}\label{motivation}

%Generally speaking, a traveling wave is a solution with a fixed form moving
%with a constant velocity. There are several types of traveling waves which are important in modeling physical phenomena, in particular, we give special attention to solitons and kinks.

%A solitary wave is a localized traveling wave which travels with constant speed and shape, and goes asymptotically to zero at large distances. A soliton is a special kind of solitary wave, more specifically, it is a spatially localized solution $u(x,t)= F(x-ct)$, where $F(\xi)$ is an one-variable function such that
%\begin{equation}
%\displaystyle\lim_{\xi\rightarrow\pm\infty}F'(\xi)=\displaystyle\lim_{\xi\rightarrow\pm\infty}F''(\xi)=\displaystyle\lim_{\xi\rightarrow\pm\infty}F'''(\xi)=0.
%\end{equation}
%
%One remarkable property of solitons is that, after an interaction with another soliton, they keep the same amplitude, width and speed, and only are shifted in space.

%It is known that solitons appear as a result of a balance between weak nonlinearity and dispersion. In the last years, solitons have attracted the focus of researchers due to its significant role in many scientific fields as optical fibers, fluid dynamics, plasma physics and others (see \cite{GSW02,KRZ86, W10} and references therein). 

The sine-Gordon equation is a nonlinear hyperbolic partial differential equation given by
\begin{equation}\label{sineGordonn}
\partial_t^2u-\partial_x^2u+\sin(u)=0,
\end{equation}
which presents a family of kinks $u_\mathtt{k}(x,t)$ given by
\begin{equation}\label{kinks}
u_\mathtt{k}(x,t)=4 \arctan\left(\exp\left(\dfrac{x-vt-x_0}{\sqrt{1-v^2}}\right)\right),
\end{equation}
where the parameter $v$ represents the velocity of the kink. 
% 
% This equation had its importance increased due to the existence of kink and antikink solutions 
% %having the collisional property of solitons 
% (see \cite{PS62,T89}).
% %, and it appears in many physical applications.

In this work, we perturb this equation by a localized nonlinear defect at the origin
\begin{equation}\label{sineGordon}
\partial_t^2u-\partial_x^2u+\sin(u)=\e \dg(x)\sin(u),
\end{equation}
where $\dg(x)$ is the Dirac delta function.
This equation was studied in \cite{FKV92,GH04} 
% (and references therein) and it has proven to display a very rich dynamics.
where the authors consider finite-dimensional reductions of it to understand the kink-like dynamics.
%of \eqref{sineGordon}, in \cite{GH04} the authors 
As a first step, they consider solutions $u$ of small amplitude of \eqref{sineGordon}, which can be approximated by solutions of the linear partial differential equation
\begin{equation}\label{sineGordonsmall}
\partial_t^2u-\partial_x^2u+u=\e \dg(x)u,
\end{equation}
which has a family of wave solutions $u_\mathtt{im}(x,t)$ given by
\begin{equation}\label{im}
u_\mathtt{im}(x,t)=a(t)e^{-\e|x|/2},
\end{equation}
where $a(t) = a_0 \cos (\Omega t + \theta_0)$, $\Omega=\sqrt{1-\e^2/4}$ and $\mathtt{im}$ stands for impurity. 
The solution $u_\mathtt{im}$ is not a traveling wave, but it is spatially localized at $x=0$.

In order to study the interaction of kinks of the sine-Gordon equation with the defect considered in \eqref{sineGordon}, 
\cite{FKV92,GH04} use variational approximation techniques to obtain the equations which describe the evolution of the kink position $X$ and the defect mode amplitude $a$. 
To derive such equations, they consider the ansatz
%Fei et al. [1] study the interaction of the kink and defect modes using a variational approximation to derive a set of
%equations for the evolution of the kink position X, and the defect mode amplitude a. An excellent review of the use
%of variational approximations in nonlinear optics is given by Malomed [15]. To derive the approximate equations,
%they substitute the ansatz
%u = uk + uim = 4 tan ?1 exp(x ? X(t)) + a(t) e?|x|/2 (2.3)
%into the Lagrangian of (1.1)
%L =
% ?
%??
%
%1
%2
%u2t
%? 1
%2
%u2x? [1 ? ?(x)](1 ? cos u)
%
%dx. (2.4)
%Here X replaces x0 + vt, and a and X, the parameters characterizing the approximate solution (2.3), are regarded
%as unknown functions of t. It is assumed that a and  are small enough that many cross-terms can be neglected.
%Thus, in calculating the effective Lagrangian, all terms produced via overlap of the two modes are neglected,
%excepting those which include the defect potential ?(x). This is equivalent to assuming that the dominant means of
%
%
%
%Based on the existence of these family of solutions, it is reasonable that the solutions of \eqref{sineGordon} with small amplitude may be approximated by the ansatz
\begin{equation}
\label{ansatz}
u(x,t)=4\arctan(\exp(x- X(t))) +a(t)e^{-\e|x|/2} .
\end{equation}
Notice that \eqref{ansatz} combines the traveling property of the family of kinks \eqref{kinks} with the localized shape of \eqref{im}. 
If 
\begin{equation}
X(t)=\dfrac{vt-x_0}{\sqrt{1-v^2}}\textrm{ and }a(t)\equiv 0,
\end{equation}
then \eqref{ansatz} becomes the original family of kinks \eqref{kinks} of \eqref{sineGordon} for $\e=0$.

%\textcolor{blue}{comentar os autores que utilizam esse ansatz para estudar esse problema e problemas parecidos. talvez explicar um pouco por que o ansatz utilizado eh adequado}

Using the ansatz \eqref{ansatz} in \eqref{sineGordon} and considering terms up to order $2$ in $\e$, \cite{FKV92,GH04} obtain
the system of Euler-Lagrange equations
\begin{equation}\label{system-ns}
\left\{
\begin{array}{l}\vspace{0.3cm}
8\ddot{X}+\e U'(X)+\e a F'(X)=0,\\
\ddot{a}+\Omega^2a+\frac{1}{2}\e^2F(X)=0,
\end{array}
\right.
\end{equation}
 where
\begin{equation}\label{exprUF}
U(X)=-2 \sech^2(X),\  F(X)=-2\tanh(X)\sech(X) \textrm{ and } \Omega=\sqrt{1-\dfrac{\e^2}{4}},
\end{equation}
which describes approximately the evolution of the kink position $X$ and the defect mode amplitude $a$.
More details of this approach and its applications can be found in \cite{FKV92,GH04,M02}. It is worth to mention that the finite dimensional reduction of PDE problems to ODE systems via an adequate ansatz and variational methods has been considered in an extensive range of works (see \cite{EGKJ15, GMHK11, GH05,GH052,GHW04,W14,ZY07}).
	
It remains as an open problem to prove that the solutions of the reduced system rigorously approximate the PDE solutions. Nevertheless there are numerical evidences ensuring this reasoning (see \cite{PZ06,PZ07}). In particular, in \cite{WJB14}, the authors analyze numerically the simulations done in \cite{GH04}  for the perturbed sine-Gordon equation \eqref{sineGordon}. 

% {\color{blue}It is worth to mention that this method has been applied in an extensive range of works. In fact, in \cite{bibid}, the authors have considered an adequate ansatz to reduce a PDE to a finite dimensional ODE system through variational methods. It remains as an open problem to prove that the solutions of the reduced system rigorously approximate the PDE solutions, nevertheless there are numerical evidences ensuring that PDE solutions are well-approximated by these reduced solutions (see \cite{ }). In particular, in \cite{ }, the authors provide a rigorous numerical treatment for the simulations done in \cite{GH04}  for the pertubed sine-Gordon equation \ref{sineGordon} considered in this paper. }

From \eqref{ansatz}, if $X(t)$ and $a(t)$ satisfy $X(t)\rightarrow \pm\infty$, $\dot{X}(t)\rightarrow C^{\pm}$ and  $a(t)\rightarrow 0$ as $t\rightarrow\pm\infty$, 
then $u(x,t)$ can be seen as an approximation for a kink of \eqref{sineGordon}, 
since it transitions from an asymptotic state  to another when $x-X(t)\rightarrow \pm\infty$. 
In this case, we say that $(X(t),a(t))$ is a \textbf{kink-like solution}, or simply a kink, of \eqref{system-ns}, 
and we say that $v_i= C^-$ and $v_f=C^+$ are the \textbf{initial velocity} and \textbf{final velocity} of the kink.

If $X(t)$ satisfy $X(t)\rightarrow \pm\infty$, $\dot{X}(t)\rightarrow C^{\pm}$ and  $a(t)$ is asymptotic to a periodic function
with small amplitude when $t\rightarrow+\infty$ of $t\rightarrow -\infty$, then $u(x,t)$ can be seen as an approximation for a kink of \eqref{sineGordon} with asymptotically periodic oscillations. 
%since it is asymptotic to a periodic motion (with small amplitude) when $x-X(t)\rightarrow \pm\infty$. 
In this case, we say that $(X(t),a(t))$ is an \textbf{oscillating kink-like solution}, or simply an oscillating  kink, of \eqref{system-ns}, 
and their initial and final velocities are defined in the same way. 
In addition, if $(X(t),a(t))$ is an oscillating kink such that $a(t)\rightarrow 0$ as $t\rightarrow-\infty$ and $a(t)$ is asymptotically periodic as $t\rightarrow+\infty$, 
then it is said to be a \textbf{quasi kink-like solution}, or quasi kink.

%We stress that all the considerations above are a trial to provide an interpretation for the solutions of \eqref{system-ns} which might generate an approximation for kinks of \eqref{sineGordonn}, in some sense, through the ansatz \eqref{ansatz}. 

In this paper we perform a rigorous study of such solutions of the finite-dimensional reduction \eqref{system-ns} of the  partial differential equation \eqref{sineGordon}.
%It remains as an open problem to prove whether the solutions of the full partial differential equation \eqref{sineGordon} 
%are well-approximated by the ansatz \eqref{ansatz}.}

%\textcolor{blue}{Melhorar o paragrafo acima: adicionar um paragrafo sobre aproximar edps por edos, e enfatizar que embora a prova rigorosa da aproximacao destes problemas esteja em aberto, existem fortes evidencias de que ela eh boa. Comentar tbm sobre as evidencias numericas dos fisicos.}

\subsection{The reduced model}\label{model}

Consider the change of variables $(X,\dot{X},a,\dot{a})\rightarrow (X,Z,b,B),$ where
\begin{equation}
X=X, Z=\dfrac{8\dot{X}}{\sqrt{\e}},\ b=\sqrt{\dfrac{2\Omega}{\e}}\e^{-1/4}a,  B=\sqrt{\dfrac{\e}{2\Omega}} \e^{-1/4}\dfrac{2}{\e}\dot{a},
\end{equation}
and the time rescaling $\tau=\sqrt{\e}t$. 
Then, denoting $'=d/d\tau$, the evolution equations of  \eqref{system-ns} are equivalent to:
%the following autonomous system
%\begin{equation}\label{aut_system}
%\left\{
%\begin{array}{l}
%\dot{X}=\vspace{0.2cm} \dfrac{Y}{8},\\
%\dot{Y}=\vspace{0.2cm} -\e U'(X) -\e a F'(X),\\
%\dot{a}=\vspace{0.2cm}\dfrac{\e}{2} A,\\
%\dot{A}=\vspace{0.2cm}-\dfrac{2}{\e}\Omega^2a -\e F(X).
%\end{array}
%\right.
%\end{equation}
%Consider the change of variables $(X,Y,a,A)\rightarrow (X,Z,b,B)$, where
%\begin{equation}
%X=X, Y=\sqrt{\e}Z,\ a=b\sqrt{\dfrac{\e}{2\Omega}}\e^{1/4} \textrm{ and } A=B\sqrt{\dfrac{2\Omega}{\e}}\e^{1/4}, 
%\end{equation}
%and a time rescaling $\tau=\sqrt{\e}t$. Thus, denoting $'=d/d\tau$, system \eqref{aut_system} is brought into:
\begin{equation}\label{rescaled_aut_system}
\left\{
\begin{array}{l}
X'=\vspace{0.2cm} \dfrac{Z}{8},\\
Z'=\vspace{0.2cm} - U'(X) -\dfrac{\e^{3/4}}{\sqrt{2\Omega}} F'(X) b,\\
b'=\vspace{0.2cm}\dfrac{\Omega}{\sqrt{\e}} B,\\
B'=\vspace{0.2cm}-\dfrac{\Omega}{\sqrt{\e}}b -\dfrac{\e^{3/4}}{\sqrt{2\Omega}} F(X),
\end{array}
\right. \textrm{ with }\Omega=\sqrt{1-\dfrac{\e^2}{4}}.
\end{equation}
Notice that \eqref{rescaled_aut_system} is a Hamiltonian system with respect to
\begin{equation}\label{Hamiltonian}
H(X,Z,b,B;\e)=\dfrac{Z^2}{16}+U(X)+ \dfrac{\Omega}{2\sqrt{\e}}(B^2+b^2)+\dfrac{\e^{3/4}}{\sqrt{2\Omega}} F(X) b,
\end{equation}
which can be split as
$H=H_{\mathrm{p}}+ H_{\mathrm{osc}}+ R,$
where
\begin{equation}
\left\{\begin{array}{l}
H_{\mathrm{p}}(X,Z)= \dfrac{Z^2}{16}+U(X),\vspace{0.2cm}\\ 
H_{\mathrm{osc}}(b,B)= H_{\mathrm{osc}}(b,B;\e)= \dfrac{\Omega}{2\sqrt{\e}}(B^2+b^2),\vspace{0.2cm}\\ 
R(X,b)=R(X,b;\e)=\dfrac{\e^{3/4}}{\sqrt{2\Omega}} F(X) b.
\end{array}\right.
\end{equation}
Thus the Hamiltonian $H$ is the sum of a pendulum-like Hamiltonian $H_{\mathrm{p}}$ with an oscillator $H_{\mathrm{osc}}$ coupled by the term $R$.

\begin{remark}\label{parabolic_rem}
	
	Applying the change of variables $Y=4\arctan(e^X)$, the Hamiltonian system \eqref{rescaled_aut_system} is brought into 
	$$
	\left\{
	\begin{array}{lcl}
	\dot{Y}=2\sin(Y/2)Z/8,\vspace{0.2cm}\\
	\dot{Z}=2\sin(Y/2)\left(\sin(Y)-\dfrac{\e^{3/4}}{\sqrt{2\Omega}}\cos(Y)b\right),\vspace{0.2cm}\\
	\dot{b}=\vspace{0.2cm}\dfrac{\Omega}{\sqrt{\e}}B,\\
	\dot{B}=\vspace{0.2cm}-\dfrac{\Omega}{\sqrt{\e}} b -\dfrac{\e^{3/4}}{\sqrt{2\Omega}} \sin(Y).	
	\end{array}
	\right.
	$$
	
	When  $Y=0$ and $Y=2\pi$, this system has parabolic critical points and periodic orbits which have invariant manifolds.
	
	The hyperplanes $Y=0$ and $Y=2\pi$ correspond to $X=-\infty$ and $X=+\infty$ of \eqref{rescaled_aut_system} respectively.
	For this reason, even if they are not solutions of the system, they can be seen as asymptotic solutions at infinity.
	Thus, abusing notation, we denote $f(\pm\infty)$ as $\displaystyle\lim_{X\rightarrow \pm\infty} f(X)$ when it is well defined.

	%	Considering the change of coordinates $Y=4\arctan(e^X)$, the system 
	%	$$
	%	\left\{
	%	\begin{array}{lcl}
	%	\dot{X}&=&Z/8,\vspace{0.2cm}\\
	%	\dot{Z}&=& -U'(X)
	%	\end{array}
	%	\right.
	%	$$
	%	is brought to a (finite) \textbf{parabolic pendulum}
	%	\begin{equation}\label{parabolic}
	%		\left\{
	%	\begin{array}{lcl}
	%	\dot{Y}&=&2\sin(Y/2)Z/8,\\
	%	\dot{Z}&=&2\sin(Y/2)\sin(Y).
	%	\end{array}
	%	\right.
	%	\end{equation}
	%Notice that, the ``lines" $X=-\infty$ and $X=+\infty$ are brought to the lines $Y=0$ and $Y=2\pi$, respectively, through the change of coordinates, and these lines are composed by parabolic singularities of \eqref{parabolic} which have invariant manifolds, and for this reason, we are able to compute their invariant manifolds for the perturbed system.  	
	
	%Notice that, for a (finite) classical pendulum with Hamiltonian $H(x,y)$ (e.g. $H(x,y)=y^2/2-\cos(x)$), there is no sense in computing perturbations of orbits above the separatrix (as $\varUpsilon_\kappa$ with $\kappa>0$), since the points in the lines $x=0$ and $x=2\pi$ are not singularities for $x\neq 0$. On the other hand, since these points are in the infinity in this case, they behave as parabolic singularities which have invariant manifolds, and for this reason, we are able to compute their invariant manifolds for the perturbed system. 	

\end{remark}

%We use the coordinates $(X,Z,b,B)$ to provide a geometric interpretation of the problem. 

System \eqref{rescaled_aut_system} inherits many properties of the sine-Gordon equation. 
In fact, the functions $U$ and $F$ have exponential decay when $|X|\rightarrow +\infty$, 
% In addition, we can write
% \begin{equation}
% \label{ham}
% H(X,\dot{X},a,\dot{a})= H_\mathrm{p}(X,\dot{X}) + H_{\mathrm{osc}}(a, \dot{a}) + \textrm{Coupling Term}, 
% \end{equation}
% where $H_\mathrm{p}$ is a pendulum-like Hamiltonian $H_\mathrm{p}$ and $H_{\mathrm{osc}}$ is an oscillator.
therefore, for large values of $X$ the system becomes decoupled. 
Nevertheless, when $X=\er(1)$, the equations are coupled and the Hamiltonians $H_\mathrm{p}$ and $H_{\mathrm{osc}}$ may exchange energy, 
and this will result in interesting global phenomena.

If $F=0$ (i.e. $R=0$), then each energy level $H=h\geq0$ of system \eqref{system-ns} contains a unique kink solution and all the other 
solutions will be oscillating kinks (with the same oscillation in both tails). 
In this paper, we prove that the kink solution in $H=h$ breaks down for low energies (see Theorem \ref{splitting_thmA}) and we obtain a 
\textbf{critical energy } $h_c$ (with associated critical initial velocity $v_c= 4\sqrt{h_c}$) such that the energy level $H=h$ ($h$ small) 
contains a quasi kink (continuation of an unperturbed kink) if and only if $h\geq h_c$. 
In addition we give an asymptotic formula for $h_c$ (see Theorem \ref{perpun_thm}) which happens to be exponentially small in the parameter $\e$. 
We also find an  energy  $0<h_s<h_c$ such that the energy level $H=h$ ($h$ small) has oscillating kinks if and only if $h\geq h_s$ (see Theorem \ref{perper_thm}).

In \cite{GH04}, the authors present numerical and formal arguments for the existence of the critical velocity $v_c$ and they conjecture 
that the final velocity $v_f$ of a quasi kink lying in an energy level $h\geq h_c$ ($h$ small) is given by $v_f\approx (v_i-v_c)^{1/2}$, 
where $v_i\geq v_c$ is its initial velocity. 
Our results prove the validity of the asymptotic formula for $v_c$ and the conjecture for $v_f$
%present a rigorous proof for the computation of $v_c$ done in \cite{GH04} and we prove their conjecture by giving an asymptotic formula for $v_f$ 
(see Theorem \ref{perpunh_thm}). 

We emphasize that the rigorous approach presented in this work is necessary to validate the conclusions obtained in \cite{GH04}. In fact, their results rely on the computation of a Melnikov integral as a first order for the total loss of energy  $\Delta E$ over the separatrix of \eqref{rescaled_aut_system} with $\dg=0$ (or more precisely of the transfer of energy from the separatrix to the oscillator). Nevertheless, Melnikov theory cannot be applied in this case due the exponentially smallness in the parameter $\e$ of the Melnikov function. In this paper we prove that it is  indeed a first order of  $\Delta E$. Note that this is not always the case:  in general problems presenting exponentially small phenomena, often the Melnikov integral is not the dominant part of the total loss of energy over a separatrix of a Hamiltonian system (see \cite{BFGS12}).    

%Acho que no paragrafo acima, devemos comentar de maneira mais clara (porem delicada) que as contas feitas no artigo dos fisicos nao estao rigorosas e sem o estudo desse artigo nao se pode chegar as conclusoes afirmadas por eles. Contudo, o resultado dele funciona porque neste caso especifico, provamos que a primeira ordem ainda eh dada pela Melnikov (citar o paper grande pra mostrar que isso poderia nao ser verdade).  

In this paper, we relate the loss of energy $\Delta E$, and thus the existence of kinks, quasi kinks and oscillating kinks,  with  the exponentially small transversal intersection of the invariant manifolds $W^{u,s}$ of certain objects (critical points and periodic orbits) at infinity.

\section{Mathematical Formulation and Main Goal}
%In what follows, we describe the system which we study in this paper. 
%In Section \ref{model}, we express and we describe the problem in an adequate system of coordinates and we describe the geometry in Section \ref{geometric}. 
%In Section \ref{goal}, we present the main goal of this work in a geometric way. 

%Finally, in Section \ref{phys_sec}, we relate the properties of physical interest of this system with their heteroclinic connections.

\subsection{The unperturbed Problem}\label{geometric}

Consider system \eqref{rescaled_aut_system} for $F=0$.
Then $H=H_\mathrm{p}+ H_{\mathrm{osc}}$ is just two uncoupled integrable systems.

%Notice that it is an uncoupled system. In what follows, we describe the phase space in the $XZ$ and $bB$ planes.

In the $XZ$-plane, the solutions are contained in the level curves $H_{\mathrm{p}}(X,Z)=\kappa$. 
This system can be transformed into a degenerate (parabolic) pendulum by a change of coordinates (see Remark \ref{parabolic_rem}).
For $\kappa<0$, $H_{\mathrm{p}}=\kappa$ is diffeomorphic to a circle. 
For $\kappa\geq0$, $H_{\mathrm{p}}=\kappa$ contains the points $q_{\kappa}^{\pm}=(\pm\infty, 4\sqrt{\kappa})$ 
which behave as ``fixed points" and are connected by a heteroclinic orbit $\Upsilon_{\kappa}$ given by the graph of
\begin{equation}
\label{Zk+}
Z_{\kappa}(X)=4\sqrt{\kappa-U(X)}=4\sqrt{\kappa+\dfrac{2}{\cosh^2(X)}},\ \  X\in\R.
\end{equation}
Notice that $\varUpsilon_0$ is a separatrix.
Analogously, $(\pm\infty,-4\sqrt{\kappa})\in \{H_{\mathrm{p}}=\kappa\}$ are fixed points at infinity connected by the heteroclinic orbit 
given by the graph of $-Z_{\kappa}(X)$. See Figure \ref{projXZ_fig}. 
%Then, the energy level $H_{\mathrm{p}}=\kappa$, with $\kappa\geq 0$, is composed by the union of these two heteroclinic orbits $\pm Z_{\kappa}(X)$. 
%Analogously, $(\pm\infty,-4\sqrt{\kappa})$ are fixed points connected by a heteroclinic orbit given by the graph of
%\begin{equation}
%\label{Zk-}
%Z_{-}^{\kappa}(X)=-4\sqrt{\kappa-U(X)}=-4\sqrt{\kappa+\dfrac{2}{\cosh^2(X)}},\ \textrm{for } X\in\R.
%\end{equation}
%Then, the energy level of energy $H_{\mathrm{p}}=\kappa$, with $\kappa\geq 0$, is composed by the union of these two heteroclinic orbits $Z_{\pm}^{\kappa}(X)$.
%given by the union of the graphs of the functions $Z_{\pm}^{\kappa}(X)$ given by \eqref{Zk-} and \eqref{Zk+}, for $|X|\leq \cosh^{-1}\left(-\sqrt{\frac{2}{\kappa}}\right)$, and $$Z_{\pm}^{\kappa}\left(\pm\cosh^{-1}\left(-\sqrt{\dfrac{2}{\kappa}}\right)\right)=0,$$ which lead us to conclude that $H_{\mathrm{p}}=\kappa$ is diffeomorphic to a circle. 
%Now, using the properties of $U(X)$ and the facts pointed out above, we can describe the projection of the solutions in the $XZ$-plane by the figure below.
%
\begin{figure}[!]	
	\centering
	\bigskip
	\begin{overpic}[width=8cm]{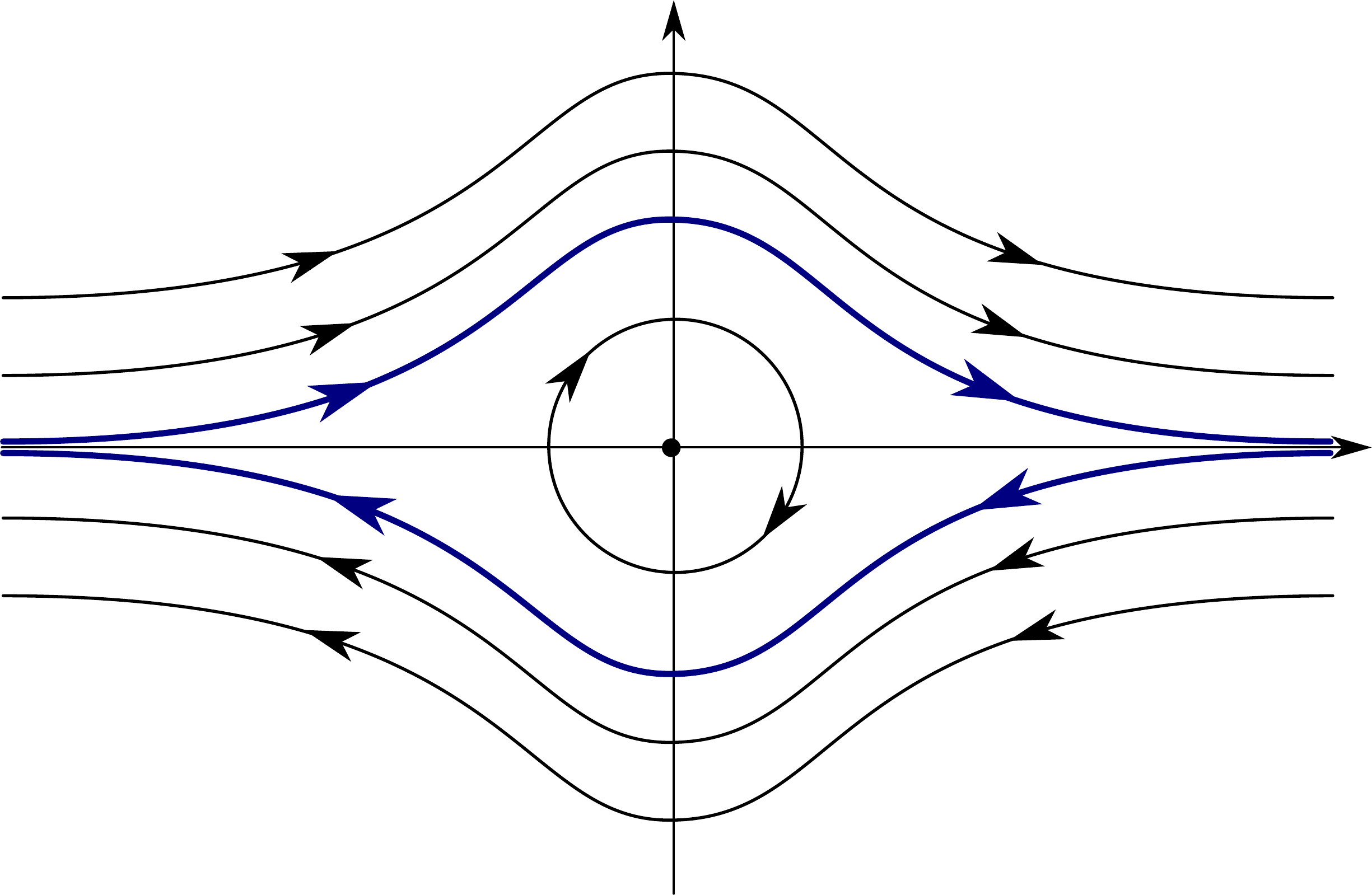}
	%						\begin{overpic}[grid,tics=5,width=8cm]{Figures/XZ_unp.pdf}								
	\put(51,62){{\footnotesize $Z$}}	
	\put(101,31){{\footnotesize $X$}}		
	\end{overpic}
	\bigskip
	\caption{ Projection of the phase space of the unperturbed system in the $XZ$-plane. }	
		\label{projXZ_fig}
\end{figure} 
From now on, we focus on the heteroclinic orbits contained in $Z>0$, since all the results of this paper can be obtained for the orbits in $Z<0$ in an analogous way.

% We denote 
%\begin{equation}
%\varUpsilon_\kappa=\left\{(X,Z);\ Z=Z_{\kappa}(X)\right\}
%\end{equation}
%as the heteroclinic connections between $q^{-}_{\kappa}=(-\infty,4\sqrt{\kappa})$ and $q^+_{\kappa}=(\infty,4\sqrt{\kappa})$, for $\kappa\geq 0$. 

%Therefore, the level curve $H_{\mathrm{p}}=0$ is a separatrix for the system in the $XZ$-plane. We stress that the separatrices $\varUpsilon_0^{\pm}$ contained in this level curve are expressed as graphs.

In the $bB$-plane, the solutions of \eqref{rescaled_aut_system} for $F=0$ are 
%are concentric circles with center at the origin. Also, the level curve $H_{\mathrm{osc}}^\e=\kappa$ describes the circle of radius $\sqrt{2\kappa/\omega}$ denoted by
\begin{equation}\label{pk}
P_\kappa=\{ H_{\mathrm{osc}}=\kappa \}=\left\{(b, B);\ b^2+B^2=2\kappa\sqrt{\e}/\Omega\right\} \textrm{ (see Figure \ref{bB_fig}).}
\end{equation}

\begin{figure}[!]
	\centering
	\bigskip
	\begin{overpic}[width=5cm]{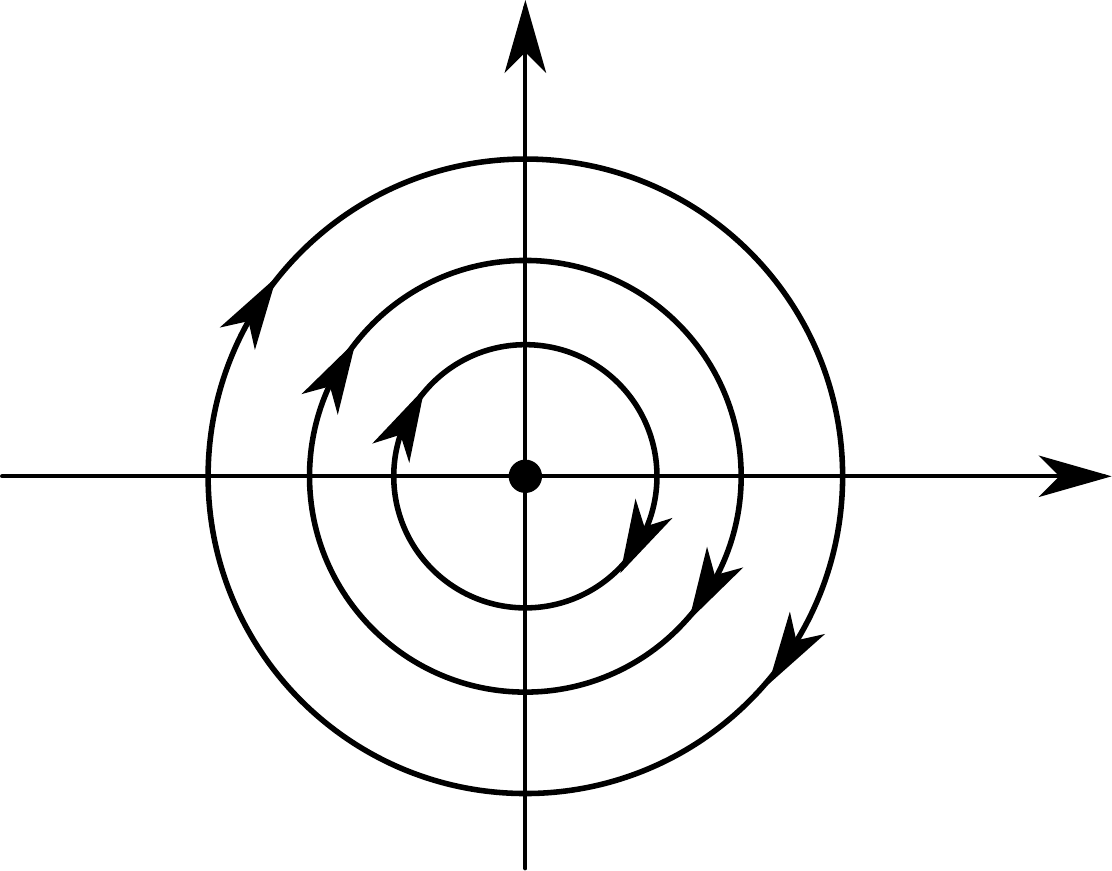}
%						\begin{overpic}[grid,tics=5,width=6cm]{Figures/bB.pdf}								
	\put(50,73){{\footnotesize $B$}}	
\put(102,33){{\footnotesize $b$}}				
	\end{overpic}
	\bigskip
	\caption{ Projection of the phase space of the unperturbed system in the $bB$-plane. }	
	\label{bB_fig}
\end{figure} 

%Now, consider the sets
%$$\begin{array}{c}P_\kappa=\{(b, B);\ b^2+B^2=(\sqrt{2\kappa/\omega})^2\}\vspace{0.2cm},\end{array}$$
%contained in the $bB$ and $XZ$ planes, respectively.

Combining \eqref{Zk+} and \eqref{pk} in the energy level $H=h,$ we define 
\begin{equation}
\Lambda^{\pm}_{\kappa_1,\kappa_2}=q_{\kappa_1}^{\pm}\times P_{\kappa_2}=\left\{(\pm\infty, 4\sqrt{\kappa_1}, b, B);\ b^2+B^2=2\kappa_2/\omega\right\},
\end{equation}
for every $\kappa_1,\kappa_2\geq 0$ such that $\kappa_1+\kappa_2=h$. 
Notice that
\begin{itemize}
	\item  If $\kappa_2=0$, then $\Lambda^{\pm}_{h,0}$ is a  degenerate saddle (parabolic) point of \eqref{rescaled_aut_system};
	\item If $\kappa_2>0$, then $\Lambda^{\pm}_{\kappa_1,\kappa_2}$
are  degenerate saddle (parabolic)  periodic orbits of \eqref{rescaled_aut_system}.	
\end{itemize}

%\begin{remark}
%	Abusing notation, we denote $f(\pm\infty)$ as $\displaystyle\lim_{X\rightarrow \pm\infty} f(X)$ when it is well defined. The periodic orbits $\Lambda^{\pm}_{\kappa_1,\kappa_2}$ are not solutions of the system since they are at infinity (see Remark \ref{parabolic_rem})
%\end{remark}

For simplicity, we denote the limit cases $\kappa_1=0$ and $\kappa_2=0$ by
\begin{equation}
\label{extremum}
\begin{array}{lcl}
\Lambda_h^{\pm}&=&\Lambda^{\pm}_{0,h}=\left\{(\pm\infty,0,b,B), b^2+B^2=2h/\omega\right\},\vspace{0.2cm}\\
p_h^{\pm}&=&\Lambda^{\pm}_{h,0}=(\pm\infty,4\sqrt{h},0,0),
\end{array}
\end{equation}
respectively. We stress that $p_h^{\pm}$ are points and $\Lambda_h^{\pm}$ are periodic orbits, 
both contained in the planes $X=\pm\infty$ and in the energy level $H=h$.

These invariant objects have invariant manifolds. Denote 
\begin{equation}\label{invmanifolds}
	W(\kappa_1,\kappa_2)=\varUpsilon_{\kappa_1}^{}\times P_{\kappa_2}=\left\{(X,Z,b,B);\ Z=4\sqrt{\kappa_1-U(X)} \textrm{ and }b^2+B^2=2\kappa_2\sqrt{\e}/\Omega  \right\},
\end{equation}
for each $\kappa_1,\kappa_2\geq 0$ such that $\kappa_1+\kappa_2=h$. 
%Observe that, for $h\geq 0$, $\kappa_1\geq 0$ and $\kappa_1\geq 0$, 
%
%, and notice that $W(\kappa_1,\kappa_2)$ are submanifolds of $\rn{4}$ contained in the energy level $H_{\e,0}=h$, which are invariant under $H_{\e,0}$ given in \eqref{Hamiltonian}.
%
%In this case, we have that
\begin{itemize}
	\item[(\textbf{1D-0})] 
	$W(0,0)= W^u_0(p_0^-)=W^s_0(p_0^+)$ is a $1$-dimensional heteroclinic connection (separatrix) between the points $p_0^-$ and $p_0^+$;
	\item[(\textbf{1D-$\kappa_1$})] 
	$W(h,0)= W^u_0(p_h^-)=W^s_0(p_h^+)$ is a $1$-dimensional heteroclinic connection between the points $p_h^-$ and $p_h^+$;
	\item[(\textbf{2D-0})] 
	If $h>0$, then $W(0,h)= W^u_0(\Lambda_h^-)=W^s_0(\Lambda_h^+)$ is a $2$-dimensional heteroclinic manifold  (separatrix) between $\Lambda_h^{-}$ and $\Lambda_h^{+}$;		
	\item[(\textbf{2D-$\kappa_1$})] 
	If $\kappa_1,\kappa_2> 0$, then $W(\kappa_1,\kappa_2)$ is a $2$-dimensional heteroclinic manifold between $\Lambda^{-}_{\kappa_1,\kappa_2}$ and $\Lambda^{+}_{\kappa_1,\kappa_2}$.
\end{itemize}

For $h>0$ fixed, the level energy $H=h$ is a $3$-dimensional  manifold. 
Eliminating the variable $Z$ by the Hamiltonian conservation, the manifolds $W(\kappa_1,\kappa_2)$ project into the the $bXB$-space as horizontal 
cylinders centered along the $X$-axis.
\begin{figure}[!]	\label{cylinders}
	\centering
	\begin{overpic}[width=11cm]{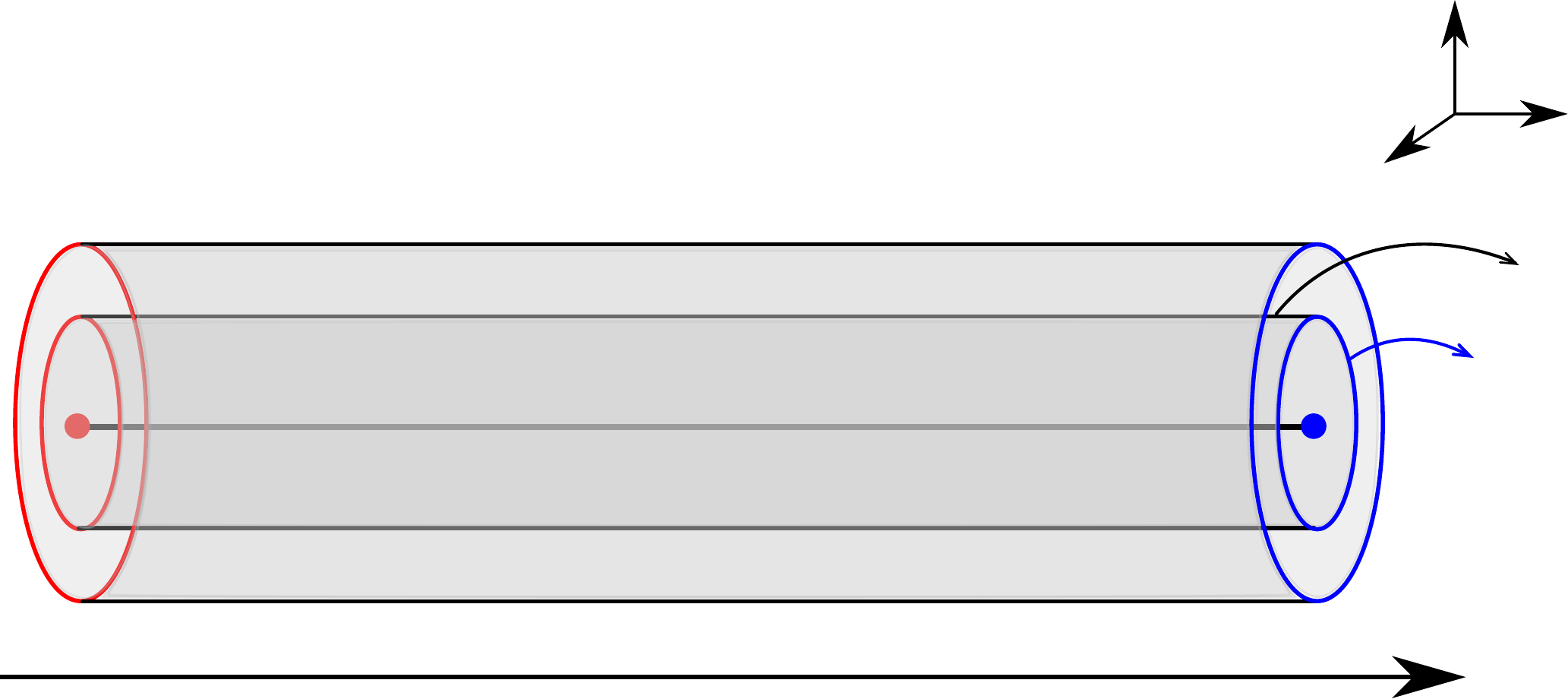}
%							\begin{overpic}[grid,tics=5,width=11cm]{Figures/variedadesxbb.pdf}								
		\put(98,26){{\footnotesize $W(\kappa_1,\kappa_2)$}}		
		\put(95,20){{\footnotesize $\Lambda_{\kappa_1,\kappa_2}^+$}}	
		\put(0,-2){{\footnotesize $-\infty$}}				
		\put(88,-2){{\footnotesize $+\infty$}}	
		\put(45,-2){{\footnotesize $X$}}	
		\put(101,36){{\footnotesize $X$}}	
		\put(87,31.5){{\footnotesize $b$}}	
		\put(92,45){{\footnotesize $B$}}																		
	\end{overpic}
	\bigskip
	\caption{ Projection of the heteroclinic manifolds $W(\kappa_1,\kappa_2)$ in the $bXB$-space. In the figure, the most external cylinder is the projection of $W(0,h)$ and the straight line represents the projection of $W(h,0)$.}	
\end{figure} 

%
%It is clear that, if $Z=0$ and $X\rightarrow\pm\infty$, we have that $H_{0,\e}(0,\pm\infty,b,B)=H_{\mathrm{osc}}$, which means that, for each $h> 0$, there exists a periodic orbit $$\Lambda^{\pm}_h=\{(\pm\infty, 0, b, B);\ b^2+B^2=(\sqrt{2h/\omega})^2\}$$ 
%in the energy level $H_{0,\e}=h$, and this is a $1$-dimensional submanifold of $\rn{4}$. 
	
%\begin{remark}
%	\textcolor{blue}{In this work, the notation $f(\pm\infty)$ means $\displaystyle\lim_{X\rightarrow \pm\infty} f(X)$. We also notice that the periodic orbits $\Lambda^{\pm}_{\kappa_1,\kappa_2}$ are not solutions of the system, it means that there are orbits in the phase space which go asymptotically to $\Lambda^{\pm}_h$ when $X\rightarrow\pm\infty$. In what follows, we will refer this kind of element as an orbit of the system, when $X=\pm\infty$, as an abuse of language.}
%\end{remark}	
%\textcolor{red}{referenciar aquela dissertacao de mestrado?? Talvez colocar coordenadas $X=\arctan$, para que fique mais claro?}

In this unperturbed case, there is no exchange of energy between the pendulum and the oscillator through the heteroclinic connections of 
$W(\kappa_1,\kappa_2)$, i.e. $H_\mathrm{p}$ and $H^\e_{\mathrm{osc}}$ are first integrals.
In the perturbed case \eqref{rescaled_aut_system} ($F\neq0$)
%. In this situation, the Hamiltonian is given by \eqref{Hamiltonian}. T
the coupling term $R$ (see \eqref{Hamiltonian}) goes to $0$ as $X\rightarrow\pm\infty$, thus, the system is uncoupled at $X=\pm\infty$. 
As a consequence, $\Lambda^{\pm}_{\kappa_1,\kappa_2}$ are orbits of system \eqref{rescaled_aut_system} in the sense of Remark \ref{parabolic_rem}.
%the system is under a weak coupling (in the tails), in the sense that in $X=\pm\infty$, the system is uncoupled. It means that, even for $\dg>0$, $\Lambda^{\pm}_{h}(\kappa_1,\kappa_2)$ are orbits of the system.
Nevertheless, the system may exchange energy between the pendulum and the oscillator when $X$ varies, 
through the appearance of heteroclinic connections between different $\Lambda^-_{\kappa_1,\kappa_2}$ and 
$\Lambda^+_{\kappa_1',\kappa_2'}$ such that $\kappa_1+\kappa_2=\kappa'_1+\kappa'_2=h$.

Recall that a quasi-kink (see Section \ref{motivation}) is a solution 
$(X(t),Z(t),b(t),B(t))$ which has initial velocity $v_i>0$ and final velocity $v_f>0$ and satisfies the asymptotic conditions
\begin{align}
\displaystyle\lim_{t\rightarrow-\infty} X(t)&=-\infty,\ \displaystyle\lim_{t\rightarrow-\infty} Z(t)=v_i,
\ \displaystyle\lim_{t\rightarrow-\infty} b(t)=\displaystyle\lim_{t\rightarrow-\infty} B(t)=0,\label{asympc}\\
 \displaystyle\lim_{t\rightarrow+\infty} X(t)&=+\infty,\ \displaystyle\lim_{t\rightarrow+\infty} Z(t)=v_f.\label{asympf}
\end{align}
 and $(b(t), B(t))$ are asymptotic to periodic functions as $X\to+\infty$. For such solutions
$$h_i=H(X(t),Z(t),b(t),B(t))=\dfrac{v_i^2}{16},$$
for every $t\in\R$.

Thus, considering $h_i=v_i^2/16$ and $\kappa_f= v_f^2/16$, we have that, the quasi-kink solution $(X(t),Z(t),b(t),B(t))$ satisfying \eqref{asympc} and \eqref{asympf} is a heteroclinic connection between the $1$-dimensional unstable manifold of $p_{h_i}^-$ and the $2$-dimensional stable manifold of $\Lambda^+_{\kappa_f,h_i-\kappa_f}$. 

\subsection{Main results}\label{goal}
Our aim is to look for solutions traveling from $X=-\infty$ to $X=+\infty$. More concretely, we prove the existence of $v_c>0$ such that the solutions  $X$ of \eqref{system-ns} incoming with velocity $v_i$ escape the defect location and continue traveling towards $X+=\infty$ with (asymptotic) final velocity $v_f$, provided $v_i\geq v_c$.

%The variables $(X,Z,b,B)$ introduced in Section \ref{model} are equivalent to the variables $(X,\dot{X},a,\dot{a})$ of \eqref{system-ns} (they differ only by some rescalings of variables and time).  

% In \cite{GH04}, the authors consider the solutions of \eqref{rescaled_aut_system} such that $X(t)\rightarrow -\infty$ as $t\rightarrow\infty$, and $b(t)\rightarrow 0$ as $t\rightarrow -\infty$, as an approximation for kinks with small amplitude (see Section \ref{motivation}).
% Since we are looking for solutions which start with small amplitude $a(t)$, and since $X(t)\rightarrow -\infty$ as $t\rightarrow\infty$, it follows that $a(t)\rightarrow 0$ as $t\rightarrow -\infty$.

% 
% % Since \eqref{rescaled_aut_system} is a Hamiltonian system of Hamiltonian \eqref{Hamiltonian}, it follows that
% $$h_i=H_{\e}(X(t),Z(t),b(t),B(t))=\dfrac{v_i^2}{16},$$
% for every $t\in\R$. The solution $X(t)$ propagates away with velocity $v_f$ if, and only if
% \begin{equation}
% \label{asympf}
% \displaystyle\lim_{t\rightarrow+\infty} X(t)=+\infty,\ \displaystyle\lim_{t\rightarrow+\infty} Z(t)=v_f.
% \end{equation}

Therefore, the critical energy $h_c$ is characterized as the lowest energy level $h_c=v_c^2/16$ such that for any $h\geq h_c$, there exist $\kappa_1,\kappa_2>0$ with $\kappa_1+\kappa_2=h$ such that $W_{\e}^u(p_{h}^-)\subset W_{\e}^s(\Lambda^+_{\kappa_1,\kappa_2})$. 
%(for some $\kappa_1,\kappa_2>0$ such that $\kappa_1+\kappa_2=h$), if and only if $h\geq h_c$.

Notice that $W_{\e}^u(p_{h}^-)\subset W_{\e}^s(\Lambda^+_{\kappa_1,\kappa_2})$ implies that the final velocity of the corresponding orbit $X(t)$ (which has initial velocity $4\sqrt{h}$) is given by $v_f=4\sqrt{\kappa_1}$. 

To analyze the existence of heteroclinic orbits between the invariant objects at $X=\pm\infty$ we consider the section $X=0$, which is transversal to the flow. Restricting  to the energy level $H=h$, eliminating the variable $Z$ and using \eqref{exprUF}, this section becomes  the disk
% $\mathcal{S}_h=\{(X,Z,b,B)\in \mathcal{S};\ H_{\dg,\e}(0,Z,b,B)=h\}$ is a $2$-dimensional manfifold inside the $3$-manifold $H_{\dg,\e}=h$. In fact, using the expression of $H_{\dg,\e}$ and recalling that we are considering $Z\geq 0$, we have that
\begin{equation}
\label{sh}
\Sigma_h =\left\{(0,b,B); b^2+B^2\leq \dfrac{(4+2h)\sqrt{\e}}{\Omega} \right\}.
\end{equation}
We  compute intersections between unstable and stable manifolds in $\Sigma_h$.

%In particular, the projection $\Sigma_h$ of $\mathcal{S}_h$ in the $XbB$-space is given by the disk 
%$$\Sigma_h =\left\{(0,b,B); b^2+B^2\leq \dfrac{4+2h}{\omega} \right\}.$$ 
%
%\begin{figure}[H]	\label{secion}
%	\centering
%	\begin{overpic}[width=11cm]{Figures/}
%%								\begin{overpic}[grid,tics=5,width=11cm]{Figures/secion.pdf}								
%		\put(69,0){{\footnotesize $b$}}		
%		\put(75,42){{\footnotesize $B$}}				
%		\put(97,17){{\footnotesize $X$}}	
%		\put(90,38){{\footnotesize $\s_h$}}				
%		\put(94,27){{\footnotesize $\s_0$}}	
%		\put(4,0){{\footnotesize $b$}}	
%		\put(40,9){{\footnotesize $B$}}			
%		\put(21,42){{\footnotesize $Z$}}	
%		\put(35.5,34){{\footnotesize $\mathcal{S}_h$}}	
%		\put(35.5,28){{\footnotesize $\mathcal{S}_0$}}		
%		\put(20,-2){{\footnotesize $(a)$}}	
%\put(80,-2){{\footnotesize $(b)$}}																				
%	\end{overpic}
%	\bigskip
%	\caption{ Projections of $\mathcal{S}_h$ in the $(a)$ $bBZ$-space and $(b)$ $bXB$-space. }	
%\end{figure} 

%Thus, fixing an energy level $h$, the dynamics of the system is contained in a $3$-manifold embedded in $\rn{4}$, and the section $X=0$ restricted to this manifold is given by $\s_h$. Since we eliminated the variable $Z$ through the Hamiltonian $H_{\e,\dg}$ given in \eqref{Hamiltonian}, the dynamics of the system in this energy level is described in the $bXB$-space,

% Since the elements to be analyzed are far from the boundary of $\Sigma_h$, we are going to represent it simply by the plane $X=0$ in the $XbB$-space.

% \subsubsection{The Energy Levels $h\geq 0$}

In the unperturbed case $F=0$,  the one-dimensional heteroclinic connection between the ``infinity points" $p_h^+$ and $p_h^-$, $W(h,0)=W_0^u(p_h^-)=W_0^s(p_h^+)$  intersect $\Sigma_h$ at the point $(0,0)$. 
In the following theorem, we show that it breaks down when $F\neq 0$ (see Figure \ref{splitting_fig}).

\begin{mtheorem}[Breakdown of kinks]\label{splitting_thmA}
	Consider system \eqref{rescaled_aut_system}. There exists $\e_0>0$ and $h_0>0$ sufficiently small such that, for every $0<\e<\e_0$ and $0\leq h\leq h_0$, the invariant manifolds $W^{u,s}_{\e}(p_h^{\mp})$ intersect $\s_0$ (given in \eqref{sh}). Denoting by $P_h^{u,s}$ the first intersection points,
	\begin{equation}\label{dif}
	\begin{split}
	\left|P_0^u - P_0^s\right|&=d_0(\e)=\dfrac{2\pi  \e^{3/4}}{\sqrt{\Omega}}e^{-\Omega\sqrt{2/\e}} +\er\left(\e^{7/4}e^{-\Omega\sqrt{2/\e}}\right), \textrm{ where }\Omega=\sqrt{1-\dfrac{\e^2}{4}}\\
	\left|P_h^u - P_h^s\right|&=d_0(\e)+\er(\e^{7/4}\sqrt{h}).
	\end{split}
	\end{equation}
% 	for each $0<\e\leq\e_0$.
	%\begin{equation}
	%\left\{\begin{array}{lcl}
	%\Gamma_{1D}^{0,u}(0)-\Gamma_{1D}^{0,s}(0)&=&-i\dfrac{2\pi  \e^{3/4}}{\sqrt{\Omega}}e^{-\Omega\sqrt{2/\e}}(1+\er(\e)),\vspace{0.2cm}\\
	%\Theta_{1D}^{0,u}(0)-\Theta_{1D}^{0,s}(0)&=&i\dfrac{2\pi  \e^{3/4}}{\sqrt{\Omega}}e^{-\Omega\sqrt{2/\e}}(1+\er(\e)),
	%\end{array}\right.
	%\end{equation}

	%\begin{equation}\label{asymp}
	%\Gamma^{0,u}_{1D}(0)-\Gamma^{0,s}_{1D}=\mathcal{M}(v) + \er(\omega\dg^3e^{-\sqrt{2}\omega}),
	%\end{equation}
	%for every $v\in \mathcal{I}_{\nu,\kappa}\subset\R$, where $\mathcal{M}$ is the Melnikov function defined by \eqref{melnikov}, which is given by the constant function:
	%$$\mathcal{M}(v)=-\dfrac{2\pi}{\sqrt{\Omega}}e^{-\sqrt{2}\omega}.$$
\end{mtheorem}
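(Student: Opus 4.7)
My plan is to implement the standard strategy for measuring the exponentially small splitting of separatrices in Hamiltonian systems with a rapidly oscillating perturbation. In \eqref{rescaled_aut_system} the oscillator $H_{\mathrm{osc}}$ has large frequency $\Omega/\sqrt{\e}$, while the coupling term $R$ is of order $\e^{3/4}$; this combination produces a splitting of size $\e^{3/4}e^{-\Omega\sqrt{2/\e}}$, with the prefactor coming from $R$ and the exponent fixed by the distance $\sqrt{2}$ from the real line to the nearest complex singularity of the unperturbed separatrix $W(0,0)$.

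First I would parameterize $\varUpsilon_0$ by time: $X'=Z_0(X)/8=(\sqrt{2}/2)\sech(X)$ integrates to $\sinh(X_0(\tau))=\tau/\sqrt{2}$, so $X_0$ extends analytically to the strip $|\Ip\tau|<\sqrt{2}$ with first singularities at $\tau=\pm i\sqrt{2}$. Next I would construct parameterizations of $W^u_{\e}(p_h^-)$ and $W^s_{\e}(p_h^+)$ as small perturbations of this separatrix. Because the base points $p_h^{\pm}$ are parabolic (see Remark \ref{parabolic_rem}), the local invariant manifolds must be produced via a parabolic stable/unstable manifold theorem with algebraic, rather than exponential, decay. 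The local parameterizations are then extended by a fixed-point argument in a weighted Banach space of functions analytic in a common complex strip of half-width $\sqrt{2}-\kappa\sqrt{\e}$; the $\sqrt{\e}$ loss is unavoidable, since within one oscillator period $2\pi\sqrt{\e}/\Omega$ of the singularity the dynamics ceases to be perturbative.

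Using conservation of $H$ on both manifolds, the vector splitting $P_h^u-P_h^s$ on $\s_0$ reduces to a scalar splitting function $\Delta(\tau)$. I would write $\Delta=M+\mathcal{E}$, where
$$M(\tau)=\int_{-\infty}^{+\infty}\{H_\mathrm{p}+H_{\mathrm{osc}},R\}\bigl(X_0(s),Z_0(X_0(s)),b_0(s+\tau),B_0(s+\tau)\bigr)\,ds$$
is the Melnikov integral along the unperturbed heteroclinic. Using \eqref{exprUF}, $F(X_0(\tau))=-2\tanh(X_0(\tau))\sech(X_0(\tau))$ has a simple pole of residue $-\sqrt{2}$ at $\tau=i\sqrt{2}$, so closing the contour in the upper half-plane against $e^{i\Omega\tau/\sqrt{\e}}$ yields the announced leading term $M(0)=(2\pi\e^{3/4}/\sqrt{\Omega})\,e^{-\Omega\sqrt{2/\e}}$. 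The $h$-dependence, for $h>0$, enters through an $\er(\sqrt{h})$ shift of the base points $p_h^{\pm}$ and of the time parameterization along $\varUpsilon_h$, which accounts for the additional $\er(\e^{7/4}\sqrt{h})$ correction.

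The main obstacle is controlling the error $\mathcal{E}$: standard Melnikov theory on the real line only yields $\mathcal{E}=\er(\e^{3/2})$, which is polynomially larger than $M$ and therefore useless. The resolution is to bound $\mathcal{E}$ inside the complex strip up to its boundary $|\Ip\tau|=\sqrt{2}-\kappa\sqrt{\e}$, where the oscillatory factor $e^{i\Omega\tau/\sqrt{\e}}$ attains its maximum $e^{\Omega(\sqrt{2}-\kappa\sqrt{\e})/\sqrt{\e}}$, and a careful iteration of the perturbed invariance equations shows $\mathcal{E}=\er(\e^{7/4})$ times that maximum. Restricting back to $\tau\in\R$ then converts this bound into the stated $\er(\e^{7/4}e^{-\Omega\sqrt{2/\e}})$ remainder.
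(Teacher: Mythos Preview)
Your overall strategy matches the paper's: extend the parameterizations of $W^{u,s}_\e(p_0^\mp)$ analytically to a domain reaching within $\sqrt{\e}$ of the singularities $\pm i\sqrt{2}$, split the difference on $\Sigma_0$ as a Melnikov term plus remainder, evaluate the Melnikov term by residues, and control the remainder via exponentially weighted norms on the strip. The $h$-correction is likewise obtained in the paper by comparing the fixed-point parameterizations of $W^u_\e(p_h^-)$ and $W^u_\e(p_0^-)$, yielding the $\er(\e^{7/4}\sqrt{h})$ term.

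There is, however, a concrete gap in your Melnikov step. The integral you wrote, $\int_{-\infty}^{\infty}\{H_\mathrm{p}+H_{\mathrm{osc}},R\}$ along the unperturbed heteroclinic $W(0,0)$, vanishes identically: on $W(0,0)$ one has $b\equiv B\equiv 0$, and $\{H_\mathrm{p}+H_{\mathrm{osc}},R\}=-\tfrac{\e^{3/4}}{\sqrt{2\Omega}}\bigl(\tfrac{Z}{8}F'(X)b+\omega B F(X)\bigr)$ is linear in $(b,B)$. There are no nontrivial $b_0,B_0$ to insert. The quantity actually being measured is not a change in an action but the oscillator coordinates themselves; the paper passes to $\Gamma=B+ib$, $\Theta=B-ib$, parameterizes both manifolds as graphs $v\mapsto(\Gamma^\star_0(v),\Theta^\star_0(v))$, and computes $\Gamma^u_0(0)-\Gamma^s_0(0)$ directly. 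Its Melnikov term is then $c_1^0 e^{i\omega v}$ with $c_1^0$ given by the integral of $e^{-i\omega r}(Q^0)'(r)$ over $\R$, and the residue of $F(X_0)$ at $i\sqrt{2}$ produces exactly the $\tfrac{2\pi\e^{3/4}}{\sqrt{\Omega}}e^{-\Omega\sqrt{2/\e}}$ you quote --- so your residue computation is right, but it does not come from the Poisson-bracket integral you wrote. For the remainder, the paper shows that the vector difference $\Delta\xi$ satisfies a \emph{linear} equation $\Delta\xi'=A\Delta\xi+B(v)\Delta\xi$ with $|B(v)|\leq M\omega\dg^2$, and inverts $\mathrm{Id}-\mathcal{H}_0$ in the space with weight $e^{\omega(\sqrt{2}-|\Ip v|)}$; your description of this step should be sharpened along these lines, since a naive iteration of the nonlinear invariance equation would not directly give the $\er(\omega\dg^3)$ bound.
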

%This theorem implies that there are no heteroclinic connections between the points $p^\pm_0$.
The first statement of this theorem is proven in Section \ref{breakdown} and the second one is a consequence of Theorem \ref{approx} stated in Section \ref{parametrizaciones} below.

\begin{remark}
	In the asymptotic formula \eqref{dif}, we could write $\Omega=1$. Nevertheless, we keep $\Omega=\sqrt{1-\e^2/4}$ in order to compare our results with \cite{GH04}. The same remark holds for Theorems $B$, $C$ and $D$ below.
\end{remark}
% 
% If $h>0$ is small, then $p_h^+$ and $p_h^-$ are $\sqrt{h}$-near to $p_0^+$ and $p_0^-$. We will see that, for $F\neq 0$, $W_\e^u(p_h^-)$ and $W_\e^s(p_h^+)$ intersect $\Sigma_h$ at points $P_h^u$ and $P_h^s$, respectively, such that $$|P_h^u - P_h^s|=d_0(\e)+\er(\sqrt{h})>0,$$
% for $h$ small enough 

When $F=0$, the energy level $h$ has a family of heteroclinic manifolds $W(\kappa_1,\kappa_2)$, with $\kappa_1+\kappa_2=h$, $\kappa_1,\kappa_2>0$, connecting the periodic orbits $\Lambda^{\pm}_{\kappa_1,\kappa_2}$. Each one intersects $\Sigma_h$ at a circle centered at $(0,0)$ with radius $\sqrt{2\kappa_2\sqrt{\e}/\Omega}$, which generates a disk of radius $\sqrt{2h\sqrt{\e}/\Omega}$ when we vary $0<\kappa_2\leq h$ (see \eqref{Zk+} and \eqref{pk}).

%Denote $D_h^{\pm}=\displaystyle\cup_{0\leq\kappa\leq h}\Lambda_h^{\pm}(h-\kappa,\kappa)$ as the disks at $X=\pm\infty$ consisting on the union of the periodic orbits at $X=\pm\infty$ contained in the energy level $H_{\e,0}=h$. 

We  show that, for the perturbed case, $W^u_\e(\Lambda^-_{\kappa_1,\kappa_2})$ and $W^s_\e(\Lambda^+_{\kappa_1,\kappa_2})$ also intersect $\Sigma_h$ in closed curves near circles of radius $\sqrt{2\kappa_2\sqrt{\e}/\Omega}$ centered in $P_h^u$ and $P_h^s$. Thus, varying $0\leq \kappa_2\leq h$, we can see that $W_{\e}^{u,s}(\Lambda^\pm_{\kappa_1,\kappa_2})$ intersect $\s_h$ in topological disks $\mathcal{D}_h^u$ and $\mathcal{D}_h^s$ near the disks of radius $\sqrt{2h\sqrt{\e}/\Omega}$ centered in $P_h^u$ and $P_h^s$, respectively (see Figure \ref{splitting_fig}).

The existence of heteroclinic connections continuation of the unperturbed ones corresponds to intersections between the disks  $\mathcal{D}_h^u$ and $\mathcal{D}_h^s$.
Even if in the energy level $h=0$, there is no (first round) heteroclinic connections between the points at $X=\pm\infty$ ($p_0^-$ and $p_0^+$), the heteroclinic connections between the periodic orbits $\Lambda^{\pm}_{\kappa_1,\kappa_2}$ may certainly exist when $h>0$, since the two disks may intersect for some values of $h$. The lowest energy level $h_s>0$ for which these heteroclinic connections exist is reached when the boundaries of these disks are tangent  (see Figure \ref{splitting_fig}). Equivalently, when $W_{\e}^{u}(\Lambda_h^-)$ intersects $W_{\e}^{s}(\Lambda_h^+)$ in the energy level $h_s=h_s(\e)$.

\begin{mtheorem}[Existence of oscillating kinks]\label{perper_thm} 
% 	Consider system \eqref{rescaled_aut_system}. 
	Fix $h_0>0$. There exists $\e_0>0$ sufficiently small such that, for every $0<\e<\e_0$ and $0\leq h\leq h_0$, the invariant manifolds $W^{u}_{\e}(\Lambda_h^{-})$, $W^{s}_{\e}(\Lambda_h^{+})$ intersect $\s_h$  (given in \eqref{sh}). The first intersection is given by closed curves, which we denote  by $\partial\mathcal D_h^{u,s}$.
Then,
% 
% Fix $h_0>0$, there exist $\e_0>0$ such that, for each $0<\e<\e_0$, 
there exists %$\star_p(\e)=1+\er(\e)$ and
	$$h_s(\e)=\dfrac{\e\pi^2e^{-2\Omega\sqrt{2/\e}}}{2}(1+\er(\e)), \ \textrm{with }\Omega=\sqrt{1-\dfrac{\e^2}{4}},$$ such that the following statements hold for system \eqref{rescaled_aut_system}.
	\begin{enumerate}
		\item If $0\leq h< h_s(\e)$, the closed curves $\partial\mathcal D_h^{u,s}$ do not intersect each other.
		\item If $ h_s(\e)\leq h\leq h_0$, the closed curves $\partial\mathcal D_h^{u,s}$ intersect at least once.
%		\item If $h> H_\mathrm{p}(\e)$, then $W^u_{\dg}(\Lambda_h^-)$ and $W^s_{\dg}(\Lambda_h^+)$ intersect themselves transversally along two heteroclinic connections;
	\end{enumerate}
Furthermore, given $\mu>1$, there exists $\e_{\mu}>0$ and
$$h_{\mu}(\e)=\dfrac{\e\pi^2e^{-2\Omega\sqrt{2/\e}}}{2}(\mu+\er(\e))^2\geq h_s(\e),$$
such that, for $0<\e<\e_\mu$  and $h_{\mu}(\e)\leq h\leq h_0$,  the closed curves $\partial\mathcal D_h^{u,s}$ have at least two intersections.
%Furthermore, given $x>1$, there exists $\e_{x}>0$ and $\star_x(\e)=x+\er(\e)$, such that, for each $0<\e<\e_x$, $\star_x(\e)\geq \star_p(\e)$ and if
%	$$h_{x}(\e)=\dfrac{\e\pi^2e^{-2\Omega\sqrt{2/\e}}}{2}\star_x(\e)^2,$$ then $W^u_{\dg}(\Lambda_h^-)$ and $W^s_{\dg}(\Lambda_h^+)$ intersect transversally along two heteroclinic connections (at least), for each $0<\e<\e_{x}$ and $h_{x}(\e)\leq h\leq h_0$.
\end{mtheorem}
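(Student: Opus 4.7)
The plan is to describe the closed curves $\partial\mathcal D_h^{u,s}$ as small perturbations of circles and then reduce the intersection problem to an elementary planar computation. In the unperturbed case $F=0$, both $W^u_\e(\Lambda_h^-)$ and $W^s_\e(\Lambda_h^+)$ coincide with $W(0,h)$, and $W(0,h)\cap\Sigma_h$ is the circle of radius $r(h):=\sqrt{2h\sqrt\e/\Omega}$ centered at the origin. Using the parametrizations of the invariant manifolds of the parabolic periodic orbits $\Lambda_h^\mp$ provided by Theorem \ref{approx} and propagating them up to the transversal section $\Sigma_h$, I would show that for sufficiently small $\e$ and $0\le h\le h_0$, each of $W^{u,s}_\e(\Lambda_h^\mp)\cap\Sigma_h$ is a closed curve that is $\er(\e^{7/4}e^{-\Omega\sqrt{2/\e}})$-close (in $C^0$, and in fact $C^1$) to a circle of radius $r(h)$ centered at $P_h^u$ and $P_h^s$ respectively, where $P_h^{u,s}$ are the points of Theorem A. The crucial point is that this perturbation error is strictly smaller in $\e$ than the leading order $\e^{3/4}e^{-\Omega\sqrt{2/\e}}$ of $d_0(\e)$.

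Two closed curves that are $\er(\e^{7/4}e^{-\Omega\sqrt{2/\e}})$-close to circles of equal radius $r$ are disjoint whenever the distance between the nominal centers exceeds $2r$ by more than the error, touch tangentially when the distance equals $2r$, and intersect transversally at two points when it is smaller by more than the error. By Theorem A the distance between centers is $d_0(\e)+\er(\e^{7/4}\sqrt h)$. Setting $|P_h^u-P_h^s|=2r(h)$ and squaring gives
\begin{equation*}
h_s(\e)=\frac{\Omega\, d_0(\e)^2}{8\sqrt\e}\bigl(1+\er(\e)\bigr)
=\frac{\e\pi^2\, e^{-2\Omega\sqrt{2/\e}}}{2}\bigl(1+\er(\e)\bigr),
\end{equation*}
after plugging in the asymptotic of $d_0(\e)$. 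For $h<h_s(\e)$, the distance between centers exceeds $2r(h)$ by more than the perturbation error, so the curves lie in disjoint open disks, yielding (1). For $h\ge h_s(\e)$, a continuity argument in $h$ starting at the near-tangency $h=h_s(\e)$ produces at least one intersection, yielding (2).

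For the final assertion, given $\mu>1$ the threshold $h_\mu(\e)$ is chosen (by the same squaring procedure) so that $2r(h_\mu(\e))\ge \mu\, d_0(\e)\bigl(1+\er(\e)\bigr)$. For $h\ge h_\mu(\e)$ the nominal circles then overlap by a macroscopic margin of relative size $\mu-1$ compared to the perturbation error, so they intersect transversally at two points. Persistence of transversal intersections under $C^1$-small perturbations yields the two transversal intersections of $\partial\mathcal D_h^u$ and $\partial\mathcal D_h^s$. The constraint $\mu>1$ is precisely what prevents one of the two crossings from being eaten by the perturbation error.

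The main obstacle is the first step: producing $C^1$-graph parametrizations of $\partial\mathcal D_h^{u,s}$ over the nominal circles with error strictly smaller than both $r(h)$ and $d_0(\e)$ down to the critical regime $h\sim h_s(\e)$. At that scale every relevant quantity is of size $\e^{3/4}e^{-\Omega\sqrt{2/\e}}$, so every exponentially small contribution has to be tracked with the complex-extension and matching machinery underlying Theorem A, now applied to the two-parameter family $W^{u,s}_\e(\Lambda^\mp_{\kappa_1,\kappa_2})$ with $\kappa_1+\kappa_2=h$ rather than to the one-dimensional manifolds $W^{u,s}_\e(p_h^\mp)$. Once the splitting function on $\Sigma_h$ is controlled with this precision uniformly in $\kappa_2\in[0,h]$, the geometric argument above goes through verbatim.
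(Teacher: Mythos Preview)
Your geometric picture is correct and matches the paper's approach: the curves $\partial\mathcal{D}_h^{u,s}$ are indeed near-circles of radius $r(h)=\sqrt{2h\sqrt\e/\Omega}$ centered near $P_0^{u,s}$, and the intersection problem reduces to comparing $2r(h)$ with the distance between centers. The paper carries this out by writing the intersection condition $N_{0,h}^u(0,\tau^u)=N_{0,h}^s(0,\tau^s)$ explicitly as a system of two real equations in $(\tau^u,\tau^s)$ with parameter $h$, rescaling $h=\tfrac{\pi^2\omega\dg^2}{2\Omega}e^{-2\sqrt2\omega}\mu^2$ so that the tangency occurs at $\mu=1$, and then applying the Implicit Function Theorem along the one-parameter family of solutions $\{(\alpha,-\alpha,1/\sin\alpha,0)\}$ of the $\e=0$ limit, followed by a compactness argument in $\alpha$. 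So your direct geometric reasoning and the paper's IFT argument encode the same computation.

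There is, however, a genuine error in your bounds. You claim the deviation of $\partial\mathcal{D}_h^{u,s}$ from exact circles is $\er(\e^{7/4}e^{-\Omega\sqrt{2/\e}})$ uniformly in $h$. What Theorem~\ref{approx} actually gives is $\er(\dg\sqrt{h}/\omega^{3/2})=\er(\e^{3/2}\sqrt{h})$, which for fixed $h$ of order $1$ is \emph{not} exponentially small and is in fact much larger than $d_0(\e)$. Your statement that ``this perturbation error is strictly smaller in $\e$ than the leading order $\e^{3/4}e^{-\Omega\sqrt{2/\e}}$ of $d_0(\e)$'' therefore fails for such $h$, and with it the verbatim geometric argument you sketch. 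The reason the picture survives is different: the error $\e^{3/2}\sqrt h$ is small relative to the \emph{radius} $r(h)\sim\e^{1/4}\sqrt h$ (ratio $\er(\e^{5/4})$), so after the paper's rescaling both error terms become $\widetilde M_j=\er(\e)$ and $\widetilde F_j=\er(\e^{5/4}\mu)$, which are uniformly small on compact $\mu$-sets. This rescaled formulation, rather than a direct comparison with $d_0(\e)$, is what makes the IFT apply across the full range $0<h\le h_0$.
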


Thus, we can see that there is a family of heteroclinic connections between elements of $X=\pm\infty$ which are contained in the energy level $h$, for $h>h_s$. 
% In particular, roughly speaking, it means that there are lots of heteroclinic connections which are not destroyed by the perturbation applied to system \eqref{rescaled_aut_system} with $F=0$. 

%\begin{remark}
%We will see that $d_0(\e)$ is exponentially small. Then, it will follow that $h_c$ will be also exponentially small (see \cite{GH04} and Theorems $A$ and $C$).
%\end{remark}

Actually, we prove that, in the energy level $H=h_s$, $\partial\mathcal{D}^u_{h_s}$ and $\partial\mathcal{D}^s_{h_s}$ intersect (tangentially) at least once, and for this reason, $\partial\mathcal{D}^u_{h_s}\cap\partial\mathcal{D}^s_{h_s}$ may have more than one point. Also, our methods show that, for $h>h_s$, $\partial\mathcal{D}^u_{h}\cap\partial\mathcal{D}^s_{h}$ has at least two points and  $\mathcal{D}^u_{h}\cap\mathcal{D}^s_{h}$ has at least one connected component with positive Lebesgue measure
(see Figure \ref{planos2}).
% Since our objective is determine when an energy level has heteroclinic connections (and not their full characterization), there is no loss in considering that 
% $\mathcal{D}^u_{h}\cap\mathcal{D}^s_{h}$ has a unique connected component, for $h\geq h_s$, in the representation given in Figure \ref{planos2}.}
%\textcolor{blue}{Esse paragrafo fala somente do desenho e nao ajuda a entender as coisas. Acho que devemos tirar daqui.}

%\begin{remark}
%	Actually, the perturbed invariant manifolds intersect $\Sigma_h$ at closed curves which are near the circles described above.
%\end{remark}

%Geometrically, we have the following situation.

\begin{figure}[!]	\label{planos1}
	\centering
	\begin{overpic}[width=10cm]{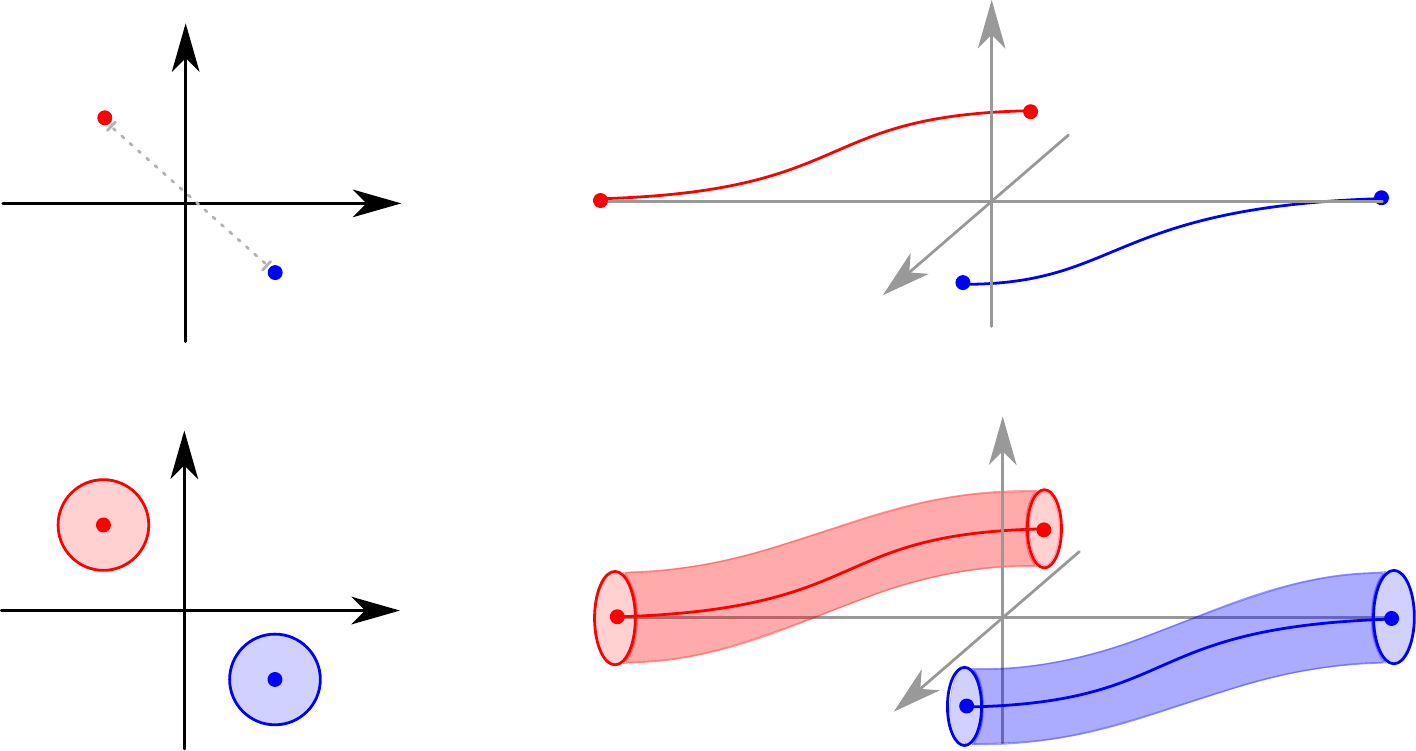}
	%									\begin{overpic}[grid,tics=5,width=10cm]{Figures/planos1.pdf}								
		\put(3,45){{\footnotesize $P_0^u$}}		
		\put(20.5,32.5){{\footnotesize $P_0^s$}}	
		\put(13.5,39.5){{\footnotesize $d_0(\e)$}}	
		\put(29,38){{\footnotesize $b$}}		
		\put(15,49){{\footnotesize $B$}}					
		\put(-6,39.5){{\footnotesize $(a)$}}
		\put(41,35.5){{\footnotesize $p_0^-$}}				
		\put(95,35.5){{\footnotesize $p_0^+$}}			
		\put(60,30){{\footnotesize $b$}}	
		\put(66,50){{\footnotesize $B$}}
		\put(74,45){{\footnotesize $P_0^u$}}				
		\put(65,29.5){{\footnotesize $P_0^s$}}	
		\put(98,38){{\footnotesize $X$}}		

		\put(3,20.5){{\footnotesize $\mathcal{D}_h^u$}}		
\put(21.5,0){{\footnotesize $\mathcal{D}_h^s$}}	
\put(29,9){{\footnotesize $b$}}		
\put(15,20){{\footnotesize $B$}}					
\put(-6,10){{\footnotesize $(b)$}}
%\put(38.5,4){{\footnotesize $D_h^-$}}				
%\put(95,14){{\footnotesize $D_h^+$}}			
\put(60,0){{\footnotesize $b$}}	
\put(67,20){{\footnotesize $B$}}
		\put(76,18){{\footnotesize $\mathcal{D}_h^u$}}		
\put(64,-2){{\footnotesize $\mathcal{D}_h^s$}}	
\put(101,8){{\footnotesize $X$}}																				
%		\put(75,42){{\footnotesize $B$}}				
%		\put(97,17){{\footnotesize $X$}}	
%		\put(90,38){{\footnotesize $\s_h$}}				
%		\put(94,27){{\footnotesize $\s_0$}}	
%		\put(4,0){{\footnotesize $b$}}	
%		\put(40,9){{\footnotesize $B$}}			
%		\put(21,42){{\footnotesize $Z$}}	
%		\put(35.5,34){{\footnotesize $\mathcal{S}_h$}}	
%		\put(35.5,28){{\footnotesize $\mathcal{S}_0$}}		
%		\put(20,-2){{\footnotesize $(a)$}}	
%		\put(80,-2){{\footnotesize $(b)$}}																				
	\end{overpic}
	\bigskip
	\caption{ Splitting of the invariant manifolds contained in the energy level $h$ (in the section $\Sigma_h$) on the left and their projections in the $bXB$-space on the right, for $(a)$ $h=0$ and $(b)$ $h>0$ small. }	
	\label{splitting_fig}
\end{figure} 

%\textcolor{red}{desenho dos discos} 

 \subsubsection{The Critical Energy Level $h_c$}

From our approach and the definitions of Section \ref{model}, the critical energy level occurs for the smallest $h$ such that $W^u_{\e}(p_{h}^-)\subset W^s_{\e}(\Lambda^+_{\kappa_1,\kappa_2})$, for some $\kappa_1,\kappa_2$ satisfying $\kappa_1+\kappa_2=h$. Thus, $h_c$ occurs when $W^u_{\e}(p_{h_c}^-)\subset W^s_{\e}(\Lambda_{h_c}^+)$.

Geometrically speaking, $h_c$ is characterized as the energy level such that $P_{h_c}^u$ belongs to the boundary of the (topological) disk $\mathcal{D}^s_{h_c}$ ``centered'' in $P_{h_c}^s$ (see Figure \ref{planos2}). In the next theorem, we compute $h_c=h_c(\e)$. 

\begin{mtheorem}[Existence of quasi-kinks]\label{perpun_thm}
	
	Consider system \eqref{rescaled_aut_system}. There exist $\e_0>0$, $h_0>0$ and a function 
$$h_c(\e)=2\pi^2\e e^{-2\Omega\sqrt{2/\e}}(1+\er(\e)), \ \text{ with }0<\e<\e_0 \ \text{ and } \ \Omega=\sqrt{1-\dfrac{\e^2}{4}},$$
such that, for every $0<\e<\e_0$ and $0<h<h_0$, the invariant manifolds $W^{u}_{\e}(p_h^{-})$, $W^{s}_{\e}(\Lambda_h^{+})$ intersect $\s_h$  (given in \eqref{sh}). The first intersection of $W^{u}_{\e}(p_h^{-})$, $W^{s}_{\e}(\Lambda_h^{+})$ with $\s_h$ is given by a point and a closed curve, denoted by $P_h^u$ and $\partial\mathcal D_h^{s}$, respectively. Then, $P_h^u\in \partial\mathcal D_h^{s}$ if, and only if  $h=h_c(\e)$.

 %$W^u_{\e}(p_h^-)\subset W^s_{\e}(\Lambda_h^+)$ if, and only if $h=h_c(\e)$.
	
%	Consider system \eqref{rescaled_aut_system}. There exist $\e_0>0$ and a function 
%	$$h_c(\e)=2\pi^2\e e^{-2\Omega\sqrt{2/\e}}(1+\er(\e)), \ \text{ with }0<\e<\e_0 \ \text{ and } \ \Omega=\sqrt{1-\dfrac{\e^2}{4}},$$
%	 such that $W^u_{\e}(p_h^-)\subset W^s_{\e}(\Lambda_h^+)$ if, and only if $h=h_c(\e)$.
\end{mtheorem}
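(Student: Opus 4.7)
The plan is to reduce the geometric condition $P_h^u \in \partial\mathcal{D}_h^s$ to a scalar equation in $h$ that can then be solved asymptotically. Theorem~A already provides the positions of $P_h^u$ and $P_h^s$ inside $\s_h$, and the analysis underlying Theorem~\ref{perper_thm} shows that $\partial\mathcal{D}_h^s$ is an exponentially small perturbation of the unperturbed circle $W(0,h)\cap \s_h$, which is centered at $P_h^s$ and has radius $\rho(h)=\sqrt{2h\sqrt{\e}/\Omega}$. Consequently, $P_h^u\in\partial\mathcal{D}_h^s$ is equivalent to the matching equation
\begin{equation}\label{eq:matchingDproposal}
|P_h^u-P_h^s|=\rho(h)+r(h,\e),
\end{equation}
where $r(h,\e)$ measures the deviation of $\partial\mathcal{D}_h^s$ from the exact circle of radius $\rho(h)$ about $P_h^s$.

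First, I would refine the parametrization of $W^s_\e(\La_h^+)\cap \s_h$ built in Section~\ref{parametrizaciones} to represent $\partial\mathcal{D}_h^s$ in the polar form
\[
P_h^s+\rho(h)\bigl(\cos\varphi,\sin\varphi\bigr)+\er\bigl(\e^{7/4}\sqrt{h}\,e^{-\Omega\sqrt{2/\e}}\bigr),\qquad \varphi\in[0,2\pi),
\]
uniformly in $\varphi$. This should follow from the same Banach-space contraction argument that yields Theorem~A, now tracking carefully the $h$-dependence of the graph representing the stable manifold. The crucial requirement is that, in the regime $h\sim h_c$ where $\rho(h)$ is itself exponentially small, the remainder $r(h,\e)$ is of genuinely higher order than $\rho(h)$.

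Next, substituting Theorem~A together with this radial estimate into \eqref{eq:matchingDproposal} gives the scalar equation
\[
d_0(\e)+\er(\e^{7/4}\sqrt{h})=\sqrt{2h\sqrt{\e}/\Omega}+\er\bigl(\e^{7/4}\sqrt{h}\,e^{-\Omega\sqrt{2/\e}}\bigr).
\]
Squaring and isolating $h$ yields
\[
h=\frac{\Omega\,d_0(\e)^2}{2\sqrt{\e}}(1+\er(\e))=2\pi^2\e\,e^{-2\Omega\sqrt{2/\e}}(1+\er(\e)),
\]
which matches the asserted $h_c(\e)$. The equivalence ``$P_h^u\in \partial\mathcal D_h^s$ iff $h=h_c(\e)$'' then follows from the implicit function theorem applied to $\Phi(h,\e)=\bigl(\rho(h)+r(h,\e)\bigr)^2-|P_h^u-P_h^s|^2$: the partial $\partial_h\rho(h)^2=2\sqrt{\e}/\Omega$ is strictly positive and of order $\sqrt{\e}$, whereas the variation of the remaining terms in $h$ is of order $\er(\e^{5/4}\,d_0(\e)\,e^{\Omega\sqrt{2/\e}})$, which is negligible by a factor of $\er(\e^{3/2})$ on the relevant window. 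Hence $\Phi$ is strictly monotone in $h$ there and has a unique zero.

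The hard part will be establishing the uniform radial estimate for $\partial\mathcal{D}_h^s$ with an \emph{exponentially small} (not merely ``small relative to $\rho(h)$'') remainder, in a regime where both $\rho(h)$ and $d_0(\e)$ are already exponentially small in $\e$. A plain $\er(\rho(h))$ error would be insufficient: what is needed is $o(\rho(h))$ uniformly in $\varphi$, which forces a careful refinement of the invariant-manifold parametrization of Section~\ref{parametrizaciones}, tracking the $h$-dependence of the whole stable graph rather than only extracting its leading splitting measured at a single point as in Theorem~A.
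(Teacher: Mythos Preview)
Your geometric reduction is the right idea and matches the paper in spirit, but there is a concrete error in the claimed remainder estimate, and a structural step the paper handles differently.

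The remainder $r(h,\e)$ measuring how far $\partial\mathcal{D}_h^s$ deviates from a circle of radius $\rho(h)$ about $P_h^s$ is \emph{not} exponentially small in $\e$. Theorem~\ref{approx} (with $\kappa_1=0$, $\kappa_2=h$) gives
\[
\pi_\Gamma N_{0,h}^s(0,\tau)-\pi_\Gamma N_{0,0}^s(0)=\Gamma_h(\tau)+\er\!\left(\dfrac{\dg\sqrt{h}}{\omega^{3/2}}\right)=\Gamma_h(\tau)+\er(\e^{3/2}\sqrt{h}),
\]
so the deviation from the exact circle is only polynomially small, an $\er(\e^{5/4})$ fraction of $\rho(h)\sim\e^{1/4}\sqrt{h}$. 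This is precisely the $o(\rho(h))$ you correctly identify as needed and is entirely sufficient for the argument; but the bound $\er(\e^{7/4}\sqrt{h}\,e^{-\Omega\sqrt{2/\e}})$ you propose is false and unprovable. The corrections to the invariant-manifold graphs coming from the coupling term $R$ are polynomial in $\e$; only the \emph{difference} $P_h^u-P_h^s$ is exponentially small, and that arises from a separate cancellation mechanism (Section~\ref{splitting_sec}).

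The paper also avoids the collapse to a scalar equation. It writes the condition $P_h^u\in\partial\mathcal{D}_h^s$ as the two-component system \eqref{sistemaponto} in the unknowns $(\tau^s,h)$, rescales via $h=\dfrac{2\pi^2\omega\dg^2e^{-2\sqrt{2}\omega}}{\Omega}\mu^2$ so that every term becomes order one, and applies the Implicit Function Theorem at the regular point $(\tau^s,\mu,\e)=(-\pi/2,1,0)$. The rescaling is essential: without it the Jacobian lives at exponentially small scales and the IFT neighbourhood is ill-controlled, which is exactly the difficulty you would face applying IFT to $\Phi(h,\e)$ directly. Your scalar reduction can be salvaged with the correct polynomial bound on $r$ and the same rescaling, but keeping the angle $\tau^s$ as an explicit unknown (rather than hiding it inside $r$) is cleaner and removes the need to argue separately that the angular condition picks out $\varphi$ uniquely.
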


Theorem \ref{perpun_thm} also holds if we change $p_h^-$ and $\Lambda_h^+$ by $p_h^+$ and $\Lambda_h^-$, respectively.%, for a possibly different energy level.

Now, given $h\geq h_c$, we compute the radius $\kappa_2=\kappa_2(h)$ of the periodic orbit $\Lambda_{\kappa_1,\kappa_2}^+$ such that $p_h^-$ connects to $\Lambda_{\kappa_1,\kappa_2}^+$ through a heteroclinic orbit.

\begin{mtheorem}\label{perpunh_thm}
There exist $\e_0>0$, $h_0>0$ sufficiently small such that, for each $0<\e<\e_0$ and $h_c(\e)\leq h< h_c(\e)+2\pi^2\e e^{-2\Omega\sqrt{2/\e}}h_0$, where $h_c(\e)$ is given by Theorem \ref{perpun_thm} and $\Omega=\sqrt{1-\e^2/4}$, there exists a function $$\kappa:\left(h_c(\e), h_c(\e)+2\pi^2\e e^{-2\Omega\sqrt{2/\e}}h_0\right)\rightarrow \R,$$  such that:
\begin{enumerate}
	\item $0<\kappa(h)<h$ and $\displaystyle\lim_{h\rightarrow h_c(\e)^+}\kappa(h)=0$;
	\item For system \eqref{rescaled_aut_system}, $W^u_{\e}(p_h^-)\subset W^s_{\e}(\Lambda_{\kappa(h),h-\kappa(h)}^+)$;
	\item There exists an orbit of \eqref{rescaled_aut_system} with input velocity $v_i=4\sqrt{h}$ and output velocity $v_f=4\sqrt{\kappa(h)}$. Furthermore, define $v_c=4\sqrt{h_c}$, then 
	\begin{equation}
	v_f=\sqrt{2v_c c_\e} \sqrt{v_i-v_c}+\er((v_i-v_c)^{3/2}),
	\end{equation}
	where $c_{\e}=1+\er(\e)$.
\end{enumerate}

%	Consider system \eqref{rescaled_aut_system} and let $h_c(\e)$ be given by Theorem \ref{perpun_thm}. There exist $\e_0>0$, $h_0>0$, such that, for $0<\e<\e_0$ and $0\leq h< h_c(\e)+2\pi^2\e e^{-2\Omega\sqrt{2/\e}}h_0$, with $\Omega=\sqrt{1-\e^2/4}$, there exists $0\leq\kappa(h)\leq h$ satisfying $W^u_{\e}(p_h^-)\subset W^s_{\e}(\Lambda_h^+(\kappa(h),h-\kappa(h)))$, if and only if $h\geq h_c(\e)$.
\end{mtheorem}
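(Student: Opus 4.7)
The plan is to translate the inclusion $W^u_\e(p_h^-)\subset W^s_\e(\Lambda^+_{\kappa_1,\kappa_2})$ into an intersection problem on the transverse section $\Sigma_h$ and then read off $\kappa(h)=\kappa_1$ from the radial position of $P_h^u$ inside a suitable foliation. From the parametrizations behind Theorems \ref{splitting_thmA}--\ref{perpun_thm}, the topological disk $\mathcal{D}_h^s=W^s_\e(\Lambda_h^+)\cap\Sigma_h$ is foliated by a smooth family of nested simple closed curves
\[
\Gamma_{h,\kappa_2}:=W^s_\e(\Lambda^+_{h-\kappa_2,\kappa_2})\cap\Sigma_h,\qquad \kappa_2\in[0,h],
\]
with outer boundary $\Gamma_{h,h}=\partial\mathcal{D}_h^s$ and degenerate inner fiber $\Gamma_{h,0}=\{P_h^s\}$. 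In the unperturbed case each $\Gamma_{h,\kappa_2}$ is the circle of radius $\sqrt{2\kappa_2\sqrt{\e}/\Omega}$ about $P_h^s$; in the perturbed case the same exponentially-small analysis used for Theorems \ref{perper_thm} and \ref{perpun_thm} shows that $\Gamma_{h,\kappa_2}$ is a multiplicative $\er(\e)$-perturbation of that circle, uniformly in $(\kappa_2,h)\in[0,h_0]^2$.

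By Theorem \ref{perpun_thm} the point $P_h^u$ lies on $\partial\mathcal{D}_h^s$ precisely at $h=h_c(\e)$, and strictly inside $\mathcal{D}_h^s$ for $h>h_c(\e)$. Since the foliation $\{\Gamma_{h,\kappa_2}\}_{\kappa_2}$ is regular and monotone in $\kappa_2$, there is a unique $\kappa_2(h)\in(0,h)$ with $P_h^u\in\Gamma_{h,\kappa_2(h)}$; I set $\kappa(h):=h-\kappa_2(h)$, which immediately yields item (2). For item (1), the continuity of the foliation together with that of $h\mapsto P_h^u$ forces $\kappa_2(h)\to h_c$ as $h\to h_c^+$, hence $\kappa(h)\to 0$.

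For item (3), the defining relation $P_h^u\in\Gamma_{h,\kappa_2(h)}$ reads, up to the error just described,
\[
|P_h^u-P_h^s|^2=\tfrac{2\sqrt{\e}}{\Omega}\,\kappa_2(h)\,(1+\er(\e)).
\]
Evaluating at $h=h_c$ (where $\kappa_2(h_c)=h_c$) gives $|P_{h_c}^u-P_{h_c}^s|^2=\tfrac{2h_c\sqrt{\e}}{\Omega}(1+\er(\e))$. Combined with the smooth $h$-dependence of $|P_h^u-P_h^s|$ extracted from Theorem \ref{splitting_thmA}, whose $\er(\e^{7/4}\sqrt{h})$ remainder yields $\partial_h|P_h^u-P_h^s|^2=\er(\e^{3/2})$ on the admissible window, subtracting produces
\[
\kappa(h)=(h-h_c)\bigl(1+\er(\e)\bigr).
\]
Substituting $h=v_i^2/16$, $h_c=v_c^2/16$, $\kappa(h)=v_f^2/16$ and expanding $h-h_c=\tfrac{v_c(v_i-v_c)}{8}(1+\er(v_i-v_c))$, square-rooting delivers the asymptotic $v_f=\sqrt{2v_c c_\e}\,\sqrt{v_i-v_c}+\er((v_i-v_c)^{3/2})$ with $c_\e=1+\er(\e)$.

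The main obstacle is the quantitative control of the foliation $\{\Gamma_{h,\kappa_2}\}$: establishing smooth, monotone $\kappa_2$-dependence of the closed curves as $\er(\e)$-perturbations of concentric circles requires re-running the parametrizations used to prove Theorems \ref{splitting_thmA} and \ref{perper_thm} with $\kappa_2$ as an additional parameter, and in particular verifying that the dominant Melnikov-type splitting does not pick up $\kappa_2$-dependent corrections larger than $\er(\e)$ relative to the leading term. A secondary delicate point is the smoothness of $h\mapsto|P_h^u-P_h^s|^2$ near $h=h_c$: the pointwise bound from Theorem \ref{splitting_thmA} has to be upgraded to a Lipschitz-in-$h$ estimate with constant $\er(\e^{3/2})$, so that the induced correction to $\kappa(h)$ remains subordinate to $(h-h_c)$ and does not contaminate the leading constant $c_\e=1+\er(\e)$.
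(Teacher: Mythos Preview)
Your proposal captures the same geometric picture as the paper but organizes the argument differently. Rather than first extracting a foliation of $\mathcal D_h^s$ by the curves $\Gamma_{h,\kappa_2}$ and then reading off $\kappa_2(h)$ from the radial position of $P_h^u$, the paper writes the inclusion $W^u_\e(p_h^-)\subset W^s_\e(\Lambda^+_{\kappa_1,\kappa_2})$ directly as a two--equation system in $(\tau^s,\kappa_2)$ (system~\eqref{sistemapontof}, obtained from Theorems~\ref{parameterization1Dh}, \ref{parameterization2Dk1k2} and~\ref{approx}), rescales by
\[
h=\tfrac{2\pi^2\omega\dg^2e^{-2\sqrt2\omega}}{\Omega}(\mu_*(\e)+\mu)^2,\quad
\kappa_2=\tfrac{2\pi^2\omega\dg^2e^{-2\sqrt2\omega}}{\Omega}(\mu_*(\e)+\mu-\xi)^2,\quad
\tau^s=\tau^s_*(\e)+\tau,
\]
where $(\mu_*(\e),\tau^s_*(\e))$ is the critical solution from Theorem~\ref{perpun_thm}, and applies the Implicit Function Theorem at $(\tau,\mu,\xi,\e)=(0,0,0,0)$. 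This yields $\overline\xi(\mu,\e)=c_\e\mu+\er(\mu^2)$ with $c_\e=1+\er(\e)$; the asymptotic for $v_f$ then follows algebraically from $\kappa_1=h-\kappa_2$.

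The main payoff of the paper's route is that the Implicit Function Theorem supplies for free the smoothness in $h$ (equivalently in $\mu$) that you correctly flag as a delicate point: one never has to upgrade the pointwise bound of Theorem~\ref{splitting_thmA} to a Lipschitz--in--$h$ estimate. A second remark on your setup: by Theorem~\ref{approx} the curves $\Gamma_{h,\kappa_2}$ are approximate circles centered not at $P_h^s$ but at $P_0^s$ plus a correction of size $\er(\dg\sqrt{\kappa_1}/\omega^2+\dg\sqrt{\kappa_2}/\omega^{3/2})$ that depends on $\kappa_1=h-\kappa_2$; hence the ``radial'' equation for $\kappa_2$ is implicitly coupled to $\kappa_1$, and untangling this coupling is exactly the IFT step the paper performs. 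Your route can be made to work with these adjustments, but it ends up reproducing the same IFT computation in disguise.
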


%The following proposition is obtained from the proof of Theorem $D$.
%
%\begin{prop}\label{corD}
%	Let  $\e_0$ and $h_0$ be given by Theorem $D$. Therefore, for each $0<\e<\e_0$, $h_c(\e)\leq h< h_c(\e)+2\pi^2\e e^{-2\Omega\sqrt{2/\e}}h_0$, where $h_c(\e)$ is given by Theorem $C$ and $\Omega=\sqrt{1-\e^2/4}$, there is an orbit with input velocity $v_i=4\sqrt{h}$ and output velocity $v_f=4\sqrt{\kappa(h)}$, where $0<\kappa(h)<h$ is given by Theorem $D$. Furthermore, if $v_c=4\sqrt{h_c}$, then 
%	\begin{equation}
%	v_f=\sqrt{2v_c c_\e} \sqrt{v_i-v_c}+\er((v_i-v_c)^{3/2}),
%	\end{equation}
%	where $c_{\e}=1+\er(\e)$.
%\end{prop}

The last item of Theorem \ref{perpunh_thm} proves the conjecture $v_f\approx \er\left((v_i-v_c)^{1/2}\right)$ raised in \cite{GH04}.

\begin{figure}[H]	
	\centering
	\begin{overpic}[width=10cm]{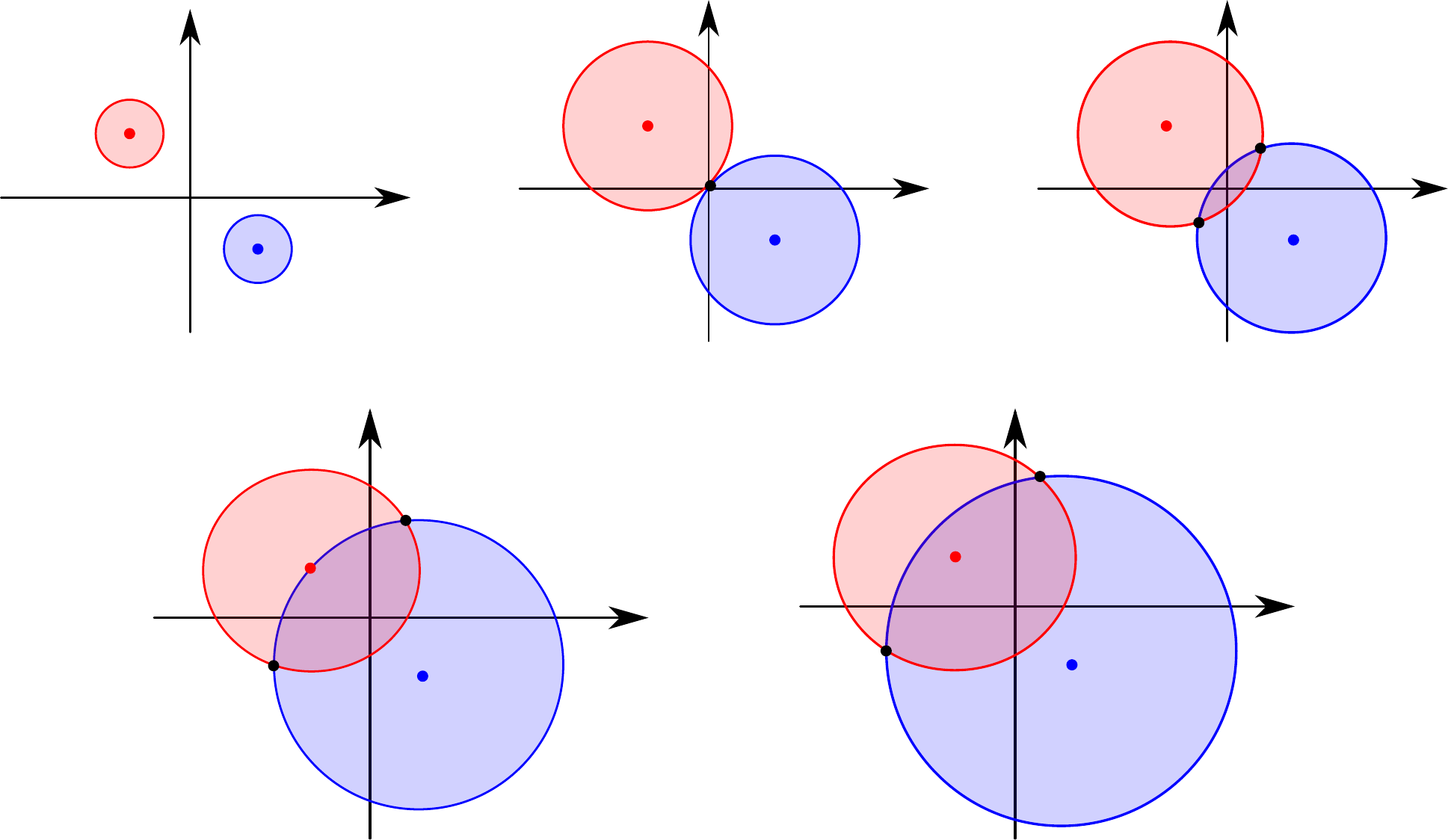}
%										\begin{overpic}[grid,tics=5,width=10cm]{Figures/planos2.pdf}								
				\put(6,32){{\footnotesize $0\leq h<h_s(\e)$}}		
				\put(45,32){{\footnotesize $h=h_s(\e)$}}
				\put(75,32){{\footnotesize $h_s(\e)<h<h_c(\e)$}}	
				\put(20,-2.5){{\footnotesize $h=h_c(\e)$}}
\put(65,-2.5){{\footnotesize $h>h_c(\e)$}}																
				\put(4,52.5){{\footnotesize $\mathcal{D}_h^u$}}		
					\put(20,37){{\footnotesize $\mathcal{D}_h^s$}}			
		%		\put(97,17){{\footnotesize $X$}}	
		%		\put(90,38){{\footnotesize $\s_h$}}				
		%		\put(94,27){{\footnotesize $\s_0$}}	
		%		\put(4,0){{\footnotesize $b$}}	
		%		\put(40,9){{\footnotesize $B$}}			
		%		\put(21,42){{\footnotesize $Z$}}	
		%		\put(35.5,34){{\footnotesize $\mathcal{S}_h$}}	
		%		\put(35.5,28){{\footnotesize $\mathcal{S}_0$}}		
		%		\put(20,-2){{\footnotesize $(a)$}}	
		%		\put(80,-2){{\footnotesize $(b)$}}																				
	\end{overpic}
	\bigskip
	\caption{ Relative position of the disks $\mathcal{D}_h^u$ and $\mathcal{D}_h^s$ in the section $\s_h$ in function of the energy level $h$. }	
	\label{planos2}
\end{figure}

\section{Proofs of Theorems A, B, C and D}
Applying the change of coordinates $\Gamma=B+ib$ and $\Theta=B-ib$ to \eqref{rescaled_aut_system} we obtain
\begin{equation}\label{complexcoord_system}
\left\{
\begin{array}{l}
X'=\vspace{0.2cm} \dfrac{Z}{8},\\
Z'=\vspace{0.2cm} - U'(X) -\dfrac{\delta}{\sqrt{2\Omega}} F'(X) \dfrac{(\Gamma-\Theta)}{2i},\\
\Gamma'=\vspace{0.2cm}\omega i \Gamma -\dfrac{\delta}{\sqrt{2\Omega}} F(X),\\
\Theta'=\vspace{0.2cm}-\omega i\Theta -\dfrac{\delta}{\sqrt{2\Omega}} F(X),
\end{array}
\right. \textrm{ with }\left\{\begin{array}{l}\dg=\e^{3/4}, \vspace{0.3cm}\\\omega=\dfrac{\Omega}{\sqrt{\e}},\vspace{0.3cm}\\ \Omega=\sqrt{1-\dfrac{\e^2}{4}}.\end{array}\right. 
\end{equation}
This system is Hamiltonian with respect to
\begin{equation}\label{hamilcc}
\mathcal{H}(X,Z,\Gamma,\Theta)=\dfrac{Z^2}{16}+U(X)+ \dfrac{\dg}{\sqrt{2\Omega}} F(X) \dfrac{\Gamma-\Theta}{2i} + \dfrac{\omega}{2}\Gamma\Theta.
\end{equation}
and the symplectic form $dX\wedge dZ+\dfrac{1}{2i}d\Gamma\wedge d\Theta$.

% \begin{remark}
% 	In \eqref{complexcoord_system}, the presence of the coupling term $F$ is characterized by the parameter $\dg$. In fact, $F=0$ corresponds to $\dg=0$ and $F\neq0$ corresponds to $\dg\neq 0$. 
% \end{remark}

\subsection{Decoupled System ($F=0$)}\label{dec_sec}

We parameterize the invariant manifolds $W(\kappa_1,\kappa_2)$ (see \eqref{invmanifolds}) of the decoupled system \eqref{complexcoord_system} (with $\dg=0$)  in the coordinates $(X,Z,\Gamma,\Theta)$.

\begin{lemma}\label{sep1h}
	The one-dimensional invariant manifold $W(h,0)=W^{u}_0(p_h^{-})=W^{s}_0(p_h^{+})$ is parameterized in the coordinate system $(X,Z,\Gamma,\Theta)$ by 
	\begin{equation}\label{nh0}
	N_{h,0}(v)= (X_h(v),Z_h(v), 0,0),\ v\in\R
	\end{equation} 
such that:
	\begin{enumerate}
		\item If $h=0$, then 
		
		\begin{equation}\label{par1}
		\left\{\begin{array}{l}
		\vspace{0.2cm}X_0(v)=\arcsinh\left(\dfrac{\sqrt{2}}{2} v\right),\\
		\vspace{0.2cm}Z_0(v)=8(X_0)'(v)=\dfrac{8}{\sqrt{v^2+2}}.
		\end{array}\right.
		\end{equation}
		
		\item	If $h>0$, then
		\begin{equation}\label{par2}
		\left\{\begin{array}{l}
		\vspace{0.2cm}X_{h}(v)=\arcsinh\left(\sqrt{\frac{2+h}{h}} \sinh\left( v\sqrt{h}/2 \right)\right),\\
		\vspace{0.2cm}Z_{h}(v)=8(X_h)'(v)=\dfrac{4\cosh(v\sqrt{h}/2 )}{\sqrt{\frac{1}{2+h}+\frac{\sinh^2(v \sqrt{h}/2 )}{h}}}.
		\end{array}\right.
		\end{equation}
	\end{enumerate}		
\end{lemma}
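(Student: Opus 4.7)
The plan is to exploit that when $F=0$ the two subsystems in \eqref{complexcoord_system} fully decouple: the $(\Gamma,\Theta)$-equations become $\Gamma' = \omega i \Gamma$, $\Theta' = -\omega i \Theta$, which are independent of $(X,Z)$. Since $W(h,0)$ corresponds to $b=B=0$ (equivalently, $\Gamma=\Theta=0$), the last two components of the parameterization are identically zero, so the problem reduces to parameterizing the upper heteroclinic orbit of the pendulum-like system
\begin{equation*}
X' = \frac{Z}{8}, \qquad Z' = -U'(X), \qquad U(X) = -2\sech^2(X),
\end{equation*}
on the level set $H_{\mathrm{p}} = Z^2/16 + U(X) = h$.

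First I would use the energy relation to express $Z$ in terms of $X$ on the upper branch (the one connecting $p_h^-$ to $p_h^+$):
\begin{equation*}
Z(v) = 4\sqrt{h - U(X(v))} = 4\sqrt{h + 2\sech^2(X(v))}.
\end{equation*}
Substituting this into $X' = Z/8$ reduces the problem to the single autonomous ODE $X'(v) = \tfrac{1}{2}\sqrt{h + 2\sech^2(X)}$. Separating variables and using the substitution $u = \sinh(X)$ (so that $du = \cosh(X)\,dX$ and $h + 2\sech^2(X) = (h\cosh^2(X) + 2)/\cosh^2(X) = (hu^2 + (2+h))/(1+u^2)$), the defining integral becomes
\begin{equation*}
v = \int \frac{2\,\cosh(X)\,dX}{\sqrt{h\cosh^2(X)+2}} = \int \frac{2\,du}{\sqrt{(2+h) + h u^2}},
\end{equation*}
choosing the integration constant so that $X(0) = 0$ (placing the kink at the origin of time).

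For $h=0$ the integrand is $\sqrt{2}\,du$, yielding $v = \sqrt{2}\sinh(X)$ which inverts to $X_0(v) = \arcsinh(\sqrt{2}v/2)$, and then $Z_0(v) = 8X_0'(v) = 8/\sqrt{v^2+2}$ by direct differentiation. For $h>0$ the integral is $v = (2/\sqrt{h})\,\arcsinh\bigl(u\sqrt{h/(2+h)}\bigr)$, which inverts to $\sinh(X_h(v)) = \sqrt{(2+h)/h}\,\sinh(v\sqrt{h}/2)$, matching \eqref{par2}; differentiating and simplifying the factor $\sqrt{(2+h)/h}\cdot\sqrt{h}/2 = \sqrt{2+h}/2$ in the numerator against $\sqrt{2+h}$ extracted from the denominator gives the claimed expression for $Z_h(v)$.

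This is essentially a routine quadrature; there is no genuine obstacle. The only care needed is to verify the asymptotic behavior: as $v \to \pm\infty$, one checks that $X_h(v) \to \pm\infty$ and $Z_h(v) \to 4\sqrt{h}$, so the parameterization indeed covers $W(h,0)$ from $p_h^-$ to $p_h^+$, and that for $h=0$ we recover $Z_0(v) \to 0$ at both infinities, consistent with the separatrix structure described in Section \ref{geometric}.
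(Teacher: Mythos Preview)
Your proposal is correct. The paper states this lemma without proof, treating it as a routine verification; your separation-of-variables computation with the substitution $u=\sinh X$ is exactly the standard way to fill in the details, and all the formulas check out.
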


A simple application of the L'Hospital rule shows us that $X_{h}(v)\rightarrow X_0(v)$, point-wisely, as $h\rightarrow 0$. Nevertheless, the decay of $X_h$ at $\infty$ is significantly different from $X_0$ (for $h=0$, the decay is polynomial and for $h>0$ is exponential). Notice that $N_{0,0}(v)$ has poles at the points $\pm\sqrt{2}i$, whereas the poles of $N_{h,0}(v)$ are all contained in the imaginary axis and the closest to the real line are $\pm\sqrt{2}i+\er(h)$.

%Notice that $N_{h,0}(v)$ has poles only at the points $\pm\sqrt{2}i +2k\pi i,$ with $k\in\Z$, whereas the poles of $N_{h,0}(v)$ closest to the real line are $\pm\sqrt{2}i+\er(h)$.

\begin{lemma}\label{sep2h}
	The two-dimensional invariant manifold $W(\kappa_1,\kappa_2)=W^{u}_0(\Lambda^{-}_{\kappa_1,\kappa_2})=W^{s}_0(\Lambda^{+}_{\kappa_1,\kappa_2})$, with $\kappa_1\geq0$, $\kappa_2>0$ and $\kappa_1+\kappa_2=h$ is parameterized in the coordinate system $(X,Z,\Gamma,\Theta)$ by 
	\begin{equation}\label{n2d}
	N_{\kappa_1,\kappa_2}(v,\tau)= (X_{\kappa_1}(v),Z_{\kappa_1}(v), \Gamma_{\kappa_2}(\tau),\Theta_{\kappa_2}(\tau)),
	\end{equation} 
	with $v\in\R$ and $\tau\in\mathbb{T}$, such that
	\begin{equation}\label{periodic}
	\Gamma_{\kappa_2}(\tau)=\sqrt{\dfrac{2\kappa_2}{\omega}}e^{i\tau}, \textrm{ and }
	\Theta_{\kappa_2}(\tau)=\sqrt{\dfrac{2\kappa_2}{\omega}}e^{-i\tau},
	\end{equation}
	and $X_{\kappa_1}$, $Z_{\kappa_1}$ are given in \eqref{par1} ($\kappa_1=0$) and \eqref{par2} ($\kappa_1>0$).
	%	\begin{enumerate}
	%		\item If $\kappa_1=0$, then 
	%		
	%		\begin{equation}
	%		\left\{\begin{array}{l}
	%		\vspace{0.2cm}X_0(v)=\arcsinh\left(\dfrac{\sqrt{2}}{2} v\right),\\
	%		\vspace{0.2cm}Z_0(v)=8(X_0)'(v)=\dfrac{8}{\sqrt{v^2+2}}.
	%		\end{array}\right.
	%		\end{equation}
	%		
	%		\item	If $\kappa_1\neq 0$, then
	%		\begin{equation}
	%		\left\{\begin{array}{l}
	%		\vspace{0.2cm}X_{\kappa_1}(v)=\arcsinh\left(\sqrt{\frac{2+\kappa_1}{\kappa_1}} \sinh\left( v\sqrt{\kappa_1}/2 \right)\right),\\
	%		\vspace{0.2cm}Z_{\kappa_1}(v)=8(X_{\kappa_1})'(v)=\dfrac{4\cosh(v\sqrt{\kappa_1}/2 )}{\sqrt{\frac{1}{2+\kappa_1}+\frac{\sinh^2(v \sqrt{\kappa_1}/2 )}{\kappa_1}}}.
	%		\end{array}\right.
	%		\end{equation}
	%	\end{enumerate}		
\end{lemma}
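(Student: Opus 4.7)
The plan is to exploit the complete decoupling that occurs in \eqref{complexcoord_system} when $F=0$. In that case the Hamiltonian \eqref{hamilcc} reduces to $\mathcal{H}=\mathcal{H}_{\mathrm{p}}(X,Z)+\mathcal{H}_{\mathrm{osc}}(\Gamma,\Theta)$ with $\mathcal{H}_{\mathrm{p}}(X,Z)=Z^2/16+U(X)$ and $\mathcal{H}_{\mathrm{osc}}(\Gamma,\Theta)=(\omega/2)\Gamma\Theta$, and the flow splits as the pendulum flow in $(X,Z)$ together with the uncoupled linear oscillator $\Gamma'=i\omega\Gamma$, $\Theta'=-i\omega\Theta$. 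Thus every invariant set of the full system is a product of an invariant set of the pendulum and an invariant set of the oscillator, which matches the product description $W(\kappa_1,\kappa_2)=\varUpsilon_{\kappa_1}\times P_{\kappa_2}$ given in \eqref{invmanifolds}.

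First I would handle the $(X,Z)$-factor. Since $\varUpsilon_{\kappa_1}$ is exactly the heteroclinic orbit parameterized in Lemma \ref{sep1h}, the map $v\mapsto (X_{\kappa_1}(v),Z_{\kappa_1}(v))$ furnished by \eqref{par1}--\eqref{par2} already sweeps $\varUpsilon_{\kappa_1}$ bijectively as $v$ runs over $\R$, with the correct asymptotics towards $q_{\kappa_1}^{\pm}$.

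Next I would parameterize the oscillator factor $P_{\kappa_2}$ in the complex coordinates. From $\Gamma=B+ib$ and $\Theta=B-ib$ one has $\Theta=\overline{\Gamma}$ for real $(b,B)$ and $\Gamma\Theta=b^2+B^2$. Since $P_{\kappa_2}=\{b^2+B^2=2\kappa_2\sqrt{\e}/\Omega\}$ and $\omega=\Omega/\sqrt{\e}$, the circle $P_{\kappa_2}$ is precisely $\{\Gamma\Theta=2\kappa_2/\omega,\ \Theta=\overline{\Gamma}\}$, which is traced bijectively as $\tau\in\mathbb{T}$ by $\Gamma_{\kappa_2}(\tau)=\sqrt{2\kappa_2/\omega}\,e^{i\tau}$, $\Theta_{\kappa_2}(\tau)=\sqrt{2\kappa_2/\omega}\,e^{-i\tau}$. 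This is exactly \eqref{periodic}.

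Combining both factors gives a bijective parameterization $N_{\kappa_1,\kappa_2}:\R\times\mathbb{T}\to W(\kappa_1,\kappa_2)$. Invariance under the decoupled flow is automatic: moving $v$ advances the $(X,Z)$-component along $\varUpsilon_{\kappa_1}$ while leaving the $(\Gamma,\Theta)$-component on $P_{\kappa_2}$, and the flow on the oscillator corresponds to the rotation $\tau\mapsto\tau+\omega t$, which preserves $P_{\kappa_2}$. There is no substantial obstacle here; the only point requiring some care is the reality constraint $\Theta=\overline{\Gamma}$ that picks out the physical circle inside the complexified $(\Gamma,\Theta)$-plane and fixes the modulus $\sqrt{2\kappa_2/\omega}$ appearing in \eqref{periodic}.
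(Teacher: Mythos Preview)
Your proposal is correct and matches the paper's approach: the paper does not give an explicit proof of this lemma, treating it as a direct consequence of the product structure $W(\kappa_1,\kappa_2)=\varUpsilon_{\kappa_1}\times P_{\kappa_2}$ in \eqref{invmanifolds} together with the decoupling of \eqref{complexcoord_system} when $F=0$, exactly as you outline. The invariance under the flow that you describe is precisely what the paper records in Remark~\ref{rem0}.
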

%\begin{proof}
%	In fact, notice that for $\dg=0$, the invariant manifolds $W^{u}_0(\Lambda_h^{-}(\kappa_1,\kappa_2))=W^{s}_0(\Lambda_h^{+}(\kappa_1,\kappa_2))$ can be described as the set
%	$$\left\{(X,Z);\ \dfrac{Z^2}{16}+U(X) =\kappa_1\right\}\times\left\{(\Gamma,\Theta);\ \Theta=\overline{\Gamma} \textrm{ and }|\Gamma|=\sqrt{\dfrac{2\kappa_2}{\omega}}\right\}.$$ 
%	
%	Since it can be written as a product of two uncoupled sets. We use two parameters to parameterize $W^{u,s}_0(\Lambda^{\mp}_{\kappa_1,\kappa_2})$. 
%	
%	It follows from Lemma \ref{sep1h} that the first set $\left\{(X,Z);\ \frac{Z^2}{16}+U(X) =\kappa_1\right\}$ can be parameterized by $(X_h(v),Z_h(v))$, with $v\in\R$, where $X_h,Z_h$ are given by Lemma \ref{sep1h}.
%	
%	It is trivial to see that the other set can be parameterized by $(\Gamma_0^{\kappa_2}(\tau),\Theta_0^{\kappa_2}(\tau))$ given by \eqref{periodic}.
%\end{proof}

\begin{remark}
	Notice that, if $\kappa_2=0$, then $N_{\kappa_1,\kappa_2}$ depends on one variable and if $\kappa_2>0$, then it depends on two variables.
\end{remark}

%\begin{remark}
%	If $\kappa_1=0$, we denote $N_{2D}^{0,h}$ simply by $N_{2D}^h$. 
%\end{remark}

Roughly speaking, in the case $\kappa_1>0$, the parameterization of the invariant manifolds $W(\kappa_1,\kappa_2)$ have the dependence on $v$ expressed in terms of $e^{v\sqrt{\kappa_1}/2}$. Thus, if we consider $v$ in compact domains, these functions can be easily understood by expanding them in a Taylor series in $\kappa_1$. Nevertheless, we must control them for values of $v$ at infinity and $\kappa_1$ near of $0$, which generates an undetermined situation. For this reason, we have a singular dependence of $N_{\kappa_1,\kappa_2}$ at the parameter  $\kappa_1=0$.

Notice that $N_{\kappa_1,\kappa_2}(v,\tau)\rightarrow N_{\kappa_1,0}(v)$ as $\kappa_2\rightarrow 0$ uniformly, and thus the dependence of $N_{\kappa_1,\kappa_2}$ is regular at $\kappa_2=0$.

\begin{remark}\label{rem0}
	%Notice that, instead of the case $W_0^{u,s}(p_h^{\mp})$, the parameterization $N_{\kappa_1,\kappa_2}$ of $W^{s,u}_0(\Lambda^{\pm}_{\kappa_1,\kappa_2})$, with $\kappa_2>0$, does not have dynamical meaning, in the sense that the curves 
	The $N_{\kappa_1,\kappa_2}(v,\tau)$, with $v$ or $\tau$ fixed, do not parameterize the solutions of \eqref{complexcoord_system}. Nevertheless, if $\dg=0$, and $\phi_t^0(\cdot)$ is the flow of \eqref{complexcoord_system}, we have $$\phi_t^0(N_{\kappa_1,\kappa_2}(v,\tau))=N_{\kappa_1,\kappa_2}(v+t,\tau+\omega t),$$
	therefore they are invariant by the flow. 
\end{remark}

\subsection{Proof of Theorem \ref{splitting_thmA} (First statement)}\label{breakdown}

%The geometry of the problem described in Section \ref{goal} is based on the fact that the heteroclinic connection of \eqref{rescaled_aut_system} (with $\dg=0$) between $p_0^-$ and $p_0^+$ contained in the energy level $h=0$ is broken when $\dg\neq 0$. Thus, 

The first step to compute the splitting of the separatrix $W(0,0)$ (parameterized by $N_{0,0}(v)$ in \eqref{nh0}) in the energy level $h=0$ is to consider parameterizations 
%. From now, on consider the system \eqref{complexcoord_system} in the coordinate system $(X,Z,\Gamma,\Theta)$. 
%To measure exponentially small splitting of separatrices we consider parameterizations of the invariant manifolds $W^{\star}_\dg(p_0^{\mp})$ of the form (see \cite{BFGS12})
\begin{equation}\label{def:perturb00}
N_{0,0}^{\star}(v)=(X_0(v),Z_{0}^{\star}(v),\Gamma_{0}^{\star}(v),\Theta_{0}^{\star}(v)), \ \star=u,s
\end{equation}
%\textcolor{blue}{ In this case, the strategy is to obtain a complex parameterization $N^0_{1D}(v)=(X_0(v),Z_0(v),\Gamma_0(v),\Theta_0(v))$ of the separatrix and then look for parameterizations of the form $$N^{u,s}_{0,0}(v)=(X_0(v),Z_0(v)+Z_{1D}^{u,s}(v)), \Gamma_0(v)+\Gamma_{1D}^{u,s}(v)).$$  }
of the invariant manifolds $W^{u}_\e(p_0^{-})$ and $W^{s}_\e(p_0^{+})$ near $N_{0,0}$, in the complex domains
\begin{equation}\label{domainlocal}
\begin{array}{l}
\vspace{0.2cm}D^{u}_{\e}=\{v\in\C;\  |\Ip(v)|<-\tan\beta\Rp(v)+\sqrt{2}-\sqrt{\e}\},\\
D^{s}_{\e}=\{v\in\C;\ -v\in D^{u}_{\e}\},
\end{array}
\end{equation}
where $0<\beta<\pi/4$ is a fixed angle independent of $\e$ (see Figure \ref{domains}). The parameterization $N_{0,0}(v)$ in \eqref{par1} has singularities only at $\pm\sqrt{2}i$, thus $N_{0,0}$ is analytic in $D^{u,s}_{\e}$.

%, and $\kappa>1$ is such that $\sqrt{2}-\kappa\sqrt{\e}>0$ (see Figure \ref{domains}).
\begin{figure}[!]	
	\centering
	\begin{overpic}[width=12cm]{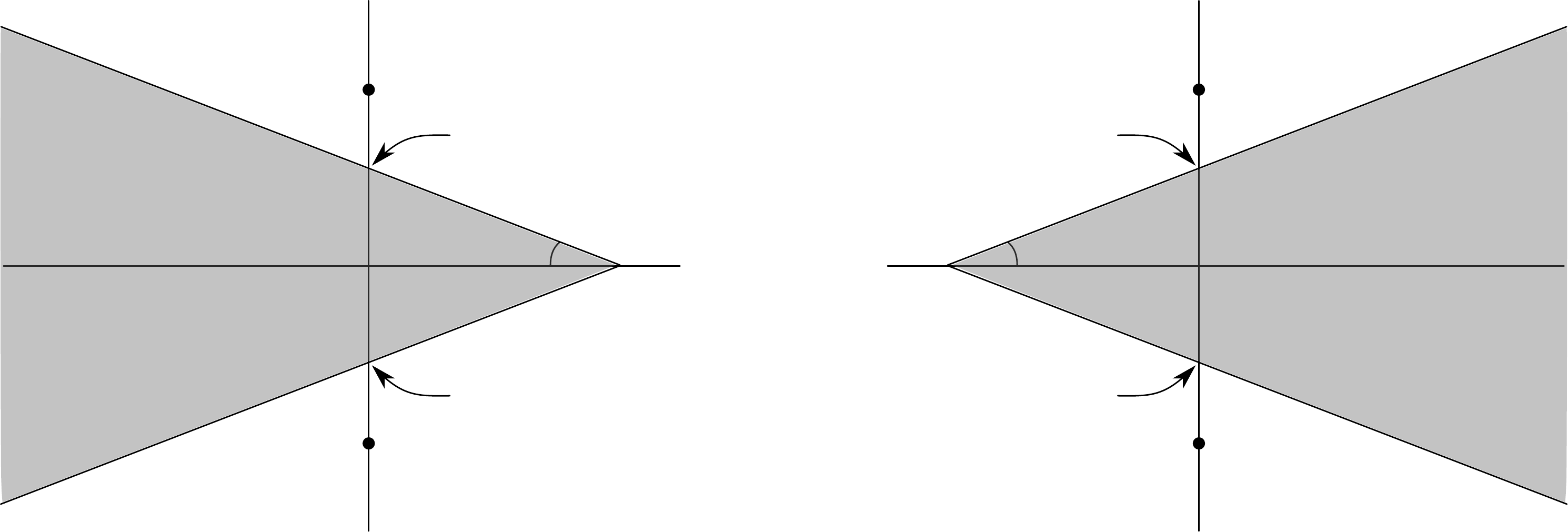}
	%									\begin{overpic}[grid,tics=5,width=12cm]{Figures/domains.pdf}								
				\put(25,27.5){{\footnotesize $i\sqrt{2}$}}	
				\put(71,27.5){{\footnotesize $i\sqrt{2}$}}			
				\put(25,5){{\footnotesize $-i\sqrt{2}$}}	
\put(69,5){{\footnotesize $-i\sqrt{2}$}}	
				\put(30,24){{\footnotesize $i(\sqrt{2}-\kappa\sqrt{\e})$}}		
				\put(30,8){{\footnotesize $-i(\sqrt{2}-\kappa\sqrt{\e})$}}			
				\put(57,24){{\footnotesize $i(\sqrt{2}-\kappa\sqrt{\e})$}}		
\put(55,8){{\footnotesize $-i(\sqrt{2}-\kappa\sqrt{\e})$}}	
\put(33,17){{\footnotesize $\bg$}}	
\put(65.5,17){{\footnotesize $\bg$}}	
\put(3,25){{\footnotesize $D^u_{\e}$}}	
\put(95,25){{\footnotesize $D^s_{\e}$}}														
		%		\put(75,42){{\footnotesize $B$}}				
		%		\put(97,17){{\footnotesize $X$}}	
		%		\put(90,38){{\footnotesize $\s_h$}}				
		%		\put(94,27){{\footnotesize $\s_0$}}	
		%		\put(4,0){{\footnotesize $b$}}	
		%		\put(40,9){{\footnotesize $B$}}			
		%		\put(21,42){{\footnotesize $Z$}}	
		%		\put(35.5,34){{\footnotesize $\mathcal{S}_h$}}	
		%		\put(35.5,28){{\footnotesize $\mathcal{S}_0$}}		
		%		\put(20,-2){{\footnotesize $(a)$}}	
		%		\put(80,-2){{\footnotesize $(b)$}}																				
	\end{overpic}
	\bigskip
	\caption{ Complex domains $D^u_{\e}$ and $D^s_{\e}$. }	
	\label{domains}
\end{figure}

%Notice that the points in the domains $D^{u,s}_{\e}$ may be $\kappa\sqrt{\e}$-close to the singularities $\pm\sqrt{2}i$ of $N_{1D}^{0}$, and this property will be very useful to obtain sharp bounds in this problem.

We state all the results for the unstable case, since it is analogous for the stable one.   Based on a fixed point argument, we prove the following theorem in Section \ref{par0_sec}. 

\begin{theorem}\label{parameterization1D0}
	
	Given $\nu>0$. There exists $\e_0>0$	such that, for $0<\e\leq \e_0$, the one-dimensional manifold $W^{u}_{\e}(p_0^{-})$ is parameterized by  
	\begin{equation}
	\label{par1D}
	N_{0,0}^{u}(v)=(X_0(v),Z_{0}^{u}(v),\Gamma_{0}^{u}(v),\Theta_{0}^{u}(v)),\end{equation}
	with $v\in D^{u}_{\e}$, where $X_0$ is given in \eqref{par1}, $Z_{0}^{u}(v)$ is obtained from $\mathcal{H}(N_{0,0}^{u}(v))=0$ ($\mathcal{H}$ given in \eqref{hamilcc}) and
\begin{equation}\label{eq1d}
\left\{
\begin{array}{lcl}
%X_0(v)&=& \arcsinh(v\sqrt{2}/2),\vspace{0.2cm}\\
%Z_{1D}^{0,u,s}(v)&=&4\sqrt{-U(X_0(v))- \dfrac{\dg}{\sqrt{2\Omega}} F(X_0(v)) \dfrac{\Gamma_{1D}^{0,u}(v)-\Theta_{1D}^{0,u}(v)}{2i} - \dfrac{\omega}{2}\Gamma_{1D}^{0,u,s}(v)\Theta_{1D}^{0,u}(v)},\vspace{0.2cm}\\
\Gamma_{0}^{u}(v)&=&Q^0(v)+\gamma_{0}^{u}(v),\vspace{0.2cm}\\
\Theta_{0}^{u}(v)&=&-Q^0(v)+\theta_{0}^{u}(v),
\end{array}
\right.
\end{equation}
with \begin{equation}\label{first0}Q^0(v)=-i\dfrac{\dg}{\omega\sqrt{2\Omega}}F(X_0(v)).\end{equation}
%and  $(\gamma_{0}^{u,s}(v),\theta_{0}^{u,s}(v))$  is a fixed point of the operator $\mathcal{G}^{u,s}_{\omega,0}=\mathcal{G}^{u,s}_{\omega}\circ\mathcal{F}_0$, where $\mathcal{G}^{u,s}_{\omega},\mathcal{F}_0$ are given by \eqref{opgeral} and \eqref{F0}, respectively.

 Furthermore, $\gamma_{0}^{u}(v),\theta_{0}^{u}(v)$ are analytic functions such that $\theta_{0}^{u}(v)=\overline{\gamma_{0}^{u}(v)},$ for every $v\in\R\cap D^{u}_{\e}$, and there exists a constant $M>0$ independent of $\e$ such that 
	\begin{enumerate}
		\item $\left|\gamma_{0}^{u}(v)\right|,\left|\theta_{0}^{u}(v)\right|\leq M\dfrac{\dg}{\omega^2}\dfrac{1}{|v|^2},$
		for each $v\in D^{u}_{\e}$, $|\Rp(v)|\leq \nu$;\vspace{0.2cm}
		\item $\left|\gamma_{0}^{u}(v)\right|,\left|\theta_{0}^{u}(v)\right|\leq M\dfrac{\dg}{\omega^2}\dfrac{1}{|v^2+2|^2},$
		for each $v\in D^{u}_{\e}$, $|\Rp(v)|\geq \nu$;
	\end{enumerate}
with $\dg=\e^{3/4}$, $\omega=\Omega/\sqrt{\e}$ and $\Omega=\sqrt{1-\e^2/4}$.
\end{theorem}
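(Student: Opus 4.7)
The plan is to execute a fixed-point argument for the unknown functions $\gamma^u_0, \theta^u_0$ on the complex domain $D^u_\e$, following the decomposition suggested by \eqref{eq1d}.

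First I would translate the invariance of $W^u_\e(p_0^-)$ into ODEs for the parameterization. Since $X^u(v) = X_0(v)$ is fixed and $Z^u_0(v)$ is determined implicitly by $\mathcal{H}(N^u_{0,0}(v)) = 0$, the tangency of the vector field \eqref{complexcoord_system} to the one-dimensional manifold, read from the first component, gives the reparameterization factor $\lambda(v) = Z^u_0(v)/Z_0(v)$. The remaining components then yield
$$(\Gamma^u_0)'(v) = \tfrac{Z_0(v)}{Z^u_0(v)} \left[\omega i\, \Gamma^u_0(v) - \tfrac{\dg}{\sqrt{2\Omega}} F(X_0(v))\right],$$
and the analogous equation for $\Theta^u_0$ with $-\omega i$. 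Substituting \eqref{eq1d} and using that $Q^0$ defined in \eqref{first0} satisfies $\omega i\, Q^0 - \tfrac{\dg}{\sqrt{2\Omega}} F(X_0) = 0$ by construction, the principal forcing cancels and the equations reduce to
$$(\gamma^u_0)' - \omega i\, \gamma^u_0 = \mathcal{R}_1(v,\gamma^u_0,\theta^u_0), \qquad (\theta^u_0)' + \omega i\, \theta^u_0 = \mathcal{R}_2(v,\gamma^u_0,\theta^u_0),$$
where $\mathcal{R}_{1,2}$ collect the derivative $(Q^0)'$ (of size $\dg/\omega$) together with nonlinear corrections arising from the factor $\lambda - 1$, which is itself controlled by $\gamma^u_0,\theta^u_0$ through the Hamiltonian constraint.

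Second, variation of parameters plus the asymptotic boundary condition $\gamma^u_0, \theta^u_0 \to 0$ as $\Rp(v) \to -\infty$ that characterizes the unstable manifold converts the ODEs into
$$\gamma^u_0(v) = \int_{-\infty + i\Ip(v)}^{v} e^{\omega i (v-s)}\, \mathcal{R}_1(s, \gamma^u_0(s), \theta^u_0(s))\,ds,$$
and analogously for $\theta^u_0$ with kernel $e^{-\omega i (v-s)}$. I would integrate along the horizontal segment through $v$ inside $D^u_\e$, on which $|e^{\pm\omega i(v-s)}| \equiv 1$ so that the large parameter $\omega$ does not create exponential growth. The natural Banach space is that of analytic functions on $D^u_\e$ equipped with the weighted norm
$$\|f\|_{\mathcal{X}} = \sup_{\substack{v\in D^u_\e\\ |\Rp(v)|\leq\nu}} |v|^2 |f(v)| + \sup_{\substack{v\in D^u_\e\\ |\Rp(v)|\geq\nu}} |v^2+2|^2 |f(v)|,$$
calibrated so that the bounds of items (1) and (2) correspond to $\|\gamma^u_0\|_{\mathcal{X}}, \|\theta^u_0\|_{\mathcal{X}} \leq M\dg/\omega^2$. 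I would then show that the integral operator maps the ball of radius $M\dg/\omega^2$ into itself and is a contraction for $\e$ small enough, yielding a unique fixed point. Analyticity follows from the integral representation; the conjugation symmetry $\theta^u_0(v) = \overline{\gamma^u_0(v)}$ on the real segment of $D^u_\e$ is inherited from the reality of the original $(b,B)$ system (under which $\Theta = \overline{\Gamma}$) combined with uniqueness.

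The main obstacle is producing the additional $1/\omega$ factor — turning a naive $\dg/\omega$ bound into the claimed $\dg/\omega^2$ — uniformly over a domain whose vertices $\pm i(\sqrt{2}-\sqrt{\e})$ approach the singular points $\pm i\sqrt{2}$ of $F(X_0(v))$. This extra factor is gained by a single integration by parts against the oscillatory kernel, at the cost of a boundary term at $s=v$ and an integral of $\partial_s \mathcal{R}_1$. The geometry of $D^u_\e$, with its $\sqrt{\e}$ setback from the singularity and opening angle $\beta$, is tailored precisely to make the resulting integrals converge and to match the weights above; verifying the weighted bounds uniformly in $\e$, in particular near the singularities where $F(X_0(v))$ behaves like $|v \mp i\sqrt{2}|^{-2}$, is the technical heart of the argument.
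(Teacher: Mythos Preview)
Your overall strategy---reduce to a fixed-point problem for $(\gamma_0^u,\theta_0^u)$ via the decomposition \eqref{eq1d}, work in a weighted Banach space tailored to bounds (1) and (2), and show the integral operator contracts on a ball of radius $M\dg/\omega^2$---is exactly what the paper does in Section~\ref{par0_sec}. The derivation of the reduced ODE and the cancellation produced by the choice of $Q^0$ are also correct.

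The genuine gap is in your mechanism for gaining the extra factor $1/\omega$. You propose to integrate along \emph{horizontal} segments (so that $|e^{\pm i\omega(v-s)}|\equiv 1$) and then extract $1/\omega$ by a single integration by parts against the oscillatory kernel. But integration by parts throws the $v$-derivative onto the integrand, so to close the contraction argument you would need control of $\partial_v\gamma,\partial_v\theta$ in your norm---and the norm $\|\cdot\|_{\mathcal{X}}$ you wrote down controls only the functions, not their derivatives. For the explicit forcing $(Q^0)'$ one can patch this by hand, but for the nonlinear part $\omega i\,\gamma\,(\eta_0-1)$ one cannot: the Lipschitz estimate would involve $\partial_v(\gamma_1-\gamma_2)$, which is unbounded in your space. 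Moreover, along a horizontal line the naive integral already loses one power of the weight, so $\mathcal{G}_\omega$ does not even map $\mathcal{X}_{2,\nu}$ to itself without some additional input.

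The paper avoids all of this by a different (and standard) device: instead of horizontal paths, the integration contour is \emph{tilted} along the direction $e^{\mp i\beta}$ of the boundary of $D^u_\e$ (Proposition~\ref{gomega0}, citing \cite{GOS}; see the proof of Proposition~\ref{gammah0} for an explicit instance of the deformation). Along such a path $|e^{\pm i\omega(v-s)}|$ decays like $e^{-\omega\sin\beta\,|v-s|}$, so the linear operator $\mathcal{G}_\omega$ itself has operator norm $\leq M/\omega$ on $\mathcal{X}_{\alpha,\nu}$, with no derivatives needed. The angled shape of $D^u_\e$ is designed precisely to accommodate this contour deformation, not merely as a setback from the singularities. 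Once $\|\mathcal{G}_\omega\|\leq M/\omega$ is in hand, the Lipschitz bound $\|\mathcal{F}_0(\gamma_1,\theta_1)-\mathcal{F}_0(\gamma_2,\theta_2)\|_{2,\nu}\leq M\dg^2\omega\|(\gamma_1-\gamma_2,\theta_1-\theta_2)\|_{2,\nu}$ (Proposition~\ref{lipschitz0}) yields $\mathrm{Lip}(\mathcal{G}_{\omega,0})\leq M\dg^2$ and the fixed point follows.

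A minor slip: $F(X_0(v))=-2\sqrt{2}\,v/(v^2+2)$ has \emph{simple} poles at $\pm i\sqrt{2}$, not double ones.
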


\begin{remark}
	Notice the points $p_0^{\pm}$ behave as degenerate-saddles at infinity, and thus the existence of local invariant manifolds for the perturbed system is not standard. Nevertheless, these singularities at infinity behave as parabolic points (see Remark \ref{parabolic_rem}) and Theorem \ref{parameterization1D0} gives the existence of their invariant manifolds.
	% (in fact, it works as a hyperbolic saddle multiplied by a singular factor).% 
	
%	More precisely, considering the change of variables $\widehat{X}=4\arctan(e^X)$, the Hamiltonian system with Hamiltonian $H_\mathrm{p}(X,Z)$ is brought into the (finite) parabolic pendulum
%	 $$
%	\left\{
%	\begin{array}{lcl}
%	\dot{\widehat{X}}=2\sin(\widehat{X}/2)Z/8,\\
%	\dot{Z}=2\sin(\widehat{X}/2)\sin(\widehat{X}),
%	\end{array}
%	\right.
%	$$
%	and in this case, the lines $\widehat{X}=0$ and $\widehat{X}=2\pi$ are composed by parabolic singularities, which have invariant manifolds.
	 
%	This kind of parabolic singularity has been extensively studied over the last years and there exist results (see \textcolor{red}{ PAU E INMA}) which ensure the persistence of the invariant manifolds under perturbation. 
	%For this reason, the iterative methods developed in this work succeeded.
	
%	
%	Fazer uma obs falando que os pontos funcionam como selas-degeneradas, entao nao eh hiperbolico e portanto a existencia de variedades invariantes locais nao eh trivial. Contudo, as singularidades no infinito do sistema nao perturbado funcionam como pontos parabolicos que possuem variedades invariantes do tipo sela (ou seja, é uma sela hiperbolica multiplicada por um fator que a torna parabolcia), e portanto existe uma teoria (Inma e Pau) que afirma a sua persistencia under perturbations. Por esses motivos, os metodos iterativos empregados aqui funcionam.
\end{remark}
\begin{figure}[!]	
	\centering
	\begin{overpic}[width=9cm]{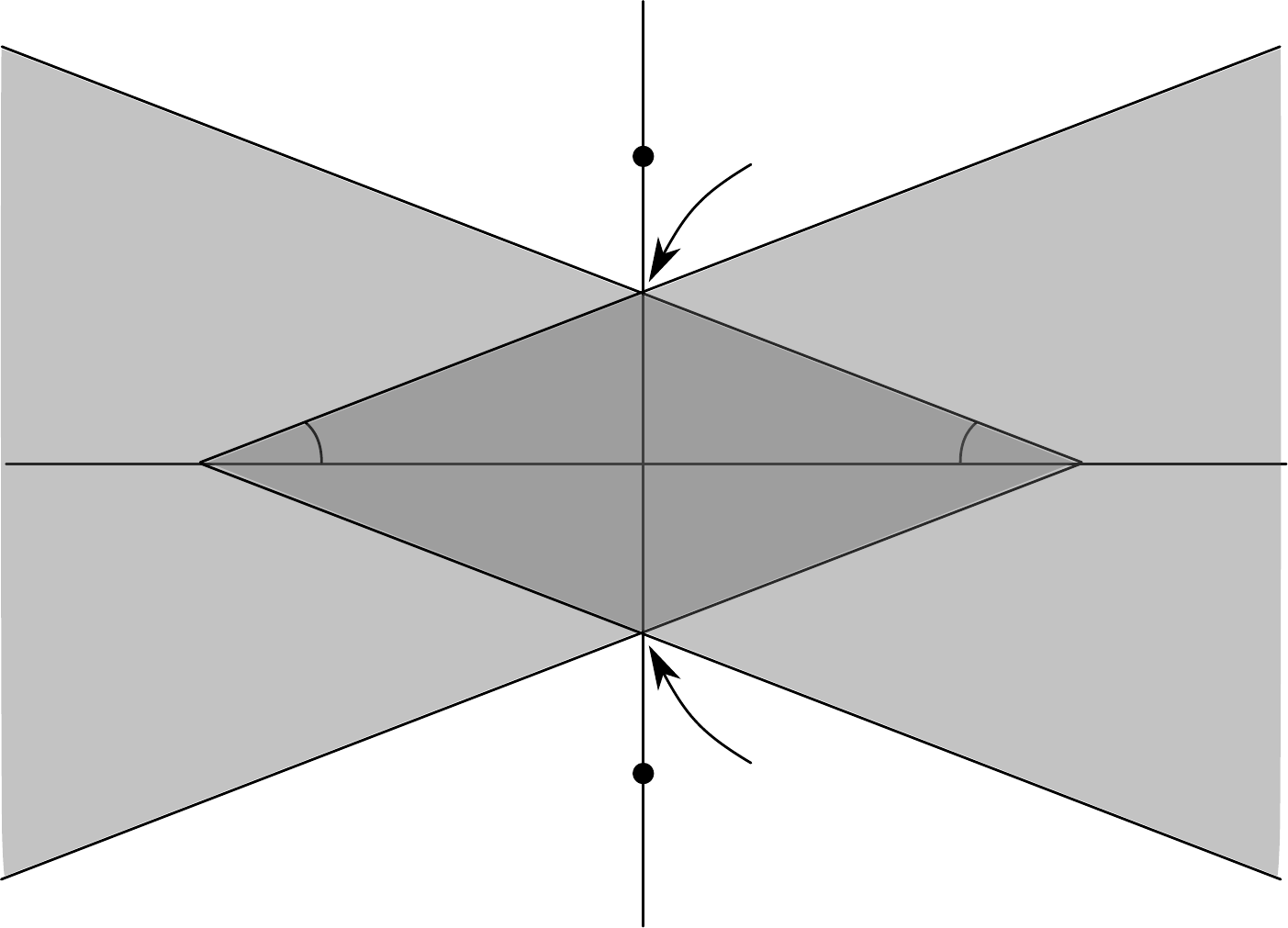}
		%									\begin{overpic}[grid,tics=5,width=11cm]{Figures/intersec.pdf}								
		\put(42,59){{\footnotesize $i\sqrt{2}$}}
		\put(39.5,11){{\footnotesize $-i\sqrt{2}$}}		
		\put(59,60){{\footnotesize $i(\sqrt{2}+\sqrt{\e})$}}			
		\put(58,11){{\footnotesize $-i(\sqrt{2}+\sqrt{\e})$}}		
		\put(26,37){{\footnotesize $\bg$}}	
		\put(72,37){{\footnotesize $\bg$}}	
		\put(5,55){{\footnotesize $D^u_{\e}$}}	
		\put(92,55){{\footnotesize $D^s_{\e}$}}
		\put(45,38){{\footnotesize $\mathcal{D}_{\e}$}}																																					
		%		\put(75,42){{\footnotesize $B$}}				
		%		\put(97,17){{\footnotesize $X$}}	
		%		\put(90,38){{\footnotesize $\s_h$}}				
		%		\put(94,27){{\footnotesize $\s_0$}}	
		%		\put(4,0){{\footnotesize $b$}}	
		%		\put(40,9){{\footnotesize $B$}}			
		%		\put(21,42){{\footnotesize $Z$}}	
		%		\put(35.5,34){{\footnotesize $\mathcal{S}_h$}}	
		%		\put(35.5,28){{\footnotesize $\mathcal{S}_0$}}		
		%		\put(20,-2){{\footnotesize $(a)$}}	
		%		\put(80,-2){{\footnotesize $(b)$}}																				
	\end{overpic}
	\bigskip
	\caption{ Domain $\mathcal{D}_{\e}$. }\label{int}	
\end{figure} 
By Theorem \ref{parameterization1D0}, both parameterizations $N_{0,0}^{u,s}(v)$ are defined in the complex domain $\mathcal{D}_{\e}=D_{\e}^u\cap D^s_{\e}$, which contains 0 (see Figure \ref{int}). To compute the difference between the invariant manifolds in the section $\s_0$ (see \eqref{sh}), we analyze $\Delta\xi(v)$   given by
$$	\Delta\xi(v)=\left(\begin{array}{l}
\Gamma^{u}_{0}(v)-\Gamma^{s}_{0}(v)\\
\Theta^{u}_{0}(v)-\Theta^{s}_{0}(v)
\end{array}\right),$$
for  $v\in\mathcal{I}_{\e}=\mathcal{D}_{\e}\cap \R$. We prove that $\Delta\xi$ satisfies 
$$
\Delta\xi'= \left(\begin{array}{cc}
\omega i & 0\\
0 & -\omega i
\end{array}\right)\Delta\xi+ B(v)\Delta\xi,
$$
where the entries of the matrix $B$ are small functions of order $\er(\dg^2)$.

Notice that, if $B\equiv 0$, then $\Delta\xi$ is the  analytic function 
% which is bounded in the complex domain $\mathcal{D}_{\e}$ and thus it has exponentially small bounds (with respect to $\e$) of $\Delta\xi(v)$ for real values of $v$. In fact, in this case
$$
\Delta\xi(v)=\left(
\begin{array}{c}
e^{\omega i(v-v_0)}\Delta\xi(v_0)\vspace{0.2cm}\\
e^{-\omega i(v-v_1)}\Delta\xi(v_1)
\end{array}
\right),
$$
for fixed $v_0,v_1\in \mathcal{D}_{\e}$. Thus, choosing $v_0=-i(\sqrt{2}-\sqrt{\e})$ and $v_1=i(\sqrt{2}-\sqrt{\e})$, we have that $|\Delta\xi(v)|\leq Me^{-\sqrt{2}\omega}\leq 2Me^{-\sqrt{\frac{2}{\e}}}$, for  $v\in\mathcal{I}_{\e}$, and therefore it is exponentially small with respect to $\e$.

Roughly speaking, we prove in Section \ref{splitting_sec} that this reasoning will also be true when $B\neq 0$, by using ideas from \cite{BS06}, and we prove the following theorem.

\begin{theorem}\label{splitting_thm}
Consider system \eqref{complexcoord_system}. Given any compact interval $\mathcal{I}\subset \R$ containing $0$, there exists $\e_0>0$ sufficiently small such that, for every $0<\e<\e_0$, the parameterizations $N_{0,0}^{\star}(v)$, $\star=u,s$, given in \eqref{def:perturb00}, are  defined for $v\in\mathcal{I}$ and satisfy
\begin{equation}
\left\{\begin{array}{lcl}
\Gamma_{0}^{u}(0)-\Gamma_{0}^{s}(0)&=&-i\dfrac{2\pi\dg}{\sqrt{\Omega}}e^{-\sqrt{2}\omega}+ \er(\omega\dg^3e^{-\sqrt{2}\omega}),\vspace{0.2cm}\\
\Theta_{0}^{u}(0)-\Theta_{0}^{s}(0)&=&i\dfrac{2\pi\dg}{\sqrt{\Omega}}e^{-\sqrt{2}\omega} + \er(\omega\dg^3e^{-\sqrt{2}\omega}),
\end{array}\right. \Omega=\sqrt{1-\dfrac{\e^2}{4}},\ \omega=\dfrac{\Omega}{\sqrt{\e}} \textrm{ and } \dg=\e^{3/4}.
\end{equation}
\end{theorem}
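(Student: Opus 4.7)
The plan is to analyze $\Delta\xi(v) = (\Gamma_0^u - \Gamma_0^s,\ \Theta_0^u - \Theta_0^s)^T$ on the common analyticity domain $\mathcal{D}_\e = D_\e^u \cap D_\e^s$ produced by Theorem \ref{parameterization1D0}, and to exploit the fact that $\mathcal{D}_\e$ reaches within a distance $\sqrt{\e}$ of the singularities $\pm i\sqrt{2}$ of the unperturbed separatrix $X_0$. The exponential smallness $e^{-\sqrt{2}\omega}$ is produced entirely by transporting bounded boundary data from those singularities down to $v=0$ along the diagonal flow generated by $A = \mathrm{diag}(\omega i, -\omega i)$.

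More concretely, I would first write down the linear equation $\Delta\xi'(v) = A\,\Delta\xi(v) + B(v)\,\Delta\xi(v)$ obtained by subtracting the $\Gamma$ and $\Theta$ equations of \eqref{complexcoord_system} evaluated along the two parameterizations. The matrix $A$ captures the leading rotations, while $B$ collects the couplings with the $\delta F'(X_0)\,\Delta Z$ and similar cross-terms; thanks to the estimates of Theorem \ref{parameterization1D0} on $\gamma_0^{u,s}, \theta_0^{u,s}$ and the cancellation of the common first-order piece $Q^0(v)$ from \eqref{first0}, the entries of $B$ are $\er(\delta^2)$ uniformly in $\mathcal{D}_\e$. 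Variation of parameters $\Delta\xi(v) = e^{vA}\,C(v)$ converts the system into $C'(v) = [e^{-vA} B(v) e^{vA}]\,C(v)$, which can be solved by a contractive fixed point in the Banach space of bounded analytic functions on $\mathcal{D}_\e$ equipped with a weighted norm adapted to each component, along the lines of \cite{BS06}. This shows that each component of $C$ is essentially constant, with relative corrections of order $\er(\omega\delta^2)$.

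The value of that essential constant is the heart of the argument and must be extracted from an inner-equation analysis near the singularities $\pm i\sqrt{2}$. Rescaling $v = i\sqrt{2} + \sqrt{\e}\,s$, the outer equation reduces at leading order to an $\e$-independent inner system whose distinguished unstable and stable solutions differ by an explicit Stokes-type constant, which for the $\Gamma$-component turns out to be $-2\pi i/\sqrt{\Omega}$ (and its complex conjugate for $\Theta$). Multiplying this constant by the coupling $\delta$ and by the exponential weight $e^{\omega i \cdot i\sqrt{2}} = e^{-\sqrt{2}\omega}$ produces the claimed main terms at $v=0$.

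The main obstacle is the matching between the outer parameterizations of Theorem \ref{parameterization1D0} and the inner solutions: both are valid on overlapping but different regions, and the matching must be sharp enough that the remainder is genuinely smaller than the main term, namely $\er(\omega\delta^3 e^{-\sqrt{2}\omega})$, which is a factor $\er(\e)$ below it. This is precisely where classical Melnikov theory is insufficient — the Melnikov integral is already exponentially small, so a naive $\er(\delta^2)$ perturbative bound on the remainder is meaningless — and where the complex-domain fixed-point scheme of \cite{BS06}, combined with the sharp inner-equation analysis, is essential.
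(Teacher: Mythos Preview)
Your setup is right and matches the paper: you correctly identify the linear system $\Delta\xi' = (A+B(v))\Delta\xi$ on $\mathcal{D}_\e$, with $A=\mathrm{diag}(\omega i,-\omega i)$ and $B$ small (the paper gets $|b_{j,k}|\le M\omega\delta^2$, not $\er(\delta^2)$; the extra $\omega$ matters). Where you diverge is in how the leading constant is obtained.

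You propose an inner-equation/matching scheme near $v=\pm i\sqrt{2}$ to read off a Stokes constant. The paper does something considerably simpler and avoids inner equations entirely. It decomposes $\Delta\xi=\mathcal{M}+\Delta\xi_1$, where $\mathcal{M}(v)=\bigl(c_1^0 e^{i\omega v},\,c_2^0 e^{-i\omega v}\bigr)$ is \emph{exactly} the Melnikov vector $\mathcal{G}_{\omega,0}^u(0,0)-\mathcal{G}_{\omega,0}^s(0,0)$, with $c_1^0=-i\,2\pi\delta\,\Omega^{-1/2}e^{-\sqrt{2}\omega}$ computed by a residue at $v=i\sqrt{2}$. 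The remainder $\Delta\xi_1$ is then controlled in a Banach space $\mathcal{Z}$ with weight $e^{\omega(\sqrt{2}-|\mathrm{Im}\,v|)}$: one writes $(\mathrm{Id}-\mathcal{H}_0)\Delta\xi_1 = I(c_1-c_1^0,c_2-c_2^0)+\mathcal{H}_0(\mathcal{M})$, bounds $|c_j-c_j^0|\le M\delta^3 e^{-\sqrt{2}\omega}$ by evaluating the fixed-point equation at the tip $v^*=-(\sqrt{2}-\sqrt{\e})i$ (using the a priori bound $\|\Delta\xi_1\|_{2,\nu}\le M\delta^3/\omega^2$ from the outer construction), shows $\|\mathcal{H}_0\|_{\mathcal{Z}}\le M\omega\delta^2<1$, and concludes $\|\Delta\xi_1\|_{\mathcal{Z}}\le M\omega\delta^3$.

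So your final comment is slightly backwards: the paper's point is precisely that the Melnikov integral \emph{is} the correct first order here, and the complex-domain machinery of \cite{BS06} is used to prove this, not to replace it by an inner-equation computation. Your inner/matching route is a legitimate alternative strategy (and is needed in problems where Melnikov is \emph{not} dominant, cf.\ \cite{BFGS12}), but it is more work and introduces the matching step you yourself flag as the main obstacle; for this particular system that obstacle can be bypassed entirely.
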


First statement of Theorem \ref{splitting_thmA} follows as a corollary of Theorem \ref{splitting_thm}.

\subsection{Parameterization of the Invariant Manifolds $W^{u}_{\e}(\Lambda^-_{\kappa_1,\kappa_2})$ and $W^{s}_{\e}(\Lambda^+_{\kappa_1,\kappa_2})$}\label{parametrizaciones}

In this section we find parameterizations of the invariant manifolds $W^{u}_{\e}(\Lambda^-_{\kappa_1,\kappa_2})$ and $W^{s}_{\e}(\Lambda^+_{\kappa_1,\kappa_2})$, for $\kappa_1,\kappa_2\geq0$ and $\kappa_1+\kappa_2=h>0$. Even if one theorem could contain all the results for $\kappa_1\geq 0$ and $\kappa_2\geq 0$, we state three separate theorems, Theorem  \ref{parameterization2Dh} ($\kappa_1=0$),  Theorem
\ref{parameterization1Dh} ($\kappa_2=0$) and Theorem \ref{parameterization2Dk1k2} ($\kappa_1,\kappa_2 >0$), to clarify the exposition (and the corresponding proofs).

\subsubsection{Zero Energy for the Pendulum (Separatrix Case $\kappa_1=0$ and $\kappa_2=h>0$)}\label{regular}
We look for  parameterizations  of the  $2$-dimensional invariant manifolds $W^{u}_{\e}(\Lambda^-_h)$ and $W^{s}_{\e}(\Lambda^+_h)$,
$$N_{0,h}^{\star}(v,\tau)=\left(X_0(v),Z_{0}(v)+Z_{0,h}^{\star}(v,\tau),\Gamma_{h}(\tau)+\Gamma_{0,h}^{\star}(v,\tau),\Theta_{h}(\tau)+\Theta_{0,h}^{\star}(v,\tau)\right),\, \star=u,s$$
as perturbations of $W(0,h)$ (see Lemma \ref{sep2h}).
% The main difference in this case is that the unperturbed invariant manifold has dimension $2$, and to parameterize it we have to find solutions of partial differential equations (with $2$ variables) instead of a system of ordinary differential equations. 
% 
% In this case, it is more the elimination of the variable $Z$ through the first integral results in a very complicated analysis. Thus, we look for parameterizations of the form
% where $\star=u,s$, and the first order is given in Lemma \ref{sep2h}.

%We also highlight that, for our purposes, it will be not necessary to parameterize the manifold in a domain which is $\sqrt{\e}$-close to the poles of $Z_0$. 

% Since we compute the splitting of these manifolds as a perturbation of the $1$-dimensional solutions of Theorem \ref{parameterization1D0}, 
For our purpose, it is not necessary to extend $N_{0,h}^{\star}$ to  a domain which is $\sqrt{\e}$-close to the singularities of $Z_0$. Thus, it is sufficient to consider the domains
\begin{equation}\label{outerdomain}
\begin{array}{l}
D^u=\left\{v\in\C;\ |\Ip(v)|\leq -\tan(\bg)\Rp(v)+\sqrt{2}/2\right\},\\
D^s=\{v\in\C;\ -v\in D^u\},
\end{array}
\end{equation}
for some $0<\bg<\pi/4$ fixed. We also   consider \begin{equation}\label{tsig}
\mathbb{T}_\sigma=\{\tau\in\C;\ |\Ip(\tau)|<\sigma \textrm{ and } \Rp(\tau)\in \mathbb{T}\}.
\end{equation}
%where $\mathbb{T}=\R/2\pi\Z$.

%Consider the operator $\mathcal{G}^{u,s}_{\omega,h}=\mathcal{G}_{\omega}^{u,s}\circ \mathcal{P}_h$, where $\mathcal{G}_{\omega}^{u,s}$ and $\mathcal{P}_h$ are given by \eqref{Go} and \eqref{Fo}, respectively.

We prove the following theorem in Section \ref{parhper_sec}. 

\begin{theorem}\label{parameterization2Dh}
	
	Fix $\sigma>0$ and $h_0>0$. There exists $\e_0>0$ sufficiently small 
	such that, for $0<\e\leq \e_0$ and $0<h\leq h_0$, $W^{u}_{\e}(\Lambda_h^{-})$ is parameterized by  $$N_{0,h}^{u}(v,\tau)=(X_0(v),Z_{0}(v)+Z_{0,h}^{u}(v,\tau),\Gamma_{h}(\tau)+\Gamma_{0,h}^{u}(v,\tau),\Theta_{h}(\tau)+\Theta_{0,h}^{u}(v,\tau)),$$
	with $v\in D^{u}$ (see \eqref{outerdomain}) and $\tau\in\mathbb{T}_{\sigma}$, where $X_0,Z_0,\Gamma_h,\Theta_h$ are given by \eqref{par1} and \eqref{periodic},
\begin{equation}\label{eq2d}
	\left\{\begin{array}{l}
Z_{0,h}^{u}(v,\tau)= Z_{0,h}(v,\tau)+ z_{0,h}^{u}(v,\tau),\vspace{0.2cm}\\
\Gamma_{0,h}^{u}(v,\tau)= Q^0(v)+ \gamma_{0,h}^{u}(v,\tau),\vspace{0.2cm}\\
\Theta_{0,h}^{u}(v,\tau)= -Q^0(v)+ \theta_{0,h}^{u}(v,\tau),
\end{array}\right.
\end{equation}
	where $Q^0$ is given by \eqref{first0}, and
\begin{equation}\label{Zfirst}Z_{0,h}(v,\tau)=\dfrac{\dg}{\omega\sqrt{2\Omega}}F'(X_0(v))\dfrac{\Gamma_h(\tau)+\Theta_h(\tau)}{2}.\end{equation}
	
	%and  $(Z_{2D}^{*,h,u,s},\Gamma_{2D}^{*,h,u,s},\Theta_{2D}^{*,h,u,s})$  is a fixed point of the operator $\mathcal{G}^{u,s}_{\omega,h}=\mathcal{G}^{u,s}_{\omega}\circ\mathcal{P}_h$, where $\mathcal{G}^{u,s}_{\omega},\mathcal{P}_h$ are given by \eqref{Go} and \eqref{Fo}, respectively. 
	
	Furthermore, $z_{0,h}^{u}$ is a real-analytic function and $\gamma_{0,h}^{u},\theta_{0,h}^{u}$ are analytic functions satisfying
	\[
	\theta_{0,h}^{u}(v,\tau)=\overline{\gamma_{0,h}^{u}(v,\tau)}, \ (v,\tau)\in\R^2\cap D^u\times\mathbb{T}_{\sigma},
	\]
	such that there exists a constant $M>0$ independent of $\e$ and $h$ such that, for $(v,\tau)\in D^{u}\times \mathbb{T}_\sigma$,
% 	\begin{enumerate} 
% 	\item $
	\begin{equation}
	\label{bounds38}
	|z_{0,h}^{u}(v,\tau)|, |\gamma_{0,h}^{u}(v,\tau)|,|\theta_{0,h}^{u}(v,\tau)|\leq M\dfrac{\dg}{\omega}\dfrac{1}{|\sqrt{v^2+2}|}
	\end{equation}
% 	;\vspac
% 	 	\]
% e{0.2cm}$
% 	\item $|\partial_{\tau}z_{0,h}^{u}(v,\tau)|, |\partial_{\tau}\gamma_{0,h}^{u}(v,\tau)|,|\partial_{\tau}\theta_{0,h}^{u}(v,\tau)|\leq M\dfrac{\dg}{\omega}\dfrac{1}{|\sqrt{v^2+2}|};\vspace{0.2cm}$	
% 	\item $|\partial_{v}z_{0,h}^{u}(v,\tau)|, |\partial_{v}\gamma_{0,h}^{u}(v,\tau)|,|\partial_{v}\theta_{0,h}^{u}(v,\tau)|\leq M\dfrac{\dg}{\omega}\dfrac{1}{|v^2+2|};\vspace{0.2cm}$		
	%	\item$|\partial_vZ_{s,u}^*(v,\tau)|, |b_{s,u}^*(v,\tau)|,|Z_{s,u}^*(v,\tau)|\leq M\dfrac{\dg}{\omega}\dfrac{1}{|v|^2}.\vspace{0.2cm}$
% \end{enumerate}
 with $\dg=\e^{3/4}$, $\omega=\Omega/\sqrt{\e}$ and $\Omega=\sqrt{1-\e^2/4}$.
\end{theorem}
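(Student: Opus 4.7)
The strategy is to apply the parameterization method tailored to a two-dimensional invariant manifold emanating from the parabolic-type object $\Lambda_h^-$. First I would derive the invariance equation: requiring the parameterization to keep $X\equiv X_0(v)$ as in the unperturbed manifold, the $X$-component of the Hamiltonian vector field of \eqref{complexcoord_system} forces the internal dynamics in $v$ to be $\dot v=1+Z_{0,h}^u/Z_0(v)$, while I take $\dot\tau=\omega$ as the free choice. This reduces the invariance to a system of three transport-type PDEs for the unknowns $(Z_{0,h}^u,\Gamma_{0,h}^u,\Theta_{0,h}^u)$ with transport direction close to $(1,\omega)$ in $(v,\tau)$.

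Second, I would extract the dominant part of the solution by balancing leading orders. Since $\omega=\Omega/\sqrt\e\gg1$, in the $\Gamma$ and $\Theta$ equations the algebraic fiber terms $\pm\omega i\,\Gamma$, $\mp\omega i\,\Theta$ dominate, so equating them against the inhomogeneity $-\dg F(X_0)/\sqrt{2\Omega}$ yields exactly $Q^0(v)$ from \eqref{first0} and its negative. In the $Z$-equation the leading term is $\omega\partial_\tau$, and inverting it against the source $-\dg F'(X_0)(\Gamma_h-\Theta_h)/(2i\sqrt{2\Omega})$ produces $Z_{0,h}(v,\tau)$ as in \eqref{Zfirst}. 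These cancel the largest source terms in the respective equations.

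Third, substituting the ansatz \eqref{eq2d} I would derive the remainder system
\[
(\partial_v+\omega\partial_\tau)\xi-L\xi=\mathcal S(v,\tau)+\mathcal N(v,\tau,\xi),
\]
where $\xi=(z_{0,h}^u,\gamma_{0,h}^u,\theta_{0,h}^u)$, the diagonal linear part is $L=\operatorname{diag}(0,\omega i,-\omega i)$, the source $\mathcal S$ consists of residual pieces (mostly $\partial_v Q^0$ and $\partial_v Z_{0,h}$, both of size $\er(\dg/\omega^2)$ with extra algebraic decay in $|v|$), and $\mathcal N$ is at least quadratic in $\xi$. I would invert the transport operator along characteristics $v=v_0+s$, $\tau=\tau_0+\omega s$ with the asymptotic condition $\xi\to 0$ as $\Rp(v)\to-\infty$ (expressing that the manifold is the unstable set of $\Lambda_h^-$), obtaining
\[
\xi(v,\tau)=\int_{-\infty}^{0}e^{sL}\bigl[\mathcal S+\mathcal N(\xi)\bigr]\bigl(v+s,\tau+\omega s\bigr)\,ds.
\]

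Finally, I would close the argument via the Banach fixed-point theorem on the space of analytic functions on $D^u\times\mathbb{T}_\sigma$ equipped with the weighted norm $\|\xi\|=\sup|\sqrt{v^2+2}|\cdot|\xi(v,\tau)|$. The main difficulty is to show that the integral operator preserves this space with the stated size: since in the separatrix case $\kappa_1=0$ the unperturbed $X_0(v)$ grows only logarithmically, $F(X_0(v))$ decays only as $|v|^{-1}$ and $\partial_v Q^0$ as $|v|^{-2}$, so one must propagate these algebraic weights carefully through the characteristic integral (which gains nothing in algebraic decay on the real trace, where $|e^{\pm i\omega s}|=1$). Once this weighted estimate is secured, the overall prefactor $\dg/\omega$ in $\mathcal S$ provides both the claimed smallness and the contractivity of the map (the Lipschitz constant of $\mathcal N$ being $\er(\dg/\omega)$ on a small ball around the origin). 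The reality condition $\theta_{0,h}^u=\overline{\gamma_{0,h}^u}$ then follows from the involution $\Gamma\leftrightarrow\overline\Theta$ of \eqref{complexcoord_system}, and uniformity in $h\in[0,h_0]$ is automatic because $h$ enters only through the bounded factors $\Gamma_h,\Theta_h\sim\sqrt{h/\omega}$, which appear multiplicatively and do not degrade the decay bounds.
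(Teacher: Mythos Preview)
Your overall architecture is the same as the paper's: derive the invariance PDE as in Lemma~\ref{pertsystem_lemh}, peel off the first approximations $Q^0$ and $Z_{0,h}$, rewrite the remainder as $\mathcal L_\omega\xi=\mathcal P_h(\xi)$, invert $\mathcal L_\omega$ by integration along the linear characteristics $(v+s,\tau+\omega s)$, and close with a contraction. So the plan is sound, but two points need correction.

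\medskip
\textbf{The main gap: derivative control.} Once you commit to the \emph{linear} characteristics $\dot v=1$, $\dot\tau=\omega$ (as you do in your integral formula), the correction $\frac{Z_{0,h}^u}{Z_0}\partial_v$ that you introduced in your first paragraph lands on the right-hand side. Concretely, your $\mathcal N$ contains terms like $-\frac{z+Z_{0,h}}{Z_0}\,\partial_v z$, $-\frac{z+Z_{0,h}}{Z_0}\,\partial_v\gamma$, etc.\ (see \eqref{edp restada} and \eqref{Fo}). These are \emph{not} quadratic in $\xi$; they are linear-with-small-coefficient \emph{and} involve $\partial_v\xi$. Your proposed norm $\|\xi\|=\sup_{(v,\tau)}|\sqrt{v^2+2}|\,|\xi(v,\tau)|$ does not see $\partial_v\xi$, so you cannot bound $\mathcal N(\xi_1)-\mathcal N(\xi_2)$ in it, and the contraction does not close as written. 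The paper deals with this by working in the product norm
\[
\llbracket g\rrbracket_{\alpha,\sigma}=\max\bigl\{\|g\|_{\alpha,\sigma},\ \|\partial_\tau g\|_{\alpha,\sigma},\ \|\partial_v g\|_{\alpha+1,\sigma}\bigr\},
\]
together with a Fourier decomposition $g=\sum_k g^{[k]}(v)e^{ik\tau}$ and the $\ell^1$-weight $e^{|k|\sigma}$; the inverse $\mathcal G_\omega$ is built mode by mode (see \eqref{inteq}--\eqref{Go}), and Lemma~\ref{opgo} shows $\mathcal G_\omega:\mathcal X_{\alpha+1,\sigma}^3\to\mathcal Y_{\alpha,\sigma}^3$ is bounded in this stronger norm. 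That is what makes Lemma~\ref{lip} (the Lipschitz estimate for $\mathcal P_h$) go through. If you prefer to keep a pure sup norm, you would need to supplement it with a Cauchy-estimate argument on nested domains to recover $\partial_v\xi$; either way, the step must be addressed.

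\medskip
\textbf{Sizes.} Two of your orders are off by one power of $\omega$. The source after subtraction is $\mathcal P_h(0,0,0)=(f_1^h,f_2^h,-f_2^h)$ with $\|(Q^0)'\|_2,\|\partial_vZ_{0,h}\|_2=\mathcal O(\dg/\omega)$, not $\mathcal O(\dg/\omega^2)$ (Lemma~\ref{firstit}); and the Lipschitz constant of the full fixed-point map is $\mathcal O(\dg)$, not $\mathcal O(\dg/\omega)$ (Lemma~\ref{lip} combined with Lemma~\ref{opgo}(2); the extra $1/\omega$ gain in Lemma~\ref{opgo}(3) is only available off the resonant Fourier modes). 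Since $\dg=\e^{3/4}$ is already small, $\mathcal O(\dg)$ is enough for contractivity, and the first iterate still lands in a ball of radius $\mathcal O(\dg/\omega)$, which is exactly \eqref{bounds38}.
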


\begin{remark}
	We stress that the bounds in \eqref{bounds38} are only valid for $v^2+2>1/2$ and therefore do not give any information about the behavior of $N_{0,h}^u(v,\tau)$ near the singularities $v=\pm i \sqrt{2}$. We use this norm to control the functions at $X=\pm\infty$.
\end{remark}

\subsubsection{Positive Energy for the Pendulum}\label{singular}
%\marginpar{\em poner en la prueba.}
This section is devoted to study the invariant manifolds of the periodic orbits $\Lambda_{\kappa_1,\kappa_2}^\mp$ for $\kappa_1>0$. First, we consider the case $\kappa_1=h$ and $\kappa_2=0$. In this case $\Lambda_{h,0}^\mp=p_h^\mp$ is a critical point. We  apply  the same ideas of Section \ref{breakdown} to parameterize $W^{u}_{\e}(p_h^{-})$ as
\[
N_{h,0}^{u}(v)=(X_h(v),Z_{h,0}^{u}(v),\Gamma_{h,0}^{u}(v),\Theta_{h,0}^{u}(v)),
\]
where $X_h(v)$ has been introduced in \eqref{par2}. The main difference is that we need to take into account the singular dependence on the parameter $h$ at $h=0$.
% 
% We parameterize the unperturbed invariant manifold as $N_{h,0}(v)=(X_h(v),Z_h(v),0,0)$, and the functions $X_h(v)$ and $Z_h(v)$ (given in \eqref{par2}) have the dependence on $v$ expressed in terms of $e^{v\sqrt{h}/2}$. Thus, if we consider $v$ in compact domains, these functions can be understood by expanding them in a Taylor series, nevertheless, we also must control them for values of $v$ at infinity and $h$ near of $0$.

% The main idea to obtain good estimates of the functions appearing in this case is to divide the domain in a compact part and a non-compact one, and use Taylor expansions in the compact one and deal with the non-compact part separately. We highlight that, at infinity, $Z_h$ is $\sqrt{h}$-close to $Z_0$, and thus, for small values of $h$, it seems reasonable that these functions are close outside a compact set sufficiently big. 
 
% We also highlight that in this case, 
As in Theorem \ref{parameterization2Dh}, for our purposes it is  sufficient to parameterize the manifolds in the  domains $D^{u,s}$ (see \eqref{outerdomain}). We prove the following theorem in Section \ref{parh_sec}.
 
 \begin{theorem}\label{parameterization1Dh}
 	
 	There exist $\e_0>0$ and $h_0>0$ sufficiently small	such that, for $0<\e\leq \e_0$ and $0<h\leq h_0$,  $W^{u}_{\e}(p_h^{-})$ is parameterized by  $$N_{h,0}^{u}(v)=(X_h(v),Z_{h,0}^{u}(v),\Gamma_{h,0}^{u}(v),\Theta_{h,0}^{u}(v)),\ v\in D^{u},$$
 	where $X_h$ is given by \eqref{par2}, $Z_{h,0}^{u}(v)$ is obtained from $\mathcal{H}(N_{h,0}^{u}(v))=h$ ($\mathcal{H}$ given in \eqref{hamilcc}) and
 	\begin{equation}\label{solh0}
  	\left\{
 \begin{array}{lcl}
 \Gamma_{h,0}^{u}(v)&=&Q^h(v)+\gamma_{h,0}^{u}(v),\vspace{0.2cm}\\
 \Theta_{h,0}^{u}(v)&=&-Q^h(v)+\theta_{h,0}^{u}(v),
 \end{array}
 \right.	
 	\end{equation}
with  	\begin{equation}\label{firsth}Q^h(v)=-i\dfrac{\dg}{\omega\sqrt{2\Omega}}F(X_h(v)).\end{equation}
 %	and  $(\Gamma_{1D}^{*,h,u,s}(v),\Theta_{1D}^{*,h,u,s}(v))$  is a fixed point of the operator $\mathcal{G}^{u,s}_{\omega,h}=\mathcal{G}^{u,s}_{\omega}\circ\mathcal{F}_h$, where $\mathcal{G}^{u,s}_{\omega},\mathcal{F}_h$ are given by \eqref{g0h} and \eqref{Fh}, respectively.
 Furthermore, $\gamma_{h,0}^{u}(v),\theta_{h,0}^{u}(v)$ are analytic functions satisfying $\theta_{h,0}^{u}(v)=\overline{\gamma_{h,0}^{u}(v)}$ for  $v\in\R\cap D^u$ such that there exists a constant $M>0$ independent of $\e$ such that for  $v\in D^u$
 	\begin{equation}
 	\label{cota2}
 	\left|\gamma_{h,0}^{u}(v)\right|,\left|\theta_{h,0}^{u}(v)\right|\leq M\dfrac{\dg}{\omega^2}\dfrac{1}{|v^2+2|},
 	\end{equation}
 	with $\dg=\e^{3/4}$, $\omega=\Omega/\sqrt{\e}$ and $\Omega=\sqrt{1-\e^2/4}$.
 \end{theorem}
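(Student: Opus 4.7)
I would follow closely the strategy of Theorem \ref{parameterization1D0}, replacing the parameterization $X_0$ by $X_h$ of \eqref{par2}, but carrying out every estimate uniformly in $h\in[0,h_0]$ to accommodate the singular dependence of $N_{h,0}$ on $h$ at $h=0$ emphasized in Section \ref{singular}. Substituting the ansatz
\[
\Gamma_{h,0}^u(v)=Q^h(v)+\gamma(v),\qquad\Theta_{h,0}^u(v)=-Q^h(v)+\theta(v),
\]
with $Q^h$ as in \eqref{firsth}, into the $\Gamma,\Theta$-equations of \eqref{complexcoord_system} along $v\mapsto X_h(v)$, and using the reparameterization factor $\alpha(v)=8X_h'(v)/Z_{h,0}^u(v)$ (close to $1$, with $Z_{h,0}^u$ defined implicitly from $\mathcal H=h$), the invariance conditions reduce to
\begin{align*}
\gamma'(v)&=\omega i\,\gamma(v)-\partial_v Q^h(v)+\mathcal R_1(v;\gamma,\theta),\\
\theta'(v)&=-\omega i\,\theta(v)+\partial_v Q^h(v)+\mathcal R_2(v;\gamma,\theta).
\end{align*}
The formula \eqref{firsth} was chosen precisely to cancel the order-$\dg$ forcing $\frac{\dg}{\sqrt{2\Omega}}F(X_h)$, so $\mathcal R_{1,2}$ collect only quadratic corrections together with the $(\alpha-1)$-terms coming from the coupling with $Z$.

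\medskip
\noindent I would then impose the unstable asymptotic condition $\gamma,\theta\to0$ as $\Rp(v)\to-\infty$ in $D^u$ and, by variation of parameters, recast the system as the integral equation
\[
\gamma(v)=\int_{-\infty}^v\!e^{\omega i(v-s)}\bigl(-\partial_s Q^h(s)+\mathcal R_1(s;\gamma,\theta)\bigr)\,ds
\]
(and analogously for $\theta$ with kernel $e^{-\omega i(v-s)}$). A single integration by parts on the principal term yields
\[
\int_{-\infty}^v\!e^{\omega i(v-s)}\partial_s Q^h(s)\,ds=\tfrac{i}{\omega}\,\partial_v Q^h(v)-\tfrac{i}{\omega}\int_{-\infty}^v\!e^{\omega i(v-s)}\partial_s^2 Q^h(s)\,ds,
\]
which, since $\partial_v Q^h=-i\tfrac{\dg}{\omega\sqrt{2\Omega}}F'(X_h(v))X_h'(v)$, already produces the $\dg/\omega^2$ prefactor announced in \eqref{cota2}. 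I would then run a contraction in the Banach space of analytic functions $f:D^u\to\C$ endowed with the norm $\|f\|_*=\sup_{v\in D^u}|v^2+2|\,|f(v)|$; the exponential kernel is bounded on $D^u$ because the angle $\bg$ in \eqref{outerdomain} forces $\Rp(\omega i(v-s))\le 0$ along admissible paths of integration. The fixed point lives in a ball of radius $\sim M\dg/\omega^2$.

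\medskip
\noindent The main obstacle is ensuring that the constant $M$ in \eqref{cota2} does not blow up as $h\to0^+$, even though $F(X_h(v))$ transitions from algebraic decay ($\sim1/|v|$ at $h=0$) to exponential decay ($\sim e^{-v\sqrt h/2}$ for $h>0$). The key tool is the identity
\[
\sech(X_h(v))=\dfrac{1}{\sqrt{1+(2+h)\sinh^2(v\sqrt h/2)/h}},
\]
which, together with $\sinh^2(v\sqrt h/2)/h\ge v^2/4$ for real $v$ (and its analytic continuation to $D^u$), gives $|\sech(X_h(v))|\le C/|v^2+2|^{1/2}$ uniformly in $0\le h\le h_0$. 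Since $F,F'$ are polynomial in $\sech$ and $\tanh$, analogous uniform pointwise estimates hold for $F(X_h(v))$ and $F'(X_h(v))X_h'(v)$ and propagate through the contraction to yield \eqref{cota2} with $M$ independent of $h$. The component $Z_{h,0}^u$ is then recovered from $\mathcal H(N_{h,0}^u(v))=h$ by the implicit function theorem, which is applicable because $Z_h(v)$ stays bounded away from zero on $D^u$. Finally, the reality condition $\theta_{h,0}^u(v)=\overline{\gamma_{h,0}^u(v)}$ for $v\in\R\cap D^u$ follows from the invariance of \eqref{rescaled_aut_system} under the involution $(\Gamma,\Theta)\mapsto(\overline\Theta,\overline\Gamma)$ (expressing the reality of $b,B$) combined with the uniqueness of the fixed point.
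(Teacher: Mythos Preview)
Your proposal is correct and matches the paper's proof in Section \ref{parh_sec}: the same substitution \eqref{solh0}, the same integral fixed-point equation in the weighted space $\|f\|=\sup_{D^u}|v^2+2|\,|f(v)|$, and the same uniform-in-$h$ pointwise bound $|F(X_h(v))|\le M/|v^2+2|^{1/2}$ obtained from the lower bound $|\sinh(v\sqrt h/2)|\ge M^*\sqrt h\,|v|$ on $D^u$ (the paper's Lemmas \ref{sinh} and \ref{Fs}). The only cosmetic difference is that you extract the factor $1/\omega$ on the leading term by an explicit integration by parts, whereas the paper packages it into the operator estimate $\|\mathcal G_\omega\|\le M/\omega$ of Proposition \ref{gomegah}; both routes give \eqref{cota2}.
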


Finally we deal with the case $\kappa_1,\kappa_2>0$. Next theorem, proven in Section \ref{parf_sec}, gives the parameterizations of $W^{u}_\de(\Lambda^{-}_{\kappa_1,\kappa_2})$. 

%We stress that, in this case, the operator $\mathcal{G}_{\omega}$ appearing in Theorem \ref{parameterization2Dh} will be replaced by a similar operator $\mathcal{G}_{\omega,\kappa_1}$ depending on $\kappa_1$, nevertheless, applying similar arguments used in the singular case, we obtain that both operator have the same properties.

% Therefore, in Section , we prove the following Theorem.

\begin{theorem}\label{parameterization2Dk1k2}
	
	Fix $\sigma>0$. There exist $\e_0>0$ and $h_0>0$ sufficiently small 
	such that, for $0<\e\leq \e_0$, $0<h\leq h_0$, and $\kappa_1>0,\ \kappa_2\geq0$  with $\kappa_1+\kappa_2=h$, the invariant manifold $W^{u}_{\e}(\Lambda^{-}_{\kappa_1,\kappa_2})$ is parameterized by  $$N_{\kappa_1,\kappa_2}^{u}(v,\tau)=(X_{\kappa_1}(v),Z_{\kappa_1}(v)+Z_{\kappa_1,\kappa_2}^{u}(v,\tau),\Gamma_{\kappa_2}(\tau)+\Gamma_{\kappa_1,\kappa_2}^{u}(v,\tau),\Theta_{\kappa_2}(\tau)+\Theta_{\kappa_1,\kappa_2}^{u}(v,\tau)),$$
	for $(v,\tau)\in D^{u}\times\mathbb{T}_{\sigma}$, where $X_{\kappa_1}$, $Z_{\kappa_1},\Gamma_{\kappa_2},\Theta_{\kappa_2}$ are given by \eqref{par2} and \eqref{periodic}, 
	\begin{equation}
	\label{form311}
	\left\{\begin{array}{l}
	\vspace{0.2cm}Z_{\kappa_1,\kappa_2}^u(v,\tau)= Z_{\kappa_1,\kappa_2}(v,\tau)+ z_{\kappa_1,\kappa_2}^{u}(v,\tau),\\
	\vspace{0.2cm} \Gamma_{\kappa_1,\kappa_2}^u(v,\tau)= Q^{\kappa_1}(v)+ \gamma_{\kappa_1,\kappa_2}^{u}(v,\tau),\\
	 \Theta_{\kappa_1,\kappa_2}^u(v,\tau)= -Q^{\kappa_1}(v)+ \theta_{\kappa_1,\kappa_2}^{u}(v,\tau),
	\end{array}\right.
	\end{equation}	
	where $Q^{\kappa_1}$ is given in \eqref{firsth} and
	$$Z_{\kappa_1,\kappa_2}(v,\tau)=\dfrac{\dg}{\omega\sqrt{2\Omega}}F'(X_{\kappa_1}(v))\dfrac{\Gamma_{\kappa_2}(\tau)+\Theta_{\kappa_2}(\tau)}{2}.$$
	 Furthermore, $z_{\kappa_1,\kappa_2}^{u}$ is a real-analytic function and $\gamma_{\kappa_1,\kappa_2}^{u},\theta_{\kappa_1,\kappa_2}^{u}$ are analytic functions satisfying $\theta_{\kappa_1,\kappa_2}^{u}(v,\tau)=\overline{\gamma_{\kappa_1,\kappa_2}^{u}(v,\tau)}$ for  $(v,\tau)\in\R^2\cap D^u\times \mathbb{T}_{\sigma}$ such that there exists a constant $M>0$ independent of $\e$, $\kappa_1$ and $\kappa_2$ such that, for  $(v,\tau)\in D^{u}\times \mathbb{T}_\sigma$ (see \eqref{outerdomain}),
% 	\begin{enumerate} 
% 		\item $
	\begin{equation}
\label{bounds311}|z_{\kappa_1,\kappa_2}^{u}(v,\tau)|, |\gamma_{\kappa_1,\kappa_2}^{u}(v,\tau)|,|\theta_{\kappa_1,\kappa_2}^{u}(v,\tau)|\leq M\dfrac{\dg}{\omega}\dfrac{1}{|v^2+2|^{\frac{1}{2}}}\end{equation}
% 		;\vspace{0.2cm
% 		 
% 		\]
% }$
% 		\item $|\partial_{\tau}z_{\kappa_1,\kappa_2}^{u}(v,\tau)|, |\partial_{\tau}\gamma_{\kappa_1,\kappa_2}^{u}(v,\tau)|,|\partial_{\tau}\theta_{\kappa_1,\kappa_2}^{u}(v,\tau)|\leq M\dfrac{\dg}{\omega}\dfrac{1}{|v^2+2|^{\frac{1}{2}}};\vspace{0.2cm}$	
% 		\item $|\partial_{v}z_{\kappa_1,\kappa_2}^{u}(v,\tau)|, |\partial_{v}\gamma_{\kappa_1,\kappa_2}^{u}(v,\tau)|,|\partial_{v}\theta_{\kappa_1,\kappa_2}^{u}(v,\tau)|\leq M\dfrac{\dg}{\omega}\dfrac{1}{|v^2+2|};\vspace{0.2cm}$		
% 		%	\item$|\partial_vZ_{s,u}^*(v,\tau)|, |b_{s,u}^*(v,\tau)|,|Z_{s,u}^*(v,\tau)|\leq M\dfrac{\dg}{\omega}\dfrac{1}{|v|^2}.\vspace{0.2cm}$
% 	\end{enumerate}
	with $\dg=\e^{3/4}$, $\omega=\Omega/\sqrt{\e}$ and $\Omega=\sqrt{1-\e^2/4}$.
\end{theorem}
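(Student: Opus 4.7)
The plan is to construct the parametrization via a Banach fixed point argument, following closely the schemes that establish Theorems \ref{parameterization2Dh} and \ref{parameterization1Dh}, so that the present two-parameter case interpolates between them in $(\kappa_1,\kappa_2)$.

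First I would derive the invariance equations for the corrections by substituting the ansatz \eqref{form311} into \eqref{complexcoord_system} and subtracting the unperturbed identities satisfied by $N_{\kappa_1,\kappa_2}(v,\tau)$ under the flow $(v,\tau)\mapsto(v+t,\tau+\omega t)$, see Remark \ref{rem0}. Eliminating the $Z$-component via the energy relation $\mathcal{H}=h$ (with $\mathcal{H}$ as in \eqref{hamilcc}) reduces the problem to a closed system for $(\gamma^{u}_{\kappa_1,\kappa_2},\theta^{u}_{\kappa_1,\kappa_2})$ whose linear parts are the transport operators $\partial_v+\omega\partial_\tau\mp\omega i$. A direct check shows that the choice of $Q^{\kappa_1}(v)$ in \eqref{firsth} cancels the leading $\er(\dg)$ inhomogeneity driven by $F(X_{\kappa_1})$, while $Z_{\kappa_1,\kappa_2}(v,\tau)$ matches the first order expansion of $Z$ coming from $\mathcal{H}=h$; what remains is a source term of order $\er(\dg^{2}/\omega)$ plus quadratic couplings.

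Second, I would invert these transport operators by integration along characteristics from $v=-\infty$ (the unstable direction), gaining a factor $1/\omega=\sqrt{\e}/\Omega$ whenever the source has sufficient decay. Working in the Banach space of bounded analytic functions on $D^{u}\times\mathbb{T}_{\sigma}$ (see \eqref{outerdomain},\eqref{tsig}) equipped with the weighted norm
\[
\|f\|=\sup_{(v,\tau)\in D^{u}\times\mathbb{T}_{\sigma}}|f(v,\tau)|\,|v^{2}+2|^{1/2},
\]
one checks that the induced operator is a contraction for $\e$ and $h$ sufficiently small; its fixed point produces $(\gamma^{u}_{\kappa_1,\kappa_2},\theta^{u}_{\kappa_1,\kappa_2})$ obeying \eqref{bounds311}. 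The component $z^{u}_{\kappa_1,\kappa_2}$ is then recovered from $\mathcal{H}=h$ and inherits the same bound. Real-analyticity of $z^{u}_{\kappa_1,\kappa_2}$ and the relation $\theta^{u}_{\kappa_1,\kappa_2}=\overline{\gamma^{u}_{\kappa_1,\kappa_2}}$ on the real axis follow from the conjugation symmetry $(\Gamma,\Theta)\mapsto(\overline{\Theta},\overline{\Gamma})$ of \eqref{complexcoord_system}.

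The hard part is to make every estimate uniform in $\kappa_1\in(0,h_0]$ and $\kappa_2\in[0,h_0]$ subject to $\kappa_1+\kappa_2=h$. As the discussion after Lemma \ref{sep2h} emphasises, $X_{\kappa_1}(v)$ depends singularly on $\kappa_1$ at $\kappa_1=0$: the complex poles closest to the real axis are $\pm i\sqrt{2}+\er(\kappa_1)$, but the decay of $F\circ X_{\kappa_1}$ along the real line switches from polynomial at $\kappa_1=0$ to exponential $e^{-|\Rp(v)|\sqrt{\kappa_1}/2}$ for $\kappa_1>0$. The weight $|v^{2}+2|^{1/2}$, weaker than the $|v^{2}+2|$ used in Theorem \ref{parameterization1Dh}, is dictated precisely by the $\kappa_1\to 0^{+}$ limit, in which \eqref{bounds311} must degenerate to \eqref{bounds38} of Theorem \ref{parameterization2Dh}. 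Establishing bounds of the form $|F(X_{\kappa_1}(v))|,|F'(X_{\kappa_1}(v))|\leq C|v^{2}+2|^{-1/2}$ on $D^{u}$ that are uniform in $\kappa_1\in[0,h_0]$, together with analogous uniform bounds on $\partial_v Q^{\kappa_1}$ via Lemma \ref{sep2h}, is the technical heart of the proof; once they are in hand, the fixed point estimates run in the same way as in Sections \ref{par0_sec}--\ref{parh_sec}.
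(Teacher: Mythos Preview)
Your overall strategy—set up invariance equations, subtract the explicit first-order terms $Q^{\kappa_1}$ and $Z_{\kappa_1,\kappa_2}$, then run a contraction mapping with weights that are uniform in $(\kappa_1,\kappa_2)$—matches the paper's scheme, and your identification of the ``hard part'' (uniform control as $\kappa_1\to 0^+$) is exactly right. However, two technical choices you make diverge from the paper's proof and, as stated, leave gaps.

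First, the paper does \emph{not} eliminate $Z$ via the energy relation. As remarked explicitly after Lemma~\ref{pertsystem_lemh}, in the two-dimensional setting the authors keep $z$ as a third unknown and work with the coupled system \eqref{edp restadaf} for $(z,\gamma,\theta)$. The reason is structural: whether you keep $z$ or express it through $\mathcal{H}=h$, the invariance equations contain terms of the form $\dfrac{z+Z_{\kappa_1,\kappa_2}}{Z_{\kappa_1}}\,\partial_v\gamma$ on the right-hand side, so the nonlinear operator $\mathcal{P}_{\kappa_1,\kappa_2}$ acts on $\partial_v\gamma$, $\partial_v\theta$, $\partial_v z$. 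Your sup-norm $\sup_{(v,\tau)}|f(v,\tau)|\,|v^2+2|^{1/2}$ does not control these derivatives, so the Lipschitz estimate for the fixed-point map cannot close in that space. The paper handles this by working in the product space $\mathcal{Y}^3_{1,\sigma}$ with the norm $\llbracket g\rrbracket_{\alpha,\sigma}=\max\{\|g\|_{\alpha,\sigma},\|\partial_\tau g\|_{\alpha,\sigma},\|\partial_v g\|_{\alpha+1,\sigma}\}$ built from Fourier-weighted norms (see Section~\ref{banachper} and Lemma~\ref{opgoh}).

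Second, your bookkeeping on sizes is off in two places. After subtracting $Q^{\kappa_1}$ the residual source is $f_2^{\kappa_1,\kappa_2}$ in \eqref{f2f}, whose dominant piece is $-(Q^{\kappa_1})'$ of order $\dg/\omega$ in the $\|\cdot\|_{2,\sigma}$-norm (Lemma~\ref{lipf}), not $\dg^2/\omega$. And the inverse $\mathcal{G}^{\kappa_1}_\omega$ of $\mathcal{L}_{\omega,\kappa_1}$ does not generically gain a factor $1/\omega$: Lemma~\ref{opgoh}(2) gives only $\llbracket\mathcal{G}^{\kappa_1}_\omega(\cdot)\rrbracket_{\alpha,\sigma}\le M\|\cdot\|_{\alpha+1,\sigma}$, i.e.\ a gain of one unit of $v$-decay; the $1/\omega$ gain in item~(3) requires the resonant Fourier modes $f^{[0]},g^{[1]},h^{[-1]}$ to vanish, which is not the case here. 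The bound \eqref{bounds311} therefore comes from $\|\mathcal{P}_{\kappa_1,\kappa_2}(0,0,0)\|_{2,\sigma}\le M\dg/\omega$ followed by $\mathcal{G}^{\kappa_1}_\omega:\mathcal{X}^3_{2,\sigma}\to\mathcal{Y}^3_{1,\sigma}$, not from an additional $1/\omega$. Your two errors happen to cancel in the final estimate, but the mechanism you describe is not the one that produces \eqref{bounds311}. The uniform-in-$\kappa_1$ bounds you flag are supplied in the paper by Lemmas~\ref{Fs}, \ref{zh} and~\ref{techf}.
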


\subsection{Approximation of $W^{u}_{\e}(\Lambda^{-}_{\kappa_1,\kappa_2})$ by $W^{u}_{\e}(p_0^{-})$ in the section $\Sigma_h$}\label{aproximacoes}

Recall that for the unperturbed case, we have that
 $$W(\kappa_1,\kappa_2)\cap\s_h=\{(Z,b,B);\ Z=4\sqrt{2+\kappa_1} \textrm{ and }b^2+B^2=2\kappa_2/\omega \}.$$ 
 
\begin{figure}[!]
	\centering
	\begin{overpic}[width=6cm]{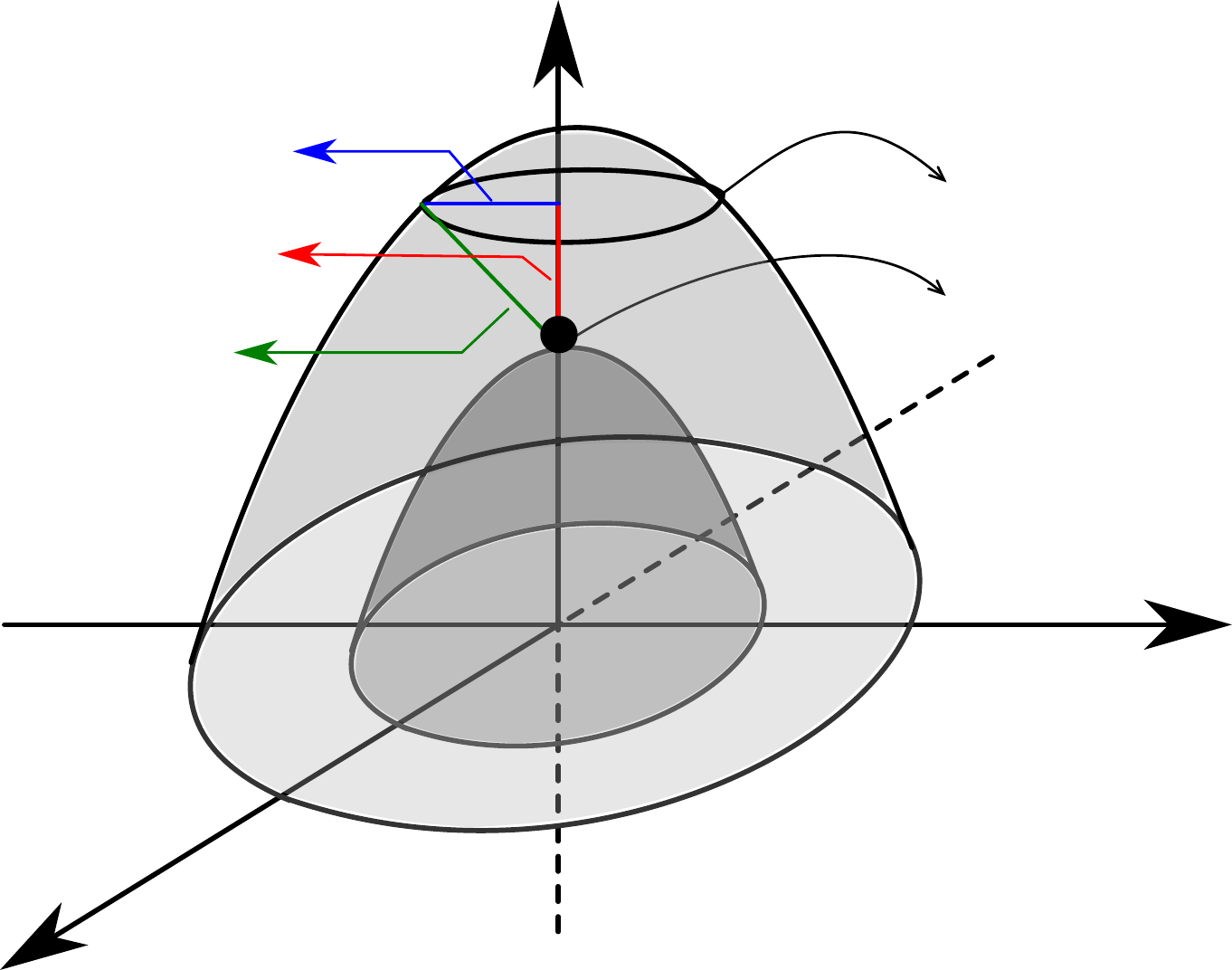}
%							\begin{overpic}[grid,tics=5,width=6cm]{Figures/approx.pdf}								
		\put(49,73){{\footnotesize $Z$}}		
		\put(8,0){{\footnotesize $b$}}
		\put(94,22){{\footnotesize $B$}}
		\put(79,52){{\footnotesize $W(0,0)\cap\mathcal{S}_0$}}
		\put(79,61){{\footnotesize $W(\kappa_1,\kappa_2)\cap\mathcal{S}_h$}}		
		\put(11,66){{\scriptsize $\sqrt{\dfrac{2\kappa_2}{\omega}}$}}	
		\put(-7,57){{\scriptsize $\sqrt{2+\kappa_1}-\sqrt{2}$}}	
		\put(-12,49){{\scriptsize $\er(\sqrt{\kappa_2}+\sqrt{\kappa_1})$}}										
%		\put(95,20){{\footnotesize $\Lambda_h(\kappa_1,\kappa_2)$}}	
%		\put(0,-2){{\footnotesize $-\infty$}}				
%		\put(88,-2){{\footnotesize $+\infty$}}	
%		\put(45,-2){{\footnotesize $X$}}	
%		\put(101,36){{\footnotesize $X$}}	
%		\put(87,31.5){{\footnotesize $b$}}	
%		\put(92,45){{\footnotesize $B$}}																		
	\end{overpic}
	\bigskip
	\caption{ Comparison between $W(\kappa_1,\kappa_2)$ and $W(0,0)$ in the section $\mathcal{S}_h$ ($X=0$, $H=h$) projected in the $bZB$-space . }	
		\label{ap}
\end{figure} 

Thus, in the section $\Sigma_h$, the sets $W(\kappa_1,\kappa_2)$ and $W(0,0)$ are $(\kappa_1+\sqrt{\kappa_2})$-close (see Figure \ref{ap}). Since the perturbed invariant manifolds are close to the unperturbed ones (see Theorems \ref{parameterization2Dh}, \ref{parameterization1Dh}, \ref{parameterization2Dk1k2}), in  the next theorem 
we  approximate $W^{u}_\e(\Lambda^{-}_{\kappa_1,\kappa_2})$ by $W^{u}_\e(p_0^{-})$
for $\kappa_1,\kappa_2$ small.
%. Actually, we are going to see that the scale of approximation is $\sqrt{\kappa_1}+\sqrt{\kappa_2}$. %$\sqrt{\kappa_1}$ and $\sqrt{\kappa_2}$.
% In the section $\Sigma_h0$, the intersections between the invariant manifolds contained in the energy level $h$, are completely determined by their projections in $\Sigma_h$, therefore 
Using energy conservation and the fact that $\Gamma$ and $\Theta$ are complex conjugate for real values of the variables, it is enough to compare the invariant manifolds  only in the variable $\Gamma$. We define the projection $\pi_\Gamma (X,Z,\Gamma, \Theta)=\Gamma$.
% 
% Notice that, from Theorems \ref{parameterization1D0}, \ref{parameterization2Dh}, \ref{parameterization1Dh}, \ref{parameterization2Dk1k2}, we have obtained parameterizations of the manifolds $W^{u}_{\e}(\Lambda^{\-}_{\kappa_1,\kappa_2})$ as $N_{\kappa_1,\kappa_2}^{u}$, for $\kappa_1+\kappa_2=h$, $\kappa_1,\kappa_2\geq 0$.
% % In order to unify the notation, denote the limit case by $N_{1D}^{h,u,s}(v)=N_{1D}^{h,0,u,s}(v)$ and $N_{0,h}^{u,s}(v,\tau)= N_{2D}^{0,h,u,s}(v,\tau)$.
% In  next theorem, we summarize all the approximation results obtained in Sections \ref{approx1}, \ref{approx2} and \ref{approx3}. 

\begin{theorem}\label{approx}
	Consider $\kappa_1,\kappa_2\geq 0$, $\kappa_1+\kappa_2=h$, and the parameterization of $N_{\kappa_1,\kappa_2}^{u}$  of $W^{u}_{\e}(\Lambda^{-}_{\kappa_1,\kappa_2})$ obtained in Theorems \ref{parameterization1D0}, \ref{parameterization2Dh}, \ref{parameterization1Dh}, and  \ref{parameterization2Dk1k2}. Then, there exist $\e_0>0$ and $h_0>0$ sufficiently small such that for $0< h\leq h_0$ and $0<\e\leq \e_0$
% 	\begin{enumerate}
% 		\item 		$
% 		\pi_\Gamma N_{0,h}^{u}(0)-\pi_\Gamma N_{0,0}^{u}(0)=\sqrt{\dfrac{2h}{\omega}}e^{i\tau}+\er\left(\dfrac{\dg\sqrt{h}}{\omega^{3/2}}\right);
% 		$
% 		\item 
		\[\pi_\Gamma N_{\kappa_1,\kappa_2}^{u}(0,\tau)-\pi_\Gamma N_{0,0}^{u}(0)=\Gamma_{\kappa_2}(\tau)+\er\left(\dfrac{\dg\sqrt{\kappa_1}}{\omega^2}+\dfrac{\dg\sqrt{\kappa_2}}{\omega^{3/2}}\right),\ \tau\in\mathbb{T},\]
% 		, for each $\kappa_1>0$, $\tau\in\mathbb{T}$, $h\leq h_0$ and $\e\leq \e_0$;
% 	\end{enumerate}
where $\Gamma_{\kappa_2}(\tau)$ has been introduced in \eqref{periodic}, $\dg=\e^{3/4}$, $\omega=\Omega/\sqrt{\e}$ and $\Omega=\sqrt{1-\e^2/4}$.
%		$$
%		N_{\kappa_1,\kappa_2}^{u,3}(0,\tau)-N_{0,0}^{u,3}(0)=\sqrt{\dfrac{2\kappa_2}{\omega}}e^{i\tau}+\er\left(\dfrac{\dg\sqrt{\kappa_1}}{\omega^2}+\dfrac{\dg\sqrt{\kappa_2}}{\omega^{3/2}}\right),
%		$$
	
\end{theorem}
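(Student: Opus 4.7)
My plan exploits structural simplifications at the section $v=0$. There, $X_0(0) = X_{\kappa_1}(0) = 0$ by \eqref{par1} and \eqref{par2}, and $F(0)=0$ from \eqref{exprUF}, so the leading corrections vanish: $Q^0(0) = Q^{\kappa_1}(0) = 0$ by \eqref{first0} and \eqref{firsth}. Combining this with the decompositions \eqref{eq1d} and \eqref{form311} yields
\[
\pi_\Gamma N_{\kappa_1,\kappa_2}^u(0,\tau) - \pi_\Gamma N_{0,0}^u(0) = \Gamma_{\kappa_2}(\tau) + \bigl(\gamma_{\kappa_1,\kappa_2}^u(0,\tau) - \gamma_0^u(0)\bigr),
\]
so the claim reduces to proving that the bracket is $\er(\dg\sqrt{\kappa_1}/\omega^2 + \dg\sqrt{\kappa_2}/\omega^{3/2})$.

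To estimate this difference I would interpolate via the intermediate parameterizations, passing from $\gamma_0^u$ to $\gamma_{h,0}^u$ of Theorem \ref{parameterization1Dh} (turning on the $\kappa_1$-dependence while keeping $\kappa_2=0$) and then from $\gamma_{h,0}^u$ to $\gamma_{\kappa_1,\kappa_2}^u$ of Theorem \ref{parameterization2Dk1k2} (turning on $\kappa_2$). In both stages the two parameterizations are obtained as fixed points of contractions of the same type, arising from a variation-of-constants formula against the oscillator kernel $e^{-\omega i s}$ integrated from $-\infty$ along the characteristics of the transport operator $\partial_v + \omega\,\partial_\tau$. Writing the difference as a new linear fixed-point equation $(I - \mathcal{L})D = G$, the contraction properties already established in those theorems propagate bounds on $G$ to bounds on $D(0,\tau)$.

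The size of the forcing $G$ is controlled by two quantities: the profile discrepancy $X_{\kappa_1}(v) - X_0(v) = \er(\sqrt{\kappa_1})$, obtained by Taylor-expanding \eqref{par2} uniformly on $D^u$ thanks to the exponential decay of $F$ and $F'$, and the torus amplitude $|\Gamma_{\kappa_2}(\tau)| = \sqrt{2\kappa_2/\omega}$ that enters the second stage. Each enters with the coupling prefactor $\dg/\sqrt{2\Omega}$ from \eqref{complexcoord_system}, and the oscillatory integral against $e^{-\omega i s}$ gains an additional factor $1/\omega$ by integration by parts. These two mechanisms yield, respectively, contributions of size $\dg\sqrt{\kappa_1}/\omega^2$ and $\dg\sqrt{\kappa_2}/\omega^{3/2}$, which combine with the contraction bound to give the stated estimate.

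The main obstacle is the singular dependence of $N_{\kappa_1,\kappa_2}^u$ on $\kappa_1$ as $\kappa_1 \to 0^+$: the profile in \eqref{par2} involves $\sinh(v\sqrt{\kappa_1}/2)/\sqrt{\kappa_1}$, which does not admit an analytic continuation in $\kappa_1$ uniformly on unbounded $v$-domains. Staging the comparison through $\gamma_{h,0}^u$ isolates this transition from the simpler oscillator interpolation, so it can be handled using the weighted norms with weight $1/|v^2+2|^{1/2}$ from \eqref{bounds311} and the exponential decay of $F(X_{\kappa_1}(v))$ along the integration contour in $D^u$. This sufficient decay in $|s|$ is what allows the singular behavior at $\kappa_1=0$ to be converted into a uniform $\er(\sqrt{\kappa_1})$ estimate.
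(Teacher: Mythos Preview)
Your overall strategy matches the paper exactly: triangulate through $\gamma_{\kappa_1,0}^u$ (Proposition~\ref{approxh0}) and then compare $\gamma_{\kappa_1,0}^u$ with $\gamma_{\kappa_1,\kappa_2}^u$ (Proposition~\ref{restaf}), each comparison being a standard ``difference of fixed points'' estimate driven by the Lipschitz constant of the contraction. Your reduction at $v=0$ via $Q^{\kappa_1}(0)=Q^0(0)=0$ is also what the paper uses.

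There is, however, a genuine technical gap in your handling of the $\kappa_1\to 0$ stage. You claim $X_{\kappa_1}(v)-X_0(v)=\er(\sqrt{\kappa_1})$ uniformly on $D^u$; this is false on the unbounded part of $D^u$, since for $|v|\gg \kappa_1^{-1/2}$ the difference grows like $|v|\sqrt{\kappa_1}/2$. The exponential decay of $F$ in its argument does not rescue this: because $X_0(v)$ only grows logarithmically, $F(X_0(v))$ decays merely like $1/v$, and one finds $|F(X_{\kappa_1}(v))-F(X_0(v))|\sim 1/|v|$ for very large $|v|$, with no $\sqrt{\kappa_1}$ gain. So the forcing $G$ in your linear equation $(I-\mathcal L)D=G$ is \emph{not} uniformly $\er(\sqrt{\kappa_1})$ in any of the weighted norms at hand.

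The paper's resolution (Proposition~\ref{gamma00}) is different from what you describe: it splits the integration path at the circle $|v|=\kappa_1^{-1/4}$. In the near region $|\kappa_1^{1/4}v|<1$ the Taylor expansion of Lemma~\ref{Fcompacto} is legitimate and gives the pointwise bound $M\dg\sqrt{\kappa_1}/(\omega|v^2+2|)$ on the integrand. In the far region $|\kappa_1^{1/4}v|\geq 1$ one does \emph{not} estimate the difference of the integrands at all; instead one bounds $\mathcal G_{\omega,\kappa_1}(\gamma_0^u,\theta_0^u)$ and $\mathcal G_{\omega,0}(\gamma_0^u,\theta_0^u)$ separately using their a~priori $\|\cdot\|_2$ bounds ($\leq M\dg/\omega^2$), and then the weight itself supplies the missing factor: $1/|v^2+2|\leq M\sqrt{\kappa_1}$ there. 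This splitting is the step your sketch is missing.
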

The proof of this theorem is done in Sections \ref{approx1}, \ref{approx2} and \ref{approx3}. The result of this theorem for $\kappa_1=h$ and $\kappa_2=0$ implies the second statement of Theorem \ref{splitting_thmA} (note that we are abusing notation since, in this case,  the function $N_{\kappa_1,\kappa_2}^{u}$ does not depend on $\tau$). 
% \begin{remark}
% 	Notice that the $1$-dimensional case where $\kappa_1=h$ and $\kappa_2=0$ is included in item $(2)$ of Theorem \ref{approx}. Also, if $\kappa_1=0$ in Theorem \ref{approx}, the constant $h_0$ can be chosen as big as we want.
% \end{remark}

\subsection{Proof of Theorem $B$}\label{periodic_sec}
% 
% Now we are ready to prove Theorem $B$. For this purpose, notice that, 
Theorems \ref{parameterization2Dh} and \ref{approx} provide, for $0\leq h\leq h_0$, $\e\leq\e_0$, the existence of the invariant manifolds $W^{u}_{\e}(\Lambda_{h}^{-})$ and $W^{s}_{\e}(\Lambda_{h}^{+})$ which are parameterized by
$$N_{0,h}^{u,s}(v,\tau)=\left(\begin{array}{c}
X_0(v)\vspace{0.2cm}\\
Z_0(v)+Z_{0,h}(v,\tau)+z^{u,s}_{0,h}(v,\tau)\vspace{0.2cm}\\
\Gamma_{h}(\tau)+\Gamma^{u,s}_{0}(v)+F^{u,s}(v,\tau,h,\e)\vspace{0.2cm}\\
\Theta_{h}(\tau)+ \Theta^{u,s}_{0}(v)+\overline{F^{u,s}(v,\tau,h,\e)}
\end{array}  \right), \ (v,\tau)\in (D^{u,s}\cap\R)\times\mathbb{T},$$
where  $X_0$, $Z_0$ are given in \eqref{par1}, $Z_{0,h}$ and $z_{0,h}^{u,s}$ are given by \eqref{eq2d}, $\Gamma_h$ and $\Theta_h$ are given in \eqref{periodic},  $\Gamma_{0}^{u,s},\Theta_0^{u,s}$ are given in \eqref{eq1d} and 
$F^{u,s}$ are analytic functions such that 
$$F^{u,s}(v,\tau,h,\e)=\er\left(\dfrac{\dg\sqrt{h}}{\omega^{3/2}}\right).$$

Consider the section $\Sigma_h$ (which corresponds to $v=0\in D^u\cap D^s$). Then, $W^u_\e(\Lambda_h^-)$ and $W^s_\e(\Lambda_h^+)$ intersect along a heteroclinic orbit if and only if there exist $\tau^u,\ \tau^s$ in $[-\pi,\pi)$ such that $N_{0,h}^u(0,\tau^u)=N_{0,h}^s(0,\tau^s)$. Moreover, using energy conservation, $N_{0,h}^u(0,\tau^u)=N_{0,h}^s(0,\tau^s)$ if, and only if,
$$\left\{\begin{array}{lcl}
\Gamma_{h}(\tau^u)+\Gamma^{u}_{0}(0)+F^{u}(0,\tau^u,h,\e)&=& \Gamma_{h}(\tau^s)+\Gamma^{s}_{0}(0)+F^{s}(0,\tau^s,h,\e)\vspace{0.2cm}\\
\Theta_h(\tau^u)+ \Theta^{u}_{0}(0)+\overline{F^{u}(0,\tau^u,h,\e)}&=&\Theta_h(\tau^s)+ \Theta^{s}_{0}(0)+\overline{F^{u}(0,\tau^s,h,\e)}.
\end{array}\right.
$$
Since $\tau^u, \tau^s\in \mathbb{R}$,  using Theorem \ref{splitting_thm}, the expression of $\Gamma_h$ in \eqref{periodic},  the equations above are equivalent to 

\begin{equation}\label{sistemaperiodica}
\left\{\begin{array}{l}
\sqrt{\dfrac{2h}{\omega}}(\cos(\tau^u)-\cos(\tau^s))+ M_1(\e)+F_1(\tau^u,\tau^s,h,\e)= 0,\vspace{0.2cm}\\
\sqrt{\dfrac{2h}{\omega}}(\sin(\tau^u)-\sin(\tau^s))-\dfrac{2\pi\dg}{\sqrt{\Omega}}e^{-\sqrt{2}\omega}+ M_2(\e)+F_2(\tau^u,\tau^s,h,\e)=0,
\end{array}\right.
\end{equation}
where $0<\e\leq\e_0$, $0<h\leq h_0$ and $M_1,M_2,F_1,F_2$ are real-analytic functions such that 
$$M_1,M_2=\er(\omega\dg^3e^{-\sqrt{2}\omega})\textrm{ and }F_1,F_2=\er\left(\dfrac{\dg\sqrt{h}}{\omega^{3/2}}\right).$$
We change the parameter $h\geq 0$
\begin{equation}\label{rescaling h}
h=\dfrac{\pi^2\omega\dg^2e^{-2\sqrt{2}\omega}}{2\Omega}\mu^2,\textrm{ for }\mu\geq 0.
\end{equation}
Then, since $0<h\leq h_0$, it is sufficient to consider $$0<\mu\leq \mu_0=\dfrac{1}{\dg_0\pi}\sqrt{\frac{2\Omega_0 h_0}{\omega_0}}e^{\sqrt{2}\omega_0},$$
where $\Omega_0=\sqrt{1-\e_0^2/4}$, $\omega_0=\Omega_0/\sqrt{\e_0}$ and $\dg_0=\e_0^{3/4}$. Considering $\e_0>0$ sufficiently small, we can assume that $\mu_0>1$. Replacing $h$ in \eqref{sistemaperiodica} and multiplying the equation by $\dfrac{\sqrt{\Omega}}{\pi\dg }e^{\sqrt{2}\omega}>0$, we may rewrite \eqref{sistemaperiodica} as
\begin{equation}\label{sistemaperiodicax}
\left\{\begin{array}{l}
\mu(\cos(\tau^u)-\cos(\tau^s))+ \widetilde{M}_1(\e)+\widetilde{F}_1(\tau^u,\tau^s,\mu,\e) = 0,\vspace{0.2cm}\\
\mu(\sin(\tau^u)-\sin(\tau^s))-2+ \widetilde{M}_2(\e)+\widetilde{F}_2(\tau^u,\tau^s,\mu,\e) = 0,
\end{array}\right.
\end{equation}
where $\widetilde{M}_1,\widetilde{M}_2,\widetilde{F}_1,\widetilde{F}_2$, are real-analytic functions such that
$$\widetilde{M}_1,\widetilde{M}_2=\er(\omega\dg^2)\textrm{ and }\widetilde{F}_1,\widetilde{F}_2=\er\left(\dfrac{\dg}{\omega}\mu\right).$$

Define the function $G=(G_1,G_2):[-\pi,\pi]^2\times (0,\mu_0]\times[0,\e_0]\rightarrow\rn{2}$ corresponding to 
% as 
% $$G(\tau^u,\tau^s,\mu,\e)=\left(\begin{array}{c}
% G_1(\tau^u,\tau^s,\mu,\e)\vspace{0.2cm}\\
% G_2(\tau^u,\tau^s,\mu,\e)
% \end{array}\right),$$
 the left-hand side of system \eqref{sistemaperiodicax}.  Recalling that $\dg=\e^{3/4}$ and $\omega=\Omega/\sqrt{\e}$, it is clear that
\begin{equation}\label{G} G(\tau^u,\tau^s,\mu,\e)=\left(\begin{array}{c}\mu(\cos(\tau^u)-\cos(\tau^s))+\er(\e)\\
\mu(\sin(\tau^u)-\sin(\tau^s))-2+\er(\e)\end{array}\right).\end{equation}
% It is clear that there is a bijection between the zeroes of $G$ and the solutions of \eqref{sistemaperiodicax}. In the sequel, 
The equation $G(\tau^u,\tau^s,\mu,0)=(0,0)$ has a unique family of solutions $$\mathcal{S}_0=\left\{(\ag,-\ag,1/\sin(\ag),0);\ \arcsin(1/\mu_0)\leq \ag\leq\pi-\arcsin(1/\mu_0) \right\}.$$ 
We find zeroes of $G$ using the Implicit Function Theorem around every solution of the family $\mathcal{S}_0$.  Denote $\ag_0=\arcsin(1/\mu_0)$ and fix $0<\ag_0\leq\ag\leq\pi-\ag_0$. Then,
\begin{enumerate}
	\item $G(\ag,-\ag,1/\sin(\ag),0)=(0,0)$,\vspace{0.2cm}
	\item $\det\left(\dfrac{\partial (G_1,G_2)}{\partial(\mu,\tau^s)}\right)(\ag,-\ag,1/\sin(\ag),0)= 2\sin(\ag)\neq 0.$
\end{enumerate}
Thus, it follows from the Implicit Function Theorem that there exist $\e_{\ag}>0$ and unique functions $\tau^s_{\ag}:(\ag-\e_{\ag},\ag+\e_{\ag})\times [0,\e_{\ag})\rightarrow [-\pi,\pi]$, $\mu_{\ag}:(\ag-\e_{\ag},\ag+\e_{\ag})\times [0,\e_{\ag})\rightarrow (0,\mu_0]$ such that
$$G(\tau^u,\tau^s_{\ag}(\tau^u,\e),\mu_{\ag}(\tau^u,\e),\e)=(0,0).$$
Furthermore
\begin{equation}
\label{sist}
\left\{\begin{array}{lcl}
\tau^{s}_{\ag}(\tau^u,\e)=-\ag+\er(\tau^u-\ag,\e),\\
\mu_{\ag}(\tau^u,\e)=1/\sin(\ag)+\er(\tau^u-\ag,\e)
\end{array}\right., \quad \tau^u\in(\ag-\e_{\ag},\ag+\e_{\ag}).
\end{equation}
%\begin{equation}
%\label{sist}
%\left\{\begin{array}{lcl}
%\tau^{s}_{\ag}(\tau^u,\e)=-\tau^u+\er(\e),\\
%\mu_{\ag}(\tau^u,\e)=1/\sin(\tau^u)+\er(\e)
%\end{array}\right., \quad \tau^u\in(\ag-\e_{\ag},\ag+\e_{\ag}).
%\end{equation}
Consider the compact set
$K=[\ag_0,\pi-\ag_0]$.
% 
% thus, for each $\ag\in K$, we can prove that there exist solutions $\tau^s_{\ag},\mu_{\ag}$ as described above.
We can find $n\in\N$, $\ag_1,\cdots,\ag_n$ with respectives $\e_{\ag_1},\cdots,\e_{\ag_n}$, previously found, such that the intervals $(\ag_i-\e_{\ag_i},\ag_i+\e_{\ag_i})$, $i=1,\cdots,n$ form a finite cover of $K$. Using the uniqueness of solutions obtained from the Implicit Function Theorem, it is possible to conclude that there exist $\e_1>0$ sufficiently small and functions
$$\left\{\begin{array}{lcl}
\tau_*^{s}\left(\tau^u,\e\right)=-\tau^u+\er(\e),\\
\mu_*\left(\tau^u,\e\right)=1/\sin(\tau^u)+\er(\e),
\end{array}\right.$$
defined for every $\e<\e_1$ and $\tau^u\in K$, such that 
$$G\left(\tau^u,\tau_*^{s}\left(\tau^u,\e\right),\mu_*\left(\tau^u,\e\right),\e\right)=(0,0).$$
This implies that there exists at least one heteroclinic connection in the energy level 
$$h=\dfrac{\pi^2\omega\dg^2e^{-2\sqrt{2}\omega}}{2\Omega}(\mu_*(\tau^u,\e))^2,\ \tau^u\in K.$$
Moreover, $(\mu_{*}(\tau^u,0))^2\geq (\mu_{*}(\pi/2,0))^2=1$, for every $\tau^u\in K$. Thus  $(\mu_*(\tau^u,\e))^2\geq 1+ \er(\e)$ for $\tau^u\in K$ and $\e<\e_1$. Therefore, since $\mu_*(\pi/2,\e)=1+\er(\e)$, there must exist a curve $\tau^u_\mathrm{min}(\e)$, such that
$$(\mu_*(\tau^u,\e))^2\geq (\mu_*(\tau^u_\mathrm{min}(\e),\e))^2,$$
for $\tau^u\in K$, $\e<\e_1$, and $\mu_*(\tau^u_\mathrm{min}(\e),\e)=1+\er(\e)$.

Thus, defining 
$$h_s(\e)=\dfrac{\pi^2\omega\dg^2e^{-2\sqrt{2}\omega}}{2\Omega}(\mu_*(\tau^u_\mathrm{min}(\e),\e))^2=\dfrac{\pi^2\omega\dg^2e^{-2\sqrt{2}\omega}}{2\Omega}(1+\er(\e)),$$
 system \eqref{complexcoord_system} has one heteroclinic orbit between the periodic orbits $\Lambda_h^-$ and $\Lambda_h^+$ in the energy level $0<h\leq h_0$ if, and only if $h\geq h_s(\e)$.

It only remains to prove the last statement of Theorem \ref{perper_thm}.Given $\mu_1>1$, let $\tau_1^u=\arcsin(\mu_1^{-1})\in [\ag_0,\pi/2)\subset K$, and consider the function $g(\tau^u,\e)= \mu_*(\tau^u,\e)-\mu_*(\tau_1^u,\e)$. Applying the Implicit Function Theorem to $g=0$ at the point $(\pi-\tau_1^u,0)$, there exist $\e_{\mu_1}>0$ and a unique curve $\tau_2^u=\tau_2^u(\tau_1^u, \e)$, defined for $0\leq \e<\e_{\mu_1}$, such that $\mu_*(\tau_2^u,\e)=\mu_*(\tau_1^u,\e)$
and $\tau_2^u(\tau_1^u, \e)=\pi-\tau_1^u+\er(\e)$. Moreover, taking  $\e_{\mu_1}$ small enough $\tau_1^u\neq \tau_2^u$ for $\e<\e_{\tau_1^u}$. Thus, in the energy level 
$$h_{\mu_1}= \dfrac{\pi^2\omega\dg^2e^{-2\sqrt{2}\omega}}{2\Omega}(\mu_*(\tau_1^u,\e))^2,$$
where $\mu_*(\tau_1^u,\e)=\mu_1+\er(\e)$, there exist two heteroclinic connections corresponding to $\tau_1^u$ and $\tau_2^u$.
% , for each $\tau_1^u\in [\ag_0,\pi/2)$ and $\e<\e_{\cg_0}$.

This completes the proof of Theorem \ref{perper_thm}.
\begin{remark}
	Notice that $g(\pi/2,0)=\partial_{\tau^u}g(\pi/2,0)=0$ and $\partial^2_{\tau^u}g(\pi/2,0)\neq0$. Unfortunately, the characterization of the bifurcation of zeros for $\e>0$ becomes impossible, since there is no information on $\partial_\e g(\pi/2,0)$, and its computation requires complicated second order expansions which are beyond the objectives of this work. Nevertheless, under some non-degenericity condition, for example $\partial_\e g(\pi/2,0)\neq 0$, it is posible to detect a saddle-node bifurcation.
\end{remark}

\subsection{Proof of Theorem \ref{perpun_thm}}\label{critic_sec}

Following the same lines of Section \ref{periodic_sec}, we use Theorems \ref{parameterization2Dh} (for the invariant manifold $W^s_\e(\Lambda_h^+)$), \ref{parameterization1Dh} (for the invariant manifold $W^u_\e(p_h^-)$) and \ref{approx} (to compare them to $W^s_\e(p_0^+)$ and  $W^u_\e(p_0^-)$). Then, we can see that $W^u_{\dg}(p_h^{-})\subset W^s_{\dg}(\Lambda_h^{+})$, if and only if 
\begin{equation}\label{sistemaponto}
\left\{\begin{array}{l}
%\sqrt{\dfrac{2h}{\omega}}\cos(\tau^u)+ M_1(\e)+F_1(\tau^u,h,\e)&=& 0,\vspace{0.2cm}\\
%\sqrt{\dfrac{2h}{\omega}}\sin(\tau^u)-\dfrac{2\pi\dg}{\sqrt{\Omega}}e^{-\sqrt{2}\omega}+ M_2(\e)+F_2(\tau^u,h,\e)&=&0,\vspace{0.2cm}\\
-\sqrt{\dfrac{2h}{\omega}}\cos(\tau^s)+ M_1(\e)+F_1(\tau^s,h,\e)= 0,\vspace{0.2cm}\\
-\sqrt{\dfrac{2h}{\omega}}\sin(\tau^s)-\dfrac{2\pi\dg}{\sqrt{\Omega}}e^{-\sqrt{2}\omega}+ M_2(\e)+F_2(\tau^s,h,\e)=0,
\end{array}\right.
\end{equation}
has solutions $\tau^u,\tau^s\in[-\pi,\pi]$, $0<\e\leq\e_0$, $0<h\leq h_0$ where $h_0$ is given in Theorem \ref{parameterization1Dh}.. The functions $M_j,F_j$ are real-analytic and satisfy
$$M_j=\er(\omega\dg^3e^{-\sqrt{2}\omega})\textrm{ and }F_j=\er\left(\dfrac{\dg\sqrt{h}}{\omega^{3/2}}+\dfrac{\dg\sqrt{h}}{\omega^{2}}\right),\ j=1,2 $$

% 
% \begin{remark}
% 	In this case, the constant $h_0>0$ is not arbitrary, it is given by 
% \end{remark}

In order to look for solutions of \eqref{sistemaponto}, we consider the change
\begin{equation}\label{reschponto}
h=\dfrac{2\pi^2\omega\dg^2e^{-2\sqrt{2}\omega}}{\Omega}\mu^2, \quad 0<\mu\leq \mu_0=\dfrac{\sqrt{\Omega_0 h_0}}{\dg_0\pi\sqrt{2\omega_0}e^{-\sqrt{2}\omega_0}}
\end{equation}
% for $$.
% Notice that, since $0<h\leq h_0$, it is sufficient to consider $$0<\mu\leq \mu_0=\dfrac{\sqrt{\Omega h_0}}{\dg_0\pi\sqrt{2\omega_0}e^{-\sqrt{2}\omega_0}},$$
% where $\omega_0=\Omega/\sqrt{\e_0}$ and $\dg_0=\e_0^{3/4}$. Also, c
Considering $\e_0>0$ sufficiently small, we can assume that $\mu_0>1$. Replacing $h$ in \eqref{sistemaponto} and multiplying it by $\dfrac{\sqrt{\Omega}}{2\pi\dg e^{-\sqrt{2}\omega}}>0$, we may rewrite this system as
\begin{equation}\label{sistemapontox}
\left\{\begin{array}{l}
%x\cos(\tau^u)+ \widetilde{M}_1(\e)+\widetilde{F}_1(\tau^u,x,\e)&=& 0,\vspace{0.2cm}\\
%x\sin(\tau^u)-1+ \widetilde{M}_2(\e)+\widetilde{F}_2(\tau^u,x,\e)&=&0,\vspace{0.2cm}\\
-\mu\cos(\tau^s)+ \widetilde{M}_1(\e)+\widetilde{F}_1(\tau^s,\mu,\e)= 0,\vspace{0.2cm}\\
-\mu\sin(\tau^s)-1+ \widetilde{M}_2(\e)+\widetilde{F}_2(\tau^s,\mu,\e)=0,
\end{array}\right.
\end{equation}
where $\widetilde{M}_j,\widetilde{F}_j$, are real-analytic functions such that
$$\widetilde{M}_j=\er(\omega\dg^2)\textrm{ and }\widetilde{F}_j=\er\left(\dfrac{\dg}{\omega}\mu\right), \ j=1,2.$$
% for $$.

Define the function $G:[-\pi,\pi]\times (0,\mu_0]\times[0,\e_0]\rightarrow\rn{2}$ as 
% $$G(\tau^s,\mu,\e)=\left(\begin{array}{c}
% G_1(\tau^s,\mu,\e)\vspace{0.2cm}\\
% G_2(\tau^s,\mu,\e)
% %G_3(\tau^u,x,\e)\vspace{0.2cm}\\
% %G_4(\tau^u,x,\e)
% \end{array}\right),$$
as the left-hand side of system \eqref{sistemapontox}.  Recalling that $\dg=\e^{3/4}$ and $\omega=\Omega/\sqrt{\e}$, we can see that
\begin{equation}\label{G1}
G(\tau^s,\mu,\e)=\left(\begin{array}{c}
%x\cos(\tau^u)+\er(\e)\\
%x\sin(\tau^u)-1+\er(\e)\\
-\mu\cos(\tau^s)+\er(\e)\\
-\mu\sin(\tau^s)-1+\er(\e)\end{array}\right).
\end{equation}
Since,
\begin{enumerate}
	\item $G(-\pi/2,1,0)=(0,0)$,\vspace{0.2cm}
	\item $\det\left(\dfrac{\partial (G_1,G_2)}{\partial(\tau^s,\mu)}\right)(-\pi/2,1,0)=1,$
\end{enumerate}
we can apply the Implicit Function Theorem to obtain $\e_*>0$ and functions $\tau^s_*:[0,\e_*)\rightarrow[-\pi,\pi]$, $\mu_*:[0,\e_*)\rightarrow (0,\mu_0]$ such that $G(\tau^s_*(\e),\mu_*(\e),\e)=0$ for $0\leq\e\leq \e_*$. Furthermore, $\tau^s_*(\e)=-\pi/2+\er(\e)$ and $\mu_*(\e)=1+\er(\e)$.

Defining $$h_c(\e)=\dfrac{2\pi^2\omega\dg^2e^{-2\sqrt{2}\omega}}{\Omega}(\mu_*(\e))^2=\dfrac{2\pi^2\omega\dg^2e^{-2\sqrt{2}\omega}}{\Omega}(1+\er(\e))$$ and reducing $\e_0$ to $\e_*$, Theorem \ref{perpun_thm} follows directly from these facts.

\subsection{Proof of Theorem \ref{perpunh_thm}}\label{finalv}

Following the same lines of Section \ref{periodic_sec},  we use Theorems \ref{parameterization1Dh} (for the invariant manifold $W^u_\e(p_h^-)$), \ref{parameterization2Dk1k2} (for the invariant manifold $W^s_\e(\Lambda_{\kappa_1, \kappa_2}^+)$),  and \ref{approx} (to compare them to $W^s_\e(p_0^+)$ and  $W^u_\e(p_0^-)$). We can see that $W^u_{\dg}(p_h^{-})\subset W^s_{\dg}(\Lambda_{\kappa_1,\kappa_2}^{+})$, if and only if
\begin{equation}\label{sistemapontof}
\left\{\begin{array}{l}
%\sqrt{\dfrac{2h}{\omega}}\cos(\tau^u)+ M_1(\e)+F_1(\tau^u,h,\e)&=& 0,\vspace{0.2cm}\\
%\sqrt{\dfrac{2h}{\omega}}\sin(\tau^u)-\dfrac{2\pi\dg}{\sqrt{\Omega}}e^{-\sqrt{2}\omega}+ M_2(\e)+F_2(\tau^u,h,\e)&=&0,\vspace{0.2cm}\\
-\sqrt{\dfrac{2\kappa_2}{\omega}}\cos(\tau^s)+ M_1(\e)+F_1(\tau^s,h,\e)= 0,\vspace{0.2cm}\\
-\sqrt{\dfrac{2\kappa_2}{\omega}}\sin(\tau^s)-\dfrac{2\pi\dg}{\sqrt{\Omega}}e^{-\sqrt{2}\omega}+ M_2(\e)+F_2(\tau^s,h,\e)=0,
\end{array}\right.
\end{equation}
has a solution $\tau^s\in[-\pi,\pi]$ for $\e\leq\e_0$, $h\leq h_0$. The functions $M_j,F_j$ are real-analytic and 
$$M_j=\er(\omega\dg^3e^{-\sqrt{2}\omega})\textrm{ and }F_j=\er\left(\dfrac{\dg\sqrt{\kappa_2}}{\omega^{3/2}}+\dfrac{\dg\sqrt{\kappa_1}}{\omega^{2}}+\dfrac{\dg\sqrt{h}}{\omega^{2}}\right), \ j=1,2, \ \kappa_1+\kappa_2=h.$$
We consider the change of parameters and variables
\begin{align}
h&=\dfrac{2\pi^2\omega\dg^2e^{-2\sqrt{2}\omega}}{\Omega}(\mu_{*}(\e)+\mu)^2,\label{reschper}\\
\kappa_2&=\dfrac{2\pi^2\omega\dg^2e^{-2\sqrt{2}\omega}}{\Omega}(\mu_*(\e)+\mu-\xi)^2,\label{resckper}\\
\tau^s&=\tau^s_*(\e)+\tau,\label{resctper}
\end{align}
where $(\mu_*(\e),\tau^s_*(\e))$ is the solution of \eqref{sistemapontox}. Since  $\kappa_2\leq h$, $\mu_*(\e)=1+\er(\e)$ and we are looking for solutions with $\mu,\xi,\tau\approx 0$, we have that $\xi\geq 0$ and $(\mu_*(\e)+\mu-\xi)^2\leq (\mu_{*}(\e)+\mu)^2$.

%Notice that, since $0<h\leq h_0$, it is sufficient to consider $$0<x\leq x_0=\dfrac{\sqrt{\Omega h_0}}{\dg_0\pi\sqrt{2\omega_0}e^{-\sqrt{2}\omega_0}},$$
%where $\omega_0=\Omega/\sqrt{\e_0}$ and $\dg_0=(\e_0)^{3/4}$. Also, considering $\e_0>0$ sufficiently small, we can assume that $x_0>1$.

Replacing $h$, $\kappa_2$ and $\kappa_1$ and multiplying it by $\dfrac{\sqrt{\Omega}}{2\pi\dg e^{-\sqrt{2}\omega}}>0$,  system  \eqref{sistemapontof} as
\begin{equation}\label{sistemapontoxf}
\left\{\begin{array}{l}
%x\cos(\tau^u)+ \widetilde{M}_1(\e)+\widetilde{F}_1(\tau^u,x,\e)&=& 0,\vspace{0.2cm}\\
%x\sin(\tau^u)-1+ \widetilde{M}_2(\e)+\widetilde{F}_2(\tau^u,x,\e)&=&0,\vspace{0.2cm}\\
-(\mu_*(\e)+\mu-\xi)\cos(\tau^s_*(\e)+\tau)+ \widetilde{M}_1(\e)+\widetilde{F}_1(\tau,\mu,\xi,\e)= 0,\vspace{0.2cm}\\
-(\mu_*(\e)+\mu-\xi)\sin(\tau^s_*(\e)+\tau)-1+ \widetilde{M}_2(\e)+\widetilde{F}_2(\tau,\mu,\xi,\e)=0,
\end{array}\right.
\end{equation}
where $\widetilde{M}_j,\widetilde{F}_j$, are real-analytic functions such that $\widetilde{M}_j=\er(\omega\dg^2)$ and 
$$\widetilde{F}_j=\er\left(\dfrac{\dg}{\omega}\left((\mu_*(\e)+\mu-\xi)+ \dfrac{\sqrt{(\mu_*(\e)+\mu)^2+(\mu_*(\e)+\mu-\xi)^2}}{\omega^{1/2}} +\dfrac{(\mu_*(\e)+\mu)}{\omega^{1/2}} \right)\right), \ j=1,2.$$
% for $$.

Define the function $G:[-\chi_0,\chi_0]\times [0,\chi_0]\times [-\chi_0,\chi_0]\times[0,\chi_0]\rightarrow\rn{2}$  as 
% \begin{equation}\label{G2}
% G(\tau,\mu,\xi,\e)=\left(\begin{array}{c}
% G_1(\tau,\mu,\xi,\e)\vspace{0.2cm}\\
% G_2(\tau,\mu,\xi,\e)
% %G_3(\tau^u,\mu,\e)\vspace{0.2cm}\\
% %G_4(\tau^u,\mu,\e)
% \end{array}\right),
% \end{equation}
the left hand side  of system \eqref{sistemapontox} and fix $\chi_0>0$ small enough.  Recalling that $\dg=\e^{3/4}$ and $\omega=\Omega/\sqrt{\e}$, we can see that
$$G(\tau,\mu,\xi,\e)=\left(\begin{array}{c}
%x\cos(\tau^u)+\er(\e)\\
%x\sin(\tau^u)-1+\er(\e)\\
-(\mu_*(\e)+\mu-\xi)\cos(\tau^s_*(\e)+\tau)+\er(\e)\\
-(\mu_*(\e)+\mu-\xi)\sin(\tau^s_*(\e)+\tau)-1+\er(\e)\end{array}\right).$$
From Section \ref{critic_sec},  $\mu_*(0)=1$ and $\tau^s_*(0)=-\pi/2$. Thus  $G(\tau,\mu,\xi,0)=(0,0)$ has a solution $\tau=0$ and $\mu=\xi$. Since, we are looking for solutions with $\mu,\xi\approx 0$,  we consider the solution $\mu=\xi=0$. Then, since

\begin{enumerate}
	\item $G(0,0,0,0)=(0,0)$,\vspace{0.2cm}
	%	\item $\det\left(\dfrac{\partial (G_1,G_2)}{\partial(\tau,y)}\right)(0,0,0,0)=1$,\vspace{0.2cm}
	\item $\det\left(\dfrac{\partial (G_1,G_2)}{\partial(\tau,\xi)}\right)(0,0,0,0)=1$,
\end{enumerate}
we can apply the Implicit Function Theorem to obtain $\e_{_0}>0$ and unique functions $\overline{\tau}:[0,\e_{0})\times[0,\e_0)\rightarrow[-\chi_0,\chi_0]$, $\overline{\xi}:[0,\e_{0})\times [0,\e_0)\rightarrow[-\chi_0,\chi_0]$ such that $G(\overline{\tau}(\mu,\e),\mu,\overline{\xi}(\mu,\e),\e)=0$.
% for  $0\leq \xi,\e<\e_0$. 
Furthermore $\overline{\tau}(\mu,\e)=\er(\mu,\e)$ and $\overline{\xi}(\mu,\e)=\er(\mu,\e)$.
For $\e=0$, we have that $\xi=\mu$ and $\tau=0$ is a solution of $G(\tau,\mu,\xi,\e)=(0,0)$.
% In particular, when  notice that
% $$-(1+\overline{\mu}(\xi,0)-\xi)\sin(-\pi/2+\er(\xi))-1=0,$$
% which means that
Thus $\overline{\xi}(\mu,0)=\mu$ and, for $\e$ small enough,
% and thus
$\overline{\xi}(\mu,\e)=\mu+\er(\e)$. 

Finally, if $\xi=0$, then $\kappa_2=h$, $\kappa_1=0$ and therefore \eqref{sistemapontof} becomes \eqref{sistemaponto}. Thus, considering the different scalings done in the systems and the uniqueness of solutions of \eqref{sistemaponto} obtained in Section \ref{critic_sec}, we conclude that $\overline{\xi}(0,\e)=\overline{\tau}(0,\e)\equiv 0$.

These facts, allows us to see that
$$\overline{\xi}(\mu,\e)=c_{\e}\mu+\er(\mu^2),\quad \text{with } c_{\e}=1+\er(\e).$$

Hence, for $\mu\geq 0$ sufficiently small, in the energy level
$$h_\mu=\dfrac{2\pi^2\omega\dg^2e^{-2\sqrt{2}\omega}}{\Omega}(\mu_{*}(\e)+\mu)^2,$$
there exists a unique heteroclinic connection between $p_{h_\mu}^-$ and $\Lambda_{h_\mu}^-(\kappa_1^\mu,\kappa_2^\mu)$, where
$$
\kappa_2^\mu=\dfrac{2\pi^2\omega\dg^2e^{-2\sqrt{2}\omega}}{\Omega}(\mu_*(\e)+\mu-\overline{\xi}(\mu,\e))^2,
$$
and $\kappa_1^\mu=h_\mu-\kappa_2^\mu$. Moreover, if $-\mu_*(\e)<\mu<0$ there is no heteroclinic connections in the energy level $h_\mu$.

Setting $v_i=\sqrt{h_\mu}$, $v_f=\sqrt{\kappa_1^\mu}$ and $v_c=\sqrt{h_c}$, where
$$h_c(\e)=\dfrac{2\pi^2\omega\dg^2e^{-2\sqrt{2}\omega}}{\Omega}(\mu_*(\e))^2,$$
it means that a soliton starting with velocity $v_i<v_c$ is trapped and will surround the defect location, otherwise, if $v_i\geq v_c$, then it will escape the defect location and propagate itself with some output velocity $v_f$. 
In what follows we give an asymptotic formula to the output velocity $v_f$ of orbits with incoming velocity $v_i\approx v_c$.
We omit the dependence of $v_i,v_f$ on $\mu$ in order to simplify the notation.

For $\mu\geq 0$ sufficiently small, we have
$$
\begin{array}{lcl}
v_f^2 &=&\kappa_1^\mu\vspace{0.2cm}\\
&=&h_\mu-\kappa_2^\mu\vspace{0.2cm}\\
&=&\dfrac{2\pi^2\omega\dg^2e^{-2\sqrt{2}\omega}}{\Omega}\left((\mu_{*}(\e)+\mu)^2-(\mu_*(\e)+\mu-\overline{\xi}(\mu,\e))^2\right)\vspace{0.2cm}\\
&=&\dfrac{2\pi^2\omega\dg^2e^{-2\sqrt{2}\omega}}{\Omega}\left(\overline{\xi}(\mu,\e)(2(\mu_{*}(\e)+\mu)-\overline{\xi}(\mu,\e))\right)\vspace{0.2cm}\\
&=&\dfrac{2\pi^2\omega\dg^2e^{-2\sqrt{2}\omega}}{\Omega}(c_\e \mu+\er(\mu^2))(2\mu_{*}(\e)+(2-c_{\e})\mu+\er(\mu^2))\vspace{0.2cm}\\
&=&\dfrac{2\pi^2\omega\dg^2e^{-2\sqrt{2}\omega}}{\Omega}(2\mu_*(\e)c_\e \mu+\er(\mu^2)).
\end{array}
$$ 
Notice that 
$$
\begin{array}{lcl}
v_i-v_c
&=&\sqrt{\dfrac{2\pi^2\omega\dg^2e^{-2\sqrt{2}\omega}}{\Omega}}\mu.
\end{array}
$$ 
Thus
$$
\begin{array}{lcl}
v_f^2 
&=&2v_c c_\e (v_i-v_c)+\er((v_i-v_c)^2).
\end{array}
$$ 
Finally, we obtain that
$$
\begin{array}{lcl}
v_f
&=&\sqrt{2v_c c_\e} \sqrt{v_i-v_c}+\er((v_i-v_c)^{3/2}).
\end{array}
$$ 
Theorem \ref{perpunh_thm} follow directly from  these facts.

\section{Proof of Theorem \ref{parameterization1D0}}\label{par0_sec}

The strategy to prove the existence of $W^{u}_{\e}(p_0^{-})$ and $W^{s}_{\e}(p_0^{+})$ when $\dg\neq 0$ (see \eqref{complexcoord_system}), is to
% compute a parameterization $N_{0,0}(v)$ of $W_0^{u}(p_0^{-})=W_0^{s}(p_0^{+})$, and 
look for a parameterization $N^{u}_{0,0}(v)$ of $W^{u}_{\e}(p_0^{\pm})$ as a perturbation of $N_{0,0}(v)$.
% $$N^{u,s}_{0,0}(v)=N_{0,0}(v)+ P_{0,0}^{u,s}(v).$$

% \begin{remark}
As in the unperturbed case $W(0,0)$ is parameterized as a graph over $X$ (see \eqref{par1}), we look for $N^{u}_{0,0}$ as 
% a graph of $X$. Thus, we impose the condition $\pi_1\circ N^{u,s}_{0,0}= \pi_1\circ N_{0,0}$.	
% 	
% %\textcolor{blue}{	Notice that, in the unperturbed case, the variable $Z$ is given as a graph of $X$, and for this reason we are looking for $N^{u,s}_{0,0}$ as a graph of $X$. (MELHORAR ISSO)}
% % \end{remark}
% 
% We use a fixed point argument to prove the existence of $P_{0,0}^{u,s}$.
% 
% %\textcolor{red}{RETIREI A PROPOSICAO sep0 DAQUI... ESTA NO LEMMA 1 DA DECOUPLED SECTION }
% 
% 
% Now, we deal with the perturbed case. Write 
\begin{equation}\label{N00}
N_{0,0}^{u}(v)=(X_0(v),Z_{0}^{u}(v),\Gamma_{0}^{u}(v),\Theta_{0}^{u}(v)).
\end{equation}
% 
% For simplicity, we will proceed only with the unstable case, nevertheless, it is easy to see that all the following results can be obtained for the stable case, in an analogous way.% Also, in this section, we will refer $(Z_{1D}^{0,u}(v),\Gamma_{1D}^{0,u}(v),\Theta_{1D}^{0,u}(v))$ simply as $(Z^u(v),\Gamma^u(v),\Theta^u(v))$ in order to simplify the notation.

% parameterizes $W^{u}_{\e}(p_0^-)$ to obtain the invariance equations for $(Z^u_0,\Gamma^u_0,\Theta^u_0)$. 
Next lemma, which is straightforward, gives the equation $N_{0,0}^{u}(v)$ has to satisfy to be invariant by the flow of \eqref{complexcoord_system}.
% by making the change of variables $X=X_0(v)$ and reparameterizing the time as $t=t(v)$.
% 
\begin{lemma}\label{pertsystem_lem} The invariant manifold $W^u_{\dg}(p_0^-)$, with $\dg\neq 0$, is parameterized by $N_{0,0}^{u}(v)$ if and only if $(\Gamma_0^u(v),\Theta_0^u(v))$ satisfy 
\begin{equation}\label{formaop} %\label{edo sem restar}
\left\{\begin{array}{l}
\vspace{0.2cm}\dfrac{ d\Gamma}{d v}(v)-\omega i\Gamma(v)= -\dfrac{\delta}{\sqrt{2\Omega}} F(X_0(v))+\left(\dfrac{Z_0(v)}{\widetilde{\eta}_0(v,\Gamma,\Theta)}-1\right)\left(\omega i \Gamma(v) -\dfrac{\delta}{\sqrt{2\Omega}} F(X_0(v))\right),\\

\dfrac{d \Theta}{d v}(v)+\omega i\Theta(v)=-\dfrac{\delta}{\sqrt{2\Omega}} F(X_0(v))+ \left(\dfrac{Z_0(v)}{\widetilde{\eta}_0(v,\Gamma,\Theta)}-1\right)\left(-\omega i\Theta(v) -\dfrac{\delta}{\sqrt{2\Omega}} F(X_0(v)))\right),\vspace{0.2cm}\\

%\vspace{0.2cm}\dfrac{ d\Gamma}{d v}(v)= \dfrac{Z_0(v)}{\widetilde{\eta_0}(v,\Gamma,\Theta)}\left(\omega i \Gamma(v) -\dfrac{\delta}{\sqrt{2\Omega}} F(X_0(v))\right),\\

%\vspace{0.2cm}\dfrac{d \Theta}{d v}(v)= \dfrac{Z_0(v)}{\widetilde{\eta_0}(v,\Gamma,\Theta)}\left(-\omega i\Theta(v) -\dfrac{\delta}{\sqrt{2\Omega}} F(X_0(v)))\right),\\
\displaystyle\lim_{v\rightarrow-\infty}\Gamma(v)=\displaystyle\lim_{v\rightarrow-\infty}\Theta(v)=0.
\end{array}\right.
\end{equation}
where
\begin{equation}
\label{Z}
\widetilde{\eta}_0(v,\Gamma,\Theta)=4\sqrt{-U(X_0(v))- \dfrac{\dg}{\sqrt{2\Omega}} F(X_0(v)) \dfrac{\Gamma(v)-\Theta(v)}{2i} - \dfrac{\omega}{2}\Gamma(v)\Theta(v)},
\end{equation}
with $X_0$ given in \eqref{par1}, $U,F$ given in \eqref{exprUF},
and $Z_0^u(v)=\widetilde{\eta}_0(v,\Gamma_0^u(v),\Theta_0^u(v))$.
\end{lemma}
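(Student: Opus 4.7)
The plan is to translate the invariance of $N_{0,0}^u(v)$ under the flow of \eqref{complexcoord_system} into an ODE for $(\Gamma_0^u,\Theta_0^u)$, exploiting that $v$ is not the flow time but a reparameterization adapted to the unperturbed pendulum motion, and that energy conservation eliminates the $Z$-variable.

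First I would observe that since $N_{0,0}^u(v)$ traces an orbit, $\tfrac{d}{dv}N_{0,0}^u(v)$ is proportional to the vector field evaluated on the curve, with scalar factor $\mu(v)=dt/dv$ where $t$ is the true flow time. Comparing the $X$-equation on the curve, $X_0'(v)=Z_0(v)/8$ from \eqref{par1}, with the $X$-equation of \eqref{complexcoord_system}, $\dot X= Z/8$, gives
\[
\mu(v)=\dfrac{Z_0(v)}{Z_0^u(v)}.
\]
Next, since $p_0^-=(-\infty,0,0,0)$ sits in the level set $\mathcal{H}=0$ (see \eqref{hamilcc}) and $W^u_\e(p_0^-)$ is contained in this energy level, the relation $\mathcal{H}(N_{0,0}^u(v))=0$ determines $Z_0^u(v)$ via \eqref{Z}, up to sign. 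Choosing the branch that continues the unperturbed $Z_0(v)>0$, we obtain $Z_0^u(v)=\widetilde\eta_0(v,\Gamma_0^u(v),\Theta_0^u(v))$.

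Then, applying the chain rule to the $\Gamma$- and $\Theta$-equations of \eqref{complexcoord_system} with this $\mu(v)$, I get
\[
\frac{d\Gamma}{dv}=\frac{Z_0(v)}{\widetilde\eta_0(v,\Gamma,\Theta)}\left(\omega i\,\Gamma-\frac{\delta}{\sqrt{2\Omega}}F(X_0(v))\right),
\]
and an analogous formula for $\Theta$. The desired form in \eqref{formaop} follows by adding and subtracting the ``unperturbed'' linear/inhomogeneous part, so that the equations split as the leading term plus a correction weighted by the factor $(Z_0/\widetilde\eta_0-1)$, which measures the deviation between the perturbed and unperturbed $Z$-components. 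The asymptotic condition $\Gamma(v),\Theta(v)\to 0$ as $v\to-\infty$ just encodes that the orbit limits to $p_0^-$, as $X_0(v)\to-\infty$ and $Z_0(v)\to 0$ in that limit.

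The converse is a direct check: given a solution $(\Gamma_0^u,\Theta_0^u)$ of \eqref{formaop} with the prescribed boundary condition, setting $Z_0^u(v)=\widetilde\eta_0(v,\Gamma_0^u(v),\Theta_0^u(v))$ places $N_{0,0}^u$ in $\{\mathcal{H}=0\}$ by construction, and the ODE forces its tangent vector to be a positive scalar multiple of the vector field of \eqref{complexcoord_system}; reparameterizing $v$ to true time $t$ yields an orbit, which by the asymptotic condition lies in $W^u_\e(p_0^-)$. Since the statement is declared straightforward by the authors, there is no real obstacle: the only care needed is in the sign choice for $Z_0^u$ (matching the unperturbed branch) and in the algebraic rearrangement into the specific form displayed in \eqref{formaop}.
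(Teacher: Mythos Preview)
Your argument is correct and is exactly the straightforward derivation the paper has in mind (the paper omits the proof, simply stating the lemma ``is straightforward''). The key steps---identifying the reparameterization factor $\mu(v)=Z_0(v)/Z_0^u(v)$ from the $X$-equation, eliminating $Z_0^u$ via the energy constraint $\mathcal{H}=0$, and then algebraically splitting off the leading linear/inhomogeneous part---are precisely what is needed, and the converse follows by reversing the reasoning as you describe.
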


The term $\frac{\dg}{\sqrt{2\Omega}}F(X_0(v))$ decays  as $1/v$ as $v\to\infty$. To have integrability, we consider the change of variables \eqref{eq1d} to system \eqref{formaop}. Then,  $(\gamma_0^{u},\theta_0^u)$ satisfy%$\Gamma^{u}_0(v)=Q^0(v)+ \gamma_{0}^{u}(v)$ and $\Theta^{u}_0(v)=-Q^0(v)+ \theta_{0}^{u}(v)$, where $Q^0$ is given by 
 %\begin{equation}\label{first0}
 %Q^0(v)=-i\dfrac{\dg}{\omega\sqrt{2\Omega}}F(X_0(v)),
 %\end{equation} 
\begin{equation}\label{edo restada}
\left\{\begin{array}{l}
\vspace{0.2cm}\dfrac{d}{d v}\gamma-\omega i\gamma=\omega i \gamma(\eta_0(v,\gamma,\theta)-1)- (Q^0)'(v),\\

\vspace{0.2cm}\dfrac{d}{d v}\theta+\omega i\theta=-\omega i \theta(\eta_0(v,\gamma,\theta)-1)+ (Q^0)'(v),\\
\displaystyle\lim_{v\rightarrow -\infty}\gamma(v)=\displaystyle\lim_{v\rightarrow -\infty}\theta(v)=0,
\end{array} \right.
\end{equation}	
where $Q^0$ is given by \eqref{first0} and 
\begin{equation}
\label{alpha01d}
\eta_0(v,\gamma,\theta)=\left(1+\dfrac{4\dg^2}{\Omega\omega}\left(\dfrac{F(X_0(v))}{Z_0(v)}\right)^2-8\omega\dfrac{\gamma\theta}{(Z_0(v))^2}\right)^{-1/2}.
\end{equation}

To prove Theorem \ref{parameterization1D0}, it is sufficient to find a solution of \eqref{edo restada}. 
% This is done in the next proposition.
\begin{prop}\label{solution0}
	Fix $\nu>0$. There exists $\e_0>0$ such that for $0<\e\leq \e_0$, the equation \eqref{edo restada}  has a solution $(\gamma_{0}^{u}(v),\theta_{0}^{u}(v))$ defined in the domain $D_{\e}^u\subset \C$ (see \eqref{domainlocal}) such that $\theta_{0}^{u}(v)=\overline{\gamma_{0}^{u}(v)}$, for every $v\in D_{\e}^u\cap\R$. Furthermore, both $\gamma_0^u,\theta_0^u$ satisfy bounds $(1)$ and $(2)$ of Theorem \ref{parameterization1D0}.
	
%	
%	
%	there exists a constant $M>0$ independent of $\e$ and $h$ such that
%	\begin{enumerate}
%		\item $\left|\gamma_{0}^{u}(v)\right|,\left|\theta_{0}^{u}(v)\right|\leq M\dfrac{\dg}{\omega^2}\dfrac{1}{|v|^2},$
%		for each $v\in D^u_{\e}$ such that $\Rp(v)\leq -\nu$;\vspace{0.2cm}
%		\item $\left|\gamma_{0}^{u}(v)\right|,\left|\theta_{0}^{u}(v)\right|\leq M\dfrac{\dg}{\omega^2}\dfrac{1}{|v^2+2|^2},$
%		for each $v\in D^u_{\e}$ such that $\Rp(v)\geq -\nu$.
%	\end{enumerate}
\end{prop}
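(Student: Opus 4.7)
The plan is to recast \eqref{edo restada} as a fixed-point equation by variation of parameters and solve it on a weighted space of analytic functions on $D_\e^u$ via the Banach contraction principle. Using $\gamma,\theta\to 0$ at $v=-\infty$ to invert the linear parts $\gamma'-\omega i\gamma$ and $\theta'+\omega i\theta$ by integration from $-\infty$ along, say, the horizontal ray $\{s+i\Ip(v):s\le \Rp(v)\}\subset D_\e^u$, I obtain the integral system
\[
\gamma(v) = -\!\int_{-\infty}^v e^{\omega i(v-s)} (Q^0)'(s)\,ds + \omega i\!\int_{-\infty}^v e^{\omega i(v-s)} \gamma(s)\bigl(\eta_0(s,\gamma,\theta)-1\bigr)\,ds,
\]
and a twin formula for $\theta$ with $e^{-\omega i(v-s)}$ and opposite signs. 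Denote by $\mathcal{F}$ the resulting operator, acting on the Banach space $\mathcal{X}$ of pairs of analytic functions on $D_\e^u$ with norm $\|(\gamma,\theta)\|_\mathcal{X}=\sup_{v\in D_\e^u}(|\gamma(v)|+|\theta(v)|)/W(v)$, where $W(v)=\dg\omega^{-2}|v|^{-2}$ for $|\Rp(v)|\le\nu$ and $W(v)=\dg\omega^{-2}|v^2+2|^{-2}$ for $|\Rp(v)|\ge\nu$, so that bounds (1) and (2) of Theorem \ref{parameterization1D0} are equivalent to $\|(\gamma_0^u,\theta_0^u)\|_\mathcal{X}\le M$.

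The first key step is to bound $\mathcal{F}(0,0)$ in $\mathcal{X}$. From $Q^0(v)=-i\dg(2\Omega)^{-1/2}\omega^{-1}F(X_0(v))$ and $X_0(v)=\arcsinh(v/\sqrt 2)$, direct computation gives $|(Q^0)'(s)|\le C\dg\omega^{-1}|s^2+2|^{-2}|2-s^2|$, so the source decays like $1/|s|^2$ at real infinity. Integrating by parts in $s$ to transfer the derivative from $Q^0$ onto the oscillating exponential produces a boundary term of size $|Q^0(v)|/\omega=O(\dg\omega^{-2}|v^2+2|^{-1/2})$ plus a remainder $\int e^{\pm\omega i(v-s)}Q^0(s)\,ds$ that I would control directly using $|Q^0(s)|\le C\dg/(\omega|s|)$ and, when $v$ is away from the real line, by shifting the contour slightly inside $D_\e^u$ to exploit the decay $|e^{\pm\omega i(v-s)}|=e^{\mp\omega\Ip(v-s)}$. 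These estimates yield $|\mathcal{F}(0,0)(v)|\le CW(v)$, so the source is in $\mathcal{X}$ with bounded norm.

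Using \eqref{alpha01d}, $\eta_0-1=O(\dg^2/\omega)+O(\omega|\gamma\theta|/|Z_0|^2)$, and $|Z_0(v)|$ stays bounded below on $D_\e^u$; hence for $(\gamma,\theta)$ in the ball of radius $2M$ in $\mathcal{X}$ the nonlinear term $\omega i\gamma(\eta_0-1)$ is smaller than the source by a factor $O(\dg^2)$, so $\mathcal{F}$ is a self-map of this ball and a contraction for $\e$ small. Its unique fixed point is the desired analytic solution $(\gamma_0^u,\theta_0^u)$. The conjugation $\theta_0^u(v)=\overline{\gamma_0^u(v)}$ on $v\in\R\cap D_\e^u$ then follows from invariance of \eqref{edo restada}, and of the integral formulation, under the involution $(\gamma(v),\theta(v))\mapsto(\overline{\theta(\bar v)},\overline{\gamma(\bar v)})$, combined with uniqueness of the fixed point.

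The hard part will be the sharp $\dg/\omega^2$ estimate of the source throughout $D_\e^u$, which narrows to width $\sqrt\e$ near the singularities $\pm i\sqrt 2$ of $X_0$; the integration-by-parts gain has to be combined with a contour deformation that both avoids these singularities and retains the oscillatory decay from $e^{\pm\omega i(v-s)}$, while the large prefactor $\omega=\Omega/\sqrt\e$ in the nonlinear term must be absorbed against the same decay, making the overall balance tight but tractable for $\e$ small.
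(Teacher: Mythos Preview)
Your strategy---integral reformulation via variation of parameters plus contraction on a weighted space of analytic functions on $D_\e^u$---is exactly the paper's. The paper writes the fixed-point operator as $\mathcal{G}_{\omega,0}=\mathcal{G}_\omega\circ\mathcal{F}_0$, with $\mathcal{G}_\omega$ the linear integral operator \eqref{g0} and $\mathcal{F}_0$ the right-hand side \eqref{F0}, and separates the estimate into two clean pieces: (i) the operator bound $\|\mathcal{G}_\omega\|\le M/\omega$ on the weighted space $\mathcal{X}_{2,\nu}$ (Proposition~\ref{gomega0}, deferred to \cite{GOS}); (ii) the direct bound $\|\mathcal{F}_0(0,0)\|_{2,\nu}\le M\dg/\omega$, since $(Q^0)'(v)=-2i\dg\,\Omega^{-1/2}\omega^{-1}(v^2-2)/(v^2+2)^2$ already carries the correct weight. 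Together these give $\|\mathcal{G}_{\omega,0}(0,0)\|_{2,\nu}\le M\dg/\omega^2$, and the Lipschitz constant $M\dg^2$ follows from $|\eta_0-1|\le M\dg^2$ (Proposition~\ref{lipschitz0}), essentially as you sketch. Your conjugation-via-uniqueness argument is equivalent to the paper's device of building the symmetry $\theta=\overline{\gamma}$ into the space $\mathcal{X}_{2,\nu}^2$.

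One real slip to flag: your integration by parts runs in the wrong direction. Transferring the derivative \emph{off} $(Q^0)'$ yields boundary term $Q^0(v)$ (not $Q^0(v)/\omega$) and a remainder $\omega i\int e^{\omega i(v-s)}Q^0(s)\,ds$ carrying an \emph{extra} factor $\omega$---no gain at all. The $1/\omega$ is a property of the linear operator $\mathcal{G}_\omega$ acting on the weighted space, obtained by tilting the integration contour inside $D_\e^u$ so that $|e^{\pm\omega i(v-s)}|$ decays exponentially; this is precisely what Proposition~\ref{gomega0} encapsulates and what you correctly identify in your last paragraph as the hard part. Two smaller corrections: $Z_0(v)=8/\sqrt{v^2+2}\to 0$ as $\Rp(v)\to-\infty$, so it is not bounded below on $D_\e^u$; what is actually controlled is the combination $\omega\gamma\theta/Z_0^2=\omega\gamma\theta(v^2+2)/64$, thanks to the decay of $\gamma,\theta$. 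And your weight $W(v)\propto|v|^{-2}$ on the strip $|\Rp(v)|\le\nu$ blows up at $v=0\in D_\e^u$, so the norm gives no control there and completeness becomes delicate; the paper avoids this by using weight $|v^2+2|^2$ on the region $\Rp(v)>-\nu$ (which contains the origin and the near-singularity zone) and reserving $|v|^2$ for $\Rp(v)\le-\nu$.
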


% To prove Proposition \ref{solution0}, w
We look for a fixed point $(\gamma_0^u,\theta_0^u)$ of the operator \begin{equation}\label{fix0}
\mathcal{G}_{\omega,0}=\mathcal{G}_{\omega}\circ\mathcal{F}_0,
\end{equation} where
	\begin{equation}\label{g0}
	\mathcal{G}_{\omega}(\gamma,\theta)(v)=\left(\begin{array}{c}
	\displaystyle\int_{-\infty}^ve^{\omega i(v-r)}\gamma(r)dr\vspace{0.2cm}\\
	\displaystyle\int_{-\infty}^ve^{-\omega i(v-r)}\theta(r)dr
	\end{array}\right),
	\end{equation}
		\begin{equation}\label{F0}
	\mathcal{F}_{0}(\gamma,\theta)(v)=\left(\begin{array}{c}
	\omega i \gamma(v)(\eta_0(v,\gamma(v),\theta(v))-1)- (Q^0)'(v)\vspace{0.2cm}\\
-\omega i \theta(v)(\eta_0(v,\gamma(v),\theta(v))-1)+ (Q^0)'(v)
	\end{array}\right),
	\end{equation}
	and $Q^0, \eta_0$ are given in \eqref{first0} and \eqref{alpha01d}, respectively.

\subsection{Banach Spaces and Technical Lemmas}\label{banach0}

In this section, we introduce a Banach space which will be used to find a fixed point of $\mathcal{G}_{\omega,0}$. 

Consider the complex domain $D^u_{\e}$ given in \eqref{domainlocal}. For each analytic function $f: D^{u}_{\e}\rightarrow \C$, $\nu>0$, $\ag\geq 0$, we consider:
\begin{equation}\label{norm_local}
\|f\|_{\alpha,\nu}=\displaystyle\sup_{v\in D^{u}_{\e}\cap\{\Rp(v)\leq-\nu\}}|v^2f(v)|+ \displaystyle\sup_{v\in D^{ u}_{\e}\cap\{\Rp(v)>-\nu\}}|(v^2+2)^{\alpha}f(v)|. 
\end{equation}
% 
% \begin{remark}
% 	Notice that, if $\ag\geq1$, for each $v\in D^{u}_{\e}$ such that $\Rp(v)>-1$, we have that $M \geq |v^2+2|^{\ag}\geq (\sqrt{\e})^{\ag}$, where $M>0$ is a constant independent of $\e$. 
% \end{remark}
For any $\nu>0$, and $\ag>0$ fixed, the  function space
\begin{equation}
\mathcal{X}_{\alpha,\nu}=\{f: D^{u}_{\e}\rightarrow \C;\ f \textrm{ is an analytic function such that},\ \|f\|_{\alpha,\nu}<\infty\}
\end{equation}
is a Banach space with respect to the norm $\|\cdot\|_{\alpha,\nu}$.

%Consider the product space $\mathcal{X}_{\ag,\nu,\kappa}^2=\mathcal{X}_{\ag,\nu,\kappa}\times \mathcal{X}_{\ag,\nu,\kappa}$ endowed with the norm
%$$\|(f,g)\|_{\ag,\nu,\kappa}=\|f\|_{\ag,\nu,\kappa}+\|g\|_{\ag,\nu,\kappa}.$$

We also consider the product space $$\mathcal{X}_{\ag,\nu}^2=\left\{(f,g)\in\mathcal{X}_{\ag,\nu}\times \mathcal{X}_{\ag,\nu};\ g(v)=\overline{f(v)} \textrm{ for every } v\in D^{u}_{\e}\cap \R\right\}$$ endowed with the norm
$$\|(f,g)\|_{\ag,\nu}=\|f\|_{\ag,\nu}+\|g\|_{\ag,\nu}.$$

% In the sequel, we describe how $\mathcal{G}_{\omega}$ in \eqref{g0} acts on the spaces $\mathcal{X}_{\ag,\nu,\e}^2$.

\begin{prop}\label{gomega0}
	Given $\nu>0$, $\ag> 0$ fixed, and $(f,g)\in \mathcal{X}_{\ag,\nu}^2$, we have that $\mathcal{G}_{\omega}(f,g)\in \mathcal{X}_{\ag,\nu}^2$. Furthermore, there exists a constant $M>0$ independent of $\e$ such that
	$$\left\|\mathcal{G}_{\omega}(f,g)\right\|_{\ag,\nu}\leq \dfrac{M}{\omega}\left\|(f,g)\right\|_{\ag,\nu},$$
	for every $(f,g)\in \mathcal{X}_{\ag,\nu}^2$.
\end{prop}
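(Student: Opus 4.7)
The plan is to estimate the two components of $\mathcal{G}_\omega(f,g)$ separately and combine. By the choice of kernels, the second component is the complex conjugate of the first when restricted to $D^u_\e \cap \R$ (since $g=\overline{f}$ there and the kernel $e^{-\omega i(v-r)}$ is the conjugate of $e^{\omega i(v-r)}$ for real $v,r$), so the conjugacy condition in $\mathcal{X}^2_{\ag,\nu}$ is preserved. Hence it suffices to bound
\begin{equation*}
\mathcal{G}_\omega^{(1)}(f)(v)=\int_{-\infty}^{v}e^{\omega i(v-r)}f(r)\,dr
\end{equation*}
in the weighted norm $\|\cdot\|_{\ag,\nu}$, with the path of integration taken inside $D^u_\e$ and tending to $-\infty$ along the real axis. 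Analyticity of $\mathcal G_\omega^{(1)}(f)$ on $D^u_\e$ follows from Morera's theorem together with absolute convergence of the integral, which is guaranteed by the decay $|f(r)|\leq\|f\|_{\ag,\nu}/|r|^2$ for $\Rp(r)\leq-\nu$.

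The factor $1/\omega$ is extracted by a single integration by parts:
\begin{equation*}
\int_{-\infty}^{v}e^{\omega i(v-r)}f(r)\,dr=-\frac{1}{\omega i}f(v)+\frac{1}{\omega i}\int_{-\infty}^{v}e^{\omega i(v-r)}f'(r)\,dr.
\end{equation*}
The boundary term at $-\infty$ vanishes thanks to the decay of $f$ at $-\infty$, and the term $f(v)/(\omega i)$ is controlled pointwise by $\|f\|_{\ag,\nu}/\omega$ times the reciprocal of the appropriate weight at $v$, which already produces one of the two pieces of $\|\mathcal G_\omega^{(1)}(f)\|_{\ag,\nu}$. For the remaining integral, I would exploit the analyticity of $f$ on $D^u_\e$ and the fact that the domain is a cone with fixed opening $\bg$ independent of $\e$, so that a Cauchy estimate gives $|f'(r)|\lesssim \|f\|_{\ag,\nu}\cdot w(r)^{-1}/\mathrm{dist}(r,\partial D^u_\e)$, where $w(r)=|r|^2$ in the left region and $w(r)=|r^2+2|^\ag$ in the right region, and $\mathrm{dist}(r,\partial D^u_\e)\gtrsim |r|$ uniformly. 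Inserting this bound and integrating along the contour produces an estimate of size $\|f\|_{\ag,\nu}\cdot w(v)^{-1}$, which absorbs into the other piece of the norm and yields the overall $1/\omega$ prefactor.

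The main obstacle is the technical choice of the integration contour for complex $v\in D^u_\e$: it must stay strictly inside $D^u_\e$ at a distance comparable to $|r|$ from the boundary so that the Cauchy estimate for $f'$ gives the correct weight loss. A standard choice is to go horizontally from $-\infty$ to $\Rp(v)$ and then tilt slightly so as to reach $v$ through the interior of the cone, avoiding the singularities of $f$ near $\pm i\sqrt{2}$. A second technical point is the matching at $\Rp(r)=-\nu$ where the weight in the definition of $\|\cdot\|_{\ag,\nu}$ changes form: one splits the integral at this level and uses each of the two bounds in its own subdomain, keeping constants uniform in $\e$ (and in particular independent of $\omega=\Omega/\sqrt\e$). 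Once these are in place, the $1/\omega$ gain from the integration by parts is immediate and the operator bound $\|\mathcal G_\omega(f,g)\|_{\ag,\nu}\leq (M/\omega)\|(f,g)\|_{\ag,\nu}$ follows.
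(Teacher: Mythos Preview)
Your integration-by-parts route has a genuine gap. The key claim that $\mathrm{dist}(r,\partial D^u_\e)\gtrsim |r|$ along the contour fails whenever the endpoint $v$ itself lies near $\partial D^u_\e$: for $v$ close to a tip $\pm i(\sqrt2-\sqrt\e)$ one has $|v|\approx\sqrt2$ while $\mathrm{dist}(v,\partial D^u_\e)$ is arbitrarily small, and any path from $-\infty$ to $v$ is forced close to the boundary near its endpoint. The Cauchy bound for $f'$ then degenerates: along, say, the imaginary segment approaching the upper tip one gets $|f'(r)|\lesssim \|f\|_{\alpha,\nu}\, d(r)^{-1}|r^2+2|^{-\alpha}$ with $d(r)\sim\cos\beta\cdot(\sqrt2-\sqrt\e-\Ip r)$, and the resulting $\int|f'(r)|\,|dr|$ picks up a non-integrable factor $1/d(r)$ near the tip, even after inserting the decay of the kernel along that segment. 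Since the proposition demands the bound uniformly over all $v\in D^u_\e$, this is not a removable technicality: the space $\mathcal X_{\alpha,\nu}$ simply gives no $\e$-uniform control on $f'$ near $\partial D^u_\e$, so any scheme that passes through $f'$ is in trouble.

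The argument from \cite{GOS} that the paper invokes---and carries out explicitly later, in the proof of Proposition~\ref{gammah0}---avoids differentiating $f$ altogether. One deforms the integration path to a tilted ray $r=v+\xi e^{\pm i\beta}$, $\xi\in(-\infty,0)$ (the sign chosen according to the component of $\mathcal G_\omega$), which is parallel to one edge of the cone $D^u_\e$ and hence remains inside it. Along this ray the oscillatory kernel becomes genuinely decaying, $|e^{\pm\omega i(v-r)}|=e^{\omega\xi\sin\beta}$, while one checks that the weight $|r^2+2|^\alpha$ (respectively $|r|^2$ in the far region) is bounded below by a fixed multiple of the weight at $v$. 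Hence
\[
\bigl|\mathcal G_\omega^{(1)}(f)(v)\bigr|\;\le\;\frac{C\,\|f\|_{\alpha,\nu}}{w(v)}\int_{-\infty}^{0}e^{\omega\xi\sin\beta}\,d\xi\;=\;\frac{C\,\|f\|_{\alpha,\nu}}{\omega\sin\beta\;w(v)},
\]
and the factor $1/\omega$ comes directly from integrating the exponential tail of the tilted kernel, with no appeal to $f'$.
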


The proof of Proposition \ref{gomega0} follows from \cite{GOS}. 
% Now, we present some technical lemmas.

\begin{prop}\label{lipschitz0}
	Let $\eta_0$ be the function given in \eqref{alpha01d}, and $\mathcal{F}_0$ given in \eqref{F0}. Given $\nu>0$ and $K>0$, there exist $\e_0>0$ and $M>0$ such that:
	
	For  $0<\e\leq \e_0$ and $(\gamma_j,\theta_j)\in \mathcal{B}_0(R)\subset \mathcal{X}_{2,\nu}^2$ where  $R=K\dfrac{\dg}{\omega^2}$ and $j=1,2$, the following statements hold for $v\in D^u_{\e}$.
	\begin{enumerate}
		\item $\left|\eta_0(v,\gamma_j(v),\theta_j(v))-1\right|\leq M\dg^2;$\vspace{0.2cm}
		\item $\left|\eta_0(v,\gamma_1(v),\theta_1(v))-\eta_0(v,\gamma_2(v),\theta_2(v))\right|\leq M\dg\omega^{2} \|(\gamma_1,\theta_1)-(\gamma_2,\theta_2)\|_{2,\nu};$\vspace{0.2cm}		
\item $\mathcal{F}_0(\gamma_j,\theta_j)\in\mathcal{X}_{2,\nu}^2$;\vspace{0.2cm}		
		\item $\left\|\mathcal{F}_0(\gamma_1,\theta_1)-\mathcal{F}_0(\gamma_2,\theta_2)\right\|_{2,\nu}\leq M\dg^2\omega\|(\gamma_1,\theta_1)-(\gamma_2,\theta_2)\|_{2,\nu}.$	
				
	\end{enumerate}

\end{prop}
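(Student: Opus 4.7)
The plan is to reduce everything to the explicit form of $\eta_0$, using the weighted-norm structure of $\mathcal{X}_{2,\nu}^2$ to tame the apparent blow-up of $\gamma,\theta$ near $v=\pm i\sqrt{2}$ by exploiting the compensating factor $1/Z_0(v)^2$ that appears in $\eta_0$. As a preliminary step I would record the explicit identities that come from $X_0(v)=\arcsinh(v/\sqrt{2})$ together with $F(X)=-2\tanh(X)\sech(X)$: namely $F(X_0(v))=-2\sqrt{2}\,v/(v^2+2)$, $Z_0(v)^2=64/(v^2+2)$, and consequently $(F/Z_0)^2=v^2/(8(v^2+2))$ and $\omega/Z_0^2=\omega(v^2+2)/64$. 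A direct computation also yields an explicit rational formula for $(Q^0)'(v)$ from which $\|(Q^0)'\|_{2,\nu}\leq C\dg/\omega$ follows, a fact that will be used in Step 3.

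Item (1) then follows by writing $\eta_0(v,\gamma,\theta)=(1+A(v)+B(v,\gamma,\theta))^{-1/2}$ with $A(v)=(\dg^2/(2\Omega\omega))\,v^2/(v^2+2)$ and $B(v,\gamma,\theta)=-(\omega/8)(v^2+2)\gamma\theta$. On $D^u_\e$, $|v^2/(v^2+2)|\leq C/\sqrt{\e}$ (worst case $v$ near $\pm i\sqrt 2$), so $|A|\leq C\dg^2/(\Omega\omega\sqrt{\e})=C\dg^2/\Omega^2$ since $\omega\sqrt{\e}=\Omega$. For $B$, the weighted-norm bound gives $|\gamma\theta|\leq R^2/|v^2+2|^4$, whence $|B|\leq C\omega R^2/|v^2+2|^3\leq C\omega R^2/\e^{3/2}$, and substituting $R=K\dg/\omega^2$, $\omega=\Omega/\sqrt{\e}$, $\dg=\e^{3/4}$ shows $|B|\leq C'\dg^2$. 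Since $|A+B|=O(\dg^2)$, a first-order Taylor expansion of $(1+x)^{-1/2}$ gives (1). The same scheme proves (2): a mean-value argument yields $|\eta_0^1-\eta_0^2|\leq C|B_1-B_2|=(C\omega/8)|v^2+2|\,|\gamma_1\theta_1-\gamma_2\theta_2|$; telescoping the product, applying the weighted-norm bounds on the differences, and using $|v^2+2|^{-3}\leq C\e^{-3/2}$ in the worst case gives $|\eta_0^1-\eta_0^2|\leq C\omega R\e^{-3/2}\|(\gamma_1,\theta_1)-(\gamma_2,\theta_2)\|_{2,\nu}$, which is exactly $M\dg\omega^2\|\cdots\|_{2,\nu}$ after substitution.

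Items (3) and (4) follow by combining (1) and (2) with the structure $\mathcal{F}_0^1=\omega i\gamma(\eta_0-1)-(Q^0)'$. From (1) and the norm bound $|\gamma(v)|\cdot|\text{weight}|\leq\|\gamma\|_{2,\nu}$, one gets $\|\omega\gamma(\eta_0-1)\|_{2,\nu}\leq M\dg^2\omega R$, and adding the control of $(Q^0)'$ from Step 1 shows $\mathcal{F}_0$ maps $\mathcal{B}_0(R)$ into $\mathcal{X}_{2,\nu}^2$; the conjugation condition is preserved because $(Q^0)'$ is purely imaginary on $\R$ and $\eta_0$ is real there when $\theta=\overline{\gamma}$. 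For the Lipschitz bound (4), write the difference as $\omega i(\gamma_1-\gamma_2)(\eta_0^1-1)+\omega i\gamma_2(\eta_0^1-\eta_0^2)$; (1) bounds the first piece by $M\dg^2\omega\|\gamma_1-\gamma_2\|_{2,\nu}$ and (2) combined with $|\gamma_2|\cdot|\text{weight}|\leq R$ bounds the second by $M\omega R\cdot\dg\omega^2\|\cdots\|_{2,\nu}=M\dg^2\omega\|\cdots\|_{2,\nu}$ after using $R=K\dg/\omega^2$.

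The main obstacle is purely bookkeeping: one must ensure that, uniformly in $\e$, the single factor of $(v^2+2)$ supplied by $1/Z_0^2$ compensates enough of the $1/|v^2+2|^2$ blow-up of $\gamma,\theta$ that the weighted norm allows near $\pm i\sqrt 2$, where the admissible distance is only $\sqrt\e$. The only real danger is that some surviving power $|v^2+2|^{-k}$ inflates more $\e$-powers than the prefactors can absorb; the explicit identities of Step 1 show that each $\gamma\theta$ in $\eta_0$ (or in $B_1-B_2$) is hit by exactly one $(v^2+2)$, which is just enough, and once the values $R=K\dg/\omega^2$, $\dg=\e^{3/4}$, $\omega=\Omega/\sqrt\e$ are substituted all the powers of $\e$ balance to give the constants $M$ independent of $\e$ claimed in (1)--(4).
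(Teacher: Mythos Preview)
Your proof is correct and follows essentially the same route as the paper: compute the explicit rational form of $\eta_0$ via the identities for $X_0,Z_0,F$, bound the perturbation $A+B$ inside $(1+x)^{-1/2}$ pointwise, telescope $\gamma_1\theta_1-\gamma_2\theta_2$ for the Lipschitz estimate, and then assemble (4) from (1)--(2) exactly as you do. The only cosmetic difference is that the paper treats the two regimes $\Rp(v)\le -\nu$ and $\Rp(v)>-\nu$ of the norm $\|\cdot\|_{2,\nu}$ separately (obtaining the sharper bound $M\dg^2/\omega$ far from the singularities), whereas you work with the uniform worst case near $\pm i\sqrt2$; since that regime dominates, your shortcut is harmless, though you should note explicitly that the bound $|\gamma\theta|\le R^2/|v^2+2|^4$ you use applies only for $\Rp(v)>-\nu$ and that the other regime (weight $|v|^2$) gives something strictly better.
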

\begin{proof}
%	Recall that, by
%	$$\eta_0(v,\Gamma,\Theta)=\left(1+\dfrac{4\dg^2}{\Omega\omega}\left(\dfrac{F(X_0(v))}{Z_0(v)}\right)^2-8\omega\dfrac{\Gamma\Theta}{(Z_0(v))^2}\right)^{-1/2}.$$
	
	Replacing the expressions of $F$, $X_0$ and $Z_0$ given in \eqref{exprUF} and \eqref{par1} in \eqref{alpha01d}, we obtain
	\begin{equation}
	\label{eta0}
\eta_0(v,\gamma,\theta)=\left(1+\dfrac{\dg^2}{4\Omega\omega}\dfrac{v^2}{v^2+2}-(v^2+2)\omega\dfrac{\gamma\theta}{8}\right)^{-1/2}.
	\end{equation}
	
	Taking $\gamma,\theta\in\mathcal{B}_0(R)$, the first statement of the proposition comes from the following inequalities
	\[
	\begin{split}
	\left|\dfrac{\dg^2}{4\Omega\omega}\dfrac{v^2}{v^2+2}-(v^2+2)\omega\dfrac{\gamma\theta}{8}\right|%&\leq&\dfrac{\dg^2}{4\Omega\omega}\left|\dfrac{v^2}{v^2+2}\right|+\dfrac{R^2\omega}{8}\left|\dfrac{v^2+2}{v^4}\right|\vspace{0.2cm}\\ 
%	&\leq&M\left(\dfrac{\dg^2}{\omega}+\dfrac{\dg^2}{\omega^{2\ag-1}}\right)\vspace{0.2cm}\\ 
	\leq&M\dfrac{\dg^2}{\omega}, \qquad \text{if }\Rp(v)\leq -\nu\\
% 	
% 	Now, if , then %in this case recall that $|v|\leq M$ and $M \geq |v^2+2|^{r}\geq \kappa(\sqrt{\e})^{r}$, for each $r\geq 1$. Thus
% 	
% 	$$
% 	\begin{array}{lcl}
	\left|\dfrac{\dg^2}{4\Omega\omega}\dfrac{v^2}{v^2+2}-(v^2+2)\omega\dfrac{\gamma\theta}{8}\right|%&\leq&\dfrac{\dg^2}{4\Omega\omega}\left|\dfrac{v^2}{v^2+2}\right|+\dfrac{R^2\omega}{8}\left|\dfrac{1}{v^2+2}\right|\vspace{0.2cm}\\ 
	%&\leq&M\dfrac{1}{\kappa\sqrt{\e}}\left(\dfrac{\dg^2}{\omega}+\dfrac{\dg^2}{\omega^{2\ag-1}} \right)\vspace{0.2cm}\\ 
	\leq&M\dg^2, \qquad \text{if }\Rp(v)\geq-\nu.
	\end{split}
	\]
% 	
% 	It follows statement $(1)$ of this proposition.
%		$$
%	\begin{array}{lcl}
%	\left|\dfrac{\dg^2}{4\Omega\omega}\dfrac{v^2}{v^2+2}-(v^2+2)\omega\dfrac{\gamma\theta}{8}\right|
%	&\leq&M\dg^2\left(\dfrac{1}{\omega}+ 1\right),
%	\end{array}
%	$$
%	for every $v\in D^u_{\e}$.
%	
%	Considering $\e_0>0$ small enough, we can see that $\eta_0$ is well defined in the ball $\mathcal{B}_0(R)$. Furthermore
%	
%	$$\left|\eta_0(v,\gamma,\theta)-1\right|\leq M \left|\dfrac{\dg^2}{4\Omega\omega}\dfrac{v^2}{v^2+2}-(v^2+2)\omega\dfrac{\gamma\theta}{8}\right|\leq M\dg^2\left(\dfrac{1}{\omega}+1\right).$$
%	
We observe that
\begin{equation}
\label{cota}
	\begin{array}{lcl}
	\left|\eta_0(v,\gamma_1,\theta_1)-\eta_0(v,\gamma_2,\theta_2)\right|%&\leq& M\omega |v^2+2|\left|\gamma_1\theta_1-\gamma_2\theta_2\right|\vspace{0.2cm}\\
   &\leq& M\omega |(v^2+2)\gamma_1(v)||\theta_1(v)-\theta_2(v)|\vspace{0.2cm}\\ &&+M\omega|(v^2+2)\theta_2(v)||\gamma_1(v)-\gamma_2(v)|\vspace{0.2cm}\\
%      &\leq& M\omega R \dfrac{\|(\Gamma_1,\Theta_1)-(\Gamma_2,\Theta_2)\|_{2,\kappa}}{|v^2+2|}\vspace{0.2cm}\\
%       &\leq& M\dfrac{\dg}{\omega} \dfrac{\|(\Gamma_1,\Theta_1)-(\Gamma_2,\Theta_2)\|_{2,\kappa}}{|v^2+2|}\vspace{0.2cm}\\     
	\end{array}
\end{equation}
Thus, if $\Rp(v)\leq -\nu$, then 
\begin{equation}
\label{conta2}
% \begin{array}{lcl}
\left|(v^2+2)\gamma_1(v)(\theta_1(v)-\theta_2(v))\right|\leq R\left|\dfrac{v^2+2}{v^2}\right|\dfrac{\|\theta_1-\theta_2\|_{2,\nu}}{|v|^2}
% \vspace{0.2cm}\\
%&\leq& MR\|\theta_1-\theta_2\|_{2,\nu,\kappa}\vspace{0.2cm}\\
\leq M\dfrac{\dg}{\omega^{2}}\|\theta_1-\theta_2\|_{2,\nu},
% \end{array}
\end{equation}
whereas, if $\Rp(v)\geq -\nu$, 
\begin{equation}
\label{conta3}
\begin{array}{lcl}
\left|(v^2+2)\gamma_1(v)(\theta_1(v)-\theta_2(v))\right|%&\leq& R\dfrac{1}{|v^2+2|^{\ag-1}}\dfrac{\|\theta_1-\theta_2\|_{\ag,\nu,\e}}{|v^2+2|^\ag}\vspace{0.2cm}\\
%&\leq& \dfrac{MR\|\theta_1-\theta_2\|_{\ag,\nu,\e}}{|v^2+2|^{2\ag-1}}\vspace{0.2cm}\\
&\leq& M\dfrac{\dg}{\sqrt{\e}}\|\theta_1-\theta_2\|_{2,\nu}.
\end{array}
\end{equation}
Recalling that $\omega=\Omega/\sqrt{\e}$ and joining \eqref{conta2} and \eqref{conta3}, we obtain that estimate \eqref{conta3} holds in $ D^{u}_{\e}$.  The other term in \eqref{cota} is bounded in an analogous way. Thus, statement $(2)$ holds.

%A similar argument allows us to see that 
%
%$$
%\begin{array}{lcl}
%\left|(v^2+2)\theta_2(v)(\gamma_1(v)-\gamma_2(v))\right|
%&\leq& M\dfrac{\dg}{(\sqrt{\e})^{\ag-1}}\|\gamma_1-\gamma_2\|_{\ag,\nu,\e},
%\end{array}
%$$
%
%for each $v\in D^{u}_{\e}$. Thus, for every $v\in D^u_{\e}$
%
%	$$
%\begin{array}{lcl}
%\left|\eta_0(v,\gamma_1,\theta_1)-\eta_0(v,\Gamma_2,\Theta_2)\right|
%       &\leq& M\dg\omega^{\ag}\|(\gamma_1,\theta_1)-(\gamma_2,\theta_2)\|_{\ag,\nu,\e}.\vspace{0.2cm}\\     
%\end{array}
%$$
If $(\gamma_j,\theta_j)\in\mathcal{X}^2_{2,\nu}$, then $\eta_0(v,\gamma_j,\theta_j)\in\R$, for each $v\in D_{\e}^u\cap\R$, thus, it is clear that $\mathcal{F}_0(\gamma_j,\theta_j)\in\mathcal{X}_{2,\nu}^2$.

Finally, for $v\in D^u_{\e}$,
$$
\begin{array}{lcl}
\left|\pi_1\circ\mathcal{F}_0(\gamma_1,\theta_1)(v)-\pi_1\circ\mathcal{F}_0(\gamma_2,\theta_2)(v)\right|&=& \omega\left| \gamma_1(v)(\eta_0(v,\gamma_1,\theta_1)-1)- \gamma_2(v)(\eta_0(v,\gamma_2,\theta_2)-1)\right|\vspace{0.2cm}\\
%%&+& \left\|\omega i \Theta_1(\ag_h(v,\Gamma_1,\Theta_1)-1)- \omega i \Theta_2(\ag_h(v,\Gamma_2,\Theta_2)-1)\right\|_2\vspace{0.2cm}\\
%&\leq& \omega\left|\eta_0(v,\gamma_1,\theta_1)-1\right||\gamma_1(v)-\gamma_2(v)|\vspace{0.2cm}\\
%&+&\omega|\gamma_2(v)||\eta_0(v,\gamma_1,\theta_1)-\eta_0(v,\gamma_2,\theta_2)|\vspace{0.2cm}\\
&\leq& M\dg^2\left(\dfrac{1}{\omega}+1\right)\omega|\gamma_1(v)-\gamma_2(v)|\vspace{0.2cm}\\
&&+ M\dg\omega^{3} \|(\gamma_1,\theta_1)-(\gamma_2,\theta_2)\|_{2,\nu}|\gamma_2(v)|.
\end{array}
$$
Therefore,
$$
\begin{array}{lcl}
\left\|\pi_1\circ\mathcal{F}_0(\gamma_1,\theta_1)-\pi_1\circ\mathcal{F}_0(\gamma_2,\theta_2)\right\|_{2,\nu}
&\leq& M\dg^2\left(\dfrac{1}{\omega}+1\right)\omega\|\gamma_1-\gamma_2\|_{2,\nu}\vspace{0.2cm}\\
&&+ MR\dg\omega^{3} \|(\gamma_1,\theta_1)-(\gamma_2,\theta_2)\|_{2,\nu}\vspace{0.2cm}\\
%&\leq& M\dg^2\omega\left(\dfrac{1}{\omega}+\kappa_0^{-1}+1\right)\|(\gamma_1,\theta_1)-(\gamma_2,\theta_2)\|_{\ag,\nu,\kappa}\vspace{0.2cm}\\
&\leq& M\dg^2\omega\|(\gamma_1,\theta_1)-(\gamma_2,\theta_2)\|_{2,\nu}.
\end{array}
$$
We can prove the same bound for the second coordinate of $\mathcal{F}_{0}$ analogously. 
\end{proof}

\begin{prop}\label{firstiteration0}
	Consider the operator $\mathcal{G}_{\omega,0}=\mathcal{G}_{\omega}\circ\mathcal{F}_0$, where $\mathcal{G}_{\omega}$ and $\mathcal{F}_0$ are given in \eqref{g0} and \eqref{F0}. Given $\nu>0$,  there exists a constant $M>0$ independent of $\e$, such that
	%$$\left\|\mathcal{F}_{0}(0,0)\right\|_{\ag,\nu}\leq M\dfrac{\dg}{\omega}$$
	 %Consequently, by Proposition \ref{gomega0}, $\mathcal{G}_{\omega,0}(0,0)\in\mathcal{X}^2_{\ag,\nu}$, and
	$$\left\|\mathcal{G}_{\omega,0}(0,0)\right\|_{2,\nu}\leq M\dfrac{\dg}{\omega^2}.$$	
\end{prop}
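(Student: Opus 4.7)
The plan is to evaluate $\mathcal{G}_{\omega,0}(0,0) = \mathcal{G}_\omega(\mathcal{F}_0(0,0))$ directly and then invoke Proposition \ref{gomega0} to gain one factor of $1/\omega$. Looking at \eqref{F0}, both components of $\mathcal{F}_0$ contain the multipliers $\pm \omega i \gamma$, $\mp\omega i\theta$ in front of the $(\eta_0-1)$ terms, so those pieces vanish at $(\gamma,\theta)=(0,0)$ and one is left with
$$\mathcal{F}_0(0,0)(v) = \bigl(-(Q^0)'(v),\, (Q^0)'(v)\bigr).$$
Thus it suffices to prove $\|(Q^0)'\|_{2,\nu}\le M\dg/\omega$ in each component, because Proposition \ref{gomega0} then yields $\|\mathcal{G}_{\omega,0}(0,0)\|_{2,\nu}\le (M/\omega)\|\mathcal{F}_0(0,0)\|_{2,\nu}\le M\dg/\omega^2$.

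The next step is to write $(Q^0)'$ explicitly. Using $X_0(v)=\arcsinh(v/\sqrt{2})$ from \eqref{par1} and $F(X)=-2\tanh(X)\sech(X)$ from \eqref{exprUF}, one computes $\sinh X_0 = v/\sqrt{2}$ and $\cosh X_0=\sqrt{(v^2+2)/2}$, hence $F(X_0(v))=-2\sqrt{2}\,v/(v^2+2)$. Substituting into \eqref{first0} gives
$$Q^0(v)=\frac{2i\dg}{\omega\sqrt{\Omega}}\cdot\frac{v}{v^2+2},\qquad (Q^0)'(v)=\frac{2i\dg}{\omega\sqrt{\Omega}}\cdot\frac{2-v^2}{(v^2+2)^2}.$$
With this closed form, verifying the two-piece norm \eqref{norm_local} reduces to two elementary estimates on $D^u_\e$: for $\Rp(v)\le-\nu$ the bound $|v^2(Q^0)'(v)|\le M\dg/\omega$ follows because numerator and denominator both grow like $|v|^4$ along the sector $D^u_\e$; for $\Rp(v)>-\nu$ the variable $v$ is confined to a bounded region of the complex plane, so $|(v^2+2)^2(Q^0)'(v)|=(2\dg/(\omega\sqrt{\Omega}))|2-v^2|\le M\dg/\omega$.

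Chaining these observations gives $\|\mathcal{F}_0(0,0)\|_{2,\nu}\le M\dg/\omega$, and then Proposition \ref{gomega0} delivers $\|\mathcal{G}_{\omega,0}(0,0)\|_{2,\nu}\le M\dg/\omega^2$. I do not expect a real obstacle here: the proposition is the zeroth iterate of the fixed point scheme, whose smallness is simply inherited from the smallness of the forcing term $(Q^0)'$. The only care needed is to check that the constants are uniform in $\e\in(0,\e_0)$, which uses that $\Omega=\sqrt{1-\e^2/4}$ is bounded away from $0$, and to check that the relevant quotients of polynomials in $v$ remain bounded across the whole domain $D^u_\e$ defined in \eqref{domainlocal}.
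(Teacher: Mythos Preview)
Your proof is correct and follows essentially the same route as the paper: compute $\mathcal{F}_0(0,0)=(-(Q^0)',(Q^0)')$, bound $\|(Q^0)'\|_{2,\nu}\le M\dg/\omega$ by the explicit rational expression in $v$, and then apply Proposition~\ref{gomega0} to gain the extra factor $1/\omega$. The only point the paper makes explicit that you leave implicit is the conjugacy relation $\overline{-(Q^0)'(v)}=(Q^0)'(v)$ for real $v$, needed so that $\mathcal{F}_0(0,0)\in\mathcal{X}_{2,\nu}^2$; this is immediate from your formula since $(Q^0)'(v)$ is purely imaginary on $\R$.
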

\begin{proof}
	Recall that $\mathcal{F}_0(0,0)=(- (Q^0)'(v), (Q^0)'(v))$, where $Q^0$ is given by \eqref{first0}.
	Thus $\overline{\pi_1\circ \mathcal{F}_0(0,0)(v)}=\pi_2\circ \mathcal{F}_0(0,0)(v)$, for each $v\in D^u_{\e}\cap\R$ and
	$$\left\|\mathcal{F}_{0}(0,0)\right\|_{2,\nu}= 2 \dfrac{\dg}{\omega\sqrt{2\Omega}}\|F(X_0)'\|_{2,\nu}.$$
	
	A straightforward computation shows that 
	$$F(X_0(v))'=\dfrac{2\sqrt{2}(v^2-2)}{(v^2+2)^2}$$
	Then, 
	\[ 
	\begin{split}
	|v^2F(X_0(v))'|&\leq M\qquad \text{for }\Rp(v)\leq -\nu,\\
	|(v^2+2)^{2}F(X_0(v))'|\leq M|v^2+2|&\leq M\qquad \text{for } \Rp(v)\geq -\nu.
\end{split}
\]
The result follows directly from these bounds and Proposition \ref{gomega0}.
\end{proof}

%\begin{remark}
%	Notice that, if $0<\ag<2$, then $$\left\|\mathcal{F}_{0}(0,0)\right\|_{\ag,\nu,\e}\leq M\dfrac{\dg}{\omega}\omega^{2-\ag}.$$
%	It means that $\ag=2$ is the smallest $\ag>0$ such that the first iteration $\mathcal{G}_{\omega,0}(0,0)$ has its maximum size in the norm $\|\cdot\|_{\ag,\nu,\e}$.
%\end{remark}

\subsection{The Fixed Point argument}

Finally, we are able to prove the existence of a fixed point of $\mathcal{G}_{\omega,0}$.

\begin{prop}\label{fixpoint0}
	Given $\nu>0$ fixed. There exists $\e_0>0$ such that for $\e\leq \e_0$, the operator $\mathcal{G}_{\omega,0}$ has a  fixed point $(\gamma_0^{u},\theta_{0}^{u})$ in $\mathcal{X}_{2,\nu}^2$. Furthermore, there exists a constant $M>0$ independent of $\e$ such that
	$$\|(\gamma_{0}^{u},\theta_{0}^{u})\|_{2,\nu}\leq M\dfrac{\dg}{\omega^2}.$$
\end{prop}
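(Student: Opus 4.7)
The plan is to invoke the Banach fixed-point theorem for $\mathcal{G}_{\omega,0}=\mathcal{G}_\omega\circ\mathcal{F}_0$ on a closed ball $\mathcal{B}_0(R)\subset\mathcal{X}_{2,\nu}^2$ of radius $R=K\dg/\omega^2$, where $K>0$ is a constant to be fixed. Combining Propositions \ref{gomega0} and \ref{lipschitz0}, for any $(\gamma_1,\theta_1),(\gamma_2,\theta_2)\in\mathcal{B}_0(R)$ we obtain
\[
\bigl\|\mathcal{G}_{\omega,0}(\gamma_1,\theta_1)-\mathcal{G}_{\omega,0}(\gamma_2,\theta_2)\bigr\|_{2,\nu}\le \frac{M}{\omega}\cdot M\dg^2\omega\,\bigl\|(\gamma_1,\theta_1)-(\gamma_2,\theta_2)\bigr\|_{2,\nu}=M^2\dg^2\,\bigl\|(\gamma_1,\theta_1)-(\gamma_2,\theta_2)\bigr\|_{2,\nu}.
\]
Since $\dg=\e^{3/4}$, for $\e_0$ sufficiently small we have $M^2\dg^2<\tfrac12$, so $\mathcal{G}_{\omega,0}$ is a contraction on $\mathcal{B}_0(R)$.

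To see that $\mathcal{G}_{\omega,0}$ maps $\mathcal{B}_0(R)$ into itself, I would split
\[
\bigl\|\mathcal{G}_{\omega,0}(\gamma,\theta)\bigr\|_{2,\nu}\le \bigl\|\mathcal{G}_{\omega,0}(\gamma,\theta)-\mathcal{G}_{\omega,0}(0,0)\bigr\|_{2,\nu}+\bigl\|\mathcal{G}_{\omega,0}(0,0)\bigr\|_{2,\nu}.
\]
By the contraction estimate above the first term is bounded by $M^2\dg^2 R=M^2K\dg^3/\omega^2$, while Proposition \ref{firstiteration0} bounds the second term by $M_0\dg/\omega^2$ for some constant $M_0$ independent of $\e$. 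Choosing $K=2M_0$ and then shrinking $\e_0$ so that $M^2\dg^2\le\tfrac12$, the total stays below $K\dg/\omega^2=R$, confirming invariance of the ball.

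It remains to check that the fixed point produced lies in the subspace $\mathcal{X}_{2,\nu}^2$ carrying the conjugate-symmetry constraint $\theta(v)=\overline{\gamma(v)}$ for $v\in D_\e^u\cap\R$. This follows because $\mathcal{F}_0$ maps the subspace to itself (the bracketed terms in \eqref{F0} are swapped conjugates when the argument is, thanks to the real-valuedness of $\eta_0$ shown in the proof of Proposition \ref{lipschitz0}, and $(Q^0)'$ is purely imaginary on the real line by \eqref{first0}), and the integral operator $\mathcal{G}_\omega$ in \eqref{g0} also preserves this symmetry since the two components are complex-conjugate kernels integrated along the real axis. Hence the iterates starting from $(0,0)\in\mathcal{X}_{2,\nu}^2$ remain in $\mathcal{X}_{2,\nu}^2$, and their limit $(\gamma_0^u,\theta_0^u)$ inherits the symmetry.

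The only nontrivial point is the bookkeeping to ensure the constants $M$ in Propositions \ref{gomega0}, \ref{lipschitz0}, and \ref{firstiteration0} combine so that $\mathcal{G}_{\omega,0}$ preserves $\mathcal{B}_0(R)$ with the specific radius $R=K\dg/\omega^2$; this fixes the order of choice as first selecting $K$ from the first-iteration bound and then shrinking $\e_0$ to control the nonlinear correction. Once this is arranged, the Banach fixed-point theorem yields a unique $(\gamma_0^u,\theta_0^u)\in\mathcal{B}_0(R)$ with $\|(\gamma_0^u,\theta_0^u)\|_{2,\nu}\le K\dg/\omega^2$, which is the claim of the proposition.
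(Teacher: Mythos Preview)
Your proof is correct and follows essentially the same approach as the paper's: both apply the Banach fixed-point theorem to $\mathcal{G}_{\omega,0}$ on a ball of radius proportional to $\dg/\omega^2$, using Proposition~\ref{firstiteration0} for the first-iteration bound and Propositions~\ref{gomega0} and~\ref{lipschitz0} to obtain a Lipschitz constant of order $\dg^2$. Your write-up is slightly more explicit than the paper's in verifying that the ball is mapped into itself and that the conjugate-symmetry constraint is preserved, but the argument is the same.
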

\begin{proof}
	From Proposition \ref{firstiteration0}, there exists a  constant $b_1>0$	independent of $h$ and $\e$ such that
	$$\left\|\mathcal{G}_{\omega,0}(0,0)\right\|_{2,\nu}\leq \dfrac{b_1}{2}\dfrac{\dg}{\omega^2}.$$
	Given $(\gamma_1,\theta_1),(\gamma_2,\theta_2)\in\mathcal{B}_{0}(b_1\dg/\omega^2)\subset \mathcal{X}_{2,\nu}^{2}$, we can use Propositions \ref{gomega0}, \ref{lipschitz0} (with $K=b_1$) and the linearity of the operator $\mathcal{G}_{\omega}$ to see that
	$$
	\begin{array}{lcl}
	\left\|\mathcal{G}_{\omega,0}(\gamma_1,\theta_1)-\mathcal{G}_{\omega,0}(\gamma_2,\theta_2)\right\|_{2,\nu}&\leq& \dfrac{M}{\omega}\left\|\mathcal{F}_0(\gamma_1,\theta_1)-\mathcal{F}_0(\gamma_2,\theta_2)\right\|_{2,\nu}\vspace{0.2cm}\\
	&\leq& M\dg^2\|(\gamma_1,\theta_1)-(\gamma_2,\theta_2)\|_{2,\nu}.
	\end{array}
	$$
Thus, choosing $\e_0$ sufficiently small, we have that $\mathrm{Lip}(\mathcal{G}_{\omega,0})\leq 1/2$. Also, it follows that $\overline{\pi_1\circ\mathcal{G}_{\omega,0}(\gamma,\theta)(v)}=\pi_2\circ\mathcal{G}_{\omega,0}(\gamma,\theta)(v)$, for each $v\in D^u_{\e}\cap\R$ and $(\gamma,\theta)\in \mathcal{B}_{0}(b_1\dg/\omega^2)$.
	
	Therefore $\mathcal{G}_{\omega,0}$ sends the ball $\mathcal{B}_{0}(b_1\dg/\omega^2)$ into itself and it is a contraction. Thus, it has a unique fixed point $(\gamma_{0}^{u},\theta_{0}^{u})\in \mathcal{B}_{0}(b_1\dg/\omega^2)$. 
\end{proof}

Proposition \ref{solution0} is a consequence of Proposition \ref{fixpoint0}. %Moreover, the results of Theorem \ref{solution0} and Lemma \ref{formabuena0} proves Theorem \ref{parameterization1D0}.

\section{Proof of Theorem \ref{splitting_thm}}\label{sec_thmA}

\subsection{The Difference Map}\label{dif_sec}

In Proposition \ref{fixpoint0}, we have found complex functions $\Gamma^{\star}_{0}=Q^0+\gamma_0^{\star}$ and $\Theta^{\star}_{0}=-Q^0+\theta^{\star}_{0}$ defined in the complex domains $D^{\star}_{\e}$, respectively, such that,
$$N_{0,0}^{\star}(v)=(X_0(v),Z^{\star}_{0}(v), \Gamma^{\star}_{0}(v),\Theta^{\star}_{0}(v)),$$
are parameterizations of $W^{\star}_{\dg}(p_0^{\mp})$ of \eqref{complexcoord_system}.
% , for each $v\in D^{\star}_{\e}\cap \R$, where $X_0$ is given in \eqref{par1} and $Z^{\star}_{0}(v)=Z(v,\Gamma^{\star}_{0}(v),\Theta^{\star}_{0}(v))$ (see \eqref{Z}), and $\star=u,s$.
Both $(\Gamma^{u}_{0},\Theta^{u}_{0})$ and $(\Gamma^{s}_{0},\Theta^{s}_{0})$ are defined in the complex domain
\begin{equation}\label{domain_dif}
	\mathcal{D}_{\e}=D^u_{\e}\cap D^s_{\e}.
\end{equation}
Note that $0\in \mathcal{I}_{\e}:=\mathcal{D}_{\e}\cap \R$. To prove that the heteroclinic connection between $p_0^-$ and $p_0^+$ of \eqref{complexcoord_system} is broken for $\e>0$ sufficiently small, it is sufficient to show that
%It is immediate that
\begin{equation}
	\left|N_{0,0}^{u}(v)-N_{0,0}^{s}(v)\right|\geq \left|(\Gamma^{u}_{0},\Theta^{u}_{0})(v)-(\Gamma^{s}_{0},\Theta^{s}_{0})(v)\right|>0,
\end{equation}
for some $v\in\mathcal{I}_{\e}$.
%for each $v\in\mathcal{I}_{\e}$.
%
%Thus, in order to prove that the heteroclinic connection between $p_0^-$ and $p_0^+$ of \eqref{complexcoord_system} is broken for $\e>0$ sufficiently small, it is sufficient to show that
%$$
%\left|(\Gamma^{u}_{0},\Theta^{u}_{0})(v)-(\Gamma^{s}_{0},\Theta^{s}_{0})(v)\right|>0,
%$$
%for some $v\in\mathcal{I}_{\e}$. 
To this end, we study the difference map
\begin{equation}\label{deltaxi}
	\Delta\xi(v)=\left(\begin{array}{l}
	\Gamma^{u}_{0}(v)-\Gamma^{s}_{0}(v)\vspace{0.2cm}\\
	\Theta^{u}_{0}(v)-\Theta^{s}_{0}(v)
	\end{array}\right)=\left(\begin{array}{l}
	\gamma^{u}_{0}(v)-\gamma^{s}_{0}(v)\vspace{0.2cm}\\
	\theta^{u}_{0}(v)-\theta^{s}_{0}(v)
	\end{array}\right),
\end{equation}
where $(\gamma^{\star}_{0},\theta^{\star}_{0})$, $\star=u,s$, are given by Proposition \ref{fixpoint0}. 
% Due to the exponentially smallness of $\Delta\xi(v)$, for $v\in\mathcal{I}_{\e}$, with respect to $\e$, we must study $\Delta\xi$ in $\mathcal{D}_{\e}$.

%Now, we describe the difference map $\Delta\xi$ as a solution of a certain differential equation. 
%For technical reasons, we have must consider the \textbf{extended difference map} $\Delta\xi=(\Delta,\overline{\Delta})$.

\begin{prop}\label{propchata}
	The difference map $\Delta\xi$ satisfies the differential equation:
	\begin{equation}\label{dif_eq}
		\Delta\xi'= A\Delta\xi+ B(v)\Delta\xi,
	\end{equation}
where
\begin{equation}
\label{matrix}
A=\left(\begin{array}{cc}
\omega i & 0\\
0 & -\omega i
\end{array}\right) \textrm{ and }B(v)=\left(\begin{array}{cc}
b_{1,1}(v) & b_{1,2}(v)\\
b_{2,1}(v) & b_{2,2}(v)
\end{array}\right),
\end{equation}
%	with
%	\begin{equation}\label{bjk}
%	 b_{j,k}(v)=\displaystyle\int_0^1 \dfrac{\partial (\pi_j\circ g_v)}{\partial z_k}(\phi(\sigma))d\sigma\ (j,k=1,2),
%	\end{equation}
%	 where $g_v$ is given by 
%	$$g_v(z_1,z_2)=\left(\begin{array}{c}
%	z_1\cdot(\eta_0(v,z_1,z_2)-1)\\
%	z_2\cdot(\eta_0(v,z_1,z_2)-1)
%	\end{array}\right),$$ with $\eta_0$ given in \eqref{eta0} and 
%	$$\phi(\sigma)=\left(\begin{array}{c}
%	\gamma_{0}^{u}(v)\sigma+\gamma_{0}^{s}(v)(1-\sigma)\vspace{0.2cm}\\
%	\theta_{0}^{u}(v)\sigma+\theta_{0}^{s}(v)(1-\sigma)
%	\end{array}\right).
%	$$
%	(\Rp(\Gamma^{}_1(v))\sigma +\Rp(\Gamma_1^s(v))(\sigma-1), \Ip(\Gamma_1^u(v))\sigma +\Ip(\Gamma_1^s(v))(\sigma-1)).$$
and there exists a constant $M$  independent of $\e$, such that for $v\in\mathcal{D}_{\e}$,
\begin{equation}\label{bjkbound}|b_{j,k}(v)|\leq M\omega\dg^2,\quad j,k=1,2.\end{equation}
\end{prop}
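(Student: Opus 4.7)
The plan is to derive the ODE for $\Delta\xi$ by subtracting the two instances of \eqref{edo restada} satisfied by $(\gamma_0^u,\theta_0^u)$ and $(\gamma_0^s,\theta_0^s)$, both supplied by Proposition \ref{fixpoint0}. Since the inhomogeneity $(Q^0)'(v)$ is independent of $(\gamma,\theta)$, it cancels, and the linear terms $\pm\omega i\gamma$ and $\mp\omega i\theta$ produce exactly $A\Delta\xi$. The only remaining contribution to $(\Delta\gamma)'-\omega i\Delta\gamma$ is $\omega i\gamma_0^u(\eta_0^u-1)-\omega i\gamma_0^s(\eta_0^s-1)$, where I abbreviate $\eta_0^\star := \eta_0(v,\gamma_0^\star,\theta_0^\star)$. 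I rewrite it as $\omega i\Delta\gamma(\eta_0^s-1)+\omega i\gamma_0^u(\eta_0^u-\eta_0^s)$, which is manifestly linear in $\Delta\xi$ once I linearize the factor $\eta_0^u-\eta_0^s$.

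For that linearization, I apply the mean value theorem to $\eta_0(v,\cdot,\cdot)$ along the segment joining $(\gamma_0^u(v),\theta_0^u(v))$ and $(\gamma_0^s(v),\theta_0^s(v))$, both contained in the ball $\mathcal B_0(b_1\dg/\omega^2)$ of Proposition \ref{fixpoint0}. Using the explicit formula \eqref{eta0} and the pointwise estimates performed in the proof of Proposition \ref{lipschitz0}(2), I obtain $\eta_0^u-\eta_0^s=A_1(v)\Delta\gamma+A_2(v)\Delta\theta$ with $|A_j(v)|\leq M\omega|v^2+2|$ times the supremum of $|\gamma|$ or $|\theta|$ along that segment. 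Collecting terms yields the entries $b_{1,1},b_{1,2}$; the analogous computation starting from the $\theta$-equation in \eqref{edo restada} gives the second row of $B$, and the reality conjugation condition in Theorem \ref{parameterization1D0} transfers immediately to $B$.

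The bound \eqref{bjkbound} then follows from two ingredients. Proposition \ref{lipschitz0}(1) provides $|\omega(\eta_0^s-1)|\leq M\omega\dg^2$, controlling the first summand. For the coupling terms, combining $|A_j|$ with $|\gamma_0^u(v)|\leq M\dg/(\omega^2|v^2+2|^2)$ from Theorem \ref{parameterization1D0} yields $|\omega\gamma_0^u A_j|\leq M\dg^2/(\omega^2|v^2+2|^3)$. The main (mild) subtlety I anticipate is the worst-case behavior of $|v^2+2|^{-1}$ on the complex domain $\mathcal D_\e$: because the $\sqrt{\e}$-buffer in \eqref{domainlocal} keeps $v\in\mathcal D_\e$ away from the singularities $\pm i\sqrt{2}$ of $N_{0,0}$, one has $|v^2+2|\geq c\sqrt{\e}$ in the worst case, so the previous quantity is $O(\dg^2/(\omega^2\e^{3/2}))=O(\omega\dg^2)$ via $\omega=\Omega/\sqrt{\e}$. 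I expect no other obstacle; the proof reduces to careful bookkeeping on top of estimates already explicit in the proof of Proposition \ref{lipschitz0}.
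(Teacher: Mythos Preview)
Your proposal is correct and follows essentially the same approach as the paper. The paper applies the Integral Mean Value Theorem directly to the map $g(v,z_1,z_2)=(i\omega z_1(\eta_0-1),-i\omega z_2(\eta_0-1))$ to produce the matrix $B$, whereas you first perform an add--and--subtract and then linearize only $\eta_0$; both decompositions are equivalent and lead to the same bound $|b_{j,k}|\leq M\omega\delta^2$ via Propositions \ref{lipschitz0} and \ref{fixpoint0} together with the fact that $|v^2+2|\geq c\sqrt{\varepsilon}$ on $\mathcal D_\varepsilon$.
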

\begin{proof}
Recall that both $(\gamma^{u,s}_{0},\theta^{u,s}_{0})$ satisfy \eqref{edo restada}  and therefore
	$$
	\left(\begin{array}{l}
	\gamma'-\omega i\gamma\\
	\theta'+\omega i\theta 
	\end{array}\right)=\mathcal{F}_0(\gamma,\theta),$$
%	\Gamma'=-\omega i \Gamma+G(v,\Gamma),$$
	where $\mathcal{F}_0$ is given in \eqref{F0}. Therefore $\Delta\xi$ satisfies
	\begin{equation}
		\begin{array}{lcl}
			\Delta\xi'&=&A\Delta\xi +G(v),
		\end{array}	
	\end{equation}
%	where
%	$$G(v)=A\left(\begin{array}{c}
%	\gamma_{0}^{u}(v)\left(\eta_0(v,\gamma_{0}^{u},\theta_{0}^{u})-1\right) -\gamma_{0}^{s}(v)\left(\eta_0(v,\gamma_{0}^{s},\theta_{0}^{s})-1\right) \vspace{0.2cm}\\
%		\theta_{0}^{u}(v)\left(\eta_0(v,\gamma_{0}^{u},\theta_{0}^{u})-1\right) -\theta_{0}^{s}(v)\left(\eta_0(v,\gamma_{0}^{s},\theta_{0}^{s})-1\right) 
%	\end{array}\right),$$
where	$G(v)=g(v,\gamma_{0}^{u}(v),\theta_{0}^{u}(v))- g(v,\gamma_{0}^{s}(v),\theta_{0}^{s}(v))$,
with $$g(v,z_1,z_2)=\left(\begin{array}{c}
i\omega z_1(\eta_0(v,z_1,z_2)-1)\\
-i\omega z_2(\eta_0(v,z_1,z_2)-1)
\end{array}\right).$$

Notice that $G(v)$ is a known function, since $(\gamma^{u,s}_0,\theta^{u,s}_{0})$ are given by Proposition \ref{fixpoint0}. We apply the Integral Mean Value Theorem to obtain
%\begin{equation}
%\label{g}
%	\begin{array}{lcl}
%	g(v,\gamma_{0}^{u},\theta_{0}^{u})- g(v,\gamma_{0}^{s},\theta_{0}^{s})&=&(\eta_0(v,\gamma_{0}^{u},\theta_{0}^{u})-1)\cdot\left(\begin{array}{l}
%	\gamma_0^u-\gamma_0^s\vspace{0.2cm}\\
%	\theta_0^u-\theta_0^s
%	\end{array}\right)+(\eta_0(v,\gamma_{0}^{u},\theta_{0}^{u})-\eta_0(v,\gamma_{0}^{s},\theta_{0}^{s}))\cdot\left(\begin{array}{l}
%	\gamma_0^s\vspace{0.2cm}\\
%	\theta_0^s
%	\end{array}\right).
%	\end{array}
%\end{equation}
%	
%Considering the following curve in $\C^2$ $$\phi(\sigma)=\left(\begin{array}{c}
%\gamma_{0}^{u}(v)\sigma+\gamma_{0}^{s}(v)(1-\sigma)\vspace{0.2cm}\\
%\theta_{0}^{u}(v)\sigma+\theta_{0}^{s}(v)(1-\sigma)
%\end{array}\right),$$
%with $0\leq \sigma\leq 1$, we apply the Mean Value Theorem to obtain that
%
%\begin{equation}
%\label{eta0mean}
%\begin{array}{lcl}
%\eta_0(v,\gamma_{0}^{u},\theta_{0}^{u})-\eta_0(v,\gamma_{0}^{s},\theta_{0}^{s})&=&
%\eta_{1}(v)(\gamma_0^u-\gamma_0^s)+\eta_2(v)(\theta_0^u-\theta_0^s),
%\end{array}
%\end{equation}
%where $\eta_1,\eta_2$ are analytic functions. It follows from Proposition \ref{lipschitz0} and Theorem \ref{fixpoint0} that $|\eta_1(v)|,|\eta_2(v)|\leq M\dg^2$, for each $v\in D^u_\e$. Replacing \eqref{eta0mean} in \eqref{g}, we write

\begin{equation}
\label{gg}
\begin{array}{lcl}
g(v,\gamma_{0}^{u},\theta_{0}^{u})- g(v,\gamma_{0}^{s},\theta_{0}^{s})&=&\left(\begin{array}{cc}
b_{1,1}(v)&b_{1,2}(v)\vspace{0.2cm}\\
b_{2,1}(v)&b_{2,2}(v)\\
\end{array}\right)\cdot\left(\begin{array}{l}
\gamma_0^u-\gamma_0^s\vspace{0.2cm}\\
\theta_0^u-\theta_0^s
\end{array}\right),
\end{array}
\end{equation}
where $b_{j,k}$ are analytic functions, $j,k=1,2$. Estimate \eqref{bjkbound} follows from Propositions \ref{lipschitz0} and   \ref{fixpoint0}. %that $|b_{j,k}(v)|\leq M\omega\dg^2$, for each $v\in D^u_\e$, $j,k=1,2$. 
\end{proof}

%In what follows, we provide an asymptotic formula of $\Delta\xi(v)$, for $v\in \mathcal{I}_{\e}$.

\subsection{Exponentially Small Splitting of $W^{u}_{\e}(p_0^{-})$ and $W^{s}_{\e}(p_0^{+})$}\label{splitting_sec}

We study the solutions of \eqref{dif_eq}. Notice that, if $B=0$, then any analytic solution of \eqref{dif_eq} which is bounded in $\mathcal{D}_{\e}$ is exponentially small with respect to $\e$ for real values  $v\in\mathcal{I}_{\e}$. In this section, we follow ideas from \cite{BS06} to prove that the same holds for solutions of the full equation \eqref{dif_eq} using that $B$ (given in \eqref{matrix}) is small  for $\e$ small enough.

We are interested in obtaining an asymptotic expression for $\Delta\xi$ given in \eqref{deltaxi}. From  Proposition \ref{fixpoint0}, we have that $(\gamma_{0}^{u,s},\theta_0^{u,s})$ is obtained as a fixed point of $\mathcal{G}_{\omega,0}^{u,s}$. Thus, the difference map can be expressed as $$\Delta\xi=\mathcal{G}_{\omega,0}^u(\gamma_{0}^{u},\theta_{0}^{u})-\mathcal{G}_{\omega,0}^s(\gamma_{0}^{s},\theta_{0}^{s}).$$ 

Therefore, as $\gamma_{0}^{u,s},\theta_{0}^{u,s}$ are small, it suggests that the dominant part of $\Delta\xi$ should be given by $\mathcal{M}=\mathcal{G}_{\omega,0}^{u}(0,0)-\mathcal{G}_{\omega,0}^{s}(0,0)$. For this reason, we decompose    
\begin{equation}\label{decomp}
\Delta\xi=\mathcal{M}+\Delta\xi_1,
\end{equation}
where $\mathcal{M}=(\mathcal{M}_\Gamma,\mathcal{M}_{\Theta})$ is given by the Melnikov integrals
\begin{equation}\label{melnikov}
\begin{array}{c}\mathcal{M}_\Gamma(v)=
ie^{i\omega v}\displaystyle\int_{-\infty}^{\infty} e^{-i\omega r}\dfrac{2\dg(r^2-2)}{\omega\sqrt{\Omega}(r^2+2)^2}dr=c_1^0e^{i\omega v},\vspace{0.3cm}\\
\mathcal{M}_\Theta(v)=
-ie^{-i\omega v}\displaystyle\int_{-\infty}^{\infty} e^{i\omega r}\dfrac{2\dg(r^2-2)}{\omega\sqrt{\Omega}(r^2+2)^2}dr=c_2^0e^{-i\omega v}
\end{array}
\end{equation}
and $\Delta\xi_1=(\Delta^1_{\Gamma},\Delta^1_{\Theta})$.
%Now, we define
%\begin{equation}\label{melnikov1}
%\mathcal{M}=\left(\begin{array}{c}
%\vspace{0.2cm}\mathcal{M}_{\Gamma}(v)\\
%\mathcal{M}_{\Theta}(v)
%\end{array}\right)= \left(\begin{array}{c}
%\vspace{0.2cm} e^{i\omega v}c_1^0\\
%e^{-i\omega v}c_2^0
%\end{array}\right) \textrm{ and }\Delta\xi_1=\left(\begin{array}{c}
%\vspace{0.2cm}\Delta_\Gamma^1\\
%\Delta_\Theta^1
%\end{array}\right),
%\end{equation}
%where the constants $c_1^0$ and $c_2^0$  are given in \eqref{melnikov}. Consider the decomposition $$\Delta\xi=\mathcal{M}+\Delta\xi_1.$$
%In what follows, $\mathcal{M}$ and $\Delta\xi_1$ will be referred as \textbf{Melnikov vector} and \textbf{error vector}, respectively.

A straightforward computation proves the following lemma.
% allows us to see that $c_1^0$ and $c_2^0$ are exponentially small constants.

\begin{lemma}\label{melnikov_lem} The constants $c_1^0$ and  $c_2^0$ are given by
	\begin{equation}\label{melnikovctes}
	c_1^0=-i\dfrac{2\pi\dg}{\sqrt{\Omega}}e^{-\sqrt{2}\omega}, 	\textrm{ and }c_2^0=\overline{c_1^0}.
	\end{equation}

\end{lemma}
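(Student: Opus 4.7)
The plan is to evaluate the improper integrals defining $c_1^0$ and $c_2^0$ in \eqref{melnikov} directly by the residue theorem. After pulling out the common constant $\dfrac{2\dg}{\omega\sqrt{\Omega}}$, both integrands involve the rational function $(r^2-2)/(r^2+2)^2$, which has double poles at $r=\pm\sqrt{2}i$ and decays like $1/r^2$ at infinity. This $1/r^2$ decay guarantees that the large-semicircle contribution vanishes in the limit, so each integral reduces to $\pm 2\pi i$ times the residue at the appropriate pole.

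For $c_1^0$, since $\omega>0$, the exponential $e^{-i\omega r}$ decays in the lower half-plane, so I close the contour there (clockwise); the only enclosed singularity is the double pole at $r=-\sqrt{2}i$. Writing the integrand as $e^{-i\omega r}g(r)/(r+\sqrt{2}i)^2$ with $g(r)=(r^2-2)/(r-\sqrt{2}i)^2$, the residue equals $\dfrac{d}{dr}\bigl[e^{-i\omega r}g(r)\bigr]$ evaluated at $r=-\sqrt{2}i$. A short logarithmic-derivative computation gives $g(-\sqrt{2}i)=1/2$ and, pleasingly, $g'(-\sqrt{2}i)=0$, so the residue collapses to $-\dfrac{i\omega}{2}e^{-\sqrt{2}\omega}$. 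Multiplying by $-2\pi i$, by the prefactor $\dfrac{2\dg}{\omega\sqrt{\Omega}}$, and by the external factor $i$ that appears in front of the integral in \eqref{melnikov}, the $\omega$'s cancel and I obtain $c_1^0 = -i\dfrac{2\pi\dg}{\sqrt{\Omega}}e^{-\sqrt{2}\omega}$.

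For $c_2^0$ I would not redo the contour computation. Since the rational factor $(r^2-2)/(r^2+2)^2$ is real-valued and even on $\R$, the integral in $c_2^0$ is the complex conjugate of the one in $c_1^0$ (for real $r$, $\overline{e^{-i\omega r}}=e^{i\omega r}$). Combined with the observation that the external prefactors $i$ and $-i$ appearing in \eqref{melnikov} are conjugates, this immediately yields $c_2^0=\overline{c_1^0}$. Alternatively one can just close in the upper half-plane and pick up the pole at $r=+\sqrt{2}i$; the computation is identical up to sign changes.

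I do not foresee a genuine obstacle here: the integrals are absolutely convergent, the arcs at infinity drop out for free, and the only mildly agreeable surprise is the vanishing $g'(-\sqrt{2}i)=0$, which ensures that the $e^{-\sqrt{2}\omega}$ factor appears cleanly without being multiplied by an additional polynomial in $\omega$. The principal care required is in the bookkeeping of signs and contour orientations.
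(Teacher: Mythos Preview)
Your proposal is correct. The paper omits the proof entirely, noting only that ``a straightforward computation'' establishes the lemma; your residue-theorem evaluation is precisely the intended straightforward computation, and your bookkeeping of the double-pole residue, the clockwise orientation, and the cancellation of $\omega$ is accurate.
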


Theorem \ref{splitting_thm} is equivalent to the following theorem. The remainder of Section \ref{splitting_sec} is devoted to prove it.

\begin{theorem}\label{exponentiallysmall} There exists $\e_0>0$ sufficiently small such that for $v\in \mathcal{I}_{\e}\subset\R$, $0<\e\leq\e_0$,
	\begin{equation}\label{asymp}
	\Delta\xi(v)= \mathcal{M}(v) + \er(\omega\dg^3e^{-\sqrt{2}\omega}),
	\end{equation}
	where $\mathcal{M}=(\mathcal{M}_{\Gamma},\mathcal{M}_{\Theta})$ is the Melnikov vector defined in \eqref{melnikov}.
\end{theorem}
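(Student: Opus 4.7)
The plan is to isolate the Melnikov contribution by writing $\Delta\xi=\mathcal{M}+\Delta\xi_1$ and proving that $\Delta\xi_1=\er(\omega\dg^3 e^{-\sqrt{2}\omega})$ on $\mathcal{I}_\e$, i.e. $\Delta\xi_1$ is smaller than $\mathcal{M}$ by a factor $\omega\dg^2$. Since $\mathcal{M}_\Gamma(v)=c_1^0e^{i\omega v}$ and $\mathcal{M}_\Theta(v)=c_2^0e^{-i\omega v}$, the Melnikov vector satisfies $\mathcal{M}'=A\mathcal{M}$, so substituting into \eqref{dif_eq} the remainder solves the linear inhomogeneous equation
\[
\Delta\xi_1'=A\Delta\xi_1+B(v)\Delta\xi_1+B(v)\mathcal{M}(v),
\]
where, by Proposition \ref{propchata} and Lemma \ref{melnikov_lem}, $|B(v)\mathcal{M}(v)|\leq M\omega\dg^3 e^{-\sqrt{2}\omega}$ on $\mathcal{I}_\e$.

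Next I would convert to an integral equation by variation of parameters, choosing base points at the corners of $\mathcal{D}_\e$: $v_-=-i(\sqrt{2}-\sqrt{\e})$ for the $\Gamma$-coordinate (eigenvalue $i\omega$) and $v_+=\overline{v_-}$ for the $\Theta$-coordinate. This gives
\[
(\Delta\xi_1)_\Gamma(v)=e^{i\omega(v-v_-)}(\Delta\xi_1)_\Gamma(v_-)+\int_{v_-}^v e^{i\omega(v-r)}\bigl[(B\Delta\xi_1)_\Gamma+(B\mathcal{M})_\Gamma\bigr](r)\,dr,
\]
and symmetrically for $(\Delta\xi_1)_\Theta$. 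For real $v\in\mathcal{I}_\e$, $|e^{i\omega(v-v_-)}|=e^{-\omega(\sqrt{2}-\sqrt{\e})}=\er(e^{-\sqrt{2}\omega})$ since $\omega\sqrt{\e}=\Omega=\er(1)$. The crude bounds of Theorem \ref{parameterization1D0} give $|(\Delta\xi_1)_\Gamma(v_-)|=\er(\dg/\omega^2)$, and since $\dg/\omega^2$ and $\omega\dg^3$ are both of order $\e^{7/4}$ with bounded ratio, the homogeneous boundary term is indeed $\er(\omega\dg^3 e^{-\sqrt{2}\omega})$ on $\mathcal{I}_\e$. The forcing integral exploits the algebraic identity $e^{i\omega(v-r)}\mathcal{M}_\Gamma(r)=c_1^0 e^{i\omega v}$, whose modulus equals $|c_1^0|$ for real $v$ and is independent of $r$; the off-diagonal contribution $e^{i\omega(v-r)}\mathcal{M}_\Theta(r)=c_2^0 e^{i\omega v-2i\omega r}$ satisfies $|e^{-2i\omega r}|\leq 1$ on any contour from $v_-$ to $v$ lying in $\{\Im r\leq 0\}$. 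Combining with $|B|\leq M\omega\dg^2$ and integrating over a path of $\er(1)$ length yields the bound $\er(\omega\dg^3 e^{-\sqrt{2}\omega})$ for the forcing integral.

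The argument would then be closed by a contraction on the ball $\mathcal{B}=\{\eta:\|\eta\|_*\leq C_0\}$ with the weighted sup norm $\|\eta\|_*:=(\omega\dg^3)^{-1}e^{\sqrt{2}\omega}\sup_{v\in\mathcal{I}_\e}|\eta(v)|$: by the previous estimates the integral operator maps $\mathcal{B}$ into itself with Lipschitz constant $\er(\omega\dg^2)=o(1)$, so its unique fixed point is $\Delta\xi_1$ and \eqref{asymp} follows. The chief technical obstacle is that the right-hand side of the integral equation requires $\Delta\xi_1$ to be analytic in a complex neighbourhood of the chosen contours, so the fixed point must really be carried out in a Banach space of analytic functions on a subset of $\mathcal{D}_\e$ whose norm both captures the exponential smallness on $\mathcal{I}_\e$ and remains compatible with the exponential growth $|\mathcal{M}_\Gamma(r)|=\er(\dg e^{-\sqrt{2}\omega+\omega|\Im r|})$ off the real axis. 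Verifying that the Lipschitz constant of the integral operator stays $\er(\omega\dg^2)$ uniformly in $\e$ in this enlarged domain, despite the a priori growth of $B$ and $\mathcal{M}$ along the chosen contours, is the delicate point, and is where the techniques of \cite{BS06} come in.
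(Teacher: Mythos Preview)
Your overall strategy---write $\Delta\xi=\mathcal{M}+\Delta\xi_1$, set up variation of parameters with base points at $v_\mp=\mp i(\sqrt{2}-\sqrt{\e})$, and close a contraction in an exponentially weighted norm on $\mathcal{D}_\e$---is exactly the one the paper uses (this is the content of Lemmas \ref{lemma35}--\ref{bounderror_lem} and the space $\mathcal{Z}$ in \eqref{Zesp}--\eqref{Znorm}). But there is a genuine error in your boundary-term estimate that would make the argument fail as written.

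You claim that the crude bounds of Theorem \ref{parameterization1D0} give $|(\Delta\xi_1)_\Gamma(v_-)|=\er(\dg/\omega^2)$. They do not. The estimates in Theorem \ref{parameterization1D0} carry the weight $|v^2+2|^{-2}$, and at $v_-=-i(\sqrt{2}-\sqrt{\e})$ one has $|v_-^2+2|\sim\sqrt{\e}$, so the pointwise bound on $\gamma_0^{u,s}(v_-)$ is only $\er(\dg/(\omega^2\e))=\er(\dg)$. Likewise $|\mathcal{M}_\Gamma(v_-)|=|c_1^0|e^{\omega(\sqrt{2}-\sqrt{\e})}=\er(\dg)$. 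Hence Theorem \ref{parameterization1D0} alone yields only $|(\Delta\xi_1)_\Gamma(v_-)|=\er(\dg)$, and after multiplying by $|e^{i\omega(v-v_-)}|=\er(e^{-\sqrt{2}\omega})$ the boundary contribution is $\er(\dg e^{-\sqrt{2}\omega})$---the \emph{same} order as $\mathcal{M}$, so nothing is gained.

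The missing ingredient is an \emph{a priori} improvement on $\Delta\xi_1$ before you ever evaluate at $v_-$. Since $\mathcal{M}=\mathcal{G}_{\omega,0}^u(0,0)-\mathcal{G}_{\omega,0}^s(0,0)$ and $(\gamma_0^{u,s},\theta_0^{u,s})$ are fixed points of $\mathcal{G}_{\omega,0}^{u,s}$, one has
\[
\Delta\xi_1=\bigl[\mathcal{G}_{\omega,0}^u(\gamma_0^u,\theta_0^u)-\mathcal{G}_{\omega,0}^u(0,0)\bigr]-\bigl[\mathcal{G}_{\omega,0}^s(\gamma_0^s,\theta_0^s)-\mathcal{G}_{\omega,0}^s(0,0)\bigr],
\]
and the Lipschitz constant $\mathrm{Lip}(\mathcal{G}_{\omega,0}^{u,s})=\er(\dg^2)$ from Proposition \ref{fixpoint0} then yields $\|\Delta\xi_1\|_{2,\nu}\leq M\dg^3/\omega^2$. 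With this improved norm bound, the weight at $v_-$ gives $|(\Delta\xi_1)_\Gamma(v_-)|\leq M\dg^3/(\omega^2\e)=\er(\dg^3)$, and the boundary term becomes $\er(\dg^3 e^{-\sqrt{2}\omega})$ as required. This is precisely the computation in Lemma \ref{lemma35}. Your forcing-term estimate and your identification of the correct weighted norm (allowing growth $e^{\omega|\Im v|}$ off the real axis) are both right; once the boundary term is fixed, the rest of your sketch matches the paper's proof.
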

\subsubsection{A Fixed Point Argument for the error $\Delta\xi_1$}

We write $\Delta\xi_1$ in \eqref{decomp} as solution of a fixed point equation in the  functional space
\begin{equation}
\mathcal{E}=\left\{ f:\mathcal{D}_{\e}\rightarrow\C^2;\ f \textrm{ is analytic and }\|f\|_{\mathcal{E}}<\infty \right\},
\end{equation}
where
\begin{equation}
\|f\|_{\mathcal{E}}= \displaystyle\sum_{j=1}^2\displaystyle\sup_{v\in\mathcal{D}_{\e}}|(v^2+2)^2\pi_j\circ f(v)|.
\end{equation}
We also consider the linear operator $\mathcal{H}_0$ given by
\begin{equation}
\mathcal{H}_0(g)(v)=\left(\begin{array}{c}
\vspace{0.2cm}e^{\omega i v}\displaystyle\int_{v^*}^ve^{-i\omega r}\pi_1(B(r)\cdot g(r))dr\\
e^{-\omega i v}\displaystyle\int_{\overline{v*}}^ve^{i\omega r}\pi_2(B(r)\cdot g(r))dr
\end{array}\right),
\end{equation}
where $v^*=-(\sqrt{2}-\sqrt{\e})i$ and $B$ is the matrix given \eqref{matrix}.

%\marginpar{No hacen falta cotas para $\mathcal{H}_0$?}
Using \eqref{bjkbound}, the operator $\mathcal{H}_0$ is well-defined from $\mathcal{E}_{\e}$ to itself. To simplify the notation, we introduce the function
\begin{equation}\label{expI}
I(k_1,k_2)(v)= e^{Av}\left(\begin{array}{c}
k_1\\
k_2
\end{array}\right)=\left(\begin{array}{c}
e^{i\omega v}k_1\\
e^{-i\omega v}k_2
\end{array}\right),
\end{equation}
where $k_j\in\C$, $j=1,2$,  $v\in\mathcal{D}_{\e}$ and $A$ is the matrix given by \eqref{matrix}. Notice that $\mathcal{M}(v)=I(c_1^0,c_2^0)(v)$.

\begin{lemma}\label{lemma35}
	The difference map $\Delta\xi$ belongs to $\mathcal{E}_{\e}$ and $\|\Delta\xi\|_{\mathcal{E}}\leq M\e$. Furthermore, there exist $(c_1,c_2)\in\C^2$ such that:
	\begin{equation}\label{diffunc}
	\Delta\xi_1(v)=I(c_1-c_1^0,c_2-c_2^0)(v)+\mathcal{H}_0(\Delta\xi_1)(v) +\mathcal{H}_0(\mathcal{M})(v),
	\end{equation}
	and $|c_j-c_j^0|\leq M \dg^3 e^{-\sqrt{2}\omega}$, $j=1,2$, where $M$ is a constant independent of $\e$.
\end{lemma}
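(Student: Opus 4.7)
I would first apply Proposition \ref{fixpoint0} to both $(\gamma_0^u,\theta_0^u)$ and the symmetric stable pair $(\gamma_0^s,\theta_0^s)$, each of which has $\mathcal{X}_{2,\nu}$-norm at most $M\dg/\omega^2$. Since the diamond $\mathcal{D}_\e = D_\e^u \cap D_\e^s$ is bounded and $|v|\geq \nu$ on the piece $\{\Rp(v)\leq -\nu\}$, the ratio $|v^2+2|^2/|v|^2$ is uniformly bounded on $\mathcal{D}_\e$, so the $\mathcal{X}_{2,\nu}$- and $\mathcal{E}$-weights are comparable there. Combining the four coordinate bounds yields $\|\Delta\xi\|_{\mathcal{E}} \leq M\dg/\omega^2 \leq M\e^{7/4}\leq M\e$.

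\textbf{Step 2.} Next, by Proposition \ref{propchata}, $\Delta\xi' = A\Delta\xi + B\Delta\xi$; substituting $\Delta\xi = \mathcal{M}+\Delta\xi_1$ and using $\mathcal{M}' = A\mathcal{M}$ (immediate from \eqref{melnikov}) gives $\Delta\xi_1' = A\Delta\xi_1 + B(\Delta\xi_1+\mathcal{M})$. Since $A$ is diagonal, I would apply variation of parameters to each scalar component separately, starting the first at $v^*$ and the second at $\overline{v^*}$; this produces exactly the claimed identity, with $\widetilde c_j := c_j - c_j^0$ determined by the boundary values $\widetilde c_1 = e^{-i\omega v^*}\pi_1\Delta\xi_1(v^*)$ and $\widetilde c_2 = e^{i\omega\overline{v^*}}\pi_2\Delta\xi_1(\overline{v^*})$ (recall that $\mathcal{H}_0(g)(v^*)$ vanishes in the first component and $\mathcal{H}_0(g)(\overline{v^*})$ vanishes in the second).

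\textbf{Step 3.} The hard part will be the sharp bound $|\widetilde c_j|\leq M\dg^3 e^{-\sqrt{2}\omega}$. By complex-conjugate symmetry it suffices to treat $j=1$, and since $|e^{-i\omega v^*}| = e^{-\omega(\sqrt{2}-\sqrt{\e})}\leq Ce^{-\sqrt{2}\omega}$ (using $\sqrt{\e}\omega = \Omega\leq 1$), this reduces to showing $|\pi_1\Delta\xi_1(v^*)|\leq M\dg^3$. My plan is to split $\mathcal{F}_0(\gamma,\theta) = -(Q^0)' + g(v,\gamma,\theta)$ with $g := i\omega\gamma(\eta_0-1)$ and exploit that $\gamma_0^u$ is defined via integration from $-\infty$ while the symmetric $\gamma_0^s$ is defined via integration from $+\infty$. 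Deforming both contours to meet at $v^*$ inside $\mathcal{D}_\e$ (legitimate because the integrands are analytic there) yields
\begin{equation*}
\pi_1\Delta\xi(v^*) = \pi_1\mathcal{M}(v^*) + \int_{-\infty}^{v^*}e^{i\omega(v^*-r)}g^u(r)\,dr + \int_{v^*}^{+\infty}e^{i\omega(v^*-r)}g^s(r)\,dr,
\end{equation*}
so that $\pi_1\Delta\xi_1(v^*)$ equals the two correction integrals. Propositions \ref{lipschitz0}(1) and \ref{fixpoint0} give $|g^{u,s}(r)|\leq M\dg^3/(\omega|r^2+2|^2)$; integrating along the real axis followed by the imaginary segment down to $v^* = -i(\sqrt{2}-\sqrt{\e})$ and balancing the blow-up $\sim 1/\e$ of the singular factor against the $\sqrt{\e}$ length of integration near $-i\sqrt{2}$ will produce $O(1/\sqrt{\e})$; combined with the prefactor $\dg^3/\omega$ and the identity $\omega\sqrt{\e} = \Omega\sim 1$, this will yield the desired $O(\dg^3)$ bound. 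The core obstacle is precisely this delicate balance between the polynomial singularity $1/|r^2+2|^2$ and the $\sqrt{\e}$ distance from $v^*$ to $-i\sqrt{2}$, in the spirit of the complex-matching arguments of \cite{BS06}.
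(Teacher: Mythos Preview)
Your Steps 1 and 2 are fine and match the paper's argument (the paper applies variation of parameters to $\Delta\xi$ first and then subtracts $\mathcal{M}=I(c_1^0,c_2^0)$, but the outcome is the same).

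Your Step 3, however, contains a genuine gap. In the integral
\[
\int_{-\infty}^{v^*} e^{i\omega(v^*-r)}\,g^u(r)\,dr
\]
you control $|g^u(r)|\leq M\dg^3/(\omega|r^2+2|^2)$ but then integrate along the real axis followed by the vertical segment to $v^*$. Along the real axis you have $|e^{i\omega(v^*-r)}|=|e^{i\omega v^*}|=e^{\omega(\sqrt{2}-\sqrt{\e})}$, which is \emph{exponentially large} in $1/\sqrt{\e}$; the resulting bound is $M(\dg^3/\omega)e^{\sqrt{2}\omega}$, not $M\dg^3$. The ``balancing'' you describe treats only the polynomial singularity $1/|r^2+2|^2$ and completely ignores this oscillatory kernel, which is the whole source of exponential smallness. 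A tilted contour along the lower edge of $D_\e^u$ (direction $e^{i\beta}$) would keep $|e^{i\omega(v^*-r)}|\le 1$ and your argument could then be repaired, but as written the path fails.

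The paper sidesteps all contour estimates at $v^*$ by a much shorter argument: since $(\gamma_0^{u,s},\theta_0^{u,s})$ are fixed points of $\mathcal{G}_{\omega,0}^{u,s}$ and $\mathcal{M}=\mathcal{G}_{\omega,0}^u(0,0)-\mathcal{G}_{\omega,0}^s(0,0)$, one has
\[
\Delta\xi_1=\bigl(\mathcal{G}_{\omega,0}^u(\gamma_0^u,\theta_0^u)-\mathcal{G}_{\omega,0}^u(0,0)\bigr)-\bigl(\mathcal{G}_{\omega,0}^s(\gamma_0^s,\theta_0^s)-\mathcal{G}_{\omega,0}^s(0,0)\bigr),
\]
and the Lipschitz estimate from Proposition~\ref{fixpoint0} gives $\|\Delta\xi_1\|_{2,\nu}\le M\dg^2\cdot M\dg/\omega^2=M\dg^3/\omega^2$. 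At $v^*$ one has $|v^{*2}+2|\sim\sqrt{\e}$ and $\omega^2\sim 1/\e$, so $\omega^2|v^{*2}+2|^2\sim 1$ and hence $|\pi_1\Delta\xi_1(v^*)|\le M\dg^3$, which immediately yields $|c_1-c_1^0|\le M\dg^3 e^{-\sqrt{2}\omega}$. This bypasses any delicate integration near the singularities.
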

\begin{proof}
	Since $(\gamma_{0}^{u,s},\theta_{0}^{u,s})\in\mathcal{X}_{2,\nu}^2$, it is clear to see that $\Delta\xi\in \mathcal{E}_{\e}$. In addition, from Proposition \ref{fixpoint0},
	$$\|\Delta\xi\|_{\mathcal{E}}\leq 2(\|(\gamma_{0}^{u},\theta_{0}^{u})\|_{2,\nu}+ \|(\gamma_{0}^{s},\theta_{0}^{s})\|_{2,\nu})\leq M\dfrac{\dg}{\omega^2},$$
	where $M$ is a constant independent of $\e$.
	
	Since $\Delta\xi$ is a solution of \eqref{dif_eq},  the method of variation of parameters implies that, given $v_1,v_2\in\mathcal{D}_{\e}$, there exist $c_1,c_2\in\C$ such that
	\begin{equation}\label{condindif}
	\Delta\xi(v)=\left(\begin{array}{c}
	\vspace{0.2cm}e^{\omega i v}c_1+e^{\omega i v}\displaystyle\int_{v_1}^ve^{-i\omega r}\pi_1(B(r)\cdot \Delta\xi(r))dr\\
e^{-\omega i v}c_2+e^{-\omega i v}\displaystyle\int_{v_2}^ve^{i\omega r}\pi_2(B(r)\cdot \Delta\xi(r))dr
	\end{array}\right).
	\end{equation}
	We take $v_1=v^*$, $v_2=\overline{v^*}$, with $v^*=-(\sqrt{2}-\sqrt{\e})i$.  Thus,
% 	there must exist constants $c_1,c_2\in\C$ such that equation \eqref{condindif} is satisfied. Clearly, 
	$$\Delta\xi(v)=I(c_1,c_2)(v)+\mathcal{H}_0(\Delta\xi)(v).$$
	Using that $\Delta\xi=\mathcal{M}+\Delta\xi_1$, $\mathcal{M}(v)=I(c_1^0,c_2^0)(v)$ and $\mathcal{H}_0$ is linear,
	$$\Delta\xi_1(v)=I(c_1-c_1^0,c_2-c_2^0)(v)+\mathcal{H}_0(\Delta\xi_1)(v) +\mathcal{H}_0(\mathcal{M})(v).$$
	Now, we bound $|c_j-c_j^0|$, $j=1,2$. By \eqref{decomp} and Proposition \ref{fixpoint0},
	$$
	\begin{array}{lcl}
	\vspace{0.2cm}\|\Delta\xi_1\|_{2,\nu}&=&\|\Delta\xi -\mathcal{M}\|_{2,\nu} \\
	\vspace{0.2cm}& = & \|(\gamma_{0}^{u},\theta_{0}^{u})-(\gamma_{0}^{s},\theta_{0}^{s}) -(\mathcal{G}_{\omega,0}^u(0,0)-\mathcal{G}_{\omega,0}^s(0,0))\|_{2,\nu} \\
	\vspace{0.2cm}& = &\|\mathcal{G}_{\omega,0}^u(\gamma_{0}^{u},\theta_{0}^{u})-\mathcal{G}_{\omega,0}^u(0,0)-(\mathcal{G}_{\omega,0}^s(\gamma_{0}^{s},\theta_{0}^{s}) -\mathcal{G}_{\omega,0}^s(0,0))\|_{2,\nu}\\
	%\vspace{0.2cm}& \leq & \|\mathcal{G}_{\omega,0}^u(\gamma_{0}^{u},\theta_{0}^{u})-\mathcal{G}_{\omega,0}^u(0,0)\|_{2,\nu,\e}+\|\mathcal{G}_{\omega,0}^s(\gamma_{0}^{s},\theta_{0}^{s}) -\mathcal{G}_{\omega,0}^s(0,0)\|_{2,\nu}\\
	\vspace{0.2cm}& \leq &M\dg^2( \|(\gamma_{0}^{u},\theta_{0}^{u})\|_{2,\nu} +\|(\gamma_{0}^{s},\theta_{0}^{s})\|_{2,\nu})	\\ 	
	\vspace{0.2cm}& \leq &M\dfrac{\dg^3}{\omega^2}.	 	 	 	 
	\end{array}
	$$
	Thus,
	$$|\pi_j(\Delta\xi_1(v))|\leq M\dfrac{\dg^3}{\omega^2|v^2+2|^2}\leq M\dg^3,	\textrm{ for each } v\in\mathcal{D}_{\e},\ j=1,2.$$

	% Also, it follows that $|\pi_j(\overline{\Delta}(v)-\overline{\mathcal{M}}(v))|=|\pi_j(\Delta(v)-\mathcal{M}(v))|\leq M\dg^3$, for each $v\in\mathcal{D}_{\e}$, $j=1,2$.
	
	%	Recall that $\Delta\xi=(\Delta,\overline{\Delta})$, and $|\overline{\Delta}(v)|=|\Delta(v)|\leq M_2$ for each $v\in\mathcal{D}_{\nu,\kappa}$. Now, $\Delta\xi_1=(\mathcal{M},\overline{\mathcal{M}})$ and  
	%	$$|\mathcal{M}(v)|=|c_1^0|e^{\omega\Ip(v)}\leq |c_1^0|e^{\omega(\sqrt{2}-\kappa\sqrt{\e})}.$$
	%	
	%	Noticing that $\omega\kappa\sqrt{\e}$ is of order $1$ and using the value of $c_1^0$ computed in Lemma \ref{melnikov_lem}, we obtain $|\mathcal{M}(v)|\leq M_3$, for each $v\in\mathcal{D}_{\nu,\kappa}$. It follows that each component of $\Delta\xi_2$ is bounded in $\mathcal{D}_{\nu,\kappa}$, for a constant $M_4$ independent of $\e$.
	
	In particular, replacing $v=v^*$ in the first component of \eqref{diffunc}, we obtain that
	$$|e^{\omega i v^*}(c_1-c_1^0)|\leq M\dg^3 \Leftrightarrow |c_1-c_1^0|\leq M\dg^3 e^{\omega\sqrt{\e}}e^{-\sqrt{2}\omega}\leq 2M\dg^3 e^{-\sqrt{2}\omega}.$$
	
%	Since $\kappa\omega\sqrt{\e}$ is of order $1$, it follows that $|c_1-c_1^0|\leq M\dg^3 e^{-\sqrt{2}\omega}$.
	 Analogously, taking $v=\overline{v^*}$ in the second component of \eqref{diffunc}, we obtain that $|c_2-c_2^0|\leq 2M\dg^3e^{-\sqrt{2}\omega}$.
\end{proof}

%
%
%
%
%
%Using such constants, 
%
%
%
%In particular, there exist $c_1,c_2\in\C$ such that:
%
%\begin{equation}
%\Delta\xi(v)=  + \mathcal{F}_0(\Delta\xi(v)),
%\end{equation}
%where $\mathcal{F}_0$ in the linear operator given by:
%
%\begin{equation}
%\mathcal{F}_0(g)(v)=\left(\begin{array}{c}
%\vspace{0.2cm}e^{-\omega i v}\displaystyle\int_{v^*}^ve^{i\omega r}\pi_1(B(r)\cdot g(r))dr\\
%e^{\omega i v}\displaystyle\int_{\overline{v*}}^ve^{-i\omega r}\pi_2(B(r)\cdot g(r))dr
%\end{array}\right),
%\end{equation}
%where $v^*=(\sqrt{2}-\kappa\sqrt{\e})i$.
%
%\begin{lemma}
%	$c_1,c_2$ are exponentially small.
%\end{lemma}

%\subsubsection{The Equation for the Error $\Delta\xi_2$}
%
%\begin{lemma}
%	$c_1-c_1^0,c_2-c_2^0$ are exponentially small.
%\end{lemma}

\subsubsection{Exponentially Smallness of $\Delta\xi_1$}
% In this section, we state some technical lemmas to prove that the error vector $\Delta\xi_1$ in \eqref{decomp} is exponentially small with respect to $\e$. First, we introduce the followin
Consider the functional space
\begin{equation}\label{Zesp}
\mathcal{Z}=\{f: \mathcal{D}_{\e}\rightarrow\C^2;\ f \textrm{ is analytic and } \|f\|_{\mathcal{Z}}<+\infty \},
\end{equation}
where
\begin{equation}\label{Znorm}
\|f\|_{\mathcal{Z}}=\displaystyle\sum_{j=1}^2\displaystyle\sup_{v\in\mathcal{D}_{\e}}\left|e^{\omega(\sqrt{2}-|\Ip(v)|)}\pi_j\circ f(v)\right|.
\end{equation}

In order to prove Theorem \ref{exponentiallysmall}, it is enough to check that $\Delta\xi_1$ belongs to $\mathcal{Z}$ and that $\|\Delta\xi_1\|_{\mathcal{Z}}\leq M\omega\dg^3$. Our strategy to achieve these results is to prove that both $I(c_1-c_1^0,c_2-c_2^0)$ and $\mathcal{H}_0(\mathcal{M})$ belong to $\mathcal{Z}$ and that the operator $\mathrm{Id}- \mathcal{H}_0$ is invertible in $\mathcal{Z}$.

%Now, we study the operator $\mathcal{H}_0$. Recall that each entry of the matrix $B=(b_{j,k})$ satisfies \eqref{bjkbound}, for each $v\in\mathcal{D}_{\e}$, $j,k\in\{1,2\}$.

\begin{lemma}\label{lemmadoop}
	There exists $\e_0>0$, such that the linear operator $\mathrm{Id}-\mathcal{H}_0$ is invertible in $\mathcal{Z}$ for $\e\leq\e_0$. Furthermore, 	there exists $M>0$ independent of $\e$ such that  $\|\mathcal{H}_0\|_{\mathcal{Z}}\leq M\omega\dg^2$ and hence
	\begin{equation}\label{eqop}
	\|(\mathrm{Id}-\mathcal{H}_0)^{-1}\|_{\mathcal{Z}} \leq (1-\|\mathcal{H}_0\|_{\mathcal{Z}})^{-1}\leq 1+ M\omega\dg^2.
	\end{equation}
\end{lemma}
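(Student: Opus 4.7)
The plan is to estimate the operator norm $\|\mathcal{H}_0\|_{\mathcal{Z}}$ and show it tends to zero with $\e$, so that $\mathrm{Id}-\mathcal{H}_0$ is invertible via the Neumann series. Once the bound $\|\mathcal{H}_0\|_{\mathcal{Z}}\leq M\omega\dg^2$ is in place, the inequality \eqref{eqop} is immediate: since $\omega\dg^2=\Omega\e$, for $\e$ small enough we have $\|\mathcal{H}_0\|_{\mathcal{Z}}<1$, hence $(\mathrm{Id}-\mathcal{H}_0)^{-1}=\sum_{k\geq 0}\mathcal{H}_0^k$ converges in operator norm with $\|(\mathrm{Id}-\mathcal{H}_0)^{-1}\|_{\mathcal{Z}}\leq (1-M\omega\dg^2)^{-1}\leq 1+M'\omega\dg^2$. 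The only real work is therefore the operator bound.

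For the bound, I would start by combining two ingredients: (i) the pointwise estimate \eqref{bjkbound} from Proposition \ref{propchata}, which gives $|b_{j,k}(r)|\leq M\omega\dg^2$ uniformly in $\mathcal{D}_\e$; and (ii) the definition \eqref{Znorm} of the $\mathcal{Z}$-norm, which tells us that for $g\in\mathcal{Z}$ and any $r\in\mathcal{D}_\e$,
$$|\pi_j(B(r)g(r))|\leq M\omega\dg^2\,\|g\|_{\mathcal{Z}}\,e^{-\omega(\sqrt{2}-|\Ip(r)|)},\quad j=1,2.$$
Plugging this into the first component of $\mathcal{H}_0(g)(v)$, taking a path from $v^*=-i(\sqrt{2}-\sqrt{\e})$ to $v$ that remains in $\mathcal{D}_\e$ (the straight segment works, as $\mathcal{D}_\e$ is convex), and using $|e^{i\omega(v-r)}|=e^{-\omega(\Ip(v)-\Ip(r))}$, I obtain after multiplying by the weight $e^{\omega(\sqrt{2}-|\Ip(v)|)}$ that
$$\bigl|e^{\omega(\sqrt{2}-|\Ip(v)|)}\pi_1\mathcal{H}_0(g)(v)\bigr|\leq M\omega\dg^2\|g\|_{\mathcal{Z}}\cdot e^{-\omega(\Ip(v)+|\Ip(v)|)}\int_{v^*}^{v}e^{\omega(\Ip(r)+|\Ip(r)|)}|dr|.$$

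The decisive step is to show that the last outer factor times the integral is bounded by a constant independent of $\omega$. I decompose the path into the portion where $\Ip(r)\leq 0$ (on which the integrand is identically $1$, so the contribution is bounded by the path length, which is $O(1)$ since $\mathcal{D}_\e$ is bounded) and, if $\Ip(v)>0$, the portion where $\Ip(r)>0$ (on which the integrand is $e^{2\omega\Ip(r)}$). On the upper portion, substituting $s=\Ip(r)$ and using that $|dr|/ds$ stays bounded because the slope of the segment is bounded away from zero ($(\sqrt{2}-\sqrt{\e})+\Ip(v)\geq\sqrt{2}/2$), the contribution is $\leq C(e^{2\omega\Ip(v)}-1)/(2\omega)$; multiplying by $e^{-\omega(\Ip(v)+|\Ip(v)|)}=e^{-2\omega\Ip(v)}$ produces an $O(1/\omega)$ term. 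When $\Ip(v)\leq 0$, only the lower portion appears and the outer factor equals $1$, so the bound is automatic. The analogous computation for $\pi_2\mathcal{H}_0(g)$, using the path from $\overline{v^*}$ to $v$ and swapping the roles of upper and lower half-planes, gives the symmetric bound. Collecting terms yields $\|\mathcal{H}_0\|_{\mathcal{Z}}\leq M\omega\dg^2$.

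The main obstacle is this careful bookkeeping of oscillatory exponentials against the weight: naively, on the upper half-plane part of the path the factor $e^{2\omega\Ip(r)}$ is a large quantity of order $e^{2\omega\sqrt{2}}$, so without using the weight on $v$ one would never get a uniform bound. The key cancellation is that the weight contributes a matching factor $e^{-2\omega\Ip(v)}$, which combined with the $1/\omega$ gain from integration against $e^{2\omega s}$ absorbs the growth. A minor auxiliary check is that $\mathcal{H}_0(g)$ is analytic in $\mathcal{D}_\e$, which follows from Morera's theorem since $B$ and $g$ are analytic; this confirms that $\mathcal{H}_0$ genuinely maps $\mathcal{Z}$ to itself, completing the proof.
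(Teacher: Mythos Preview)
Your proposal is correct and follows essentially the same route as the paper: bound $|\pi_j(B(r)g(r))|$ via \eqref{bjkbound} and the $\mathcal{Z}$-weight, then control the weighted integral along the segment from $v^*$ to $v$. The only difference is cosmetic: where you split the path into $\Ip(r)\le 0$ and $\Ip(r)>0$ and extract a $1/\omega$ gain on the upper piece, the paper simply notes that $t\mapsto t+|t|$ is nondecreasing, so along the segment (where $\Ip(r)\le\Ip(v)$) the exponent $\omega(\Ip(r)+|\Ip(r)|-\Ip(v)-|\Ip(v)|)$ is nonpositive, and hence the integrand has modulus $\le 1$ and the integral is bounded by the path length.
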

\begin{proof}
	Since $\mathcal{H}_0$ is a linear operator, to prove this lemma, it is sufficient to show that $\|\mathcal{H}_0\|_{\mathcal{Z}}\leq M\omega\dg^2<1$.
	
	Let $h\in\mathcal{Z}$ and denote by $M$ any constant independent of $\e$. Using \eqref{bjkbound} and \eqref{Znorm}, we have that 	for  $v\in\mathcal{D}_{\e}$ and $j=1,2$,
	
	%	 Recalling that $|e^{\pm i\omega v}|\leq Me^{\sqrt{2}\omega}$ and using \eqref{entradasdab}, we have that:
	%	$$
	%	\begin{array}{lcl}
	%	\vspace{0.2cm}	|\pi_i(B(v)h(v))| & \leq & |b_{i,1}(v)\pi_1\circ h (v)| + |b_{i,2}(v)\pi_2\circ h (v)|,\\
	%	\vspace{0.2cm} & \leq &M\omega\dg^2( |\pi_1\circ h (v)| + |\pi_2\circ h (v)|),\\
	%	\vspace{0.2cm} & \leq &M\omega\dg^2(| e^{-i \omega  (v-i\sqrt{2})}| + |e^{i\omega (v+i\sqrt{2})}|)\|h\|_{\mathcal{Z}},\\
	%	\vspace{0.2cm} & \leq &M\omega\dg^2e^{-\sqrt{2}\omega }( e^{\omega  \Ip(v)} + e^{-\omega\Ip(v) })\|h\|_{\mathcal{Z}},\\	
	%	\vspace{0.2cm} & \leq &M\omega\dg^2e^{-\sqrt{2}\omega } e^{\sqrt{2}\omega}\|h\|_{\mathcal{Z}},	\\	
	%	\vspace{0.2cm} & \leq &M\omega\dg^2\|h\|_{\mathcal{Z}},	
	%	\end{array}
	%	$$
\[|\pi_j(B(v)\cdot h(v))|  \leq \displaystyle\sum_{k=1}^2 |b_{j,k}(v)\pi_k( h (v))|
 \leq M\omega\dg^2e^{-\omega(\sqrt{2}-|\Ip(v)|)}\|h\|_{\mathcal{Z}}.
	\]
Thus
	$$
	\begin{array}{lcl}
	\vspace{0.2cm}	|e^{\omega(\sqrt{2}-|\Ip(v)|)}\pi_1(\mathcal{H}_0(h)(v))| & = & \left|e^{\sqrt{2}\omega}\displaystyle\int_{v^*}^ve^{-i\omega (r-v-i|\Ip(v)|)}\pi_1(B(r)\cdot h(r))dr\right|\\
	\vspace{0.2cm} & \leq &M\omega\dg^2 e^{-\sqrt{2}\omega}e^{\sqrt{2}\omega}\|h\|_{\mathcal{Z}}\displaystyle\int_{v^*}^v\left|e^{-i\omega (r-v-i|\Ip(v)|)}\right|e^{\omega|\Ip(r)|}dr\\
	\vspace{0.2cm} & \leq &M\omega\dg^2\|h\|_{\mathcal{Z}}\displaystyle\int_{v^*}^ve^{\omega (\Ip(r)+|\Ip(r)|-\Ip(v)-|\Ip(v)|)}dr.
	%\vspace{0.2cm} & \leq %&M\omega\dg^2\|h\|_{\mathcal{Z}}\left|\displaystyle\int_{\Ip(v)}^{\Ip(v^*)}e^{\omega (t+|t|-\Ip(v)-|\Ip(v)|)}dt\right|\\
	%\vspace{0.2cm} & \leq &M\dg^2\|h\|_{\mathcal{Z}}(e^{-\omega (\Ip(v^*)-|\Ip(v^*)|-\Ip(v)+|\Ip(v)|)}-1)\\
	%\vspace{0.2cm} & \leq &M\dg^2\|h\|_{\mathcal{Z}},
	\end{array}
	$$	
%	
%	Notice that, if $t\leq 0$, then $e^{\omega (t+|t|-\Ip(v)-|\Ip(v)|)}=e^{-\omega(\Ip(v)+|\Ip(v)|)}$, otherwise $e^{\omega (t+|t|-\Ip(v)-|\Ip(v)|)}=e^{\omega (2t-\Ip(v)-|\Ip(v)|)}$. Recall that $\Ip(v^*)<0$, therefore, if $\Ip(v)\leq 0$, then 
%	
%		$$
%	\begin{array}{lcl}
%	\vspace{0.2cm}\left|\displaystyle\int_{\Ip(v)}^{\Ip(v^*)}e^{\omega (t+|t|-\Ip(v)-|\Ip(v)|)}dt\right|&=& \left|\displaystyle\int_{\Ip(v)}^{\Ip(v^*)}1dt\right|\leq M. 
%	%\vspace{0.2cm}&\leq &M.
%	\end{array}
%	$$
%	
%	Now, if $\Ip(v)> 0$, then
Since $\Ip(v^*)\leq \Ip(r)\leq \Ip(v)$, we have that $\Ip(r)+|\Ip(r)|-\Ip(v)-|\Ip(v)|\leq 0$, then
$$\left|\displaystyle\int_{v^*}^ve^{\omega (\Ip(r)+|\Ip(r)|-\Ip(v)-|\Ip(v)|)}dr\right|\leq M.$$ 
%	
%		$$
%	\begin{array}{lcl}
%	\vspace{0.2cm}\left|\displaystyle\int_{\Ip(v)}^{\Ip(v^*)}e^{\omega (t+|t|-\Ip(v)-|\Ip(v)|)}dt\right|&\leq& \left|\displaystyle\int_{\Ip(v)}^{0}e^{2\omega (t-\Ip(v))}dt\right| + Me^{-2\omega\Ip(v)}\\
%	\vspace{0.2cm}&=&\left|\dfrac{e^{-2\omega\Ip(v)}-1}{2\omega}\right| + Me^{-2\omega\Ip(v)}\\
%	\vspace{0.2cm}&\leq &M.
%	\end{array}
%	$$
%	
%	
%%	$$
%%	\begin{array}{lcl}
%%	\vspace{0.2cm}\left|\displaystyle\int_{\Ip(v)}^{\Ip(v^*)}e^{-\omega (t-|t|-\Ip(v)+|\Ip(v)|)}dt\right|&\leq& \left|\displaystyle\int_{\Ip(v)}^{0}e^{2\omega (t-\Ip(v))}dt\right| + Me^{-2\omega\Ip(v)}\\
%%	\vspace{0.2cm}&=&\left|\dfrac{e^{-2\omega\Ip(v)}-1}{2\omega}\right| + Me^{-2\omega\Ip(v)}\\
%%	\vspace{0.2cm}&\leq &M.
%%	\end{array}
%%	$$
%	
%	We conclude that this integral is always bounded for a constant independent of $\e$. 
Analogously, we have that
	
	$$
	\begin{array}{lcl}
	\vspace{0.2cm}	|e^{\omega(\sqrt{2}-|\Ip(v)|)}\pi_2(\mathcal{H}_0(h)(v))| %& = & \left|e^{\sqrt{2}\omega}\displaystyle\int_{\overline{v^*}}^ve^{i\omega (r-v+i|\Ip(v)|)}\pi_2(B(r)\cdot h(r))dr\right|,\\
& \leq &M\omega\dg^2\|h\|_{\mathcal{Z}},%\displaystyle\int_{\overline{v^*}}^ve^{-\omega (\Ip(r)-|\Ip(r)|-\Ip(v)+|\Ip(v)|)}dr,\\
	%\vspace{0.2cm} & \leq &M\dg^2\|h\|_{\mathcal{Z}}(e^{\omega (\Ip(\overline{v^*})+|\Ip(\overline{v^*})|-\Ip(v)-|\Ip(v)|)}-1)\\
	%\vspace{0.2cm} & \leq &M\dg^2\|h\|_{\mathcal{Z}},
	\end{array}
	$$	
%	and the integral above is also bounded by similar arguments.
%	
%	% We can also obtain the same bounds for coordinates $\pi_3(\mathcal{H}_0(h)(v))$ and $\pi_4(\mathcal{H}_0(h)(v))$, by following exactly the same lines of the proof of the bounds for $\pi_1(\mathcal{H}_0(h)(v))$ and $\pi_2(\mathcal{H}_0(h)(v))$, respectively.
%	
	%From these bounds, we conclude that 
	and thus $\|\mathcal{H}_0(h)\|_{\mathcal{Z}}\leq M\omega\dg^2\|h\|_{\mathcal{Z}}.$ Since, $\|\mathcal{H}_0\|_{\mathcal{Z}}<1$, for $\e$ sufficiently small, the linear operator $\mathrm{Id}-\mathcal{H}_0$ is invertible and satisfies \eqref{eqop}.
	%moreover $\|(\mathrm{Id}-\mathcal{H}_0)^{-1}\|_{\mathcal{Z}} \leq (1-\|\mathcal{H}_0\|_{\mathcal{Z}})^{-1}\leq 1+ M\omega\dg^2.$
	
	%	$$
	%\begin{array}{lcl}
	%\vspace{0.2cm}	|e^{i\omega (v-i\sqrt{2})}\pi_1(\mathcal{F}_0(h)(v))| & = & \left|\displaystyle\int_{v^*}^ve^{i\omega (r-i\sqrt{2})}\pi_1(B(r)\cdot h(r))dr\right|,\\
	%\vspace{0.2cm} & \leq &M\omega\dg^2\|h\|_{\mathcal{Z}}\displaystyle\int_{v^*}^v|e^{i\omega (r-i\sqrt{2})}|dr,\\	
	%\vspace{0.2cm} & \leq &M\omega\dg^2\|h\|_{\mathcal{Z}}\displaystyle\int_{v^*}^ve^{-\omega (\Ip(r)-\sqrt{2})}dr
	%\end{array}
	%$$	
	
	%	$$
	%\begin{array}{lcl}
	%\vspace{0.2cm}	|\pi_1(\mathcal{F}_0(h)(v))| & = & \left|e^{-i\omega v}\displaystyle\int_{v^*}^ve^{i\omega r}\pi_1(B(r)\cdot h(r))dr\right|,\\
	%\vspace{0.2cm} & \leq &M\omega\dg^2\|h\|_{\mathcal{Z}}|e^{-i\omega v}|\displaystyle\int_{v^*}^v|e^{i\omega (r-i\sqrt{2})}|dr,\\	
	%\vspace{0.2cm} & \leq &M\omega\dg^2\|h\|_{\mathcal{Z}}|e^{-i\omega v}|\displaystyle\int_{v^*}^ve^{-\omega (\Ip(r)-\sqrt{2})}dr\\
	%\vspace{0.2cm} & \leq &M\omega\dg^2\|h\|_{\mathcal{Z}}|e^{-i\omega v}|\dfrac{e^{\omega\kappa\sqrt{\e}}-e^{-\omega (\Ip(v)-\sqrt{2})}}{\omega}\\
	%\end{array}
	%$$	
\end{proof}

Now, recall that $\mathcal{M}=I(c_1^0,c_2^0)$, where $I$ is given by \eqref{expI} and $c_1^0,c_2^0$ are given by \eqref{melnikovctes}. Moreover, from Lemma \ref{lemma35}, we have that
\begin{equation}\label{idf0}
(\mathrm{Id}-\mathcal{H}_0)\Delta\xi_1= I(c_1-c_1^0,c_2-c_2^0) + \mathcal{H}_0(I(c_1^0,c_2^0)).
\end{equation}

Since $\mathrm{Id}-\mathcal{H}_0$ is invertible in $\mathcal{Z}$, it only remains to show that $ I(c_1-c_1^0,c_2-c_2^0)$ and $I(c_1^0,c_2^0)$ belong to $\mathcal{Z}$.

\begin{lemma}\label{lemmadoI}
	Given $k_1,k_2\in\C$, then the function $I$ given in \eqref{expI} satisfies
	\begin{equation}
	\|I(k_1,k_2)\|_{\mathcal{Z}}\leq Me^{\sqrt{2}\omega}(|k_1|+|k_2|),
	\end{equation}
	where $M$ is a constant independent of $\e$.
\end{lemma}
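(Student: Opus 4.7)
The plan is direct: bound each of the two components of $I(k_1,k_2)$ in the weighted supremum norm by exploiting the exact form of $|e^{\pm i\omega v}|$ in terms of $\Ip(v)$, and case-split on the sign of $\Ip(v)$.

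More precisely, first I would write, for arbitrary $v\in\mathcal{D}_{\e}$,
\[
\bigl|e^{\omega(\sqrt{2}-|\Ip(v)|)}\,e^{i\omega v}k_1\bigr|
= e^{\sqrt{2}\omega}\,e^{-\omega|\Ip(v)|-\omega\Ip(v)}\,|k_1|,
\qquad
\bigl|e^{\omega(\sqrt{2}-|\Ip(v)|)}\,e^{-i\omega v}k_2\bigr|
= e^{\sqrt{2}\omega}\,e^{-\omega|\Ip(v)|+\omega\Ip(v)}\,|k_2|.
\]
Now I would observe that $-|\Ip(v)|-\Ip(v)\le 0$ and $-|\Ip(v)|+\Ip(v)\le 0$, since in each case one of the two terms vanishes and the other is $\le 0$: if $\Ip(v)\ge 0$ then the first exponent becomes $-2\omega\Ip(v)\le 0$ and the second becomes $0$; if $\Ip(v)<0$ then the first becomes $0$ and the second becomes $2\omega\Ip(v)\le 0$. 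Consequently both exponential factors are $\le 1$.

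Therefore
\[
\sup_{v\in\mathcal{D}_{\e}}\bigl|e^{\omega(\sqrt{2}-|\Ip(v)|)}\pi_j\circ I(k_1,k_2)(v)\bigr|
\le e^{\sqrt{2}\omega}|k_j|,\qquad j=1,2,
\]
and summing the two suprema as in the definition \eqref{Znorm} yields
\[
\|I(k_1,k_2)\|_{\mathcal{Z}}\le e^{\sqrt{2}\omega}\bigl(|k_1|+|k_2|\bigr),
\]
so that any constant $M\ge 1$ works.

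There is no genuine obstacle here: the entire argument is a one-line estimate once the identities $|e^{\pm i\omega v}|=e^{\mp\omega\Ip(v)}$ are combined with the weight $e^{\omega(\sqrt{2}-|\Ip(v)|)}$. The role of this lemma in the sequel is to feed into \eqref{idf0}: together with Lemma \ref{lemma35} (giving $|c_j-c_j^0|=\er(\dg^3 e^{-\sqrt{2}\omega})$) and Lemma \ref{lemmadoop}, it will produce a $\mathcal{Z}$-bound of order $\omega\dg^3$ for the right-hand side of \eqref{idf0}, and hence for $\Delta\xi_1$, which is exactly what is needed to derive the asymptotic formula \eqref{asymp} in Theorem \ref{exponentiallysmall}.
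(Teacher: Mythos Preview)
Your proof is correct and takes exactly the approach implicit in the paper, which simply states that the lemma follows directly from the definitions of $\|\cdot\|_{\mathcal{Z}}$ in \eqref{Znorm} and of $I$ in \eqref{expI}. You have merely spelled out the one-line estimate the paper leaves to the reader, and even obtained the sharp constant $M=1$.
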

%\begin{proof}
%	Let $k_1,k_2\in\C$, therefore:
%	$$
%	\begin{array}{lcl}
%	\left|e^{\omega(\sqrt{2}-|\Ip(v)|)}\pi_1\circ I(k_1,k_2)(v)\right| &= &|k_1||e^{\omega(\sqrt{2}-|\Ip(v)|)}e^{i\omega v}|\\
%	&= &|k_1|e^{\omega(\sqrt{2}-\Ip(v)-|\Ip(v)|)}\\
%	&\leq &M|k_1|e^{\sqrt{2}\omega},	
%	\end{array}
%	$$
%	since $\Ip(v)+|\Ip(v)|\geq0$, for each $v\in\mathcal{D}_{\e}$. Analogously, since $\Ip(v)-|\Ip(v)|\leq0$, for each $v\in\mathcal{D}_{\e}$, we have
%	
%	$$
%	\begin{array}{lcl}
%	\left|e^{\omega(\sqrt{2}-|\Ip(v)|)}\pi_2\circ I(k_1,k_2)(v)\right| %&= &|k_2||e^{\omega(\sqrt{2}-|\Ip(v)|)}e^{-i\omega v}|\\
%%	&= &|k_2|e^{\omega(\sqrt{2}+\Ip(v)-|\Ip(v)|)}\\
%	&\leq &M|k_2|e^{\sqrt{2}\omega},	
%	\end{array}
%	$$
%
%	%The same bounds can be obtained for $\pi_3\circ I(k_1,k_2,k_3,k_4)(v)$ and $\pi_4\circ I(k_1,k_2,k_3,k_4)(v)$ in an anlogous way.
%	
%	 The result follows directly from the definition of $\|I(k_1,k_2)\|_{\mathcal{Z}}$.
%\end{proof}

To prove Lemma \ref{lemmadoI} it is enough to recall the definitions of $\|\cdot\|_{\mathcal{Z}}$ in \eqref{Znorm} and $I$ in \eqref{expI}.

\begin{lemma}\label{bounderror_lem}
	The error vector $\Delta\xi_1$ given in \eqref{decomp} belongs to $\mathcal{Z}$ and it is determined by
	\begin{equation}\label{error}
	\Delta\xi_1= (\mathrm{Id}-\mathcal{H}_0)^{-1}\left(I(c_1-c_1^0,c_2-c_2^0\right) + (\mathrm{Id}-\mathcal{H}_0)^{-1}\left(\mathcal{H}_0(\mathcal{M})\right).
	\end{equation}
	Furthermore, there exists a constant $M>0$ independent of $\e$ such that 
	\begin{equation}\label{bounderror}
	\|\Delta\xi_1\|_{\mathcal{Z}}\leq M\omega\dg^3.
	\end{equation}	
\end{lemma}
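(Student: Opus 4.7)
The plan is to combine the linear fixed-point characterization of $\Delta\xi_1$ from Lemma \ref{lemma35} with the invertibility of $\mathrm{Id}-\mathcal{H}_0$ on $\mathcal{Z}$ from Lemma \ref{lemmadoop}. Rewriting \eqref{diffunc} as
\begin{equation*}
(\mathrm{Id}-\mathcal{H}_0)\,\Delta\xi_1 \;=\; I(c_1-c_1^0,\,c_2-c_2^0) \,+\, \mathcal{H}_0(\mathcal{M}),
\end{equation*}
the expression \eqref{error} follows immediately once I check that $\Delta\xi_1\in\mathcal{Z}$. For this I observe that on $\mathcal{D}_\e$ the weight $e^{\omega(\sqrt{2}-|\Ip(v)|)}$ is bounded above by $e^{\sqrt{2}\omega}$, and from $\|\Delta\xi_1\|_{\mathcal{E}}\leq M\dg^3/\omega^2$ established in the proof of Lemma \ref{lemma35} each component of $\Delta\xi_1$ is uniformly bounded on $\mathcal{D}_\e$; hence $\Delta\xi_1\in\mathcal{Z}$ (with an a priori very large norm). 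Applying $(\mathrm{Id}-\mathcal{H}_0)^{-1}$, which is a bounded operator on $\mathcal{Z}$ by Lemma \ref{lemmadoop}, then yields \eqref{error}.

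To obtain the quantitative bound \eqref{bounderror}, I estimate each summand on the right-hand side of \eqref{error} separately in the $\mathcal{Z}$-norm. For the first, Lemma \ref{lemmadoI} together with $|c_j-c_j^0|\leq M\dg^3 e^{-\sqrt{2}\omega}$ from Lemma \ref{lemma35} yields
\begin{equation*}
\|I(c_1-c_1^0,c_2-c_2^0)\|_{\mathcal{Z}} \;\leq\; Me^{\sqrt{2}\omega}\bigl(|c_1-c_1^0|+|c_2-c_2^0|\bigr) \;\leq\; M\dg^3.
\end{equation*}
For the second, Lemma \ref{melnikov_lem} gives $|c_j^0|\leq M\dg e^{-\sqrt{2}\omega}$, so Lemma \ref{lemmadoI} provides $\|\mathcal{M}\|_{\mathcal{Z}}\leq M\dg$, and the operator bound $\|\mathcal{H}_0\|_{\mathcal{Z}}\leq M\omega\dg^2$ from Lemma \ref{lemmadoop} then produces $\|\mathcal{H}_0(\mathcal{M})\|_{\mathcal{Z}}\leq M\omega\dg^3$. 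Since $\|(\mathrm{Id}-\mathcal{H}_0)^{-1}\|_{\mathcal{Z}}\leq 1+M\omega\dg^2$ is uniformly bounded for $\e$ small, and $\dg^3\leq \omega\dg^3$, combining the two estimates gives $\|\Delta\xi_1\|_{\mathcal{Z}}\leq M\omega\dg^3$.

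The conceptually delicate point is that a naive bound on $\Delta\xi_1$ issuing from Lemma \ref{lemma35} only delivers $\|\Delta\xi_1\|_{\mathcal{Z}} = O(e^{\sqrt{2}\omega}\dg^3)$, which is exponentially \emph{large}; the whole content of the lemma is that the fixed-point equation allows one to trade this large factor for a small factor $\omega\dg^2$ coming from the operator $\mathcal{H}_0$. This cancellation is made explicit by the decomposition in \eqref{error}: the Melnikov vector $\mathcal{M}$ has $\mathcal{Z}$-norm $O(\dg)$, its exponential smallness in absolute size being exactly compensated by the weight, and it is hit by the small operator $\mathcal{H}_0$; simultaneously, the constants $c_j-c_j^0$ are already cubic in $\dg$ after the $\mathcal{Z}$-weighting, contributing only a subdominant $\dg^3$. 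Once the bound $\|\Delta\xi_1\|_{\mathcal{Z}}\leq M\omega\dg^3$ is obtained, evaluating at real $v\in\mathcal{I}_\e$, where the weight equals $e^{\sqrt{2}\omega}$, converts it into the claimed remainder $\er(\omega\dg^3 e^{-\sqrt{2}\omega})$ that is required in Theorem \ref{exponentiallysmall} and hence in Theorem \ref{splitting_thm}.
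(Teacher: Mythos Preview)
Your proof is correct and follows essentially the same route as the paper's: bound $\|I(c_1-c_1^0,c_2-c_2^0)\|_{\mathcal{Z}}$ and $\|\mathcal{H}_0(\mathcal{M})\|_{\mathcal{Z}}$ via Lemmas \ref{lemmadoI}, \ref{lemma35}, \ref{melnikov_lem}, then apply the operator bound for $(\mathrm{Id}-\mathcal{H}_0)^{-1}$ from Lemma \ref{lemmadoop}. If anything, your argument is slightly more careful than the paper's in explicitly verifying that $\Delta\xi_1\in\mathcal{Z}$ a priori (via the uniform pointwise bound from the $\|\cdot\|_{2,\nu}$ estimate), a step the paper glosses over when asserting that \eqref{error} is equivalent to \eqref{diffunc}.
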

\begin{proof}
From Lemmas \ref{lemma35} and \ref{melnikov_lem}, we have that $|c_j-c_j^0|\leq M\dg^3e^{-\sqrt{2}\omega},$ and  $ |c_j^0|\leq M \dg e^{-\sqrt{2}\omega}$,   $j=1,2$. Therefore, it follows from Lemma \ref{lemmadoI} that $I(c_1-c_1^0,c_2-c_2^0)\in \mathcal{Z}$, and $\mathcal{M}=I(c_1^0,c_2^0)\in\mathcal{Z}$. Furthermore
	$$\|I(c_1-c_1^0,c_2-c_2^0)\|_{\mathcal{Z}}\leq M\dg^3 \,\text{ and } \,\|\mathcal{M}\|_{\mathcal{Z}}\leq M\dg.$$
As $\mathrm{Id}-\mathcal{H}_0$ is invertible in $\mathcal{Z}$ by Lemma \ref{lemmadoop}, formula \eqref{error} is equivalent to \eqref{diffunc}.  Therefore, $\Delta\xi_1\in\mathcal{Z}$ and, using again Lemma \ref{lemmadoop},
	$$
	\begin{array}{lcl}
	\vspace{0.2cm}\|\Delta\xi_1\|_{\mathcal{Z}} &\leq & \|(\mathrm{Id}-\mathcal{H}_0)^{-1}\|_{\mathcal{Z}}\left(\|I(c_1-c_1^0,c_2-c_2^0)\|_{\mathcal{Z}}+\|\mathcal{H}_0(\mathcal{M})\|_{\mathcal{Z}}\right)\\
	\vspace{0.2cm} &\leq & M\dg^3 + M\|\mathcal{H}_0\|_{\mathcal{Z}}\|\mathcal{M}\|_{\mathcal{Z}}\\
	%\vspace{0.2cm} &\leq & M\dg^3 + M\omega\dg^3\\	
	\vspace{0.2cm} &\leq & M\omega\dg^3.
	\end{array}
	$$
\end{proof}

\begin{proof}[Proof of Theorem \ref{exponentiallysmall}]

Finally, we prove that $\Delta\xi_1$ is exponentially small and we obtain an asympotic formula for $\Delta\xi$. From \eqref{bounderror} and the definition of the norm \eqref{Znorm}, we have
\begin{equation}\label{expf}
|e^{\omega(\sqrt{2}-|\Ip(v)|)}\pi_j\circ \Delta\xi_1(v)|\leq M\omega\dg^3,\, \text{ for } \,v\in\mathcal{D}_{\e},\text{ and }j=1,2.
\end{equation}
 In particular, if $v\in \mathcal{I}_{\e}=\mathcal{D}_{\e}\cap\R$,
$|\Delta\xi_1(v)|\leq M\omega\dg^3 e^{-\sqrt{2}\omega},$
for $j=1,2$. 
The result follows directly from this bound and \eqref{decomp}.

\end{proof}

\section{Proof of Theorem \ref{parameterization2Dh}}\label{parhper_sec}

%\subsection{Existence and Complex Parameterization of the Invariant Manifolds of $\Lambda_h^{\pm}$}

In this section we look for parameterizations  of the invariant manifolds $W^{u}_\e(\Lambda_h^{-})$ of the periodic orbits $\Lambda_h^{-}$ of the form
%It is worth to mention that $W_0^{u,s}(\Lambda_h^{\mp})$ are $2$-dimensional submanifolds of $\rn{4}$, hence we are looking for invariant manifolds $W_{\e}^{u,s}(\Lambda_h^{\mp})$ (with $\dg\neq 0$) of dimension $2$. The methodology to parameterize these sets is slightly different from the case $W_{\e}^{u,s}(p_0^{\pm})$. In this case, a system of partial differential equations of $2$-variables arise from the invariance equations, instead a simple system of ordinary differential equations.
%In this case, a system of partial differential equations of $2$-variables arise from the invariance equations. We adapt the strategy used in Section \ref{par0_sec} to obtain the existence and parameterization of these invariant manifolds. %In what follows, we provide a description of $W_0^{u,s}(\Lambda_h^{\mp})$.
%\textcolor{red}{RETIREI A PROPOSICAO sepph DAQUI... ESTA NO LEMMA 2 DA DECOUPLED SECTION }
%We look for parameterizations  of   %$W^{s,u}_\e(\Lambda_h^{\pm})$ which satisfy that $\pi_1\circ N_{0,h}^{u,s}=\pi_1\circ N_{2D}^{h}$. Furthermore, we will look $N_{0,h}^{s,u}$ as a perturbation of $N_{2D}^{0}$. Thus, we write
\begin{equation}\label{n2dh}
N_{0,h}^{u}(v,\tau)=(X_0(v),Z_{0}(v)+Z_{0,h}^{u}(v,\tau),\Gamma_{h}(\tau)+\Gamma_{0,h}^{u}(v,\tau),\Theta_{h}(\tau)+\Theta_{0,h}^{u}(v,\tau)),
\end{equation}
where $Z_0,\Gamma_{h},\Theta_{h}$ are given in \eqref{par1} and \eqref{periodic}, as a perturbation of $N_{0,h}(v,\tau)$ (see \eqref{n2d}).
% We proceed only with the unstable case. The stable case is analogous.%Nevertheless, it is easy to see that all the following results can be obtained for the stable case, in an analogous way. %In this section, we will refer $(Z_{2D}^{h,u}(v,\tau),\Gamma_{2D}^{h,u}(v,\tau),\Theta_{2D}^{h,u}(v,\tau))$ simply as $(Z^u(v,\tau),\Gamma^u(v,\tau),\Theta^u(v,\tau))$ in order to simplify the notation.

% Next lemma gives the invariance equations for $(Z_{0,h}^u,\Gamma_{0,h}^u,\Theta_{0,h}^u)$.
\begin{lemma}\label{pertsystem_lemh} The invariant manifold $W^u_{\dg}(\Lambda_h^-)$, with $\dg\neq 0$, can be parameterized by $N_{0,h}^{u}(v,\tau)$ in \eqref{n2dh} if $(Z_{0,h}^u(v,\tau),\Gamma_{0,h}^u(v,\tau),\Theta_{0,h}^u(v,\tau))$ satisfy the following system of partial differential equations
	\begin{equation}\label{edp sem restarh}
	\left\{
	\begin{array}{l}
	\begin{array}{rcl}
	\partial_v Z + \omega\partial_\tau Z+\dfrac{Z_0'(v)}{Z_0(v)}Z&=& -\dfrac{Z}{Z_0(v)}\partial_v Z -\dfrac{\delta}{\sqrt{2\Omega}} F'(X_0(v)) \dfrac{\Gamma-\Theta}{2i}\vspace{0.3cm}\\
	&&- \dfrac{\delta}{\sqrt{2\Omega}} F'(X_0(v)) \dfrac{\Gamma_{h}(\tau)-\Theta_h(\tau)}{2i},\vspace{0.3cm}\\
	\partial_v \Gamma + \omega\partial_\tau \Gamma&=&-\dfrac{Z}{Z_0(v)}\partial_v \Gamma+\omega i \Gamma-\dfrac{\dg}{\sqrt{2\Omega}}F(X_0(v)),\vspace{0.3cm}\\
	\partial_v \Theta + \omega\partial_\tau \Theta&=& -\dfrac{Z}{Z_0(v)}\partial_v \Theta-\omega i \Theta-\dfrac{\dg}{\sqrt{2\Omega}}F(X_0(v)),\vspace{0.3cm}
	\end{array}\\
	\displaystyle\lim_{v\rightarrow-\infty}Z(v,\tau)=\displaystyle\lim_{v\rightarrow-\infty}\Gamma(v,\tau)=\displaystyle\lim_{v\rightarrow-\infty}\Theta(v,\tau)=0, \textrm{ for each $\tau\in[0,2\pi]$,	}
	\end{array}
	\right.
	\end{equation}
	and $Z_{0,h}^u,\Gamma_{0,h}^u,\Theta_{0,h}^u$ are $2\pi$-periodic in the variable $\tau.$	
	
\end{lemma}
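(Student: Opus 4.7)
The plan is to derive the equations \eqref{edp sem restarh} by substituting the ansatz $N_{0,h}^u(v,\tau)$ into the invariance condition for the vector field of \eqref{complexcoord_system} and extracting the natural reparameterization of the flow on the manifold. Since $N_{0,h}^u$ has the first component equal to $X_0(v)$ (no perturbation), invariance under the flow requires that there be functions $\dot v(v,\tau)$, $\dot\tau(v,\tau)$ so that $\dot v\,\partial_v N_{0,h}^u+\dot\tau\,\partial_\tau N_{0,h}^u$ equals the vector field at $N_{0,h}^u$. Comparing the first component against $X'=Z/8$ and using $X_0'(v)=Z_0(v)/8$ forces
\[
\dot v=1+\frac{Z_{0,h}^u(v,\tau)}{Z_0(v)}.
\]
On the other hand, because the invariant manifold limits onto the periodic orbit $\Lambda_h^-$, on which the internal frequency is $\omega$, we take $\dot\tau=\omega$; equivalently, this fixes the ambiguity in the choice of parameter $\tau$ so that the restriction of the flow to the manifold coincides with the rotation $\tau\mapsto\tau+\omega t$ at $v=-\infty$.

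With these velocities, I would plug $N_{0,h}^u$ into each of the remaining three components of \eqref{complexcoord_system}. For the $\Gamma$-component, writing $\Gamma=\Gamma_h(\tau)+\Gamma_{0,h}^u(v,\tau)$ and using $\Gamma_h'(\tau)=i\Gamma_h(\tau)$, the contributions involving $\Gamma_h$ cancel: indeed, $\omega\,\partial_\tau\Gamma_h(\tau)=\omega i\,\Gamma_h(\tau)$, which matches the linear term $\omega i\Gamma_h(\tau)$ from the right-hand side. After moving the nonlinear piece $(\dot v-1)\partial_v\Gamma_{0,h}^u=(Z_{0,h}^u/Z_0)\partial_v\Gamma_{0,h}^u$ to the right, one obtains exactly the second equation of \eqref{edp sem restarh}. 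The $\Theta$-equation is handled symmetrically (via $\Theta_h'(\tau)=-i\Theta_h(\tau)$), giving the third equation.

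For the $Z$-component I would substitute $Z=Z_0(v)+Z_{0,h}^u$ and use the unperturbed pendulum identity $Z_0'(v)=-U'(X_0(v))$ (which follows directly from differentiating \eqref{par1} or, equivalently, from Hamilton's equations at $\delta=0$). The unperturbed terms $\dot v\,Z_0'(v)$ split as $Z_0'(v)+(Z_{0,h}^u/Z_0(v))Z_0'(v)$; the first piece cancels against $-U'(X_0(v))$ on the right, while the second produces the linear term $(Z_0'/Z_0)Z_{0,h}^u$ on the left of \eqref{edp sem restarh}. The remaining $(Z_{0,h}^u/Z_0)\partial_v Z_{0,h}^u$ term is moved to the right, and the perturbative coupling $F'(X_0(v))(\Gamma-\Theta)/(2i)$ splits naturally into the piece involving $\Gamma_h(\tau)-\Theta_h(\tau)$ and the one involving $\Gamma_{0,h}^u-\Theta_{0,h}^u$, matching precisely the first equation of \eqref{edp sem restarh}.

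Finally, the asymptotic conditions $\lim_{v\to-\infty}(Z_{0,h}^u,\Gamma_{0,h}^u,\Theta_{0,h}^u)=0$ encode precisely that $N_{0,h}^u(v,\tau)$ approaches the periodic orbit $\Lambda_h^-=\{(-\infty,0,\Gamma_h(\tau),\Theta_h(\tau))\}$ as $v\to-\infty$, i.e. that we are parameterizing the unstable manifold of $\Lambda_h^-$; together with the $2\pi$-periodicity in $\tau$ of $\Gamma_h,\Theta_h$, this forces $Z_{0,h}^u,\Gamma_{0,h}^u,\Theta_{0,h}^u$ to be $2\pi$-periodic in $\tau$ in order that $N_{0,h}^u$ descend to a well-defined map on the cylinder $\mathbb{R}\times\mathbb{T}$. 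No real obstacle appears in this lemma: once the reparameterization $(\dot v,\dot\tau)=(1+Z_{0,h}^u/Z_0,\,\omega)$ is identified, everything else is bookkeeping. The only subtlety worth underlining is that the nonlinear advection terms $-(Z_{0,h}^u/Z_0)\partial_v(\cdot)$ on the right-hand sides are an artifact of choosing the graph-over-$X$ form of the ansatz, and they will be treated as higher-order perturbations in the subsequent fixed-point argument.
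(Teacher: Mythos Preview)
Your derivation is correct and is precisely the computation the paper has in mind; the paper states this lemma without proof (as with the analogous Lemma~\ref{pertsystem_lem}, which it calls ``straightforward''), so your write-up simply fills in the omitted bookkeeping. The key identifications $\dot v=1+Z_{0,h}^u/Z_0$ from the $X$-component, the choice $\dot\tau=\omega$ compatible with the unperturbed flow $\phi_t^0(N_{0,h}(v,\tau))=N_{0,h}(v+t,\tau+\omega t)$ of Remark~\ref{rem0}, and the cancellation $Z_0'(v)=-U'(X_0(v))$ are exactly what is needed.
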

	In contrast to the $1$-dimensional case, for technical reasons, we do not use that $\mathcal{H}(W^{u}_{\e}(\Lambda_h^-))=h$ to obtain $Z=Z(X,\Gamma,\Theta)$. Thus, we deal with the problem in dimension $3$.
% \end{remark}

 As in the $1$-dimensional case \eqref{formaop}, if we set $Z=\Gamma=\Theta=0$, the right-hand side of \eqref{edp sem restarh} decays as $1/|v|$ as $v\rightarrow-\infty$. To have quadratic decay as $|v|\rightarrow \infty$ to have integrability, we perform with the change \eqref{eq2d} to  system \eqref{edp sem restarh}. Then, $(z_{0,h}^u,\gamma_{0,h}^u,\theta_{0,h}^u)$ satisfy
%Consider $Z^u_{0,h}(v,\tau)=Z_{0,h}(v,\tau)+ z_{0,h}^u(v,\tau)$, $\Gamma^u_{0,h}(v,\tau)=Q^0(v)+ \gamma_{0,h}^u(v,\tau)$, $\Theta^u_{0,h}(v,\tau)=-Q^0(v)+ \theta_{0,h}^u(v,\tau)$, where $Q^0$ is given by \eqref{first0} and
%\begin{equation}\label{Zfirst}
%	Z_{0,h}(v,\tau)=\dfrac{\dg}{\omega\sqrt{2\Omega}}F'(X_0(v))\dfrac{\Gamma_h(\tau)+\Theta_h(\tau)}{2},
%\end{equation}
%with $\Gamma_h,\Theta_h$ given by \eqref{periodic}.
\begin{equation}\label{edp restada}
	\left\{
	\begin{array}{l}\begin{array}{rcl}
			\partial_v z + \omega\partial_\tau z+\dfrac{Z_0'(v)}{Z_0(v)}z&=&f_1^h(v,\tau) -\dfrac{z+Z_{0,h}(v,\tau)}{Z_0(v)}\partial_v z-\dfrac{\partial_vZ_{0,h}(v,\tau)}{Z_0(v)}z\vspace{0.3cm}\\ &&-\dfrac{\delta}{\sqrt{2\Omega}} F'(X_0(v)) \dfrac{\gamma-\theta}{2i}\vspace{0.3cm}\\
			\partial_v \gamma + \omega\partial_\tau \gamma-\omega i\gamma&=& f_2^h(v,\tau)-\dfrac{(Q^0)'(v)}{Z_0(v)}z-\dfrac{z+Z_{0,h}(v,\tau)}{Z_0(v)}\partial_v\gamma,\vspace{0.3cm}\\
			\partial_v \theta + \omega\partial_\tau \theta+\omega i\theta
			&=& -f_2^h(v,\tau)+\dfrac{(Q^0)'(v)}{Z_0(v)}z-\dfrac{z+Z_{0,h}(v,\tau)}{Z_0(v)}\partial_v\theta,
		\end{array}\vspace{0.3cm}\\
		\displaystyle\lim_{v\rightarrow -\infty}z(v,\tau)=\displaystyle\lim_{v\rightarrow -\infty}\gamma(v,\tau)=\displaystyle\lim_{v\rightarrow -\infty}\theta(v,\tau)=0,
	\end{array}\right.
\end{equation}	
where 

\begin{align}
	f_1^h(v,\tau)=&-\partial_vZ_{0,h}(v,\tau)-\dfrac{Z_0'(v)}{Z_0(v)}Z_{0,h}(v,\tau)\label{f1}\\
		 &-\dfrac{\dg}{\sqrt{2\Omega}}F'(X_0(v))\dfrac{Q^0(v)}{i}-\dfrac{Z_{0,h}(v,\tau)\partial_vZ_{0,h}(v,\tau)}{Z_0(v)},\notag\\
% 	\end{array}
% \end{equation}
% \begin{equation}
	f_2^h(v,\tau)=&-(Q^0)'(v)-\dfrac{Z_{0,h}(v,\tau)(Q^0)'(v)}{Z_0(v)},\label{f2}
\end{align} 
and $Q^0,\ Z_{0,h}$ are given by \eqref{first0}, \eqref{Zfirst}, respectively.

We consider  equation \eqref{edp restada} with $(v, \tau)\in D^u\times \mathbb{T}_{\sigma}$ (see \eqref{outerdomain} and \eqref{tsig}), and asymptotic conditions 
$	\displaystyle\lim_{\Rp(v)\rightarrow -\infty}z(v,\tau)=\displaystyle\lim_{\Rp(v)\rightarrow -\infty}\gamma(v,\tau)=\displaystyle\lim_{\Rp(v)\rightarrow -\infty}\theta(v,\tau)=0,$
for every $\tau\in\mathbb{T}_{\sigma}$.

% Thus, to prove Theorem \ref{parameterization2Dh}, it is enough to find a solution of \eqref{edp restada}. This is done in the following proposition.
%In what follows, we shall prove the following theorem.

\begin{prop}\label{existence}
	Fix $\sigma>0$ and $h_0>0$. There exists $\e_0>0$ sufficiently small 
	such that for $0<\e\leq \e_0$ and $0\leq h\leq h_0$, equation \eqref{edp restada} has a solution $(z_{0,h}^u,\gamma_{0,h}^u,\theta_{0,h}^u)$ defined in $D^{u}\times \mathbb{T}_\sigma$ such that $z_{0,h}^u$ is real-analytic, $\gamma_{0,h}^u,\theta_{0,h}^u$ are analytic, $\theta_{0,h}^u(v,\tau)=\overline{\gamma_{0,h}^u(v,\tau)}$ for each $(v,\tau)\in\rn{2}$, and
	$$\displaystyle\lim_{\Rp(v)\rightarrow-\infty}z_{0,h}^u(v,\tau)=\displaystyle\lim_{\Rp(v)\rightarrow-\infty}\gamma_{0,h}^u(v,\tau)=\displaystyle\lim_{\Rp(v)\rightarrow-\infty}\theta_{0,h}^u(v,\tau)=0,$$
	for every $\tau\in\mathbb{T}_{\sigma}$. Furthermore, $(z_{0,h}^u,\gamma_{0,h}^u,\theta_{0,h}^u)$ satisfy the bounds in \eqref{bounds38}.%there exists a constant $M>0$ independent of $\e$, such that
\end{prop}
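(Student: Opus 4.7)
The plan is to solve \eqref{edp restada} by a Banach fixed point argument on a space of analytic functions on $D^u\times \mathbb{T}_\sigma$, exactly in the spirit of Propositions \ref{gomega0}--\ref{fixpoint0}, but adapted to the PDE setting and with a weight reflecting the decay $1/|\sqrt{v^2+2}|$ announced in \eqref{bounds38}. Concretely, I would introduce a Banach space
\[
\mathcal{Y}_\sigma=\bigl\{\,f:D^u\times \mathbb{T}_\sigma\to\C \text{ analytic};\ \|f\|_\sigma:=\sup_{(v,\tau)\in D^u\times\mathbb{T}_\sigma}|\sqrt{v^2+2}\,f(v,\tau)|<\infty\bigr\},
\]
together with the subspace $\mathcal{Y}_\sigma^{3}$ of triples $(z,\gamma,\theta)$ for which $z$ is real on $\R\cap D^u\times\R\cap\mathbb{T}_\sigma$ and $\theta(v,\tau)=\overline{\gamma(v,\tau)}$ on the same set. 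The ball $\mathcal{B}(R)\subset\mathcal{Y}_\sigma^3$ of radius $R=M\delta/\omega$ is the expected domain for the fixed point.

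Next I would invert the linear transport part of \eqref{edp restada}. Expanding each unknown in a Fourier series $\sum_k (\cdot)_k(v)e^{ik\tau}$, the equations decouple mode by mode. For the $\gamma$-equation the $k$-th mode becomes $\gamma_k' - i\omega(1-k)\gamma_k = (\mathrm{RHS})_k$, integrated from $-\infty$ with the integrating factor $e^{-i\omega(1-k)v}$; analogously for $\theta$ with $+i\omega(1+k)$; for the $z$-equation the integrating factor $Z_0(v)$ absorbs the $Z_0'/Z_0$ term, after which the remaining factor is again $e^{\pm i\omega k v}$. Putting these modes back together defines a linear operator $\mathcal{G}_{\omega,h}$ on $\mathcal{Y}_\sigma^3$. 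The key analytic estimate to establish is $\|\mathcal{G}_{\omega,h}\|_{\mathcal{Y}_\sigma^3}\le M/\omega$: one uses that $k\ne 0$ modes pick up a factor $\omega^{-1}$ directly from the oscillatory integration (as in Proposition \ref{gomega0}), while the $k=0$ modes are handled by integrating against the explicit kernel of $\partial_v$ (resp. $\partial_v\pm i\omega$) and using that $1/Z_0(v)$ decays like $\sqrt{v^2+2}$ as $|v|\to\infty$, so that the weight exactly compensates.

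Then I would rewrite \eqref{edp restada} as $(z,\gamma,\theta)=\mathcal{G}_{\omega,h}\circ\mathcal{F}_{0,h}(z,\gamma,\theta)$, where $\mathcal{F}_{0,h}$ collects the right-hand side of \eqref{edp restada}. The non-homogeneous part $\mathcal{F}_{0,h}(0,0,0)$ is built out of $(Q^0)'$, $Z_{0,h}$ and $f_1^h,f_2^h$ defined in \eqref{f1}--\eqref{f2}. Using the explicit formulas for $F$, $X_0$, $Z_0$ and $Q^0$ one obtains $\|\mathcal{F}_{0,h}(0,0,0)\|_{\sigma}\le M\delta$, from which $\|\mathcal{G}_{\omega,h}\mathcal{F}_{0,h}(0,0,0)\|_\sigma\le M\delta/\omega$. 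For the Lipschitz estimates one must control the nonlinear terms in \eqref{edp restada}, which involve products with $1/Z_0$ and, crucially, the derivatives $\partial_v z,\partial_v\gamma,\partial_v\theta$. These derivatives are bounded in terms of the norm $\|\cdot\|_\sigma$ via Cauchy estimates on a slightly shrunk strip (at the cost of changing $\sigma$ to $\sigma/2$), which is precisely where the flexibility of working in the complex domain $D^u\times \mathbb{T}_\sigma$ is needed. Combining these with the smallness of $\delta,1/\omega$ gives $\mathrm{Lip}(\mathcal{G}_{\omega,h}\circ\mathcal{F}_{0,h})\le 1/2$ on $\mathcal{B}(R)$.

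Finally, the Banach fixed point theorem yields the unique triple $(z_{0,h}^u,\gamma_{0,h}^u,\theta_{0,h}^u)\in \mathcal{B}(R)$, and its uniqueness together with the real symmetry $\tau\mapsto\tau$, $v\mapsto \bar v$ of the equation and of $\mathcal{B}(R)$ forces $z_{0,h}^u$ to be real-analytic and $\theta_{0,h}^u=\overline{\gamma_{0,h}^u}$ on $D^u\cap\R\times\mathbb{T}$. The vanishing at $\Rp(v)\to-\infty$ is built into the inversion $\mathcal{G}_{\omega,h}$, and the bound \eqref{bounds38} is just the definition of $\mathcal{B}(R)$. The main obstacle, as indicated above, is the $\omega$-uniform bound on the linear operator $\mathcal{G}_{\omega,h}$: the $k=0$ Fourier mode is the delicate one, since it does not benefit from the oscillation factor $1/\omega$ and requires one to exploit the decay of $Z_0^{-1}$ against the weight $1/\sqrt{v^2+2}$ to close the estimate uniformly in $h\in[0,h_0]$.
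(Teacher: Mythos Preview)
Your overall architecture (write \eqref{edp restada} as a fixed point, invert the left-hand side Fourier-mode by Fourier-mode, estimate the source term and the Lipschitz constant) is the same as in the paper, but two of the core estimates you state are not correct as written, and the scheme does not close with a single weighted sup norm.

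First, the resonant modes are not the $k=0$ modes for all three components. Writing the $k$-th Fourier mode, the linear operators are $z_k'+i\omega k\,z_k+\tfrac{Z_0'}{Z_0}z_k$, $\gamma_k'+i\omega(k-1)\gamma_k$ and $\theta_k'+i\omega(k+1)\theta_k$; the modes with no oscillatory gain are $k=0$ for $z$, $k=1$ for $\gamma$ and $k=-1$ for $\theta$. For these resonant modes the inverse is a plain primitive $\int_{-\infty}^v(\cdot)$ (for $z$ with the harmless factor $Z_0(r)/Z_0(v)$), and there is no way to extract a factor $1/\omega$. Hence the assertion $\|\mathcal G_{\omega,h}\|_{\mathcal Y_\sigma^3}\le M/\omega$ is false. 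Worse, with only the weight $|\sqrt{v^2+2}|$ on the input, the primitive of a function bounded by $C/|v|$ diverges at $-\infty$, so $\mathcal G_{\omega,h}$ is not even well defined from your $\mathcal Y_\sigma$ to itself on those modes. The argument you give for the $k=0$ mode of $z$ (``$1/Z_0$ decays like $\sqrt{v^2+2}$'') goes the wrong way: $1/Z_0(v)=\sqrt{v^2+2}/8$ grows, so after multiplying by $Z_0(r)/Z_0(v)$ and integrating a $1/|r|$-sized right-hand side you obtain something bounded but with no decay.

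Second, the Cauchy estimate you invoke for $\partial_v z,\partial_v\gamma,\partial_v\theta$ must come from shrinking the $v$-domain $D^u$, not from shrinking $\mathbb T_\sigma$; changing $\sigma$ to $\sigma/2$ only controls $\partial_\tau$.

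The way the paper closes the loop is precisely to avoid asking the linear inverse for a $1/\omega$ gain. One works with \emph{two} decay scales: a space $\mathcal X^3_{2,\sigma}$ (weight $|v^2+2|$) for the right-hand side and a space $\mathcal Y^3_{1,\sigma}$ (weight $|\sqrt{v^2+2}|$, with the $\partial_v$-derivative built into the norm at one extra power) for the solution. The linear inverse $\mathcal G_\omega:\mathcal X^3_{2,\sigma}\to\mathcal Y^3_{1,\sigma}$ is then bounded by a constant independent of $\omega$ (Lemma~\ref{opgo}), the resonant modes being handled by the gain of one unit of decay. The nonlinear operator $\mathcal P_h$ maps $\mathcal Y^3_{1,\sigma}\to\mathcal X^3_{2,\sigma}$ because every term on the right of \eqref{edp restada} carries at least two powers of $1/|v|$ once $\partial_v$-derivatives are accounted for (this is where the $\llbracket\cdot\rrbracket$ norm is used). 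The smallness $\delta/\omega$ of the fixed point comes entirely from the source term $\mathcal P_h(0,0,0)$ (Lemma~\ref{firstit}), and the contraction constant is $M\delta$, not $M/\omega$ (Lemma~\ref{lip} and Proposition~\ref{fixpointgob}). With these modifications your outline becomes the paper's proof.
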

We devote the rest of this section to prove Proposition \ref{existence}.  %(and Theorem \ref{parameterization2Dh} as a corollary).
Equation \eqref{edp restada} can be written as the functional equation

\begin{equation}\label{funcform}
\mathcal{L}_{\omega}(z,\gamma,\theta)= \mathcal{P}_h(z,\gamma,\theta),
\end{equation}
where $\mathcal{L}_{\omega}$ and $\mathcal{P}_h$ are the operators 

\begin{equation}\label{Lo}
\mathcal{L}_{\omega}(z,\gamma,\theta)=\left(\begin{array}{l}
\partial_v z + \omega\partial_\tau z+\dfrac{Z_0'(v)}{Z_0(v)}z\vspace{0.2cm}\\	
\partial_v \gamma + \omega\partial_\tau \gamma-\omega i\gamma \vspace{0.2cm}\\
\partial_v \theta + \omega\partial_\tau \theta+\omega i\theta
\end{array}\right),
\end{equation}
\begin{equation}\label{Fo}
\mathcal{P}_h(z,\gamma,\theta)=\left(\begin{array}{l}
f_1^h(v,\tau) -\dfrac{z+Z_{0,h}(v,\tau)}{Z_0(v)}\partial_v z-\dfrac{\partial_vZ_{0,h}}{Z_0(v)}z-\dfrac{\delta}{\sqrt{2\Omega}} F'(X_0(v)) \dfrac{\gamma-\theta}{2i} \vspace{0.2cm}\\
f_2^h(v,\tau)-\dfrac{(Q^0)'(v)}{Z_0(v)}z-\dfrac{z+Z_{0,h}(v,\tau)}{Z_0(v)}\partial_v\gamma\vspace{0.2cm}\\
-f_2^h(v,\tau)+\dfrac{(Q^0)'(v)}{Z_0(v)}z-\dfrac{z+Z_{0,h}(v,\tau)}{Z_0(v)}\partial_v\theta
\end{array}\right).
\end{equation}

%We are interested in solving equations of type $\mathcal{L}_{\omega}(Z,\Gamma,\Theta)= (f,g,h)$. For this purpose, we show the existence of an inverse $\mathcal{G}_{\omega}$ of $\mathcal{L}_{\omega}$ in an appropriated Banach space. 

\subsection{Banach spaces and technical results}\label{banachper}

% We introduce the Banach spaces where we invert the operator $\mathcal{L}_{\omega}$ in order to look for solutions of \eqref{edp restada}. 

%Here we are not interested in finding a complex parameterization $N^{u}_{0,h}$ in a domain which is $\sqrt{\e}$-close of the poles of $N_{0,h}$ (as the domain $D_{\e}^u$ in Section \ref{par0_sec}). We consider the outer domain $D^u$ given by \eqref{outerdomain}.

% For $v\in D^u$, $r<0$, $|v^2+2|^r\leq M_r$, where $M_r$ is a constant independent of $\e$.
For analytic functions $f:D^{u}\rightarrow \C$ and $g:D^{u}\times\mathbb{T}_{\sigma}\rightarrow \C$  and $\ag>0$,  we define
\begin{equation}\label{normabonita}
\begin{split}
\|f\|_{\alpha}&= \sup_{v\in D^{u}}|(v^2+2)^{\ag/2} f(v)|,\\
\|g\|_{ \alpha,\sigma}&= \displaystyle\sum_{k\in\Z} \|g^{[k]}\|_{\alpha}e^{|k|\sigma},
\end{split}
\end{equation}
where $g(v,\tau)=\displaystyle\sum_{k\in\Z}g^{[k]}(v)e^{ik\tau}$.

\begin{remark}\label{remark do peso}
	Notice that there exists a constant $d>0$ independent of $\e$ such that  the distance between each $v\in D^u$ (given in \eqref{outerdomain})  and the poles $\pm i\sqrt{2}$ of $N_{0,h}(v,\tau)$ (given in \eqref{n2d}) is greater than $d$. The weight $|v^2+2|^{\ag/2}$ in the norm $\|\cdot\|_{\alpha}$ is chosen to control the behavior as $\Rp v\to -\infty$ and to have it well-defined for $v=0\in D^u$. In fact, at infinity this norm is equivalent to the norm with weight $|v|^{\ag}$.
% 	in $D^u\setminus\{0\}$.
\end{remark}

We also define %the $\mathcal{C}^1$ norm of $g$ by
\begin{equation}
\llbracket g \rrbracket_{\alpha,\sigma}=\max\{ \|g\|_{\alpha,\sigma},\|\partial_\tau g\|_{\alpha,\sigma},\|\partial_v g\|_{\alpha+1,\sigma}\}
\end{equation}
%\begin{equation}
%\|g\|_{\nu, \alpha,\infty}=  \sup_{v\in D^s_{\nu,\infty}, \tau\in\mathbb{T}_{\sigma}}|v^\ag g(v,\tau)|.
%\end{equation}

%\begin{remark}
%	Notice that 
%	$$\|g\|_{\nu, \alpha,\infty}\leq\|g\|_{\nu, \alpha,\sigma}\leq\|g\|_{\nu, \alpha,\sigma}^{*}.$$
%\end{remark}
and  the Banach spaces
\begin{equation*}
\begin{array}{lcl}
\mathcal{X}_{\alpha,\sigma}&=&\{g:D^u\times\mathbb{T}_{\sigma}\rightarrow \C \textrm{ is an analytic function, }\textrm{ such that }  \|g\|_{\ag,\sigma}<\infty \},\\
\mathcal{Y}_{ \ag,\sigma}&=&\{g:D^u\times\mathbb{T}_{\sigma}\rightarrow \C \textrm{ is an analytic function},\textrm{ such that }  \llbracket g\rrbracket_{\ag,\sigma}<\infty \}.\\
%\chi_{\nu,\alpha,\infty}&=&\{g:D^s_{\nu,\infty}\times\mathbb{T}_{\sigma}\rightarrow \C \textrm{ is an analytic function, }2\pi\textrm{-periodic  in the second variable,}\\
%& &\textrm{ such that }  \|g\|_{\nu, \alpha,\infty}<\infty \}\\
\end{array}
\end{equation*}
Consider the product spaces 
\begin{equation*}
\begin{array}{lcl}\mathcal{X}_{\ag,\sigma}^{3}&=&\left\{(f,g,h)\in \mathcal{X}_{\ag,\sigma}\times\mathcal{X}_{\ag,\sigma}\times \mathcal{X}_{\ag,\sigma};\  f\ \textrm{is real-analytic, } g(v,\tau)=\overline{h(v,\tau)}, \textrm{ for every }v\in D^u\cap\R,\tau\in\mathbb{T}
\right\},\vspace{0.2cm}\\
\mathcal{Y}_{\ag,\sigma}^{3}&=&\left\{(f,g,h)\in \mathcal{Y}_{\ag,\sigma}\times\mathcal{Y}_{\ag,\sigma}\times \mathcal{Y}_{\ag,\sigma};\ f\ \textrm{is real-analytic, } g(v,\tau)=\overline{h(v,\tau)}, \textrm{ for every }v\in D^u\cap\R,\tau\in\mathbb{T}
\right\},\end{array}\end{equation*} endowed with the norms \begin{equation*}
\begin{array}{lcl}\|(f,g,h)\|_{\ag,\sigma}&=&\|f\|_{\ag,\sigma}+\|g\|_{\ag,\sigma}+\|h\|_{\ag,\sigma},\vspace{0.2cm}\\ \llbracket(f,g,h)\rrbracket_{\ag,\sigma}&=&\llbracket f\rrbracket_{\ag,\sigma}+ \llbracket g\rrbracket_{\ag,\sigma}+ \llbracket h\rrbracket_{\ag,\sigma},\end{array}\end{equation*}
respectively. We present some properties of the norm $\|\cdot\|_{\ag,\sigma}$, which are proven in \cite{BFGS12}.
\begin{lemma}\label{properties}
	Given real-analytic functions $f:\C\rightarrow \C$, $g,h:D^u\times\mathbb{T}_{\sigma}\rightarrow\C$, the following statements hold
	\begin{enumerate}
		\item If $\ag_1\geq\ag_2\geq 0$, then
		$$\|h\|_{\ag_2,\sigma}\leq \|h\|_{\ag_1,\sigma}.$$	
		\item If $\ag_1,\ag_2\geq 0$, and $\|g\|_{\ag_1,\sigma},\|h\|_{\ag_2,\sigma}<\infty$, then
		$$\|gh\|_{\ag_1+\ag_2,\sigma}\leq \|g\|_{\ag_1,\sigma}\|h\|_{\ag_2,\sigma}.$$	
		%		\item Let $\ag\geq 0$, $\nu'>\nu>0$ such that $\nu'-\nu$ has a positive lower bound independent of $\e$. If $\|g\|_{\nu,\ag,\sigma}<\infty$ then  $\|\partial_vg\|_{\nu',\ag,\sigma}<\infty$ and
		%		$$\|\partial_v h\|_{\nu',\ag,\sigma}^{\infty}\leq M \|h\|_{\nu,\ag,\sigma}^{\infty}.$$
		\item If $\|g\|_{\ag,\sigma}$, $\|h\|_{\ag,\sigma}\leq R_0/4$, where $R_0$ is the convergence ratio of $f'$ at $0$, then
		$$\|f(g)-f(h)\|_{\ag,\sigma}\leq M \|g-h\|_{\ag,\sigma}.$$
	\end{enumerate}	
\end{lemma}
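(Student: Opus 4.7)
The plan is to prove the three statements separately, using standard arguments for weighted suprema on Fourier series.

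For statement (1), I would rely on the geometric observation that on $D^u$ the quantity $|v^2+2|$ is bounded below by $1$. This follows from Remark \ref{remark do peso}, which guarantees a positive distance from the poles $\pm i\sqrt{2}$, combined with an elementary check along the boundary $|\Ip(v)|=-\tan(\beta)\Rp(v)+\sqrt{2}/2$ for $0<\beta<\pi/4$ chosen small enough. With $|v^2+2|\geq 1$, the pointwise inequality $|v^2+2|^{\ag_2/2}\leq|v^2+2|^{\ag_1/2}$ is immediate. Hence for each Fourier coefficient $\|h^{[k]}\|_{\ag_2}\leq\|h^{[k]}\|_{\ag_1}$, and summing against $e^{|k|\sigma}$ gives $\|h\|_{\ag_2,\sigma}\leq\|h\|_{\ag_1,\sigma}$.

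For statement (2), I would expand $g$ and $h$ in Fourier series in $\tau$ and compute the convolution
\[
(gh)^{[n]}(v)=\sum_{k\in\Z} g^{[k]}(v)\,h^{[n-k]}(v).
\]
The factorization $|v^2+2|^{(\ag_1+\ag_2)/2}=|v^2+2|^{\ag_1/2}\cdot|v^2+2|^{\ag_2/2}$ applied modewise yields $\|(gh)^{[n]}\|_{\ag_1+\ag_2}\leq\sum_k\|g^{[k]}\|_{\ag_1}\|h^{[n-k]}\|_{\ag_2}$. Multiplying by $e^{|n|\sigma}$, applying $|n|\leq|k|+|n-k|$ to split the exponential, and exchanging the order of summation via Fubini give the product inequality $\|gh\|_{\ag_1+\ag_2,\sigma}\leq\|g\|_{\ag_1,\sigma}\|h\|_{\ag_2,\sigma}$.

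For statement (3), I would exploit the Taylor expansion $f(x)=\sum_{n\geq0}f_n x^n$ around $0$, whose derivative has radius of convergence $R_0$. Using the telescoping identity $g^n-h^n=(g-h)\sum_{j=0}^{n-1}g^jh^{n-1-j}$ termwise, I write $f(g)-f(h)=(g-h)\sum_{n\geq1}f_n\sum_{j=0}^{n-1}g^jh^{n-1-j}$. Combining (1) and (2) gives $\|u^k\|_{\ag,\sigma}\leq\|u^k\|_{k\ag,\sigma}\leq\|u\|_{\ag,\sigma}^k$ for $u\in\{g,h\}$, so
\[
\|f(g)-f(h)\|_{\ag,\sigma}\leq\|g-h\|_{\ag,\sigma}\sum_{n\geq1}n|f_n|(R_0/4)^{n-1},
\]
and the series on the right converges since its radius of convergence equals that of $f'$. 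This produces the Lipschitz constant $M$.

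The main obstacle, in my view, is establishing the pointwise lower bound $|v^2+2|\geq 1$ on $D^u$, which underlies both (1) and the iterated application of (2) in (3). A direct computation shows that the minimum of $|v^2+2|$ along the boundary of $D^u$ behaves like $\tfrac{3}{2}\sqrt{1-\tan^2\beta}$, so securing the bound $\geq 1$ requires a careful choice of $\beta<\pi/4$ (equivalently, a slight shrinkage of the cone). Once this geometric point is settled, everything else reduces to standard manipulations with Fourier series and weighted suprema.
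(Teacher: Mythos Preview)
The paper does not actually prove this lemma; immediately after stating it, the authors write that these properties ``are proven in \cite{BFGS12}'' and move on. So there is no proof in the paper to compare against.

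Your argument is correct and is precisely the standard one used in that reference (and in related works on exponentially small splitting). Statement (2) is the usual $\ell^1$-convolution bound for Fourier series combined with the multiplicativity of the weight $|v^2+2|^{\alpha/2}$; statement (3) is the standard Lipschitz estimate obtained by writing $f(g)-f(h)=(g-h)\sum_{n\ge1}f_n\sum_{j=0}^{n-1}g^jh^{n-1-j}$ and controlling the sum via (1) and (2). For statement (1) you are right that the whole thing hinges on the inequality $|v^2+2|\ge 1$ on $D^u$, and you are also right that this does \emph{not} follow for arbitrary $0<\beta<\pi/4$: one checks along the boundary that the minimum of $|v^2+2|$ depends on $\tan\beta$ and exceeds $1$ only for $\beta$ below a threshold strictly smaller than $\pi/4$. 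The paper leaves this implicit (the angle $\beta$ is introduced as ``some $0<\beta<\pi/4$ fixed'' and is never used quantitatively beyond $\tan\beta<1$), so your observation that a slight shrinkage of the cone is needed is exactly the right caveat. Once that is in place, your proof is complete.
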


\subsection{The Operators $\mathcal{L}_{\omega}$ and $\mathcal{G}_{\omega}$}
Let $f$, $g$, and $h$ be analytic functions defined in $D^u\times\mathbb{T}_{\sigma}$. We define
\begin{equation}\label{inteq}
\begin{array}{l}
F^{[k]}(f)(v)= \displaystyle\int_{-\infty}^v\dfrac{e^{\omega i k(r-v)}Z_0(r)}{Z_0(v)}f^{[k]}(r)dr,\vspace{0.2cm}\\
G^{[k]}(g)(v)=\displaystyle\int_{-\infty}^{v}e^{\omega i (k-1) (r-v)}g^{[k]}(r)dr\vspace{0.2cm},\\
H^{[k]}(h)(v)=\displaystyle\int_{-\infty}^{v}e^{\omega i (k+1) (r-v)}h^{[k]}(r)dr,
\end{array}
\end{equation}
and consider the linear operator $\mathcal{G}_{\omega}$ given by
\begin{equation}\label{Go}
\mathcal{G}_{\omega}(f,g,h)=\left(\begin{array}{l}
\displaystyle\sum_k F^{[k]}(f)(v) e^{ik\tau}\vspace{0.2cm}\\
\displaystyle\sum_k G^{[k]}(g)(v) e^{ik\tau}\vspace{0.2cm}\\
\displaystyle\sum_k H^{[k]}(h)(v) e^{ik\tau}
\end{array}\right).
\end{equation}

%In what follows, we prove that $\mathcal{G}_{\omega}$ is an inverse of $\mathcal{L}_{\omega}$ given in \eqref{Lo} and we show some properties of $\mathcal{G}_{\omega}$.

\begin{lemma}\label{opgo}
	Fix $\alpha\geq 1$ and $\sigma>0$, the operator $$\mathcal{G}_{\omega}:
	\mathcal{X}_{\alpha+1,\sigma}^3\rightarrow \mathcal{Y}_{\alpha,\sigma}^3$$ given in \eqref{Go} is well-defined and the following statements hold:
	\begin{enumerate}
		\item $\mathcal{G}_{\omega}$ is an inverse of the operator $\mathcal{L}_{\omega}:\mathcal{Y}_{\ag,\sigma}^{3}\rightarrow \mathcal{X}_{\alpha+1,\sigma}^3$ given in \eqref{Lo}, i.e. $\mathcal{G}_{\omega}\circ\mathcal{L}_\omega=\mathcal{L}_{\omega}\circ\mathcal{G}_\omega= \mathrm{Id}$;
		\item $\llbracket \mathcal{G}_{\omega}(f,g,h)\rrbracket_{\ag,\sigma}\leq M \|(f,g,h)\|_{\ag+1,\sigma}$;
		\item If $f^{[0]}=g^{[1]}=h^{[-1]}=0$, then $\llbracket \mathcal{G}_{\omega}(f,g,h)\rrbracket_{\ag,\sigma}\leq \dfrac{M}{\omega} \llbracket (f,g,h)\rrbracket_{\ag,\sigma}$.
	\end{enumerate}
	%	
	%	
	%	
	%	\begin{enumerate}
	%		\item [($i$)] 	
	%		If $h=(h_1,h_2)\in\chi_{0,\nu,\alpha+1,\sigma}^{\times}$, then $\mathcal{G}_{\omega}(h),\partial_v\mathcal{G}_\omega(h)\in\chi_{0,\nu,\alpha,\sigma}^{\times}$ and
	%		\begin{equation}
	%		\| \mathcal{G}_{\omega}(h)\|_{\nu,\alpha,\sigma}^{\times}\leq M \|h\|_{\nu,\alpha+1,\sigma}^{\times} \textrm{ and } \| \partial_v\mathcal{G}_{\omega}(h)\|_{\nu,\alpha,\sigma}^{\times}\leq M\omega \|h\|_{\nu,\alpha+1,\sigma}^{\times}.
	%		\end{equation}
	%		\item [($ii$)] 	
	%		If $h=(h_1,h_2)\in\chi_{0,\nu,\alpha+1,\sigma}^{\times}$, and $h_1^{[1]}=h_2^{[-1]}=0$, then $\mathcal{G}_{\omega}\in\chi_{1,\nu,\alpha,\sigma}^{\times}$ and
	%		\begin{equation}
	%		\llbracket \mathcal{G}_{\omega}(h)\rrbracket_{\nu,\alpha,\sigma}^{\times}\leq M \| h\|_{\nu,\alpha+1,\sigma}^{\times}.
	%		\end{equation}		
	%	\end{enumerate}
\end{lemma}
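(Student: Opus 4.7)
The lemma has three parts: the inverse property, a bound with a loss of one unit of weight, and an improved bound with a $1/\omega$ gain under a vanishing-at-resonance hypothesis. The plan is to work mode by mode in the Fourier expansion in $\tau$ and then sum with the exponential weight $e^{|k|\sigma}$.

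For part (1), I would verify $\mathcal{L}_\omega \mathcal{G}_\omega = \mathrm{Id}$ by direct differentiation under the integral; for the first component, writing $z^{[k]} = F^{[k]}(f)$,
\[
\partial_v F^{[k]}(f)(v) = -\tfrac{Z_0'(v)}{Z_0(v)} F^{[k]}(f)(v) - i\omega k F^{[k]}(f)(v) + f^{[k]}(v),
\]
which, summed with $e^{ik\tau}$, is exactly the first equation of $\mathcal{L}_\omega z = f$; the computations for $G^{[k]}$ and $H^{[k]}$ are identical up to renaming. The opposite composition $\mathcal{G}_\omega \mathcal{L}_\omega = \mathrm{Id}$ then follows from uniqueness of the solution of each first-order linear ODE with the vanishing asymptotic condition as $\Rp(v) \to -\infty$.

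For part (2), the three quantities in $\llbracket \cdot \rrbracket_{\alpha,\sigma}$ are handled separately. The $\|\cdot\|_{\alpha,\sigma}$ bound follows from direct integral estimates: using $Z_0(v) = 8/\sqrt{v^2+2}$,
\[
|F^{[k]}(f)(v)| \leq \sqrt{|v^2+2|} \int_{-\infty}^v \frac{|f^{[k]}(r)|}{\sqrt{|r^2+2|}} \, dr \leq M \|f^{[k]}\|_{\alpha+1} \sqrt{|v^2+2|} \int_{-\infty}^v \frac{dr}{|r^2+2|^{(\alpha+2)/2}},
\]
and a standard computation of the last integral yields $\|F^{[k]}(f)\|_\alpha \leq M \|f^{[k]}\|_{\alpha+1}$, uniformly in $k$; simpler arguments work for $G^{[k]}$ and $H^{[k]}$ since their kernels are purely exponential. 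The bound on $\|\partial_v \cdot\|_{\alpha+1,\sigma}$ is read off from the relation $\partial_v z^{[k]} = f^{[k]} - (i\omega k + Z_0'/Z_0) z^{[k]}$, using Lemma \ref{properties}(2) to absorb the single unit of weight loss into the factor $Z_0'/Z_0 \in \mathcal{X}_{1,\sigma}$ and, for $k \neq 0$, the integration-by-parts identity below to control $\omega|k| \|F^{[k]}(f)\|_{\alpha+1}$. The $\|\partial_\tau \cdot\|_{\alpha,\sigma}$ bound is then obtained by solving for $\omega \partial_\tau z$ in the ODE and collecting the preceding bounds, which in fact yields an additional $1/\omega$ factor.

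For part (3), the hypotheses $f^{[0]} = g^{[1]} = h^{[-1]} = 0$ remove precisely the resonant Fourier modes, so every phase $e^{i\omega k(r-v)}$, $e^{i\omega(k-1)(r-v)}$, $e^{i\omega(k+1)(r-v)}$ appearing in the relevant sums is genuinely oscillatory. Integration by parts then extracts the factor $1/\omega$: for instance,
\[
G^{[k]}(g)(v) = \frac{g^{[k]}(v)}{i\omega(k-1)} - \frac{1}{i\omega(k-1)} \int_{-\infty}^v e^{i\omega(k-1)(r-v)} \partial_r g^{[k]}(r) \, dr, \qquad k \neq 1,
\]
the boundary term at $-\infty$ vanishing by the decay of $g^{[k]}$. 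Since the right-hand side of the target estimate is now the $\llbracket \cdot \rrbracket$-norm, $\partial_v g$ is available, and each piece is bounded by $M/\omega$ times quantities controlled by $\llbracket g \rrbracket_{\alpha,\sigma}$ (the factor $|k-1|^{-1} \leq 1$ causing no difficulty). Analogous identities for the non-resonant modes of $F^{[k]}$ and $H^{[k]}$ complete the proof. The principal technical obstacle is the $\|\partial_v \cdot\|_{\alpha+1,\sigma}$ bound in part (2): controlling $\omega|k| \|F^{[k]}(f)\|_{\alpha+1}$ for $k \neq 0$ using only $\|f^{[k]}\|_{\alpha+1}$ requires the IBP identity above, whose remainder involves $\partial_r f^{[k]}$ and must be recovered by Cauchy estimates in a slightly shrunken strip of $\mathbb{T}_\sigma$, with the loss absorbed uniformly in $\varepsilon$ into $M$.
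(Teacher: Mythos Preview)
The paper does not prove this lemma but defers to \cite{BFGS12}, so there is no in-paper proof to compare against; I will simply assess your argument on its own.

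Your overall strategy---Fourier decomposition in $\tau$, mode-by-mode ODE analysis, and integration by parts for the $1/\omega$ gain when the resonant modes are absent---is the standard one and is correct in outline. However, there is a genuine gap in part~(2). Your displayed estimate
\[
|F^{[k]}(f)(v)| \leq \sqrt{|v^2+2|}\int_{-\infty}^{v}\frac{|f^{[k]}(r)|}{\sqrt{|r^2+2|}}\,dr
\]
tacitly assumes $|e^{i\omega k(r-v)}|\leq 1$ along the integration path. In the complex domain $D^u$ this is false on a generic path: $|e^{i\omega k(r-v)}|=e^{-\omega k\,\Ip(r-v)}$ can grow exponentially. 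The remedy, which is the standard device in this literature (and is used elsewhere in the paper, e.g.\ in the proof of Proposition~\ref{gammah0}), is to tilt the integration contour: for $k\gtrless 0$ integrate along $r=v+e^{\mp i\beta}\xi$, $\xi\in(-\infty,0]$, so that the exponential becomes $e^{-\omega|k|\sin\beta\,|\xi|}$, which decays.

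This same tilted-path estimate also removes your ``principal technical obstacle'' at the end: integrating the decaying exponential directly yields
\[
\|F^{[k]}(f)\|_{\alpha+1}\leq \frac{M}{\omega|k|}\,\|f^{[k]}\|_{\alpha+1}\qquad(k\neq 0),
\]
so that $\omega|k|\,\|F^{[k]}(f)\|_{\alpha+1}\leq M\|f^{[k]}\|_{\alpha+1}$ without any integration by parts or Cauchy estimates in $\tau$. Your proposed fix via shrinking the $\mathbb{T}_\sigma$ strip is problematic because the target norm uses the \emph{same} $\sigma$; the tilted path avoids this entirely. With this correction in place, the rest of your argument (including the IBP in part~(3), which is legitimate since $\llbracket\cdot\rrbracket_{\alpha,\sigma}$ controls $\partial_v$) goes through.
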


The proof of Lemma \ref{opgo} can be found in  \cite{BFGS12}.

To find a solution of \eqref{edp restada}, it is sufficient to find a fixed point of the operator
\begin{equation}\label{gob}
\overline{\mathcal{G}}_{\omega,h}=\mathcal{G}_{\omega}\circ \mathcal{P}_h,
\end{equation}
where $\mathcal{G}_{\omega}$ is given by \eqref{Go} and $\mathcal{P}_h$ is given by \eqref{Fo}.

\subsection{The Operator $\mathcal{P}_h$}

We show some properties of the operator $\mathcal{P}_h$ defined in \eqref{Fo}.

\begin{lemma}\label{firstit}
	Fix $\sigma>0,\ h_0>0$. For  $0\leq h\leq h_0$, the operator $\mathcal{P}_h$ defined in \eqref{Fo} satisfies
	%	
	%	the functions $f_1^h, f_2^h$ and $f_3^h$ given by \eqref{f1}, and \eqref{f2} satisfy the following properties:
	%	\begin{enumerate}
	%		\item [$(i)$] $\|f_1^h\|_{2,\sigma}\leq M\left(\dfrac{\dg\sqrt{h}}{\omega^{3/2}}+\dfrac{\dg^2}{\omega}\right)\vspace{0.2cm}$;
	%%		\item [$(ii)$] $\|f_1^h\|_{\nu,\kappa,5/2,\sigma}^{out}\leq M\left(\dfrac{\dg\sqrt{h}}{\omega^{3/2}}+\dfrac{\dg^2}{\omega}\right)\vspace{0.2cm}$;		
	%		\item [$(ii)$]$\|f_2^h\|_{2,\sigma}=\|f_3^h\|_{2,\sigma}\leq M\dfrac{\dg}{\omega}$.
	%%		\item [$(iv)$]$\|f_2^h\|_{\nu,\kappa,2,\sigma}^{out}\leq M\dfrac{\dg}{\omega}$.		
	%		
	%	\end{enumerate}
	%	Furthermore, 
	
	$$\left\|\mathcal{P}_h(0,0,0)\right\|_{2,\sigma}\leq M\dfrac{\dg}{\omega}.$$
\end{lemma}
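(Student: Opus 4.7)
The proof reduces to a direct estimate. Setting $(z,\gamma,\theta)=(0,0,0)$ in \eqref{Fo} gives
\[
\mathcal{P}_h(0,0,0)=\bigl(f_1^h,\,f_2^h,\,-f_2^h\bigr),
\]
so by the definition of the norm it suffices to show that $\|f_j^h\|_{2,\sigma}\leq M\dg/\omega$ for $j=1,2$.

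The first step is to write out explicit formulas for every object appearing in \eqref{f1}--\eqref{f2}. From $\sinh X_0(v)=v/\sqrt{2}$ one obtains the rational expressions
\[
F(X_0(v))=-\frac{2\sqrt{2}\,v}{v^2+2},\qquad Z_0(v)=\frac{8}{\sqrt{v^2+2}},\qquad \frac{Z_0'(v)}{Z_0(v)}=-\frac{v}{v^2+2},
\]
together with analogous expressions for $F'(X_0(v))$, $Q^0(v)$, and $(Q^0)'(v)$; combined with \eqref{Zfirst} and \eqref{periodic} these also determine $Z_{0,h}$ and $\partial_v Z_{0,h}$. A crucial structural observation is that $Z_{0,h}(v,\tau)$ contains only the harmonics $k=\pm 1$ in $\tau$ (because $\Gamma_h(\tau)+\Theta_h(\tau)=2\sqrt{2h/\omega}\cos\tau$), so the weight $e^{|k|\sigma}$ in $\|\cdot\|_{2,\sigma}$ produces at most a bounded factor $e^{\sigma}$ that is absorbed into $M$.

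The second step is term-by-term majorisation in $\|\cdot\|_2$. By Remark \ref{remark do peso}, the poles $\pm i\sqrt{2}$ of the rational factors lie at positive distance from $D^u$, so the only thing to monitor is the rate of decay as $\Rp(v)\to-\infty$. For $f_2^h$, the dominant summand $(Q^0)'(v)$ satisfies $|(v^2+2)(Q^0)'(v)|\leq M\dg/\omega$ uniformly in $D^u$, while the remaining term $Z_{0,h}(Q^0)'/Z_0$ carries an additional prefactor $\dg\sqrt{h/\omega}$ and is of lower order. For $f_1^h$, the four summands
\[
\partial_v Z_{0,h},\qquad \frac{Z_0'}{Z_0}Z_{0,h},\qquad \frac{\dg}{\sqrt{2\Omega}}F'(X_0)\frac{Q^0}{i},\qquad \frac{Z_{0,h}\,\partial_v Z_{0,h}}{Z_0}
\]
contribute (in the $\|\cdot\|_2$ norm) terms of sizes $\dg\sqrt{h/\omega}/\omega$, $\dg\sqrt{h/\omega}/\omega$, $\dg^2/\omega$, and $\dg^2 h/\omega^3$ respectively. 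Since $\dg=\e^{3/4}<1$, $h\leq h_0$ is bounded, and $\omega=\Omega/\sqrt{\e}$ is large for $\e$ small, each of these is dominated by $M\dg/\omega$.

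The only modest technical subtlety is that the exponent $\alpha=2$ in the Banach space is exactly tight for the Melnikov-type term $(Q^0)'(v)$, which decays precisely like $|v|^{-2}$ at infinity; this is what dictates the choice of weight in \eqref{normabonita}. All other summands decay strictly faster, so no cancellation is required and the bound $\|\mathcal{P}_h(0,0,0)\|_{2,\sigma}\leq M\dg/\omega$ follows from independent term-by-term estimates, with a constant $M$ independent of both $\e$ and $h$.
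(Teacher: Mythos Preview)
Your proof is correct and follows essentially the same route as the paper: evaluate $\mathcal{P}_h(0,0,0)=(f_1^h,f_2^h,-f_2^h)$ and bound each summand of $f_1^h,f_2^h$ separately in the $\|\cdot\|_{2,\sigma}$ norm, obtaining the same term sizes $\dg\sqrt{h}/\omega^{3/2}$, $\dg^2/\omega$, $\dg^2 h/\omega^3$, $\dg/\omega$. The paper packages this via the building-block bounds $\|Q^0\|_{1,\sigma},\|(Q^0)'\|_{2,\sigma}\le M\dg/\omega$, $\|Z_{0,h}\|_{1,\sigma},\|\partial_v Z_{0,h}\|_{2,\sigma}\le M\dg\sqrt{h}/\omega^{3/2}$ together with the algebra property of Lemma~\ref{properties}, whereas you compute the rational expressions explicitly and note the finite Fourier content of $Z_{0,h}$; the content is the same.
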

\begin{proof}
	Notice that $\mathcal{P}_{h}(0,0,0)=(f_1^h,f_2^h,-f_2^h)$, where $f_1^h$ and $f_2^h$ are given by \eqref{f1}, and \eqref{f2} respectively, and involve the functions $F'(X_0),\ Z_0,\ Z_0',\ Q^0,\ Q_0',$ $Z_{0,h},\partial_v Z_{0,h}$. By \eqref{exprUF}, \eqref{par1}, \eqref{first0} and \eqref{Zfirst}, we can see that
	
%	
%	$$\begin{array}{rcl}
%	Q^0(v)&=& i\dfrac{2\dg}{\omega\sqrt{\Omega}}\dfrac{v}{v^2+2},\vspace{0.2cm}\\
%	(Q^0)'(v)&=& i\dfrac{2\dg}{\omega\sqrt{\Omega}}\dfrac{(2-v^2)}{(v^2+2)^2},\vspace{0.2cm}\\	
%	F'(X_0)(v)&=&\dfrac{2\sqrt{2}(v^2-2)}{(v^2+2)^{3/2}},\vspace{0.2cm}\\		
%	Z_{0,h}(v,\tau)&=&\dfrac{2\dg}{\omega\sqrt{\Omega}}\sqrt{\dfrac{2h}{\omega}}\dfrac{(v^2-2)}{(v^2+2)^{3/2}}\cos(\tau),\vspace{0.2cm}\\
%	\partial_vZ_{0,h}(v,\tau)&=&\dfrac{2\dg}{\omega\sqrt{\Omega}}\sqrt{\dfrac{2h}{\omega}}\dfrac{v(10-v^2)}{(v^2+2)^{5/2}}\cos(\tau),\vspace{0.2cm}\\	
%	Z_0(v)&=&\dfrac{8}{\sqrt{v^2+2}},\vspace{0.2cm}\\
%	Z_0'(v)&=&-\dfrac{8v}{(v^2+2)^{3/2}}.				
%	\end{array}$$
%	
%	We have that
	$$
	\begin{array}{l}
	\|Q^0\|_{1,\sigma}, \|(Q^0)'\|_{2,\sigma} \leq M\dfrac{\dg}{\omega},\vspace{0.2cm}\\	
	%Q^0(v)-\Theta_{1D}^0(v)
	%	\|b_1^*\|_{1,\sigma}^{\infty},\|(b_1^*)'\|_{2,\sigma}^{\infty}\leq M\dfrac{\dg}{\omega},\vspace{0.2cm}\\
	\left\|Z_{0,h}\right\|_{1,\sigma},\|\partial_vZ_{0,h}\|_{2,\sigma}\leq M\dfrac{\dg\sqrt{h}}{\omega^{3/2}},\vspace{0.2cm}\\ \|Z_0\|_{1,\sigma}, \|Z_0'\|_{2,\sigma}, \|F'(X_0)\|_{1,\sigma} \leq M.
	\end{array}
	$$
	It follows from these bounds and Lemma \ref{properties} that 
	$$\|f_1^h\|_{2,\sigma}\leq M\max\left\{\dfrac{\dg\sqrt{h}}{\omega^{3/2}},\dfrac{\dg^2}{\omega},\dfrac{\dg^2}{\omega^3}h\right\}=M\max\left\{\dfrac{\dg\sqrt{h}}{\omega^{3/2}},\dfrac{\dg^2}{\omega}\right\},$$
	$$\|f_2^h\|_{2,\sigma}\leq M\max\left\{\dfrac{\dg}{\omega},\dfrac{\dg^2}{\omega^{5/2}}\sqrt{h}\right\}=M\dfrac{\dg}{\omega}.$$
	%	
	%	Again, using the expressions of the functions which appear in $f_1^h$ and $f_2^h$, we obtain
	%	$$
	%	\begin{array}{l}
	%	\|b_1^*\|_{1,\sigma}^{out},\|(b_1^*)'\|_{2,\sigma}^{out}\leq M\dfrac{\dg}{\omega},\vspace{0.2cm}\\ \left\|Z_1^*\right\|_{3/2,\sigma}^{out},\|\partial_vZ_1^*\|_{5/2,\sigma}^{out}\leq M\dfrac{\dg\sqrt{h}}{\omega^{3/2}},\vspace{0.2cm}\\ \|Z_0\|_{1/2,\sigma}^{out}, \|Z_0'\|^{out}_{3/2,\sigma}, \|F'(X_0)\|^{out}_{3/2,\sigma} \leq M.
	%	\end{array}
	%	$$
	%	
	%	Thus, we have that, for each $v\in D^s_{\nu,\kappa}$ with $|\Rp(v)|<\nu$,
	%	
	%	$$\begin{array}{lcl}
	%	|(v^2+2)^{5/2}f_1^h(v)|&\leq& M\left(\dfrac{\dg}{\omega^{3/2}}\sqrt{h}+\dfrac{\dg^2}{\omega}+\dfrac{\dg^2h}{\omega^3}\dfrac{1}{|v^2+2|}\right)\vspace{0.2cm}\\
	%	&\leq& M\left(\dfrac{\dg}{\omega^{3/2}}\sqrt{h}+\dfrac{\dg^2}{\omega}+\dfrac{\dg^2h}{\omega^2}\right)\vspace{0.2cm}\\
	%	&\leq& M\left(\dfrac{\dg}{\omega^{3/2}}\sqrt{h}+\dfrac{\dg^2}{\omega}\right),
	%	\end{array}$$
	%	
	%	and,
	%	
	%	$$\begin{array}{lcl}
	%	|(v^2+2)^{2}f_2^h(v)|&\leq& M\left(\dfrac{\dg}{\omega}+\dfrac{\dg^2}{\omega^{5/2}}\sqrt{h}\dfrac{1}{|v^2+2|}\right)\vspace{0.2cm}\\
	%	&\leq& M\left(\dfrac{\dg}{\omega}+\dfrac{\dg^2}{\omega^{3/2}}\sqrt{h}\right)\vspace{0.2cm}\\
	%	&\leq& M\dfrac{\dg}{\omega}.
	%	\end{array}$$
	%	
	%	Therefore, it follows straightforwardly from these bounds and Lemma \ref{properties} that 
	%	$$\|f_1^h\|_{5/2,\sigma}^{out}\leq M\left(\dfrac{\dg}{\omega^{3/2}}\sqrt{h}+\dfrac{\dg^2}{\omega}\right),$$
	%	$$\|f_2^h\|_{2,\sigma}^{out}\leq M\dfrac{\dg}{\omega}.$$
	%	

\end{proof}

\begin{lemma}\label{lip}
	Fix $\sigma>0,\ h_0>0$ and $K>0$. If $0\leq h\leq h_0$,  the operator 
	$$\mathcal{P}_h: \mathcal{Y}_{1,\sigma}^{3}\rightarrow\mathcal{X}_{2,\sigma}^{3}$$
	is well defined.
	Moreover, given $(z_j,\gamma_j,\theta_j)\in\mathcal{B}_{0}(K\dg/\omega)\subset\mathcal{Y}_{1,\sigma}^{3},\ j=1,2$, 
	\begin{equation}\label{lipph}
	\left\|	\mathcal{P}_h(z_1,\gamma_1,\theta_1)- \mathcal{P}_h(z_2,\gamma_2,\theta_2)\right\|_{2,\sigma}\leq M\left(\dg+\dfrac{\dg}{\omega^{3/2}}\sqrt{h}\right)\left\llbracket	(z_1,\gamma_1,\theta_1)- (z_2,\gamma_2,\theta_2)\right\rrbracket_{1,\sigma},
	\end{equation}
	where $M$ is a constant independent of $\e$ and $h$.	
\end{lemma}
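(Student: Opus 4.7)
The plan is to expand the difference $\mathcal{P}_h(z_1,\gamma_1,\theta_1) - \mathcal{P}_h(z_2,\gamma_2,\theta_2)$ component by component. The inhomogeneities $f_1^h, \pm f_2^h$ are independent of $(z,\gamma,\theta)$ and cancel out, leaving only the semilinear contributions: in the first component, $-(z+Z_{0,h})Z_0^{-1}\partial_v z$, $-\partial_v Z_{0,h}\,Z_0^{-1}z$ and the linear term $\dg F'(X_0)(\gamma-\theta)/(2i\sqrt{2\Omega})$; in the second and third components, $(Q^0)'Z_0^{-1}z$ together with $(z+Z_{0,h})Z_0^{-1}\partial_v\gamma$ (resp.\ $\partial_v\theta$). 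For every quadratic-in-$(z,\gamma,\theta)$ term I split via the standard identity
\[
A_1B_1-A_2B_2=(A_1-A_2)B_1+A_2(B_1-B_2),
\]
so that each resulting summand is linear in exactly one of the differences $z_1-z_2,\ \gamma_1-\gamma_2,\ \theta_1-\theta_2,\ \partial_v(z_1-z_2)$, etc., all of which are controlled by $\llbracket(z_1,\gamma_1,\theta_1)-(z_2,\gamma_2,\theta_2)\rrbracket_{1,\sigma}$.

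Each factor is then estimated in the appropriate norm. The smooth coefficients $F'(X_0),(Q^0)',Q^0,Z_0,Z_0',Z_{0,h},\partial_v Z_{0,h}$ satisfy the bounds already established in the proof of Lemma \ref{firstit}; in particular $\|Z_{0,h}\|_{1,\sigma},\|\partial_v Z_{0,h}\|_{2,\sigma}\leq M\dg\sqrt{h}/\omega^{3/2}$ and $\|F'(X_0)\|_{1,\sigma}\leq M$. The factors involving $(z_j,\gamma_j,\theta_j)$ are bounded by the hypothesis $\llbracket(z_j,\gamma_j,\theta_j)\rrbracket_{1,\sigma}\leq K\dg/\omega$, using that this controls both $\|\cdot\|_{1,\sigma}$ of the variable and $\|\partial_v\cdot\|_{2,\sigma}$ of its $v$-derivative. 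The products are then collected through Lemma \ref{properties}(2): the linear term $\dg F'(X_0)(\gamma_1-\gamma_2-\theta_1+\theta_2)$ contributes $M\dg$ directly, the terms mixing a small factor from the ball with $\partial_v Z_{0,h}$ or $Z_{0,h}$ contribute $M\dg\sqrt{h}/\omega^{3/2}$, and the purely quadratic terms with one factor in the ball contribute $MK\dg/\omega$, which is absorbed into $M\dg$.

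The main bookkeeping subtlety is the fact that $1/Z_0(v)=\sqrt{v^2+2}/8$ raises the norm weight by one, i.e.\ $\|g/Z_0\|_{\alpha,\sigma}\leq M\|g\|_{\alpha+1,\sigma}$. This forces a careful pairing: every factor $1/Z_0$ must be coupled with a $\partial_v$-term (controlled in $\|\cdot\|_{2,\sigma}$ by $\llbracket\cdot\rrbracket_{1,\sigma}$) or with $\partial_v Z_{0,h}$, while undifferentiated $z,\gamma,\theta$ and variables like $F'(X_0)$ are kept in $\|\cdot\|_{1,\sigma}$, so that the net product lives in $\|\cdot\|_{2,\sigma}$. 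This is precisely the role of introducing the auxiliary norm $\llbracket\cdot\rrbracket_{1,\sigma}$ on the domain of $\mathcal{P}_h$.

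Finally, the well-definedness $\mathcal{P}_h:\mathcal{Y}_{1,\sigma}^{3}\to\mathcal{X}_{2,\sigma}^{3}$ follows as an immediate corollary by combining the Lipschitz bound with Lemma \ref{firstit}:
\[
\|\mathcal{P}_h(z,\gamma,\theta)\|_{2,\sigma}\leq \|\mathcal{P}_h(0,0,0)\|_{2,\sigma}+M\bigl(\dg+\dg\sqrt{h}/\omega^{3/2}\bigr)\llbracket(z,\gamma,\theta)\rrbracket_{1,\sigma},
\]
and the reality of the first component together with the conjugacy of the second and third are preserved by the algebraic structure of $\mathcal{P}_h$. I do not expect a serious obstacle here, since the real work was already carried out in the estimates of Lemma \ref{firstit}; the only place demanding attention is the weighted-norm arithmetic required to keep the factor $1/Z_0$ under control.
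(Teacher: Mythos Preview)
Your proposal is correct and follows essentially the same approach as the paper: write out the componentwise differences, split the bilinear terms via $A_1B_1-A_2B_2=(A_1-A_2)B_1+A_2(B_1-B_2)$, and estimate each factor using the bounds from Lemma~\ref{firstit} together with Lemma~\ref{properties}. Your treatment of the factor $1/Z_0(v)=\sqrt{v^2+2}/8$ as a weight shift $\|g/Z_0\|_{\alpha,\sigma}\leq M\|g\|_{\alpha+1,\sigma}$ is in fact more precise than the paper's wording (the paper says ``$Z_0$ is lower bounded in $D^u$'', which is not literally true since $Z_0\to 0$ at infinity; what is actually used is exactly the weight-shift you describe).
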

\begin{proof}
	It is straightforward to see that $\mathcal{P}_h$ is well defined. Denote $\mathcal{P}_h^j=\pi_j\circ\mathcal{P}_{h}$. We  show the bound of the difference for $\mathcal{P}_h^1$ and $\mathcal{P}_h^2$, since the bound of $\mathcal{P}_h^3$ can be obtained in exactly the same way as $\mathcal{P}_h^2$. 
	
	Notice that
		%	\begin{equation}\label{Fo}
	%	\mathcal{P}_h(Z_1^u,\Gamma_1^u,\Theta_1^u)=\left(\begin{array}{l}
	%	f_1^h(v,\tau) -\dfrac{Z_1^{u}+Z_{2D}^h(v,\tau)}{Z_0(v)}\partial_v Z_1^{u}-\dfrac{\partial_vZ_{2D}^h}{Z_0(v)}Z_1^u-\dfrac{\delta}{\sqrt{2\Omega}} F'(X_0(v)) \dfrac{\Gamma_1^{u}-\Theta_1^u}{2i} \vspace{0.2cm}\\
	%	f_2^h(v,\tau)-\dfrac{(Q^0)'(v)}{Z_0(v)}Z_1^u-\dfrac{Z_1^u+Z_{2D}^h(v,\tau)}{Z_0(v)}\partial_v\Gamma_1^u\vspace{0.2cm}\\
	%	f_3^h(v,\tau)-\dfrac{(\Theta_{1D}^0)'(v)}{Z_0(v)}Z_1^u-\dfrac{Z_1^u+Z_{2D}^h(v,\tau)}{Z_0(v)}\partial_v\Theta_1^u
	%	\end{array}\right).
	%	\end{equation}
	$$
	\begin{array}{lcl}
	\mathcal{P}_h^1(z_1,\gamma_1,\theta_1)- \mathcal{P}_h^1(z_2,\gamma_2,\theta_2) &=&  -\dfrac{\dg}{\sqrt{2\Omega}}F'(X_0(v))\dfrac{(\gamma_1-\gamma_2)-(\theta_1-\theta_2)}{2i}\vspace{0.2cm}\\
	&&-\dfrac{\partial_vZ_{0,h}(v,\tau)}{Z_0(v)}(z_1-z_2)-\partial_vz_2\dfrac{z_1-z_2}{Z_0(v)}		\vspace{0.2cm}\\
	&&-\dfrac{z_1+Z_{0,h}(v,\tau)}{Z_0(v)}(\partial_vz_1-\partial_vz_2).%\vspace{0.2cm}\\
%	&&-\partial_vz_2\dfrac{z_1-z_2}{Z_0(v)}		
	\end{array}
	$$
	Using the bounds contained in the proof of Lemma \ref{firstit} and that $Z_0$ is lower bounded in $D^u$ by a positive constant independent of $\e$, one can see that
	$$\left\|\mathcal{P}_h^1(z_1,\gamma_1,\theta_1)- \mathcal{P}_h^1(z_2,\gamma_2,\theta_2)\right\|_{2,\sigma}\leq  M\max\left\{\dg,\dfrac{\dg}{\omega^{3/2}}\sqrt{h}\right\}\left\llbracket	(z_1,\gamma_1,\theta_1)- (z_2,\gamma_2,\theta_2)\right\rrbracket_{1,\sigma}.$$
	
%	where
%	$$M^{1}_{\e,h}\leq M\max\left\{\dg,\dfrac{\dg}{\omega^{3/2}}\sqrt{h}\right\}.$$
	%	and
	%	$$M^{1,out}_{\dg,\omega}\leq M\max\left\{\dg,\dfrac{\dg}{\omega^{1/2}}\sqrt{h},\dfrac{\dg}{\omega^{1/2}}\sqrt{h}+R\omega,R\omega\right\}.$$	
	
	Now,
	$$
	\begin{array}{lcl}
	\mathcal{P}_h^2(z_1,\gamma_1,\theta_1)- \mathcal{P}_h^2(z_2,\gamma_2,\theta_2) &=&   -\dfrac{(Q^0)'(v)}{Z_0(v)}(z_1-z_2)-\partial_v\gamma_2\dfrac{z_1-z_2}{Z_0(v)}		\vspace{0.2cm}\\
	&&-\dfrac{z_1+Z_{0,h}(v,\tau)}{Z_0(v)}(\partial_v\gamma_1-\partial_v\gamma_2)\vspace{0.2cm}\\
%	&-&\partial_v\gamma_2\dfrac{z_1-z_2}{Z_0(v)}		
	\end{array}
	$$
	which, proceeding analogously,
	$$\left\|\mathcal{P}_h^2(z_1,\gamma_1,\theta_1)- \mathcal{P}_h^2(z_2,\gamma_2,\theta_2)\right\|_{2,\sigma}\leq M\max\left\{\dfrac{\dg}{\omega},\dfrac{\dg}{\omega^{3/2}}\sqrt{h}\right\}\left\llbracket	(z_1,\gamma_1,\theta_1)- (z_2,\gamma_2,\theta_2)\right\rrbracket_{1,\sigma}.$$
	%	and
	%	$$\left\|\mathcal{F}_2(Z_1,\Gamma_1,\Theta_1)- \mathcal{F}_2(Z_2,\Gamma_2,\Theta_2)\right\|^{out}_{2,\sigma}\leq M_{\dg,\omega}^{2,out}\left\llbracket	(Z_1,\Gamma_1,\Theta_1)- (Z_2,\Gamma_2,\Theta_2)\right\rrbracket_{1,3/2,1,\sigma}^{\times},$$	
%	where
%	$$M^{2}_{\e,h}\leq M\max\left\{\dfrac{\dg}{\omega},\dfrac{\dg}{\omega^{3/2}}\sqrt{h}\right\}.$$
	%	and	
	%	$$M^{2,out}_{\dg,\omega}\leq M\max\left\{\dg,\dfrac{\dg}{\omega^{1/2}}\sqrt{h}+R\omega,R\omega\right\}.$$
	
%	Thus, we have \eqref{lipph}.
	%that
	%	$$\left\|	\mathcal{P}_h(Z_1,\Gamma_1,\Theta_1)- \mathcal{P}_h(Z_2,\Gamma_2,\Theta_2)\right\|_{2,\sigma}\leq M\left(\dg+\dfrac{\dg}{\omega^{3/2}}\sqrt{h}\right)\left\llbracket	(Z_1,\Gamma_1,\Theta_1)- (Z_2,\Gamma_2,\Theta_2)\right\rrbracket_{1,\sigma}.$$
\end{proof}

\subsection{The Fixed Point Theorem}	

Now, we write Proposition \ref{existence} in terms of Banach spaces and we prove it through a fixed point argument  applied to the operator $\overline{\mathcal{G}}_{\omega,h}$ given by \eqref{gob}. 
\begin{prop}\label{fixpointgob}
	Fix $\sigma>0$ and $h_0>0$. There exists $\e_0>0$ such that for $0<\e\leq \e_0$, the operator $\overline{\mathcal{G}}_{\omega,h}$ in \eqref{gob} has a  fixed point $(z_{0,h}^{u},\gamma_{0,h}^{u},\theta_{0,h}^{u})\in\mathcal{Y}_{1,\sigma}^3$. Furthermore, there exists a constant $M>0$ independent of $\e$ and $h$ such that
	$$\llbracket(z_{0,h}^{u},\gamma_{0,h}^{u},\theta_{0,h}^{u})\rrbracket_{1,\sigma}\leq M\dfrac{\dg}{\omega}.$$
\end{prop}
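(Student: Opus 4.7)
The plan is to obtain $(z_{0,h}^{u},\gamma_{0,h}^{u},\theta_{0,h}^{u})$ as a fixed point of the operator $\overline{\mathcal{G}}_{\omega,h}=\mathcal{G}_{\omega}\circ\mathcal{P}_h$ via the Banach fixed point theorem in an appropriate closed ball of $\mathcal{Y}_{1,\sigma}^{3}$. The key observation is that the composition is self-mapping into $\mathcal{Y}_{1,\sigma}^{3}$: by Lemma \ref{lip}, $\mathcal{P}_h$ sends $\mathcal{Y}_{1,\sigma}^{3}$ into $\mathcal{X}_{2,\sigma}^{3}$, and by item (2) of Lemma \ref{opgo} with $\alpha=1$, $\mathcal{G}_{\omega}$ sends $\mathcal{X}_{2,\sigma}^{3}$ back into $\mathcal{Y}_{1,\sigma}^{3}$, with the estimate $\llbracket\mathcal{G}_{\omega}(f,g,h)\rrbracket_{1,\sigma}\leq M\|(f,g,h)\|_{2,\sigma}$. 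Both operators preserve real-analyticity of the first component and the complex conjugation relation between the second and third components, so the whole argument stays inside $\mathcal{Y}_{1,\sigma}^{3}$.

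First I would establish the initial iteration bound. Combining Lemma \ref{firstit} (which gives $\|\mathcal{P}_h(0,0,0)\|_{2,\sigma}\leq M\dg/\omega$) with the estimate from Lemma \ref{opgo} yields a constant $b_1>0$, independent of $\e$ and $h$, such that
\[
\llbracket\overline{\mathcal{G}}_{\omega,h}(0,0,0)\rrbracket_{1,\sigma}\leq \frac{b_1}{2}\,\frac{\dg}{\omega}.
\]
I then fix the candidate ball $\mathcal{B}_0(b_1\dg/\omega)\subset \mathcal{Y}_{1,\sigma}^{3}$. For the contraction estimate, given $(z_j,\gamma_j,\theta_j)\in \mathcal{B}_0(b_1\dg/\omega)$, $j=1,2$, Lemma \ref{lip} (applied with $K=b_1$) combined with Lemma \ref{opgo}(2) yields
\[
\llbracket\overline{\mathcal{G}}_{\omega,h}(z_1,\gamma_1,\theta_1)-\overline{\mathcal{G}}_{\omega,h}(z_2,\gamma_2,\theta_2)\rrbracket_{1,\sigma}
\leq M\Bigl(\dg+\frac{\dg\sqrt{h}}{\omega^{3/2}}\Bigr)\llbracket(z_1,\gamma_1,\theta_1)-(z_2,\gamma_2,\theta_2)\rrbracket_{1,\sigma}.
\]
Since $\dg=\e^{3/4}$ and $h\leq h_0$, choosing $\e_0>0$ small enough makes the Lipschitz constant smaller than $1/2$. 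Together with the self-mapping condition $\llbracket\overline{\mathcal{G}}_{\omega,h}(f)\rrbracket_{1,\sigma}\leq \frac{b_1}{2}\frac{\dg}{\omega}+\frac{1}{2}\cdot\frac{b_1\dg}{\omega}\leq \frac{b_1\dg}{\omega}$ valid on $\mathcal{B}_0(b_1\dg/\omega)$, the Banach contraction principle provides a unique fixed point $(z_{0,h}^{u},\gamma_{0,h}^{u},\theta_{0,h}^{u})$ in this ball, which by construction satisfies the announced bound $\llbracket(z_{0,h}^{u},\gamma_{0,h}^{u},\theta_{0,h}^{u})\rrbracket_{1,\sigma}\leq b_1\dg/\omega$.

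There is no significant obstacle left: the entire proof reduces to a careful bookkeeping of the estimates already proven in Lemmas \ref{opgo}, \ref{firstit} and \ref{lip}. The mildly delicate point, which deserves a line, is to verify that the fixed point indeed lies in $\mathcal{Y}_{1,\sigma}^{3}$ (and not merely in $\mathcal{Y}_{1,\sigma}\times \mathcal{Y}_{1,\sigma}\times \mathcal{Y}_{1,\sigma}$), namely that the realness and conjugation constraints are preserved under iteration: this follows because $\mathcal{P}_h$ sends real-analytic first components into real-analytic functions (the coefficients $f_1^h, Z_0,Z_{0,h}$ are real-analytic on $D^u\cap \R$) and swaps the second and third components under complex conjugation (the source terms $f_2^h$ and $-f_2^h$ are complex conjugate on $D^u\cap\R$), and $\mathcal{G}_\omega$ inherits these properties from the integral kernels in \eqref{inteq}. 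Hence the closed subspace $\mathcal{Y}_{1,\sigma}^{3}$ is invariant, which legitimates applying the contraction mapping theorem there and concludes the proof.
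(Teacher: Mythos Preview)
Your proof is correct and follows essentially the same approach as the paper: bound $\overline{\mathcal{G}}_{\omega,h}(0,0,0)$ via Lemmas \ref{opgo} and \ref{firstit}, establish the Lipschitz estimate on a ball of radius proportional to $\dg/\omega$ via Lemmas \ref{opgo} and \ref{lip}, shrink $\e_0$ to make the Lipschitz constant less than $1/2$, and apply the contraction mapping theorem. Your additional remark on the invariance of the realness/conjugation constraints under $\overline{\mathcal{G}}_{\omega,h}$ is a detail the paper leaves implicit, so your write-up is slightly more complete in that respect.
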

%\begin{prop}\label{fixedpointgob}
%	 Then, there exists $\e_0>0$ sufficiently small and functions $Z_2^*(v,\tau),\Gamma^*(v,\tau),$ and $\Theta^*(v,\tau)$ defined in $D^s_{\nu,\kappa}\times T_{\sigma}$ such that, for $0<\dg<\dg_0$ and $h\leq h_0$,  $(Z_2^*,\Gamma^*,\Theta^*)$ is a fixed point of the operator $\overline{\mathcal{G}}_{\omega}$ defined in \eqref{gob}. Furthermore, there exists a constant $b_1>0$ independent of $\e$, such that
%	\begin{equation}
%	\|Z_2^*\|_{1,3/2,\sigma},\|\Gamma^*\|_{1,1,\sigma}\|\Theta^*\|_{1,1,\sigma}\leq b_1\dfrac{\dg}{\omega}.
%	\end{equation}
%	Furthermore, $\|\partial_vZ_2^*\|_{2,5/2,\sigma},\|\partial_v\Gamma^*\|_{2,2,\sigma}\|\partial_v\Theta^*\|_{2,2,\sigma}\leq b_1\dfrac{\dg}{\omega}$.
%\end{prop}
\begin{proof}
	From Lemmas \ref{opgo} and \ref{firstit}, there exists a constant $b_2>0$ independent of $\e$ and $h$ such that
	$$\llbracket\overline{\mathcal{G}}_{\omega,h}(0,0,0)\rrbracket_{1,\sigma}\leq M \|\mathcal{P}_h(0,0,0)\|_{2,\sigma}\leq \dfrac{b_2}{2}\dfrac{\dg}{\omega}.$$
	Consider the operator $\overline{\mathcal{G}}_{\omega,h}=\mathcal{G}_{\omega}\circ\mathcal{P}_h: \mathcal{B}_{0}(b_2\dg/\omega)\subset\mathcal{Y}_{1,\sigma}\rightarrow \mathcal{Y}_{1,\sigma}$. Notice that Lemmas \ref{opgo} and \ref{lip} imply that it is well defined in these spaces. 
	
	To show that $\overline{\mathcal{G}}_{\omega,h}$ sends $\mathcal{B}_0(b_2\dg/\omega)$ into itself, consider $K=b_2$ in Lemma \ref{lip} and $(z_j,\gamma_j,\theta_j)\in\mathcal{B}_0(b_2\dg/\omega),\ j=1,2$.
	It follows from Lemmas \ref{opgo}, \ref{lip} and the fact that $\mathcal{G}_{\omega}$ is a linear operator that
	$$
	\begin{array}{lcl}
	\llbracket \overline{\mathcal{G}}_{\omega,h}(z_1,\gamma_1,\theta_1)- \overline{\mathcal{G}}_{\omega,h}(z_2,\gamma_2,\theta_2)\rrbracket_{1,\sigma}&\leq& M \left\|	\mathcal{P}_h(z_1,\gamma_1,\theta_1)- \mathcal{P}_h(z_2,\gamma_2,\theta_2)\right\|_{2,\sigma},\vspace{0.2cm}\\
	%	&\leq& M\dg\left\llbracket	(Z_1,\Gamma_1,\Theta_1)- (Z_2,\Gamma_2,\Theta_2)\right\rrbracket_{\nu,1,\sigma}^{\times},\vspace{0.2cm}\\
	&\leq& M\dg\left\llbracket	(z_1,\gamma_1,\theta_1)- (z_2,\gamma_2,\theta_2)\right\rrbracket_{1,\sigma}.
	\end{array}
	$$
	%	Therefore, $\overline{\mathcal{G}}_{\omega,h}$ is a Lipschitz map in the ball $\mathcal{B}_{b_1\dg/\omega}(0)$. 
	
	Choosing $\e_0$ sufficiently small such that $\mathrm{Lip}(\overline{\mathcal{G}}_{\omega,h})<1/2$, $\overline{\mathcal{G}}_{\omega,h}$ sends $ \mathcal{B}_0(b_2\dg/\omega)$ into itself and it is a contraction. Thus, it has a unique fixed point $(z_{0,h}^{u},\gamma_{0,h}^{u},\theta_{0,h}^{u})\in \mathcal{B}_0(b_2\dg/\omega)$.
\end{proof}

%This ends the proof of Proposition \ref{existence}.% Notice that the proof of Theorem \ref{parameterization2Dh} follows directly from the constructions of this section and Proposition \ref{existence}.

%\subsection{Existence of Heteroclinic Connections Between $\Lambda_h^-$ and $\Lambda_h^+$}\label{periodic_sec}

\section{Proof of Theorem \ref{parameterization1Dh}}\label{parh_sec}

%\subsection{Existence and Complex Parameterization of the Invariant Manifolds of $p_h^{\pm}$}

%In this section, we prove the existence of the $1$-dimensional $W_{\e}^u(p_h^-)$, with $\dg\neq 0$, and we provide a complex parameterization for this orbit. 
The strategy used to prove Theorem \ref{parameterization1Dh} is analogous to the one of Theorem \ref{parameterization1D0} taking into account that  all the expressions appearing become  singular as $h \to 0$.

%\textcolor{red}{RETIREI A PROPOSICAO seph DAQUI... ESTA NO LEMMA 1 DA DECOUPLED SECTION }

We write
\begin{equation}\label{n1dh}
N_{h,0}^{u}(v)=(X_h(v),Z_{h,0}^{u}(v),\Gamma_{h,0}^{u}(v),\Theta_{h,0}^{u}(v)).
\end{equation}

\begin{lemma}\label{pertsystemh_lem} Given $h>0$, the invariant manifold $W^u_{\dg}(p_h^-)$, with $\dg\neq 0$, is parameterized by $N_{h,0}^{u}(v)$ if and only if $(\Gamma_{h,0}^u(v),\Theta_{h,0}^u(v))$ satisfy
	\begin{equation}\label{edo sem restarh}
	\left\{\begin{array}{l}\begin{array}{l}
	\vspace{0.2cm}\dfrac{ d\Gamma}{d v}(v)= \dfrac{Z_h(v)}{\widetilde{\eta}_h(v,\Gamma,\Theta)}\left(\omega i \Gamma(v) -\dfrac{\delta}{\sqrt{2\Omega}} F(X_h(v))\right),\\
	
	\dfrac{d \Theta}{d v}(v)= \dfrac{Z_h(v)}{\widetilde{\eta}_h(v,\Gamma,\Theta)}\left(-\omega i\Theta(v) -\dfrac{\delta}{\sqrt{2\Omega}} F(X_h(v)))\right).
	\end{array}\vspace{0.2cm}\\
	\displaystyle\lim_{v\rightarrow-\infty}\Gamma(v)=\displaystyle\lim_{v\rightarrow-\infty}\Theta(v)=0,
	\end{array}
	\right.
	\end{equation}
	and 
	$$\widetilde{\eta}_h(v,\Gamma,\Theta)=4\sqrt{h-U(X_h(v))- \dfrac{\dg}{\sqrt{2\Omega}} F(X_h(v)) \dfrac{\Gamma(v)-\Theta(v)}{2i} - \dfrac{\omega}{2}\Gamma(v)\Theta(v)},$$
	with $X_h$ given in \eqref{par1}, $U,F$given in \eqref{exprUF},
	and $Z_{h,0}^u(v)=\widetilde{\eta}_h(v,\Gamma_{h,0}^u(v),\Theta_{h,0}^u(v))$.
\end{lemma}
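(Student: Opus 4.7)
The plan is to translate the invariance of the candidate parameterization $N_{h,0}^{u}(v)$ under the Hamiltonian flow of \eqref{complexcoord_system} into a closed ODE system for the components $\Gamma_{h,0}^{u}$ and $\Theta_{h,0}^{u}$, after eliminating $Z$ via energy conservation. The argument is structurally identical to Lemma \ref{pertsystem_lem} from the $h=0$ case, with the only new point being that the base curve $X=X_h(v)$ from Lemma \ref{sep1h} now depends singularly on $h$.

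First, I would parameterize a trajectory contained in $W^u_\delta(p_h^-)$ by writing $X(t)=X_h(v(t))$. Applying the chain rule to the first equation of \eqref{complexcoord_system} gives $X_h'(v)\,v'(t)=Z(t)/8$. Using the identity $Z_h(v)=8 X_h'(v)$ from \eqref{par2}, this simplifies to $v'(t)=Z_{h,0}^{u}(v)/Z_h(v)$. Since $p_h^-$ lies in the energy level $\mathcal{H}=h$ and the full unstable manifold inherits this level, imposing $\mathcal{H}(N_{h,0}^{u}(v))=h$ in the expression \eqref{hamilcc} for $\mathcal{H}$ yields
\[
Z_{h,0}^{u}(v)=\widetilde{\eta}_h\bigl(v,\Gamma_{h,0}^{u}(v),\Theta_{h,0}^{u}(v)\bigr),
\]
by taking the positive square root (consistent with $Z\to 4\sqrt{h}>0$ at $p_h^-$).

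Next, I would write out the invariance condition for the $\Gamma$-component: $\dot{\Gamma}(t)=\tfrac{d\Gamma}{dv}\,v'(t)$ must equal the right-hand side of the third equation of \eqref{complexcoord_system} evaluated on $N_{h,0}^{u}(v)$. Substituting the expression for $v'(t)$ and solving for $d\Gamma/dv$ produces exactly the first equation of \eqref{edo sem restarh}. The same chain-rule computation on $\Theta$ yields the second. The asymptotic boundary condition $\Gamma(v),\Theta(v)\to 0$ as $v\to-\infty$ is the requirement that $N_{h,0}^{u}(v)$ converges to $p_h^{-}=(-\infty,4\sqrt{h},0,0)$ along the unstable direction, which under the parameterization \eqref{par2} corresponds precisely to $v\to-\infty$.

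The converse direction is equally direct: given any solution of \eqref{edo sem restarh} satisfying the asymptotic conditions, the curve $N_{h,0}^{u}(v)$ built from it lies in the energy level $\{\mathcal{H}=h\}$ by construction and, by reversing the chain-rule argument above, is tangent to the Hamiltonian vector field at each point after the time reparameterization $dt/dv=Z_h(v)/Z_{h,0}^{u}(v)$; together with the limit at $v\to-\infty$, this identifies it as (part of) $W^u_\delta(p_h^-)$. No substantive obstacle is expected here; unlike the subsequent Theorem \ref{parameterization1Dh}, which will require the fixed-point machinery in suitable Banach spaces to handle the singular $h$-dependence, Lemma \ref{pertsystemh_lem} itself is a straightforward reformulation.
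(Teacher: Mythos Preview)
Your proposal is correct and matches the paper's approach exactly: the paper does not give an explicit proof of this lemma, treating it as a straightforward analogue of Lemma~\ref{pertsystem_lem} obtained by the same chain-rule and energy-conservation argument you describe. Your write-up in fact supplies more detail than the paper does.
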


As in Section \ref{par0_sec}, we  compute an explicit term of $(\Gamma^u_{h,0},\Theta^u_{h,0})$. Thus, the solution of \eqref{edo sem restarh} can be written as \eqref{solh0} and $(\gamma_{h,0}^{u},\theta_{h,0}^u)$ satisfy
	\begin{equation}\label{edo restadah}
	\left\{\begin{array}{l}
	\vspace{0.2cm}\dfrac{d}{d v}\gamma-\omega i\gamma=\omega i \gamma(\eta_h(v,\gamma,\theta)-1)- (Q^h)'(v),\\
	
	\vspace{0.2cm}\dfrac{d}{d v}\theta+\omega i\theta=-\omega i \theta(\eta_h(v,\gamma,\theta)-1)+ (Q^h)'(v),\\
	\displaystyle\lim_{v\rightarrow -\infty}\gamma(v)=\displaystyle\lim_{v\rightarrow -\infty}\theta(v)=0,
	\end{array} \right.
	\end{equation}	
	where $Q^h$ is given in \eqref{firsth} and
	\begin{equation}
	\label{alphah1d}
	\eta_h(v,\gamma,\theta)=\left(1+\dfrac{4\dg^2}{\Omega\omega}\left(\dfrac{F(X_h(v))}{Z_h(v)}\right)^2-8\omega\dfrac{\gamma\theta}{(Z_h(v))^2}\right)^{-1/2}.
	\end{equation}
We prove Theorem \ref{parameterization1Dh} by finding a solution of \eqref{edo restadah} in the next proposition.

\begin{prop}\label{solutionh}
	There exists $\e_0>0$ and $h_0>0$ such that for $0<h\leq h_0$ and $0<\e\leq \e_0$,  equation \eqref{edo restadah}  has a solution $(\gamma_{h,0}^{u}(v),\theta_{h,0}^{u}(v))$ defined in $D^u$ (see \eqref{outerdomain})  such that $\theta_{h,0}^{u}(v)=\overline{\gamma_{h,0}^{u}(v)}$ for every $v\in\R$. Furthermore, $(\gamma_{h,0}^u,\theta_{h,0}^u)$ satisfy the bound \eqref{cota2}.
\end{prop}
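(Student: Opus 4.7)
The plan is to prove Proposition \ref{solutionh} by a contraction mapping argument paralleling Proposition \ref{fixpoint0}, but adapted to (i) the outer domain $D^u$, which lies at a fixed positive distance, uniform in $\e$ and $h$, from the singularities $\pm i\sqrt{2}+\er(h)$ of $X_h$, and (ii) the singular dependence of $X_h$, $Z_h$, $Q^h$ and $\eta_h$ on $h$ at $h=0$. I would reformulate \eqref{edo restadah} as the fixed point equation
\[
(\gamma,\theta)=\mathcal{G}_{\omega,h}(\gamma,\theta):=\mathcal{G}_\omega\circ\mathcal{F}_h(\gamma,\theta),
\]
where $\mathcal{G}_\omega$ is the integral operator in \eqref{g0} and $\mathcal{F}_h$ is the analogue of \eqref{F0} with $Q^0$ replaced by $Q^h$ from \eqref{firsth} and $\eta_0$ replaced by $\eta_h$ from \eqref{alphah1d}.

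I would work in the Banach space
\[
\mathcal{X}_1^2=\Bigl\{(f,g):D^u\to\C^2 \text{ analytic},\ g(v)=\overline{f(v)}\ \text{on}\ D^u\cap\R,\ \|(f,g)\|_1<\infty\Bigr\},
\]
with norm $\|(f,g)\|_1=\sup_{v\in D^u}|(v^2+2)f(v)|+\sup_{v\in D^u}|(v^2+2)g(v)|$. A single polynomial weight $(v^2+2)$ suffices here, in contrast to \eqref{norm_local}, precisely because $D^u$ avoids a uniform neighborhood of the poles of $X_h$. The three estimates driving the contraction, all direct analogues of Propositions \ref{gomega0}, \ref{firstiteration0}, \ref{lipschitz0}, are: $\mathcal{G}_\omega:\mathcal{X}_1^2\to\mathcal{X}_1^2$ is linear with operator norm bounded by $M/\omega$; $\|\mathcal{G}_{\omega,h}(0,0)\|_1\leq M\dg/\omega^2$; and $\mathcal{F}_h$ is Lipschitz on the ball $\mathcal{B}(K\dg/\omega^2)$ with constant $M\dg^2\omega$. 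With these, choosing $\e_0$ small enough so that $M\dg^2<1/2$, the map $\mathcal{G}_{\omega,h}$ sends $\mathcal{B}(b\dg/\omega^2)$ into itself and is a contraction, yielding the unique fixed point $(\gamma_{h,0}^u,\theta_{h,0}^u)$ satisfying \eqref{cota2}. Preservation of the reality condition $\theta=\overline{\gamma}$ on $D^u\cap\R$ follows from the invariance of the symmetric subspace under $\mathcal{G}_{\omega,h}$, exactly as in Proposition \ref{fixpoint0}.

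The main obstacle lies in establishing the first-iterate and Lipschitz estimates \emph{uniformly} in $h\in(0,h_0]$. Formula \eqref{par2} is a singular perturbation of \eqref{par1}: pointwise $X_h\to X_0$ as $h\to 0$, yet $F(X_h(v))$ decays only polynomially at $h=0$ and exponentially for $h>0$, with the exponential rate $\sim e^{-|v|\sqrt{h}}$ degenerating as $h\to 0$. To obtain a single polynomial-weight bound valid uniformly, I would use the identity $\sinh(X_h(v))=\sqrt{(2+h)/h}\,\sinh(v\sqrt{h}/2)$ to show that, for $v\in D^u$ and uniformly in $h\in(0,h_0]$,
\[
\frac{1}{|Z_h(v)|^2}\leq M\,|v^2+2|,\qquad \Bigl|\frac{F(X_h(v))}{Z_h(v)}\Bigr|\leq \frac{M}{|v^2+2|^{1/2}},\qquad \bigl|(v^2+2)(F\circ X_h)'(v)\bigr|\leq M.
\]
These uniform bounds yield $|\eta_h(v,\gamma,\theta)-1|\leq M\dg^2/\omega$ on the ball $\mathcal{B}(K\dg/\omega^2)$ and $\|(Q^h)'\|_1\leq M\dg/\omega$, which then plug mechanically into the three key estimates. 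Once these uniform-in-$h$ bounds are in place, the rest of the argument is the same formal contraction as in Section \ref{par0_sec}.
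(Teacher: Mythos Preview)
Your approach is essentially the same as the paper's: the same fixed point operator $\mathcal{G}_{\omega,h}=\mathcal{G}_\omega\circ\mathcal{F}_h$, the same Banach space (your weight $(v^2+2)$ is exactly the paper's $\|\cdot\|_2$, since there $\|f\|_\alpha=\sup|(v^2+2)^{\alpha/2}f|$), and the same three-step contraction (first iterate $\leq M\dg/\omega^2$, Lipschitz on a ball of that radius, compose with $\mathcal{G}_\omega$ of norm $M/\omega$).

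One intermediate bound you state is not quite right, though it does not damage the argument. The uniform-in-$h$ estimate $\bigl|F(X_h(v))/Z_h(v)\bigr|\leq M/|v^2+2|^{1/2}$ fails: already at $h=0$ one has $F(X_0)/Z_0=-\sqrt{2}\,v/(4\sqrt{v^2+2})$, which tends to a nonzero constant as $|v|\to\infty$, so no decay is available uniformly for small $h$. What you actually need, and what the paper proves and uses (Lemmas \ref{Fs} and \ref{zh}), is the pair $|F(X_h(v))|\leq M/|v^2+2|^{1/2}$ and $|Z_h(v)|^{-2}\leq M|v^2+2|$, which together give only $\bigl(F(X_h)/Z_h\bigr)^2\leq M$. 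That boundedness is all that is required to obtain $|\eta_h-1|\leq M\dg^2/\omega$, and the rest of your scheme goes through unchanged. (Your Lipschitz constant $M\dg^2\omega$ for $\mathcal{F}_h$ is also looser than the paper's $M\dg^2$, but it still yields a contraction after composing with $\mathcal{G}_\omega$.)
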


To prove Proposition \ref{solutionh}, it is sufficient to find a fixed point $(\gamma_{h,0}^u,\theta_{h,0}^u)$ of the operator
\begin{equation}\label{fixh}\mathcal{G}_{\omega,h}=\mathcal{G}_{\omega}\circ\mathcal{F}_h,\end{equation} where $\mathcal{G}_{\omega}$ is given in \eqref{g0} and
	%	\begin{equation}\label{g0h}
	%	\mathcal{G}_{\omega}(\gamma,\theta)(v)=\left(\begin{array}{c}
	%	\displaystyle\int_{-\infty}^ve^{\omega i(v-r)}\gamma(r)dr\vspace{0.2cm}\\
	%	\displaystyle\int_{-\infty}^ve^{-\omega i(v-r)}\theta(r)dr
	%	\end{array}\right),
	%	\end{equation}
	\begin{equation}\label{Fh}
	\mathcal{F}_{h}(\gamma,\theta)(v)=\left(\begin{array}{c}
	\omega i \gamma(v)(\eta_h(v,\gamma(v),\theta(v))-1)- (Q^h)'(v)\vspace{0.2cm}\\
	-\omega i \theta(v)(\eta_h(v,\gamma(v),\theta(v))-1)+ (Q^h)'(v)
	\end{array}\right),
	\end{equation}
	and $Q^h,\eta_h$ are given in \eqref{firsth} and \eqref{alphah1d}, respectively.

The rest of this section is devoted to find a fixed point of \eqref{fixh}.

%In what follows we are intended to prove the fixed point version of the following theorem.

\subsection{Banach spaces and technical lemmas}\label{banachh}
%Differently from Section \ref{banach0}, it will not be necessary to compute solutions in a complex domain $\sqrt{\e}$-close of the poles of  $(\Gamma_{h,0}^u(v),\Theta_{h,0}^u(v))$, since we are interested simply in the parameterizations $N_{h,0}^{u,s}(v)$ when $v\in\R$, as in Section \ref{periodic_sec}.

By \eqref{exprUF}, \eqref{par2} and \eqref{firsth}
\begin{equation}
\label{FXh}
\begin{array}{lcl}
Q^h(v)%&=&-i\dfrac{\dg}{\omega\sqrt{2\Omega}}F(X_h(v))\vspace{0.2cm}\\
%&=&\dfrac{2\dg i}{\omega\sqrt{2\Omega}}\left(\dfrac{\sinh(X_h(v))}{1+\sinh^2(X_h(v))}\right)\vspace{0.2cm}\\
&=&\dfrac{2\dg i}{\omega\sqrt{2\Omega}}\left(\dfrac{\sqrt{\frac{2+h}{h}}\sinh(v\sqrt{h}/2)}{1+\frac{2+h}{h}\sinh^2(v\sqrt{h}/2)}\right),
\end{array}
\end{equation}
which has poles at
\begin{equation}
s_{h,k}^{\pm,j}=i\dfrac{2}{\sqrt{h}}\left(\dg_{j,1}\pi\pm\arcsin\left(\sqrt{\dfrac{h}{2+h}}\right)+ 2k\pi\right),
\end{equation}
where $\dg_{j,1}$ is the delta of Kronecker, $j=0,1$ and $k\in\Z$. All these singularities are contained in the imaginary axis and satisfy
$$s_{h,k}^{\pm,j}=i\left(\pm\sqrt{2}+\er(h)+ \dfrac{2}{\sqrt{h}}\left(\dg_{j,1}\pi+2k\pi\right)\right).$$
Thus, for $h$  sufficiently small $\left|s_{h,k}^{\pm,j}\right|\geq 3\sqrt{2}/4$,  $j=0,1$ and $k\in\Z$.  

%\begin{remark}
%	\textcolor{blue}{Notice that if $h\rightarrow \infty$, we have that $s_{h,k}^{\pm,j}\rightarrow 0$, for each $j=0,1$, $k\in\Z$, and hence the singularities are very close to the real axis, hence, for $h$ big, the domain described in this section is not sufficiently small to compute the bounds in the following sections.} 
%\end{remark}

Therefore, we can consider the same domain $D^u$ in \eqref{outerdomain}. It satisfies the following property, whose proof is straightforward.

\begin{lemma}\label{domain_lem}
	If $v\in D^u$ is such that $|\Rp(v)|\geq \chi_0$, for some $\chi_0>0$, then 
	$$|\Ip(v)|\leq \dfrac{\chi_0+1}{\chi_0}|\Rp(v)|.$$
\end{lemma}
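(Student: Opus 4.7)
The plan is to go directly from the defining inequality of $D^u$, namely $|\Ip(v)|\leq -\tan(\bg)\Rp(v)+\sqrt{2}/2$ with $0<\bg<\pi/4$, and split into two cases depending on the sign of $\Rp(v)$. The only facts I will actually use are $\tan(\bg)<1$ (since $\bg<\pi/4$) and $\sqrt{2}/2<1$; everything else is an elementary rearrangement under the assumption $|\Rp(v)|\geq\chi_0$.

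First, suppose $\Rp(v)\leq 0$. Then $-\Rp(v)=|\Rp(v)|$, so the defining inequality becomes
\[
|\Ip(v)|\;\leq\;\tan(\bg)\,|\Rp(v)|+\tfrac{\sqrt{2}}{2}\;\leq\;|\Rp(v)|+1.
\]
Using $|\Rp(v)|\geq\chi_0$, the constant $1$ is bounded by $|\Rp(v)|/\chi_0$, giving
\[
|\Ip(v)|\;\leq\;|\Rp(v)|+\frac{|\Rp(v)|}{\chi_0}\;=\;\frac{\chi_0+1}{\chi_0}\,|\Rp(v)|,
\]
which is the desired bound. Second, suppose $\Rp(v)>0$; then $-\tan(\bg)\Rp(v)<0$, so the defining inequality yields the even cruder estimate $|\Ip(v)|\leq\sqrt{2}/2\leq 1$. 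Combined with $|\Rp(v)|\geq\chi_0$ this gives $|\Ip(v)|\leq 1\leq|\Rp(v)|/\chi_0\leq \frac{\chi_0+1}{\chi_0}|\Rp(v)|$.

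There is really no obstacle here: the lemma is an elementary geometric fact about the wedge-shaped region $D^u$, and the factor $(\chi_0+1)/\chi_0$ on the right is deliberately loose so that the same constant works in both cases (narrow wedge on the left, bounded strip on the right). The only thing to record explicitly is that the constant $\sqrt{2}/2$ from the definition of $D^u$ and the slope $\tan(\bg)<1$ are both controlled by $1$, which is what allows the combined bound $1+1/\chi_0$ to absorb both the wedge slope and the additive constant in a single inequality.
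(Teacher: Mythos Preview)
Your proof is correct. The paper itself omits the proof entirely, calling it ``straightforward,'' and your two-case argument based on the sign of $\Rp(v)$, using only $\tan(\bg)<1$ and $\sqrt{2}/2<1$, is exactly the elementary verification one would expect.
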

%\begin{proof}
%	In fact, since $v\in D^u$
%	$$\begin{array}{lcl}
%	|\Ip(v)|&\leq& \tan(\bg)|\Rp(v)|+\sqrt{2}/2\vspace{0.1cm}\\
%	&\leq& |\Rp(v)|\left(\tan(\bg)+\dfrac{\sqrt{2}}{2|\Rp(v)|}\right)\vspace{0.1cm}\\
%	&\leq& |\Rp(v)|\left(1+\dfrac{1}{\chi_0}\right).		
%	\end{array}$$
%\end{proof}

%Now we describe the Banach space where we will look for fixed points of $\mathcal{G}_{\omega,h}$ defined in Proposition \ref{fixh}. Recall that, if $f:D^u\rightarrow \C$ is an analytic function, then

For  $\ag\geq 0$, we consider the Banach space
\begin{equation}\label{space2}
\mathcal{X}_{\ag}=\left\{f: D^u\rightarrow C;\ f \textrm{ is analytic and } \|f\|_\ag<\infty  \right\}
\end{equation}
endowed with the norm 
\begin{equation}\label{norm2}
\|f\|_{\ag}=\displaystyle\sup_{v\in D^u}|(v^2+2)^{\ag/2}f(v)|,
\end{equation}
and the product space $$\mathcal{X}_{\ag}^2=\left\{(f,g)\in \mathcal{X}_{\ag}\times \mathcal{X}_{\ag};\ g(v)=\overline{f(v)} \textrm{ for every }v\in\R  \right\}$$ endowed with the norm 
$\|(f,g)\|_\ag= \|f\|_\ag+ \|g\|_\ag$.
Remark \ref{remark do peso} and  Lemma \ref{properties} also apply to $\|\cdot\|_{\ag}$.

%\subsection{Technical Lemmas}\label{techh_sec}

% We present some lemmas to obtain estimates needed to study the operator $\mathcal{F}_h$ given in \eqref{Fh}. Note that we are dealing with functions which are singular as $h\rightarrow 0$.
%, hence to obtain good estimates, it is necessary to obtain refined bounds.

\begin{lemma}\label{sinh}
	Given $0<h_0\leq 1$, there exists a constant $M^*>0$ such that, for each $v\in D^u$ and $0<h\leq h_0$,
	$$\left|\sinh(v\sqrt{h}/2)\right|\geq M^*\sqrt{h}|v|, \qquad \ \left|\cosh(v\sqrt{h}/2)\right|\geq M^*.$$
\end{lemma}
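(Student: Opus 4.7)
The plan is to rescale $w=v\sqrt{h}/2$ and exhibit a single $h$-independent wedge $W_1$ containing $w$ whenever $v\in D^u$ and $0<h\leq h_0\leq 1$; on that fixed wedge I would show that $\sinh(w)/w$ (extended by $1$ at $0$) and $\cosh(w)$ admit uniform positive lower bounds. For $v\in D^u$ one checks directly that
$$|\Ip(w)|=\tfrac{\sqrt{h}}{2}|\Ip(v)|\leq \tfrac{\sqrt{h}}{2}\bigl(-\tan(\bg)\Rp(v)+\tfrac{\sqrt{2}}{2}\bigr)=-\tan(\bg)\Rp(w)+\tfrac{\sqrt{2h}}{4},$$
so $w$ belongs to $W_1=\{w\in\C;\ |\Ip(w)|\leq -\tan(\bg)\Rp(w)+\sqrt{2}/4\}$, a wedge with apex at $w=\sqrt{2}/(4\tan\bg)$ opening to the left. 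Since $\sqrt{2h}/4\leq \sqrt{2}/4$ for $h\leq 1$, the inclusion $W_h\subset W_1$ is uniform in $h$.

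The next step is to rule out the zeros of $\sinh$ (located at $ik\pi$, $k\in\Z$) and of $\cosh$ (located at $i(k+\tfrac12)\pi$) from $W_1$: all such zeros are purely imaginary, and on the slice $\Rp(w)=0$ the wedge $W_1$ forces $|\Ip(w)|\leq \sqrt{2}/4<\pi/2$, which excludes all of them. Hence $\sinh(w)/w$ and $\cosh(w)$ are continuous and non-vanishing on $W_1$.

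Now I would upgrade non-vanishing to a uniform lower bound by separating a compact region from the tail. Using $|\sinh(w)|^2=\sinh^2(\Rp w)+\sin^2(\Ip w)$ and $|\cosh(w)|^2=\sinh^2(\Rp w)+\cos^2(\Ip w)$, both moduli are bounded below by $|\sinh(\Rp w)|$, which grows exponentially as $\Rp w\to-\infty$; inside $W_1$ one has the linear bound $|w|\leq C(1+|\Rp w|)$ for a constant $C=C(\bg)$. Therefore $|\sinh(w)|/|w|\to\infty$ and $|\cosh(w)|\to\infty$ as $\Rp w\to -\infty$, so both quantities exceed $1$ outside some set $W_1\cap\{|w|\leq R\}$. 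On that compact set continuity and non-vanishing yield a positive infimum. Letting $2M^*$ denote the smaller of the two infima and writing $|v|\sqrt{h}/2=|w|$ converts the bound on $|\sinh(w)/w|$ into $|\sinh(v\sqrt{h}/2)|\geq M^*\sqrt{h}|v|$, while the bound on $|\cosh(w)|$ gives $|\cosh(v\sqrt{h}/2)|\geq M^*$ directly.

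The only potential obstacle is that the constant $M^*$ might \emph{a priori} deteriorate as $h\to 0$, because the right-hand side $\sqrt{h}|v|$ is itself small; the inclusion $W_h\subset W_1$ neutralises this, since the estimate is established once on the fixed wedge $W_1$ and then transferred to every smaller $W_h$ with no $h$-dependent loss. Beyond the compactness/exponential-decay dichotomy outlined above, no finer analysis is required.
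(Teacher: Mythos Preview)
Your argument is correct. The paper states this lemma without proof, so there is nothing to compare against; your rescaling $w=v\sqrt{h}/2$, the uniform inclusion into the fixed wedge $W_1$, the exclusion of the zeros of $\sinh$ and $\cosh$ from $W_1$, and the compactness-plus-exponential-tail dichotomy together give exactly the claimed uniform lower bounds.
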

The following Lemma is proved in \cite{INMA}.
\begin{lemma}\label{cauchy} Let $1/2<\bg<\pi/4$ be fixed. The following statements hold
	\begin{enumerate}
		\item 	There exists $\bg_0>0$ sufficiently small such that $D^u\subset D^u(\bg_0)$, where
		$$D^u(\bg_0)=\left\{v\in\C;\ |\Ip(v)|\leq -\tan(\bg+\bg_0)\Rp(v)+2\sqrt{2}/3\right\}.$$
		\item Given $\ag>0$, if $f: D^u(\bg_0)\rightarrow \C$ is a real-analytic function such that 
		$$m_{\ag}(f)=\sup_{v\in D^u(\bg_0)}|(v^2+2)^{\ag/2}f(v)|<\infty,$$	
		then, for any $n\in\N$
		$$\|f^{(n)}\|_{\ag+n}\leq Mm_{\ag}(f).$$
	\end{enumerate}
	
	%
	%	Let ?
	%< ? be such that ??? has a positive lower bound independent of ?, ? and ? such that ? < ?
	%< 0
	%and h ? E?,?,?,? . Then ?uh ? E?,?
	%,?
	%,? and satisfies
\end{lemma}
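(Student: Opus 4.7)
The proof splits naturally into the two items. For item (1), the plan is an elementary geometric comparison. Writing $v = x + iy \in D^u$, the hypothesis gives $|y| \leq -\tan(\bg)x + \sqrt{2}/2$ and the desired inclusion requires $|y| \leq -\tan(\bg+\bg_0)x + 2\sqrt{2}/3$; subtracting, it suffices to verify
$$(\tan(\bg+\bg_0) - \tan(\bg))\, x \leq \sqrt{2}/6.$$
For $x \leq 0$ this holds trivially for any $\bg_0 > 0$. For $x > 0$, membership in $D^u$ forces $x \leq \sqrt{2}/(2\tan(\bg))$, so the condition reduces to $\tan(\bg+\bg_0) - \tan(\bg) \leq \tan(\bg)/3$, which is guaranteed by continuity for all $\bg_0$ small enough.

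Item (2) is a weighted Cauchy integral estimate. The crucial geometric input is that the gap between $D^u$ and $\partial D^u(\bg_0)$ widens linearly in $|v|$: I expect there to be a constant $c_0 = c_0(\bg, \bg_0) > 0$ such that $\mathrm{dist}(v, \partial D^u(\bg_0)) \geq c_0 (1 + |v|)$ for all $v \in D^u$. The plan is to verify this by applying the point-to-line distance formula to a generic $v = (x_0, y_0) \in D^u$ together with the upper bounding line $\tan(\bg+\bg_0)x + y - 2\sqrt{2}/3 = 0$ of $D^u(\bg_0)$ and the bound $y_0 \leq -\tan(\bg)x_0 + \sqrt{2}/2$; the lower bounding line is handled symmetrically. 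In addition, the zeros $\pm i\sqrt{2}$ of the weight $|v^2+2|^{\ag/2}$ lie outside $D^u(\bg_0)$ at distance at least $\sqrt{2} - 2\sqrt{2}/3 = \sqrt{2}/3$, since $|{\Ip(\pm i\sqrt{2})}| = \sqrt{2} > 2\sqrt{2}/3$.

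With the gap estimate secured, I would fix $v \in D^u$ and choose a Cauchy radius $r(v) = c_1 \min(1, |v|)$ with $c_1 > 0$ so small that $\overline{B(v, r(v))} \subset D^u(\bg_0)$ and stays uniformly away from $\pm i\sqrt{2}$. Cauchy's formula then produces
$$|f^{(n)}(v)| \leq \frac{n!}{r(v)^n}\, \sup_{|w-v|=r(v)} |f(w)| \leq \frac{n!\, m_\ag(f)}{r(v)^n\, \inf_{|w-v|=r(v)} |w^2+2|^{\ag/2}}.$$
I would conclude by splitting into two regimes: for $|v| \leq R_0$ with $R_0$ a fixed constant, both $r(v)$ and $\inf |w^2+2|$ are bounded below by positive constants, while $|v^2+2|^{(\ag+n)/2}$ is bounded above, yielding the bound directly; for $|v| > R_0$, one has $r(v) = c_1|v|$ and $|w| \geq (1-c_1)|v|$ on the circle, forcing $|w^2+2| \geq \tfrac{1}{2}(1-c_1)^2 |v|^2$ after enlarging $R_0$ if necessary, so that $|f^{(n)}(v)| \leq C_n\, m_\ag(f)\, |v|^{-n-\ag}$, which pairs with $|v^2+2|^{(\ag+n)/2} \leq C|v|^{n+\ag}$ to close the argument.

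The only non-routine step I anticipate is the linear-gap estimate, and even this is essentially forced by the strict inequality of aperture angles $\bg < \bg + \bg_0$: the rest is a textbook application of Cauchy's integral formula tailored to the weighted sup-norm $\|\cdot\|_\ag$.
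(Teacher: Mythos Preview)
The paper does not supply its own proof of this lemma; it simply cites \cite{INMA}. Your outline is the standard argument one expects there: elementary line geometry for item (1), and a Cauchy integral estimate with radius growing linearly at infinity for item (2), exploiting the strict angular gap between $D^u$ and $D^u(\beta_0)$.

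One slip to correct: the Cauchy radius $r(v)=c_1\min(1,|v|)$ is not what you actually use. Since $0\in D^u$, this choice gives $r(0)=0$, contradicting your claim that $r(v)$ is bounded below on $\{|v|\le R_0\}$; and for $|v|>1$ it gives $r(v)=c_1$, not the $c_1|v|$ you invoke in the large-$|v|$ regime. The radius consistent with your linear-gap estimate $\mathrm{dist}(v,\partial D^u(\beta_0))\ge c_0(1+|v|)$ is $r(v)=c_1(1+|v|)$ (or $c_1\max(1,|v|)$); with that correction the two-regime argument closes exactly as you describe. You should also take $\beta_0$ in item (1) with a strict inequality, e.g.\ $\tan(\beta+\beta_0)-\tan(\beta)\le \tan(\beta)/6$, so that the gap remains uniformly positive near the rightmost vertex of $D^u$.
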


	In the remaining of this paper, all the Landau symbols $\er(f(v,h,\e))$ denote a function dependent on $v,h$ and $\e$  such that there exists a constant $M>0$ independent of $h$ and $\e$ such that $|\er(f(v,h,\e))|\leq M|f(v,h,\e)|$, for every $(v,h,\e)$ in the domain considered.

\begin{lemma}\label{Fs}
	There exist $h_0\in(0,1)$ and a constant $M>0$  such that, for $v\in D^u$ and $0<h\leq h_0$,
	\begin{enumerate}
		\item $|F(X_h(v))|\leq \dfrac{M}{|\sqrt{v^2+2}|}$;\vspace{0.2cm}
		\item $|F(X_h(v))'|\leq \dfrac{M}{|v^2+2|}$.
	\end{enumerate}
	where $X_h$  given in \eqref{par2} and $F(X)$ in \eqref{exprUF}.
\end{lemma}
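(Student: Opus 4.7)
Using the definition $F(X)=-2\tanh(X)\sech(X)$ together with the identities $\tanh(\arcsinh y)=y/\sqrt{1+y^{2}}$, $\sech(\arcsinh y)=1/\sqrt{1+y^{2}}$, the formula \eqref{par2} for $X_{h}$ yields the explicit expression
\begin{equation*}
F(X_{h}(v))=-\dfrac{2g(v,h)}{1+g(v,h)^{2}},\qquad g(v,h):=\sqrt{\tfrac{2+h}{h}}\,\sinh(v\sqrt{h}/2),
\end{equation*}
which already appears implicitly in \eqref{FXh}. The plan is to bound $F(X_h(v))$ by splitting $D^u$ into a bounded and an unbounded part, and to deduce statement (2) from statement (1) via Lemma~\ref{cauchy}.

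First I would verify that $F(X_{h}(\cdot))$ is analytic on the slightly larger domain $D^{u}(\bg_{0})$ provided by Lemma~\ref{cauchy}, for every $0<h\leq h_{0}$ with $h_{0}$ small. Indeed, the singularities $1+g^{2}=0$ are precisely the poles $s_{h,k}^{\pm,j}$ located just before the lemma, and the closest ones to the real axis are $\pm i\sqrt{2}+\er(h)$. For $h_{0}$ small they stay outside $D^{u}(\bg_{0})$ by a fixed positive distance, so $F(X_{h}(v))$ is holomorphic and uniformly bounded on any compact subset of $D^{u}(\bg_{0})$, uniformly in $h$.

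Next, for statement (1), I would split $D^{u}(\bg_{0})=\{|v|\leq R\}\cup\{|v|\geq R\}$ for a large $R$ independent of $\e,h$. On the bounded piece, the previous step gives $|F(X_{h}(v))|\leq M$; since Remark~\ref{remark do peso} ensures $|v\pm i\sqrt{2}|\geq d>0$ on $D^{u}(\bg_{0})$, one has $|\sqrt{v^{2}+2}|\geq d$ there, and also $|\sqrt{v^{2}+2}|\leq \sqrt{R^{2}+2}$, so the bound $|F(X_{h}(v))|\leq M/|\sqrt{v^{2}+2}|$ holds with a redefined $M$. On the unbounded piece, I apply Lemma~\ref{sinh} to get
\begin{equation*}
|g(v,h)|\geq M^{*}\sqrt{2+h}\,|v|\geq M^{*}\sqrt{2}\,|v|,
\end{equation*}
and, choosing $R$ large enough that $|g|\geq 2$, the elementary estimate $|1+g^{2}|\geq |g|^{2}/2$ yields
\begin{equation*}
|F(X_{h}(v))|\leq \dfrac{4}{|g(v,h)|}\leq \dfrac{M}{|v|}\leq \dfrac{M'}{|\sqrt{v^{2}+2}|},
\end{equation*}
where in the last step I use that $|\sqrt{v^{2}+2}|/|v|$ is bounded above on $D^{u}(\bg_{0})\cap\{|v|\geq R\}$ (again by Remark~\ref{remark do peso}). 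Combining the two regimes gives statement (1) on $D^{u}(\bg_{0})$ and hence on $D^{u}$.

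Finally, statement (2) follows at once from Lemma~\ref{cauchy}(2) applied with $\ag=1$ and $n=1$: statement (1) on the enlarged domain $D^{u}(\bg_{0})$ exactly asserts that $m_{1}(F(X_{h}))\leq M$, so Cauchy's estimate gives $\|(F\circ X_{h})'\|_{2}\leq M'$ on $D^{u}$, which is the bound claimed. The main technical point in the plan is ensuring uniformity in $h$ when locating the poles and when applying Lemma~\ref{sinh}; both are handled by taking $h_{0}$ small and by the explicit description of the poles as $\pm i\sqrt{2}+\er(h)$.
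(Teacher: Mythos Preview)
Your proposal is correct and follows essentially the same approach as the paper: both use the explicit formula $F(X_h(v))=-2g/(1+g^2)$, split $D^u$ into a bounded and an unbounded region, invoke Lemma~\ref{sinh} to get the $1/|v|$ decay for large $|v|$, and then deduce item (2) from item (1) via the Cauchy estimate of Lemma~\ref{cauchy}. The only cosmetic difference is that on the bounded region the paper Taylor-expands $\sinh(v\sqrt{h}/2)$ to see directly that $F(X_h(v))\to F(X_0(v))$ uniformly, whereas you argue more abstractly by locating the poles and using compactness; both are valid and yield the same uniform bound.
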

\begin{proof}
	By \eqref{firsth} and \eqref{FXh}, we have that
	$$
	\begin{array}{lcl}
	F(X_h(v))%&=&-2\dfrac{\sqrt{\frac{2+h}{h}}\sinh(v\sqrt{h}/2)}{1+\frac{2+h}{h}\sinh^2(v\sqrt{h}/2)}\vspace{0.2cm}\\
	&=&-2\sqrt{\dfrac{h}{2+h}}\dfrac{1}{\sinh(v\sqrt{h}/2)}\left(\dfrac{1}{1+\frac{h}{2+h}\frac{1}{\sinh^2(v\sqrt{h}/2)}}\right).
	\end{array}
	$$
	Then, Lemma \ref{sinh} implies
	$$
	\begin{array}{lcl}
	|F(X_h(v))|&\leq&M \sqrt{h}\dfrac{1}{\sqrt{h}|v|}\left(\dfrac{1}{\left|1+\frac{h}{2+h}\frac{1}{\sinh^2(v\sqrt{h}/2)}\right|}\right).
	\end{array}
	$$
	Notice that
	$$
	\begin{array}{lcl}
	\left|1+\dfrac{h}{2+h}\dfrac{1}{\sinh^2(v\sqrt{h}/2)}\right|&\geq & 1- \dfrac{h}{2+h}\left|\dfrac{1}{\sinh^2(v\sqrt{h}/2)}\right|
	\end{array}
	$$	
	and, by Lemma \ref{sinh},
	$$
	\frac{h}{2+h}\left|\dfrac{1}{\sinh^2(v\sqrt{h}/2)}\right| \leq \frac{h}{2+h}\dfrac{1}{(M^*)^2h|v|^2}\leq \frac{1}{2(M^*)^2|v|^2}.
	$$	
	Thus, for $|v|\geq (M^*)^{-1}$, 
	\begin{equation}\label{denom}
	\begin{array}{lcl}
	\left|1+\dfrac{h}{2+h}\dfrac{1}{\sinh^2(v\sqrt{h}/2)}\right|&\geq & 1/2.
	\end{array}
	\end{equation}
	We also know that, if $|v|\geq(M^*)^{-1}$,  $|\sqrt{v^2+2}|\leq\sqrt{1+2M^*} |v|$. Hence
	$$|(\sqrt{v^2+2})F(X_h(v))|\leq M\dfrac{|\sqrt{v^2+2}|}{|v|}\leq M.$$
	Now, assume that $|v|\leq (M^*)^{-1}$. Hence $|v\sqrt{h}/2|\leq M$ and expanding $\sinh(z)$ at $0$ we obtain%From Lemma \ref{taylor}, we have that
		$$
	\begin{array}{lcl}
	F(X_h(v))&=&-2\dfrac{\sqrt{\frac{2+h}{h}}\left(v\sqrt{h}/2+\er(h^{3/2}v^3)\right)}{1+\frac{2+h}{h}\left(hv^2/4+\er(h^2v^4)\right)}\vspace{0.2cm}\\
	&=&-2\dfrac{\sqrt{2+h}(v/2+\er(h))}{1 +v^2/2+\er(h)}.
	\end{array}
	$$
	Since $v\in D^u$, we have that there exists $M>0$ such that
	$$|1+v^2/2+\er(h)|\geq |1+v^2/2|-\er(h)\geq M-\er(h).$$ 
	Therefore, for $h>0$ sufficiently small, we have that $|F(X_h(v))|\leq M$, for $|v|\leq (M^*)^{-1}$, and since $|\sqrt{v^2+2}|$ is inferiorly and superiorly bounded by nonzero constants in this domain, we have that
	$$|(\sqrt{v^2+2})F(X_h(v))|\leq M \quad \text{for }\ |v|\leq (M^*)^{-1}.$$
	This concludes the proof of the first item. One can obtain item $2$ using Lemma \ref{cauchy}.

\end{proof}

\begin{lemma}\label{zh}
	Given $0<h_0\leq 1$, there exists a constant $M>0$ such that, for $v\in D^u$ and $0<h\leq h_0$,
	$$\left|\dfrac{1}{Z_h^2(v)}\dfrac{1}{v^2+2}\right|\leq M$$
	where $Z_h$ in \eqref{par2}.
\end{lemma}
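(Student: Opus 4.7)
The plan is to use the explicit formula \eqref{par2} for $Z_h$ and reduce the problem to a two-regime analysis in the rescaled variable $u:=v\sqrt{h}/2$. A direct computation gives
\[
Z_h^2(v) = \frac{16\,h(2+h)\cosh^2(u)}{h+(2+h)\sinh^2(u)},
\]
so that
\[
\frac{1}{Z_h^2(v)(v^2+2)} = \frac{1}{16(2+h)(v^2+2)\cosh^2(u)} + \frac{\tanh^2(u)}{16\,h\,(v^2+2)}.
\]
The first summand will be bounded uniformly in $h\in(0,h_0]$ and $v\in D^u$ by combining Lemma \ref{sinh} (which supplies $|\cosh(u)|\geq M^*$) with the strictly positive lower bound of $|v^2+2|$ on $D^u$ recorded in Remark \ref{remark do peso}.

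For the delicate second summand $T(v,h):=\tanh^2(u)/(16\,h\,(v^2+2))$, I will compensate the apparent $1/h$ factor by splitting according to the size of $|u|$. If $|u|\leq 1$, the elementary Taylor bound $|\tanh(u)|\leq C|u|$ yields $|\tanh^2(u)|\leq C^2|u|^2=C^2h|v|^2/4$, so that $|T(v,h)|\leq C^2|v|^2/(64|v^2+2|)$; the ratio $|v|^2/|v^2+2|$ is bounded on $D^u$ because the zeros $\pm i\sqrt{2}$ of $v^2+2$ do not belong to $D^u$ and $|v|^2/|v^2+2|\to 1$ as $|v|\to\infty$ along the cone. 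If instead $|u|\geq 1$, equivalently $|v|\geq 2/\sqrt{h}\geq 2$ (using $h_0\leq 1$), the reverse triangle inequality gives $|v^2+2|\geq |v|^2-2\geq |v|^2/2\geq 2/h$, hence $h\,|v^2+2|\geq 2$; meanwhile $u$ lies in the rescaled cone $(\sqrt{h}/2)D^u$, whose points stay uniformly away from the poles $\pm i\pi/2+ik\pi$ of $\tanh$ (seen via the identity $|\cosh(u)|^2=\sinh^2(\Rp u)+\cos^2(\Ip u)$, together with the fact that $|\Ip u|\geq\pi/3$ forces $|\Rp u|\geq(\pi/3-\sqrt{h_0}/(2\sqrt 2))/\tan\beta$). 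Consequently $|\tanh(u)|$ is bounded uniformly, and so is $T(v,h)$.

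The only real obstacle is this $1/h$ in the second summand, which is precisely resolved by the two-regime split: in the small-$|u|$ regime $\tanh^2(u)$ is itself of order $h$, while in the large-$|u|$ regime $|v^2+2|$ is of order $1/h$. All the remaining estimates are routine consequences of the geometry of $D^u$ and Lemma \ref{sinh}, so once the decomposition above is set up the proof concludes immediately.
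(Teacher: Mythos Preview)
Your argument is correct. The paper does not give a proof of this lemma at all: it simply states that the proof is analogous to that of Lemma~\ref{Fs}, which splits according to $|v|\lessgtr (M^*)^{-1}$ and uses Lemma~\ref{sinh} together with a Taylor expansion of $\sinh$ near $0$. Your route is in the same spirit but organized differently and, in fact, more transparently: you first perform the algebraic decomposition
\[
\frac{1}{Z_h^2(v)(v^2+2)}=\frac{1}{16(2+h)(v^2+2)\cosh^2 u}+\frac{\tanh^2 u}{16\,h\,(v^2+2)},\qquad u=\tfrac{\sqrt h}{2}v,
\]
and then split according to $|u|\lessgtr 1$ rather than $|v|\lessgtr (M^*)^{-1}$. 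This is a genuine improvement in clarity, because it isolates the only dangerous term $\tanh^2 u/h$ and makes explicit how the $1/h$ is compensated in each regime (by $|\tanh u|\le C|u|$ when $|u|\le 1$, and by $h|v^2+2|\ge 2$ when $|u|\ge 1$).

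One small point worth making explicit: in the regime $|u|\ge 1$ you argue that $u$ stays uniformly away from the poles of $\tanh$, but on an unbounded region this alone does not imply $|\tanh u|$ is bounded. What actually closes the argument is the identity you already quote, together with its companion $|\sinh u|^2=\sinh^2(\Rp u)+\sin^2(\Ip u)$, which give
\[
|\tanh u|^2=\frac{\sinh^2(\Rp u)+\sin^2(\Ip u)}{\sinh^2(\Rp u)+\cos^2(\Ip u)}.
\]
Your dichotomy ($|\Ip u|<\pi/3$ gives $\cos^2(\Ip u)\ge 1/4$; $|\Ip u|\ge\pi/3$ forces $|\Rp u|\ge c>0$) then bounds this ratio uniformly. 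With that sentence added, the proof is complete.
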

The proof is analogous to the one of Lemma \ref{Fs}.

\subsection{The Fixed Point Theorem}

Now, we study the operator $\mathcal{G}_{\omega,h}$ in order to find a fixed point in $\mathcal{X}_2^2$.
Recall the definition of $\mathcal{G}_{\omega,h}=\mathcal{G}_{\omega}\circ\mathcal{F}_h$ in \eqref{fixh}, and notice that $\mathcal{G}_{\omega}$ is the same operator of the case $h=0$. Thus, Proposition \ref{gomega0} still holds for functions in the Banach space $\mathcal{X}_2^2$. 

\begin{prop}\label{gomegah}
	Given $(f,g)\in \mathcal{X}_2^2$, we have that $\mathcal{G}_{\omega}(f,g)\in \mathcal{X}_2^2$. Furthermore, there exists a constant $M>0$ independent of $\e$ such that
	$$\left\|\mathcal{G}_{\omega}(f,g)\right\|_2\leq \dfrac{M}{\omega}\left\|(f,g)\right\|_2.$$
% 	for every $(f,g)\in \mathcal{X}_2^2$.
\end{prop}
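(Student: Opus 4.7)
The plan is to mirror the proof of Proposition \ref{gomega0} (attributed to \cite{GOS}), the only changes being the domain ($D^u$ in place of $D^u_\e$) and the weighted norm (with weight $|v^2+2|$ in place of the piecewise weight). The factor $1/\omega$ must come from a single integration by parts that exploits the oscillation of the kernel. By the conjugation symmetry built into $\mathcal{G}_{\omega}$, it suffices to estimate the first component $F(f)(v)=\int_{-\infty}^v e^{\omega i(v-r)}f(r)\,dr$ and show $\|F(f)\|_2\le (M/\omega)\|f\|_2$.

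For each $v\in D^u$, I would take the integration path to be the horizontal half-line $r=v-s$, $s\ge 0$. This stays in $D^u$ because the wedge opens to the left: $\Ip(v-s)=\Ip(v)$ is unchanged while the admissible bound $-\tan(\bg)\Rp(v-s)+\sqrt{2}/2$ only grows as $s$ increases. Along this path $|e^{\omega i(v-r)}|=1$, and after substitution an integration by parts yields
$$F(f)(v)=-\frac{f(v)}{\omega i}+\frac{1}{\omega i}\int_0^\infty e^{\omega i s}f'(v-s)\,ds,$$
with the boundary term at $s=\infty$ vanishing because $|f(v-s)|\le \|f\|_2/|(v-s)^2+2|\to 0$. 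The leading term is controlled directly by $|(v^2+2)f(v)/(\omega i)|\le \|f\|_2/\omega$, which is already of the required size.

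For the remaining integral I would bound $|f'(r)|$ via Cauchy's formula on a disk centered at $r$ whose radius is a fixed fraction of the distance from $r$ to $\partial D^u$; this distance is bounded below by a uniform constant on compact subregions of $D^u$ avoiding $\pm i\sqrt{2}$ and is comparable to $|r|$ as $|\Rp r|\to\infty$. This yields $|f'(r)|\le M\|f\|_2/|r^2+2|^{3/2}$, so the task reduces to verifying the uniform weighted estimate
$$|v^2+2|\int_0^\infty\frac{ds}{|(v-s)^2+2|^{3/2}}\le C,\qquad v\in D^u.$$
I would split the integral into a tail, where the integrand decays like $1/s^3$ and produces a contribution of order $1/|v|^2$ that cancels $|v^2+2|\sim|v|^2$, and a neighborhood of $s=0$, where the integrand is of size $1/|v^2+2|^{3/2}$ and contributes something of order $|v^2+2|^{-1/2}$, bounded because $D^u$ stays a positive distance from $\pm i\sqrt{2}$ (Remark \ref{remark do peso}). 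The hard part will be carrying out this weighted integral estimate uniformly in $v\in D^u$, especially for $v$ near the sloped boundary of the wedge where both factors are delicate; once that is done, the bound $\|F(f)\|_2\le (M/\omega)\|f\|_2$ follows, and the identical argument for $\pi_2\mathcal{G}_\omega$ together with the complex-conjugacy relation $\pi_2\mathcal{G}_\omega(f,g)(v)=\overline{\pi_1\mathcal{G}_\omega(f,g)(v)}$ on $D^u\cap\R$ gives $\mathcal{G}_\omega(f,g)\in\mathcal{X}_2^2$ with the stated norm bound.
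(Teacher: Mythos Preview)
Your integration-by-parts scheme has a genuine gap at the Cauchy-estimate step. You want $|f'(r)|\le M\|f\|_2/|r^2+2|^{3/2}$ uniformly along the horizontal ray $r=v-s$, $s\ge 0$, but $f$ is only assumed analytic on $D^u$, so the Cauchy disk at $r$ must lie inside $D^u$ and hence has radius at most $\mathrm{dist}(r,\partial D^u)$. When $v$ lies on (or arbitrarily near) the sloped boundary, this distance behaves like $c\,s$ as $s\to 0^+$, giving only $|f'(v-s)|\lesssim\|f\|_2/\bigl(s\,|(v-s)^2+2|\bigr)$; the resulting $\int_0^{\cdot} ds/s$ diverges logarithmically, so the remainder integral is not controlled uniformly in $v$. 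You seem to have conflated distance to the poles $\pm i\sqrt 2$ (which \emph{is} bounded below on $D^u$, cf.\ Remark~\ref{remark do peso}) with distance to $\partial D^u$ (which is not). Salvaging this route would require $f$ to be analytic on a strictly larger wedge, as in Lemma~\ref{cauchy}, but that hypothesis is not part of $\mathcal{X}_2^2$.

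The paper gives no explicit argument here; it simply observes that $\mathcal{G}_\omega$ is the same operator as in Proposition~\ref{gomega0}, attributed to \cite{GOS}. The technique that actually produces the $1/\omega$ gain, visible in the later proofs of Propositions~\ref{gammah0} and~\ref{gamma00}, is not integration by parts but \emph{tilting the contour}: parameterize $r=v-te^{i\bg}$ (for $\pi_1$) or $r=v-te^{-i\bg}$ (for $\pi_2$), $t\ge 0$, i.e.\ a ray parallel to one edge of the wedge. A direct check shows this ray stays in $D^u$ for every $v\in D^u$, and now $|e^{\omega i(v-r)}|=e^{-\omega t\sin\bg}$ decays, so $\int_0^\infty e^{-\omega t\sin\bg}\,dt=1/(\omega\sin\bg)$ supplies the factor $1/\omega$ directly. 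Combined with the elementary geometric bound $|(v-te^{\pm i\bg})^2+2|\ge c\,|v^2+2|$ along the ray (the ray recedes from both singularities $\pm i\sqrt 2$), this gives $|(v^2+2)\,\pi_1\mathcal{G}_\omega(f,g)(v)|\le (M/\omega)\|f\|_2$ without ever differentiating $f$, and the boundary issue never arises.
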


We proceed by studying the operator $\mathcal{F}_h$ in \eqref{Fh}.

\begin{prop}\label{firstiterationh}
	There exists $h_0>0$, $\e_0>0$ and a constant $M>0$ such that for, $0<\e\leq \e_0$ and $0<h\leq h_0$,
%	$$\left\|\mathcal{F}_{h}(0,0)\right\|_2\leq M\dfrac{\dg}{\omega},$$
%	for each $0<h\leq h_0$.	Consequently, the operator $\mathcal{G}_{\omega,h}$ defined in Proposition \eqref{fixh} satisfies
	$$\left\|\mathcal{G}_{\omega,h}(0,0)\right\|_2\leq M\dfrac{\dg}{\omega^2}.$$	
\end{prop}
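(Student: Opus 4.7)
The plan is to evaluate $\mathcal{G}_{\omega,h}(0,0)$ explicitly, bound the result in the $\|\cdot\|_2$ norm using Lemma \ref{Fs}, and then apply the smoothing estimate for $\mathcal{G}_\omega$ supplied by Proposition \ref{gomegah}. This reduces the claim to routine estimates once the three ingredients are assembled.

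First I would simplify $\mathcal{F}_h(0,0)$. Looking at the defining formula \eqref{Fh}, the terms proportional to $\eta_h(v,\gamma,\theta)-1$ are multiplied by $\gamma$ or $\theta$, so they vanish at $(\gamma,\theta)=(0,0)$. Hence
\[
\mathcal{F}_h(0,0)(v)=\bigl(-(Q^h)'(v),\,(Q^h)'(v)\bigr),
\]
with $Q^h$ as in \eqref{firsth}. Reality/conjugacy is immediate, so this pair lies in $\mathcal{X}_2^2$ provided $(Q^h)'\in\mathcal{X}_2$.

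Second, I would estimate $\|(Q^h)'\|_2$. By definition $(Q^h)'(v)=-\frac{i\dg}{\omega\sqrt{2\Omega}}\,\frac{d}{dv}F(X_h(v))$, and Lemma \ref{Fs}(2) gives the pointwise bound $|\tfrac{d}{dv}F(X_h(v))|\leq M/|v^2+2|$ uniformly for $v\in D^u$ and $0<h\leq h_0$. Multiplying by $|(v^2+2)^{2/2}|=|v^2+2|$ yields $\|\tfrac{d}{dv}F(X_h)\|_2\leq M$, so
\[
\|(Q^h)'\|_2 \leq M\,\dfrac{\dg}{\omega\sqrt{2\Omega}} \leq M\,\dfrac{\dg}{\omega},
\]
since $\Omega=\sqrt{1-\e^2/4}$ is bounded away from $0$ for $\e\leq\e_0$.

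Third, I would invoke Proposition \ref{gomegah} to absorb the remaining factor of $1/\omega$. Since $\mathcal{G}_{\omega,h}(0,0)=\mathcal{G}_\omega\bigl(\mathcal{F}_h(0,0)\bigr)$ and $\mathcal{F}_h(0,0)\in\mathcal{X}_2^2$, Proposition \ref{gomegah} applies to give
\[
\|\mathcal{G}_{\omega,h}(0,0)\|_2 \leq \dfrac{M}{\omega}\,\bigl\|\mathcal{F}_h(0,0)\bigr\|_2 \leq \dfrac{M}{\omega}\cdot 2\|(Q^h)'\|_2 \leq M\,\dfrac{\dg}{\omega^2},
\]
up to enlarging $M$. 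Choosing $\e_0$ and $h_0$ so that the hypotheses of Lemma \ref{Fs} and Proposition \ref{gomegah} are met completes the proof.

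There is no real obstacle here: the whole argument is the composition of two previously established bounds, and the only non-trivial input is that Lemma \ref{Fs}(2) provides a bound with quadratic decay (i.e.\ controlled in the norm $\|\cdot\|_2$), which is exactly what is needed to apply $\mathcal{G}_\omega$ without losing derivatives. The uniformity in $h$ follows because $M$ in Lemma \ref{Fs} is independent of $h\in(0,h_0]$.
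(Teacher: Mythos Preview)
Your proposal is correct and follows essentially the same approach as the paper: compute $\mathcal{F}_h(0,0)=(-(Q^h)',(Q^h)')$, bound $\|(Q^h)'\|_2$ via Lemma~\ref{Fs}(2), and then apply Proposition~\ref{gomegah} to gain the extra factor of $1/\omega$. The paper's proof is simply a more compressed version of exactly these three steps.
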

\begin{proof}
	Notice that $\mathcal{F}_h(0,0)=(- (Q^h)'(v),(Q^h)'(v))$ (see \eqref{firsth}), which implies
	%	$$Q^h(v)=-\Theta_{1D}^h(v)=-i\dfrac{\dg}{\omega\sqrt{2\Omega}}F(X_h(v)).$$
	%	
	%	It means that 
	$$\left\|\mathcal{F}_{h}(0,0)\right\|_2= 2 \dfrac{\dg}{\omega\sqrt{2\Omega}}\|F(X_h)'\|_{2}.$$
	
	Thus, it is enough to apply Lemma \ref{Fs} and Proposition \ref{gomegah}.
\end{proof}

\begin{prop}\label{lipschitzh}
	There exist $\e_0>0$, $h_0>0$ and a constant $M>0$  such that for $0<\e\leq \e_0$, $0<h\leq h_0$:
	
	Let $\eta_h$ be given in \eqref{alphah1d} and take $(\gamma_j,\theta_j)\in \mathcal{B}_0(R)\subset \mathcal{X}_{2}^2$ with $j=1,2$ and $R=K\dfrac{\dg}{\omega^2}$, where $K$ is a constant independent of $h$ and $\e$, the following statements hold.
	\begin{enumerate}
		\item $\left|\eta_h(v,\gamma_j(v),\theta_j(v))-1\right|\leq M\dfrac{\dg^2}{\omega};$\vspace{0.2cm}
		\item $\left|\eta_h(v,\gamma_1(v),\theta_1(v))-\eta_h(v,\gamma_2(v),\theta_2(v))\right|\leq M\dfrac{\dg}{\omega}\|(\gamma_1,\theta_1)-(\gamma_2,\theta_2)\|_0;$\vspace{0.2cm}		
		
		\item $\left\|\mathcal{F}_h(\gamma_1,\theta_1)-\mathcal{F}_h(\gamma_2,\theta_2)\right\|_2\leq M\dg^2\|(\gamma_1,\theta_1)-(\gamma_2,\theta_2)\|_2;$		\vspace{0.2cm}		
	\end{enumerate}
\end{prop}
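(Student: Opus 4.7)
The plan is to mirror the structure of the proof of Proposition \ref{lipschitz0} used in the $h=0$ case, but with every $h$-dependent quantity controlled via Lemmas \ref{Fs} and \ref{zh} (which play the roles that the explicit formulas \eqref{par1}, \eqref{exprUF} played there). First I would write $\eta_h(v,\gamma,\theta) = (1 + A_h(v) - 8\omega\gamma\theta/Z_h(v)^2)^{-1/2}$, where $A_h(v) = \frac{4\dg^2}{\Omega\omega}(F(X_h(v))/Z_h(v))^2$. Combining Lemmas \ref{Fs} and \ref{zh} yields $|F(X_h(v))/Z_h(v)|^2 \leq M$ uniformly on $D^u$, so $|A_h(v)| \leq M\dg^2/\omega$. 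For the second term, if $(\gamma,\theta)\in\mathcal{B}_0(R)\subset \mathcal{X}_2^2$ with $R=K\dg/\omega^2$, then $|\gamma(v)\theta(v)|\leq R^2/|v^2+2|^2$, and Lemma \ref{zh} gives $|8\omega\gamma\theta/Z_h^2|\leq M\omega R^2/|v^2+2|\leq M\dg^2/\omega^3$. The sum is therefore $\er(\dg^2/\omega)$, small for $\e$ small, and a first-order Taylor expansion of $(1+x)^{-1/2}$ gives item (1).

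For item (2) I would subtract the two evaluations, noting that only the $\gamma\theta$-term differs, and use the Lipschitz bound for $(1+x)^{-1/2}$ in the regime where its argument is $\er(\dg^2/\omega)$:
\[
|\eta_h(v,\gamma_1,\theta_1)-\eta_h(v,\gamma_2,\theta_2)| \leq M\,\omega\,\frac{|\gamma_1\theta_1-\gamma_2\theta_2|}{|Z_h(v)|^2}.
\]
Splitting $\gamma_1\theta_1-\gamma_2\theta_2=\gamma_1(\theta_1-\theta_2)+\theta_2(\gamma_1-\gamma_2)$, applying the $\mathcal{X}_2^2$ bound $|\gamma_j(v)|,|\theta_j(v)|\leq R/|v^2+2|$, and using Lemma \ref{zh} to absorb $1/|Z_h(v)|^2$ against $|v^2+2|$, the $|v^2+2|$ factors cancel and I get the clean estimate $M\omega R \|(\gamma_1,\theta_1)-(\gamma_2,\theta_2)\|_0 = M(\dg/\omega)\|\cdots\|_0$.

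For item (3), I would decompose the difference componentwise via the standard telescoping
\[
\gamma_1(\eta_h(v,\gamma_1,\theta_1)-1) - \gamma_2(\eta_h(v,\gamma_2,\theta_2)-1) = (\gamma_1-\gamma_2)(\eta_h(v,\gamma_1,\theta_1)-1) + \gamma_2(\eta_h(v,\gamma_1,\theta_1)-\eta_h(v,\gamma_2,\theta_2)),
\]
bounding the first summand by item (1) (which gives the factor $\dg^2/\omega$, compensating the outer $\omega$) and the second by item (2) (with $|\gamma_2(v)|\leq R/|v^2+2|$ absorbing the weight needed to pass from $\|\cdot\|_0$ to $\|\cdot\|_2$, since $\|f\|_0$ is controlled by $\|f\|_2$ thanks to the uniform lower bound of $|v^2+2|$ on $D^u$). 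Both contributions give $M\dg^2\|(\gamma_1,\theta_1)-(\gamma_2,\theta_2)\|_2$ when multiplied by $|v^2+2|$ and supped over $D^u$. The same argument applies verbatim to the second component, concluding the proof.

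The main obstacle, and the reason Lemmas \ref{Fs} and \ref{zh} had to be proved beforehand, is the singular dependence on $h$ of both $F(X_h)$ and $Z_h$: near $h=0$ the natural polynomial decay of the $h=0$ case collapses to the weaker decay $1/|\sqrt{v^2+2}|$ for $F(X_h)$, but this is precisely compensated by the growth of $1/Z_h$, so that the crucial ratio $F(X_h)/Z_h$ and the product $Z_h^{-2}(v^2+2)^{-1}$ remain bounded uniformly in $h$. All the numerology in items (1)--(3) depends on this cancellation; once it is verified on $D^u$, the rest of the argument is an exact replica of the $h=0$ estimates.
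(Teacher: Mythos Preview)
Your proposal is correct and follows essentially the same route as the paper's proof: bounding the perturbation inside $\eta_h$ via Lemmas \ref{Fs} and \ref{zh}, using the Lipschitz property of $(1+x)^{-1/2}$ together with the splitting $\gamma_1\theta_1-\gamma_2\theta_2=\gamma_1(\theta_1-\theta_2)+\theta_2(\gamma_1-\gamma_2)$ for item (2), and the same telescoping for item (3). Your identification of the $h$-uniform cancellation between $F(X_h)$ and $Z_h$ as the key point is exactly what the paper relies on.
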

\begin{proof}
	%Recall the ex
	%$$	\eta_h(v,\Gamma,\Theta)=\left(1+\dfrac{4\dg^2}{\Omega\omega}\left(\dfrac{F(X_h(v))}{Z_h(v)}\right)^2-8\omega\dfrac{\Gamma\Theta}{(Z_h(v))^2}\right)^{-1/2}.$$
	%
	%Now, we use 
	
	Lemmas \ref{Fs} and \ref{zh} and the fact that $(\gamma,\theta)\in \mathcal{B}_0(R)$ imply
	$$
	\begin{array}{lcl}
	\left|\dfrac{4\dg^2}{\Omega\omega}\left(\dfrac{F(X_h(v))}{Z_h(v)}\right)^2-8\omega\dfrac{\gamma\theta}{(Z_h(v))^2}\right|
	&\leq& M\dfrac{\dg^2}{\omega}.
	\end{array}
	$$
	
%	Therefore, choosing $\e_0>0$ sufficiently small, we have that 
%	$$\left|\dfrac{4\dg^2}{\Omega\omega}\left(\dfrac{F(X_h(v))}{Z_h(v)}\right)^2-8\omega\dfrac{\gamma\theta}{(Z_h(v))^2}\right|\leq \dfrac{1}{2}.$$
	
	Thus, using \eqref{alphah1d}, it follows that
	
	$$\left|\eta_h(v,\gamma,\theta)-1\right|\leq M \left|\dfrac{4\dg^2}{\Omega\omega}\left(\dfrac{F(X_h(v))}{Z_h(v)}\right)^2-8\omega\dfrac{\gamma\theta}{(Z_h(v))^2}\right|\leq M\dfrac{\dg^2}{\omega}$$
and using also Lemma \ref{zh}, we have
	$$\begin{array}{lcl}
	\left|\eta_h(v,\gamma_1,\theta_1)-\eta_h(v,\gamma_2,\theta_2)\right|&\leq& M\omega \left|\dfrac{\gamma_1\theta_1-\gamma_2\theta_2}{(Z_h(v))^2}\right|\vspace{0.2cm}\\
%	&\leq& M\omega \left|\dfrac{\gamma_1(\theta_1-\theta_2)+\theta_2(\gamma_1-\gamma_2)}{(Z_h(v))^2}\right|\vspace{0.2cm}\\
	&\leq&MR\omega \left(\dfrac{|\theta_1-\theta_2|}{|(Z_h(v))^2(v^2+2)|}+ \dfrac{|\gamma_1-\gamma_2|}{|(Z_h(v))^2(v^2+2)|}\right)\vspace{0.2cm}\\
%	&\leq&MR\omega\|(\gamma_1,\theta_1)-(\gamma_2,\theta_2)\|_0\vspace{0.2cm}\\
	&\leq&M\dfrac{\dg}{\omega}\|(\gamma_1,\theta_1)-(\gamma_2,\theta_2)\|_0
	\end{array}$$
	
	Finally, it follows from items $(1)$ and $(2)$ of this proposition and \eqref{Fh} that
	$$
	\begin{array}{lcl}
	\left\|\pi_1\circ\mathcal{F}_h(\gamma_1,\theta_1)-\pi_1\circ\mathcal{F}_h(\gamma_2,\theta_2)\right\|_2%&=& \omega\left\| \gamma_1(\eta_h(v,\gamma_1,\theta_1)-1)-  \gamma_2(\eta_h(v,\gamma_2,\theta_2)-1)\right\|_2\vspace{0.2cm}\\
	%&+& \left\|\omega i \Theta_1(\ag_h(v,\Gamma_1,\Theta_1)-1)- \omega i \Theta_2(\ag_h(v,\Gamma_2,\Theta_2)-1)\right\|_2\vspace{0.2cm}\\
	&\leq& \omega\left\|\eta_h(v,\gamma_1,\theta_1)-1\right\|_0\|\gamma_1-\gamma_2\|_2\vspace{0.2cm}\\
	&&+\omega\|\gamma_2\|_2\left\|\eta_h(v,\gamma_1,\theta_1)-\eta_h(v,\gamma_2,\theta_2)\right\|_0\vspace{0.2cm}\\
	&\leq& M\dg^2\|\gamma_1-\gamma_2\|_2+ M\omega R\dfrac{\dg}{\omega}\|(\gamma_1,\theta_1)-(\gamma_2,\theta_2)\|_0\vspace{0.2cm}\\
	&\leq& M\dg^2\|(\gamma_1,\theta_1)-(\gamma_2,\theta_2)\|_2.
	\end{array}
	$$
	
	Analogously, we obtain the same inequality for the second component of $\mathcal{F}_h$. 
\end{proof}

Finally, we are able to prove Proposition \ref{solutionh} (and thus Theorem \eqref{parameterization1Dh}) by a fixed point argument.

\begin{prop}\label{fixpointh}
	There exist $\e_0>0$, $h_0>0$ and a constant $M>0$ such that for $0<h\leq h_0$ and $\e\leq \e_0$, the operator $\mathcal{G}_{\omega,h}$ (given in \eqref{fixh}) has a  fixed point $(\gamma_{h,0}^{u},\theta_{h,0}^{u})$ in $\mathcal{X}_2^2$ which satisfies
	$$\|(\gamma_{h,0}^{u},\theta_{h,0}^{u})\|_2\leq M\dfrac{\dg}{\omega^2}.$$
\end{prop}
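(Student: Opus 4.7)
The plan is to prove Proposition \ref{fixpointh} via a Banach fixed point argument applied to $\mathcal{G}_{\omega,h}=\mathcal{G}_\omega\circ\mathcal{F}_h$ on a closed ball in $\mathcal{X}_2^2$, following exactly the scheme of Proposition \ref{fixpoint0} but using the $h$-dependent estimates of the previous subsection.

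First, I would use Proposition \ref{firstiterationh} to extract a constant $b_3>0$, independent of $\e$ and $h$, such that
$$\|\mathcal{G}_{\omega,h}(0,0)\|_2\leq \frac{b_3}{2}\,\frac{\dg}{\omega^2},$$
and then consider the closed ball $\mathcal{B}_0(b_3\dg/\omega^2)\subset\mathcal{X}_2^2$. Next I would check that $\mathcal{G}_{\omega,h}$ sends this ball into itself: given $(\gamma,\theta)\in\mathcal{B}_0(b_3\dg/\omega^2)$, applying Propositions \ref{gomegah} and \ref{lipschitzh} (with $K=b_3$) gives
$$\|\mathcal{G}_{\omega,h}(\gamma,\theta)\|_2\leq \|\mathcal{G}_{\omega,h}(0,0)\|_2+\frac{M}{\omega}\|\mathcal{F}_h(\gamma,\theta)-\mathcal{F}_h(0,0)\|_2\leq \frac{b_3}{2}\frac{\dg}{\omega^2}+\frac{M\dg^2}{\omega}\cdot\frac{b_3\dg}{\omega^2},$$
which is bounded by $b_3\dg/\omega^2$ for $\e_0$ small enough (since $\dg^2=\e^{3/2}\to 0$).

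Second, I would prove that $\mathcal{G}_{\omega,h}$ is a contraction on this ball. Using the linearity of $\mathcal{G}_\omega$ together with Propositions \ref{gomegah} and \ref{lipschitzh}(3),
$$\|\mathcal{G}_{\omega,h}(\gamma_1,\theta_1)-\mathcal{G}_{\omega,h}(\gamma_2,\theta_2)\|_2\leq \frac{M}{\omega}\|\mathcal{F}_h(\gamma_1,\theta_1)-\mathcal{F}_h(\gamma_2,\theta_2)\|_2\leq \frac{M\dg^2}{\omega}\|(\gamma_1,\theta_1)-(\gamma_2,\theta_2)\|_2.$$
Choosing $\e_0$ small enough so that $M\dg^2/\omega\leq 1/2$, the map has Lipschitz constant at most $1/2$. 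I would also note that, by the construction of $\mathcal{F}_h$ in \eqref{Fh} and the integral form of $\mathcal{G}_\omega$ in \eqref{g0}, the conjugacy relation $\pi_2\circ\mathcal{G}_{\omega,h}(\gamma,\theta)(v)=\overline{\pi_1\circ\mathcal{G}_{\omega,h}(\gamma,\theta)(v)}$ holds for $v\in D^u\cap\R$, so $\mathcal{G}_{\omega,h}$ preserves the subspace $\mathcal{X}_2^2$ (not only $\mathcal{X}_2\times\mathcal{X}_2$).

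Finally, the Banach fixed point theorem produces a unique fixed point $(\gamma_{h,0}^u,\theta_{h,0}^u)\in\mathcal{B}_0(b_3\dg/\omega^2)$, which yields the desired bound $\|(\gamma_{h,0}^u,\theta_{h,0}^u)\|_2\leq M\dg/\omega^2$. Strictly speaking there is no real obstacle here: all of the delicate estimates (control of $F(X_h)$ near its singularities, the decay behavior of $Z_h$, the Lipschitz constant of $\eta_h$, and the smoothing properties of $\mathcal{G}_\omega$) have already been packaged into Propositions \ref{gomegah}, \ref{firstiterationh} and \ref{lipschitzh}. The only subtlety worth double-checking is that the constants $\e_0,h_0$ produced here can be taken uniform in $h\in(0,h_0]$; this is automatic because all the intermediate estimates were obtained with constants independent of $h$.
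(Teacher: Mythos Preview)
Your proposal is correct and follows essentially the same approach as the paper: both use Proposition \ref{firstiterationh} to fix the ball radius $b_3\dg/\omega^2$, then combine Propositions \ref{gomegah} and \ref{lipschitzh} (with $K=b_3$) to obtain the Lipschitz bound $M\dg^2/\omega$ and apply the Banach fixed point theorem. Your write-up is slightly more explicit (checking self-mapping of the ball and the conjugacy relation separately), but there is no substantive difference.
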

\begin{proof}
	From Proposition \ref{firstiterationh}, there exists a constant $b_3>0$	independent of $h$ and $\e$ such that
	$$\left\|\mathcal{G}_{\omega,h}(0,0)\right\|_2\leq \dfrac{b_3}{2}\dfrac{\dg}{\omega^2},$$
	Now, given $(\gamma_1,\theta_1)$ and $(\gamma_2,\theta_2)$ in $\mathcal{B}_{0}(b_3\dg/\omega^2)$, we can use Propositions \ref{lipschitzh} (with $K=b_3$) and \ref{gomegah} and the linearity of the operator $\mathcal{G}_{\omega}$ to see that
	$$
	\begin{array}{lcl}
	\left\|\mathcal{G}_{\omega,h}(\gamma_1,\theta_1)-\mathcal{G}_{\omega,h}(\gamma_2,\theta_2)\right\|_2&\leq& \dfrac{M}{\omega}\left\|\mathcal{F}_h(\gamma_1,\theta_1)-\mathcal{F}_h(\gamma_2,\theta_2)\right\|_2\vspace{0.2cm}\\
	&\leq& M\dfrac{\dg^2}{\omega}\|(\gamma_1,\theta_1)-(\gamma_2,\theta_2)\|_2.
	\end{array}
	$$
	Choosing $\e_0$ sufficiently small, we have that $\mathrm{Lip}(\mathcal{G}_{\omega,h})\leq 1/2$. Therefore $\mathcal{G}_{\omega,h}$ sends the ball $\mathcal{B}_{0}(b_3\dg/\omega^2)$ into itself and it is a contraction. Thus, it has a unique fixed point $(\gamma_{h,0}^{u},\theta_{h,0}^{u})\in\mathcal{B}_{0}(b_3\dg/\omega^2)$. 
	
%	The proof is complete.
\end{proof}

%The results of this section proves Theorem \ref{parameterization1Dh}.

\section{Proof of Theorem \ref{parameterization2Dk1k2}}\label{parf_sec}

%\subsection{Existence and Complex Parameterization of the Invariant Manifolds of $\Lambda_{\kappa_1,\kappa_2}^{\pm}$}

In this section we prove the existence of $W_{\e}^u(\Lambda_{\kappa_1,\kappa_2}^{-})$, with $\dg\neq 0$.
% , and we provide a complex parameterization for this $2$-manifold. By combining results from Sections \ref{parhper_sec} and \ref{parh_sec}.
%First, notice that it follows straightforwardly from Lemma \ref{sep2h} and Proposition \ref{sep_unph} that the unperturbed invariant manifolds can be parameterized in the following way.  
%\textcolor{red}{RETIREI A PROPOSICAO seppf DAQUI... ESTA NO LEMMA 2 DA DECOUPLED SECTION }
As in the previous sections, we look for parameterizations $N_{\kappa_1,\kappa_2}^{u}$ of $W^{u}_\e(\Lambda_{\kappa_1,\kappa_2}^{-})$ as graphs 
% which satisfy that $\pi_1\circ N_{\kappa_1,\kappa_2}^{u}=\pi_1\circ N_{\kappa_1,\kappa_2}$, as perturbations of $ N_{\kappa_1,\kappa_2}$. Thus, we write
\begin{equation}\label{paraf}
N_{\kappa_1,\kappa_2}^{u,s}(v,\tau)=(X_{\kappa_1}(v),Z_{\kappa_1}(v)+Z_{\kappa_1,\kappa_2}^{u,s}(v,\tau),\Gamma_{\kappa_2}(\tau)+\Gamma_{\kappa_1,\kappa_2}^{u,s}(v,\tau),\Theta_{\kappa_2}(\tau)+\Theta_{\kappa_1,\kappa_2}^{u,s}(v,\tau)),
\end{equation}
where $X_{\kappa_1},Z_{\kappa_1}$ are given in \eqref{par1} and $\Gamma_{\kappa_2},\Theta_{\kappa_2}$ are given in \eqref{periodic}.
%Also, in this section, we will refer $(Z_{2D}^{\kappa_1,\kappa_2,u}(v,\tau),\Gamma_{2D}^{\kappa_1,\kappa_2,u}(v,\tau),\Theta_{2D}^{\kappa_1,\kappa_2,u}(v,\tau))$ simply as $(Z^u(v,\tau),\Gamma^u(v,\tau),\Theta^u(v,\tau))$ in order to simplify the notation.

Following  the same lines of Section \ref{parh_sec} we have a characterization of  $N_{\kappa_1,\kappa_2}^{u}$.
\begin{lemma}\label{formabuenahf}
	Write $Z^u_{\kappa_1,\kappa_2}(v,\tau)=Z_{\kappa_1,\kappa_2}(v,\tau)+ z_{\kappa_1,\kappa_2}^u(v,\tau)$, $\Gamma^u_{\kappa_1,\kappa_2}(v,\tau)=Q^{\kappa_2}(v)+ \gamma_{\kappa_1,\kappa_2}^u(v,\tau)$, $\Theta^u_{\kappa_1,\kappa_2}(v,\tau)=-Q^{\kappa_1}(v)+ \theta_{\kappa_1,\kappa_2}^u(v,\tau)$, where $Q^{\kappa_1}$ is given by \eqref{firsth} and
	\begin{equation}\label{Zfirstf}
	Z_{\kappa_1,\kappa_2}(v,\tau)=\dfrac{\dg}{\omega\sqrt{2\Omega}}F'(X_{\kappa_1}(v))\dfrac{\Gamma_{\kappa_2}(\tau)+\Theta_{\kappa_2}(\tau)}{2},
	\end{equation}
	with $\Gamma_{\kappa_1},\Theta_{\kappa_1}$ given by \eqref{periodic}.
	Then, $N_{\kappa_1,\kappa_2}^u(v,\tau)$, given in \eqref{paraf},  with $\kappa_1,\kappa_2\geq 0$ and $\kappa_1+\kappa_2=h$, parameterizes $W^u(\Lambda^-_{\kappa_1,\kappa_2})$ provided $(z_{\kappa_1,\kappa_2}^u,\gamma_{\kappa_1,\kappa_2}^u,\theta_{\kappa_1,\kappa_2}^u)$ satisfy
	\begin{equation}\label{edp restadaf}
	\left\{
	\begin{array}{l}\begin{array}{rcl}
	\partial_v z + \omega\partial_\tau z+\dfrac{Z_{\kappa_1}'(v)}{Z_{\kappa_1}(v)}z&=&f_1^{\kappa_1,\kappa_2}(v,\tau) -\dfrac{z+Z_{\kappa_1,\kappa_2}(v,\tau)}{Z_{\kappa_1}(v)}\partial_v z-\dfrac{\partial_vZ_{\kappa_1,\kappa_2}(v,\tau)}{Z_{\kappa_1}(v)}z\vspace{0.3cm}\\ &&-\dfrac{\delta}{\sqrt{2\Omega}} F'(X_{\kappa_1}(v)) \dfrac{\gamma-\theta}{2i},\vspace{0.3cm}\\
	\partial_v \gamma + \omega\partial_\tau \gamma-\omega i\gamma&=& f_2^{\kappa_1,\kappa_2}(v,\tau)-\dfrac{(Q^{\kappa_1})'(v)}{Z_{\kappa_1}(v)}z-\dfrac{z+Z_{\kappa_1,\kappa_2}(v,\tau)}{Z_{\kappa_1}(v)}\partial_v\gamma,\vspace{0.3cm}\\
	\partial_v \theta + \omega\partial_\tau \theta+\omega i\theta
	&=& -f_2^{\kappa_1,\kappa_2}(v,\tau)+\dfrac{(Q^{\kappa_1})'(v)}{Z_{\kappa_1}(v)}z-\dfrac{z+Z_{\kappa_1,\kappa_2}(v,\tau)}{Z_{\kappa_1}(v)}\partial_v\theta,
	\end{array}\vspace{0.3cm}\\
	\displaystyle\lim_{v\rightarrow -\infty}z(v,\tau)=\displaystyle\lim_{v\rightarrow -\infty}\gamma(v,\tau)=\displaystyle\lim_{v\rightarrow -\infty}\theta(v,\tau)=0,
	\end{array}\right.
	\end{equation}	
	where 
	\begin{align}
	f_1^{\kappa_1,\kappa_2}(v,\tau)=&-\partial_vZ_{\kappa_1,\kappa_2}(v,\tau)-\dfrac{Z_{\kappa_1}'(v)}{Z_{\kappa_1}(v)}Z_{\kappa_1,\kappa_2}(v,\tau)-\dfrac{\dg}{\sqrt{2\Omega}}F'(X_{\kappa_1}(v))\dfrac{Q^{\kappa_1}(v)}{i}	\label{f1f}\\
	 &-\dfrac{Z_{\kappa_1,\kappa_2}(v,\tau)\partial_vZ_{\kappa_1,\kappa_2}(v,\tau)}{Z_{\kappa_1}(v)},\notag\\
	f_2^{\kappa_1,\kappa_2}(v,\tau)=&-(Q^{\kappa_1})'(v)-\dfrac{Z_{\kappa_1,\kappa_2}(v,\tau)(Q^{\kappa_1})'(v)}{Z_{\kappa_1}(v)}.	\label{f2f}
	\end{align}
\end{lemma}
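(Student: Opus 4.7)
The plan mirrors the derivations already carried out for the one-dimensional case (Lemma \ref{pertsystem_lem}) and the $\kappa_1=h$, $\kappa_2=0$ case (Lemma \ref{pertsystemh_lem}), adapted to genuinely two-dimensional manifolds with periodic orbits. First I would introduce an adapted parameterization of trajectories on the invariant manifold: impose $X(t)=X_{\kappa_1}(v(t))$ and $\dot\tau=\omega$ along orbits. Combining $\dot X=Z/8$ with $Z_{\kappa_1}(v)=8X_{\kappa_1}'(v)$ forces $\dot v = Z/Z_{\kappa_1}(v)$. Consequently, for any scalar function $f(v,\tau)$ on the manifold,
\begin{equation*}
\dot f = \frac{Z}{Z_{\kappa_1}(v)}\,\partial_v f + \omega\,\partial_\tau f.
\end{equation*}
Applying this identity to $f=Z,\Gamma,\Theta$ (viewed as functions of $(v,\tau)$ on the manifold) and equating with the right-hand sides of \eqref{complexcoord_system} yields three scalar PDEs for the components of $N_{\kappa_1,\kappa_2}^u$.

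Next I would substitute the specific ansatz $Z=Z_{\kappa_1}(v)+Z_{\kappa_1,\kappa_2}(v,\tau)+z(v,\tau)$ into the $Z$-equation. Two cancellations are essential: the unperturbed identity $Z_{\kappa_1}'(v)=-U'(X_{\kappa_1}(v))$ (which follows from Lemma \ref{sep1h}), and the defining relation of the first-order corrector \eqref{Zfirstf} together with \eqref{periodic}, namely
\begin{equation*}
\omega\,\partial_\tau Z_{\kappa_1,\kappa_2}(v,\tau) = -\frac{\delta}{\sqrt{2\Omega}}\,F'(X_{\kappa_1}(v))\,\frac{\Gamma_{\kappa_2}(\tau)-\Theta_{\kappa_2}(\tau)}{2i}.
\end{equation*}
These remove the slowly decaying terms, and regrouping the remainder produces the first equation of \eqref{edp restadaf} with source $f_1^{\kappa_1,\kappa_2}$ exactly as in \eqref{f1f}.

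For the $\Gamma$-component, I would substitute $\Gamma=\Gamma_{\kappa_2}(\tau)+Q^{\kappa_1}(v)+\gamma(v,\tau)$ into $\dot\Gamma=\omega i\Gamma-\frac{\delta}{\sqrt{2\Omega}}F(X_{\kappa_1})$. The leading-order terms cancel because $\omega\,\Gamma_{\kappa_2}'(\tau)=\omega i\,\Gamma_{\kappa_2}(\tau)$ by \eqref{periodic} and $\omega i\,Q^{\kappa_1}(v)=\frac{\delta}{\sqrt{2\Omega}}F(X_{\kappa_1}(v))$ by \eqref{firsth}. Expanding $Z/Z_{\kappa_1}=1+(Z_{\kappa_1,\kappa_2}+z)/Z_{\kappa_1}$ and collecting yields the second equation of \eqref{edp restadaf} with source $f_2^{\kappa_1,\kappa_2}$ as in \eqref{f2f}. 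The $\Theta$-component is handled identically after the sign flips $\omega i\to -\omega i$ and $Q^{\kappa_1}\to -Q^{\kappa_1}$. Finally, the asymptotic condition at $v\to -\infty$ follows from the fact that $N_{\kappa_1,\kappa_2}^u$ must approach $\Lambda_{\kappa_1,\kappa_2}^-$ there: since $X_{\kappa_1}(v)\to -\infty$, the exponentially decaying functions $F(X_{\kappa_1})$ and $F'(X_{\kappa_1})$ vanish, forcing $Q^{\kappa_1}$ and $Z_{\kappa_1,\kappa_2}$ to vanish, and hence $z,\gamma,\theta\to 0$ as well.

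The computation is essentially algebraic; the only mild obstacle is bookkeeping. Specifically, one must keep careful track of which terms in the expanded equations are non-integrable (those decaying like $1/|v|$ or having nonzero zero-mean in $\tau$) and verify that the correctors $Z_{\kappa_1,\kappa_2}$ and $Q^{\kappa_1}$ absorb exactly these terms, leaving only residuals with quadratic decay at infinity---a property that is precisely what the companion fixed-point argument (Proposition \ref{fixpointgob} applied in this setting) will exploit to solve for $(z,\gamma,\theta)$ in the Banach space $\mathcal{Y}^3_{1,\sigma}$.
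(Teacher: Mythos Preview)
Your proposal is correct and follows essentially the same approach as the paper, which simply states that the lemma is obtained ``following the same lines of Section \ref{parh_sec}'' (i.e., by imposing invariance under the flow of \eqref{complexcoord_system}, deriving the analogue of \eqref{edp sem restarh}, and then applying the change of variables with correctors $Q^{\kappa_1}$ and $Z_{\kappa_1,\kappa_2}$ to obtain \eqref{edp restadaf}). Your identification of the key cancellations---$Z_{\kappa_1}'=-U'(X_{\kappa_1})$, $\omega\,\partial_\tau Z_{\kappa_1,\kappa_2}=-\frac{\delta}{\sqrt{2\Omega}}F'(X_{\kappa_1})\frac{\Gamma_{\kappa_2}-\Theta_{\kappa_2}}{2i}$, and $\omega i\,Q^{\kappa_1}=\frac{\delta}{\sqrt{2\Omega}}F(X_{\kappa_1})$---is exactly what drives the derivation.
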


We consider the equation \eqref{edp restadaf} with $(v,\tau)\in D^u\times\mathbb{T}_\sigma$ with the asymptotic conditions
$	\displaystyle\lim_{\Rp(v)\rightarrow -\infty}z(v)=\displaystyle\lim_{\Rp(v)\rightarrow -\infty}\gamma(v)=\displaystyle\lim_{\Rp(v)\rightarrow -\infty}\theta(v)=0,$
for every $\tau\in\mathbb{T}_{\sigma}$.

Theorem \eqref{parameterization2Dk1k2} is a consequence of the following proposition.

\begin{prop}\label{existencef}
	Fix $\sigma>0$. There exist $h_0>0$ and $\e_0>0$ sufficiently small 
	such that for $0<\e\leq \e_0$, $0< h\leq h_0$ and  $\kappa_1,\kappa_2\geq 0$ with $\kappa_1+\kappa_2=h$,  system \eqref{edp restadaf} has an analytic solution $(z_{\kappa_1,\kappa_2}^{u},\gamma_{\kappa_1,\kappa_2}^{u},\theta_{\kappa_1,\kappa_2}^{u})$ defined in $D^{u}\times \mathbb{T}_\sigma$ (see \eqref{outerdomain} and \eqref{tsig}) such that $z_{\kappa_1,\kappa_2}^{u}$ is real-analytic, $\theta_{\kappa_1,\kappa_2}^{u}(v,\tau)=\overline{\gamma_{\kappa_1,\kappa_2}^{u}(v,\tau)}$ for each $(v,\tau)\in D^u\times \mathbb{T}_\sigma\cap\rn{2}$ and
	$$\displaystyle\lim_{\Rp(v)\rightarrow-\infty}z_{\kappa_1,\kappa_2}^{u}(v,\tau)=\displaystyle\lim_{\Rp(v)\rightarrow-\infty}\gamma_{\kappa_1,\kappa_2}^{u}(v,\tau)=\displaystyle\lim_{\Rp(v)\rightarrow-\infty}\theta_{\kappa_1,\kappa_2}^{u}(v,\tau)=0,$$
	for every $\tau\in\mathbb{T}_{\sigma}$. Furthermore, $(z_{\kappa_1,\kappa_2}^u,\gamma_{\kappa_1,\kappa_2}^u,\theta_{\kappa_1,\kappa_2}^u)$ satisfies the bounds in \eqref{bounds311}.
\end{prop}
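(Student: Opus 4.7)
The plan is to mimic the fixed point scheme already used for Propositions \ref{existence} and \ref{solutionh}, but now one has to handle simultaneously the singular dependence on $\kappa_1$ at $\kappa_1=0$ (as in Section \ref{parh_sec}) and the two-variable structure $(v,\tau)\in D^u\times\mathbb T_\sigma$ with its Fourier expansion in $\tau$ (as in Section \ref{parhper_sec}). Concretely, I would rewrite \eqref{edp restadaf} in functional form as
\[
\mathcal L_\omega^{\kappa_1}(z,\gamma,\theta)=\mathcal P_{\kappa_1,\kappa_2}(z,\gamma,\theta),
\]
where $\mathcal L_\omega^{\kappa_1}$ is the linear operator on the left-hand side (identical in shape to \eqref{Lo}, but with $Z_{\kappa_1}'/Z_{\kappa_1}$ in the first row), and $\mathcal P_{\kappa_1,\kappa_2}$ is the right-hand side (analogous to \eqref{Fo}, with $f_1^{\kappa_1,\kappa_2}, f_2^{\kappa_1,\kappa_2}$ in place of $f_1^h, f_2^h$). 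A solution will be sought as a fixed point of $\overline{\mathcal G}_{\omega,\kappa_1,\kappa_2}:=\mathcal G_\omega^{\kappa_1}\circ\mathcal P_{\kappa_1,\kappa_2}$, where $\mathcal G_\omega^{\kappa_1}$ is the explicit right inverse of $\mathcal L_\omega^{\kappa_1}$ given by integrals of the form \eqref{inteq} with $Z_{\kappa_1}$ replacing $Z_0$.

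Next I would set up Banach spaces $\mathcal X_{\alpha,\sigma}$, $\mathcal Y_{\alpha,\sigma}$ and their triple versions $\mathcal X^3_{\alpha,\sigma}, \mathcal Y^3_{\alpha,\sigma}$ exactly as in Section \ref{banachper} (so the Fourier norm $\sum_k\|g^{[k]}\|_\alpha e^{|k|\sigma}$ with weight $|v^2+2|^{\alpha/2}$), and prove an analogue of Lemma \ref{opgo} for $\mathcal G_\omega^{\kappa_1}$. The key point is that the estimates from \cite{BFGS12} only require $Z_{\kappa_1}$ to be bounded above and below by positive constants on $D^u$ uniformly in $\kappa_1\in[0,h_0]$; this is straightforward from \eqref{par2} and Lemma \ref{sinh}. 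Hence the inverse $\mathcal G_\omega^{\kappa_1}$ maps $\mathcal X^3_{\alpha+1,\sigma}$ to $\mathcal Y^3_{\alpha,\sigma}$ with a norm uniformly controlled in $\kappa_1,\kappa_2$.

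The heart of the argument is then to prove the two analogues of Lemmas \ref{firstit} and \ref{lip} uniformly in $\kappa_1,\kappa_2$. For the first iteration, I would bound each building block of $\mathcal P_{\kappa_1,\kappa_2}(0,0,0)=(f_1^{\kappa_1,\kappa_2},f_2^{\kappa_1,\kappa_2},-f_2^{\kappa_1,\kappa_2})$ in the $\|\cdot\|_{\alpha,\sigma}$-norm. The nontrivial decay estimates are precisely those of Section \ref{parh_sec}: Lemma \ref{Fs} gives $\|F(X_{\kappa_1})\|_1,\|F'(X_{\kappa_1})\|_2\le M$ uniformly in $\kappa_1$, Lemma \ref{zh} gives $\|1/Z_{\kappa_1}\|_{-1}\le M$, and using these plus the explicit form \eqref{Zfirstf} one gets $\|Z_{\kappa_1,\kappa_2}\|_{1,\sigma}\lesssim\frac{\dg\sqrt{\kappa_2}}{\omega^{3/2}}$ and $\|\partial_v Z_{\kappa_1,\kappa_2}\|_{2,\sigma}\lesssim\frac{\dg\sqrt{\kappa_2}}{\omega^{3/2}}$, while $\|Q^{\kappa_1}\|_1,\|(Q^{\kappa_1})'\|_2\lesssim\frac{\dg}{\omega}$. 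Substituting in \eqref{f1f}--\eqref{f2f} and dominating the quadratic term $Z_{\kappa_1,\kappa_2}\partial_v Z_{\kappa_1,\kappa_2}/Z_{\kappa_1}$ shows
\[
\|\mathcal P_{\kappa_1,\kappa_2}(0,0,0)\|_{2,\sigma}\le M\tfrac{\dg}{\omega},
\]
uniformly in $\kappa_1,\kappa_2\ge 0$ with $\kappa_1+\kappa_2=h\le h_0$. The Lipschitz estimate on $\mathcal P_{\kappa_1,\kappa_2}$ restricted to a ball $\mathcal B_0(K\dg/\omega)\subset\mathcal Y^3_{1,\sigma}$ follows along the same lines as Lemma \ref{lip}, giving a Lipschitz constant of order $\dg+\frac{\dg\sqrt{h}}{\omega^{3/2}}$. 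Combining with the norm of $\mathcal G_\omega^{\kappa_1}$, the composition $\overline{\mathcal G}_{\omega,\kappa_1,\kappa_2}$ sends a ball $\mathcal B_0(b\dg/\omega)$ into itself and is a contraction for $\e_0$ small, yielding the fixed point and the bound \eqref{bounds311} with the stated decay $|v^2+2|^{-1/2}$.

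The main obstacle is ensuring all constants are truly uniform in $\kappa_1\in[0,h_0]$: the parameterization $X_{\kappa_1}$ degenerates at $\kappa_1=0$ (from polynomial to exponential growth), and this is what forces the weaker weight $|v^2+2|^{1/2}$ in the norm. The uniform lower bounds on $|\sinh(v\sqrt{\kappa_1}/2)|$ and on $Z_{\kappa_1}$ from Lemma \ref{sinh} (and its analogue for $\cosh$) together with the weight $|v^2+2|^{\alpha/2}$ are precisely designed to accommodate both regimes; I expect the bulk of the bookkeeping to go into verifying that the integral kernels in $\mathcal G_\omega^{\kappa_1}$ give $O(1/\omega)$ gains on the non-resonant Fourier modes and, on the resonant modes (where the gain is only a factor of order one), that the remaining decay in $v$ is sufficient to close the fixed point in $\mathcal Y^3_{1,\sigma}$ uniformly in $\kappa_1,\kappa_2$.
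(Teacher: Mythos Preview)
Your proposal is correct and follows essentially the same route as the paper: rewrite \eqref{edp restadaf} as $\mathcal L_{\omega,\kappa_1}=\mathcal P_{\kappa_1,\kappa_2}$, invert $\mathcal L_{\omega,\kappa_1}$ Fourier-mode by Fourier-mode via an operator $\mathcal G_\omega^{\kappa_1}$ with $Z_{\kappa_1}$ in place of $Z_0$, work in the same spaces $\mathcal X^3_{\alpha,\sigma},\mathcal Y^3_{\alpha,\sigma}$ of Section \ref{banachper}, and close a contraction on a ball of radius $M\dg/\omega$ using the $\kappa_1$-uniform bounds from Lemmas \ref{Fs}, \ref{zh}, \ref{sinh}. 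The paper packages the analogue of Lemma \ref{opgo} as Lemma \ref{opgoh}, adds a short technical Lemma \ref{techf} for $|F(X_{\kappa_1})''|$ and $|Z_{\kappa_1}'/Z_{\kappa_1}|$, and combines the first-iteration and Lipschitz estimates into a single Lemma \ref{lipf}; the fixed point is then stated as Proposition \ref{fixpointgobf}, but the content is exactly what you outline.
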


Equation \eqref{edp restadaf} can be written as the functional equation
\begin{equation}\label{funcformf}
\mathcal{L}_{\omega,\kappa_1}(z,\gamma,\theta)= \mathcal{P}_{\kappa_1,\kappa_2}(z,\gamma,\theta),
\end{equation}
where $\mathcal{L}_{\omega,\kappa_1}$ and $\mathcal{P}_{\kappa_1,\kappa_2}$ are the functional operators given by
\begin{equation}\label{Lof}
\mathcal{L}_{\omega,\kappa_1}(z,\gamma,\theta)=\left(\begin{array}{l}
\partial_v z + \omega\partial_\tau z+\dfrac{Z_{\kappa_1}'(v)}{Z_{\kappa_1}(v)}z\vspace{0.2cm}\\	
\partial_v \gamma + \omega\partial_\tau \gamma-\omega i\gamma \vspace{0.2cm}\\
\partial_v \theta + \omega\partial_\tau \theta+\omega i\theta
\end{array}\right),
\end{equation}
and
\begin{equation}\label{Fof}
\mathcal{P}_{\kappa_1,\kappa_2}(z,\gamma,\theta)=\left(\begin{array}{l}
f_1^{\kappa_1,\kappa_2}(v,\tau) -\dfrac{z+Z_{\kappa_1,\kappa_2}(v,\tau)}{Z_{\kappa_1}(v)}\partial_v z-\dfrac{\partial_vZ_{\kappa_1,\kappa_2}}{Z_{\kappa_1}(v)}z-\dfrac{\delta}{\sqrt{2\Omega}} F'(X_{\kappa_1}(v)) \dfrac{\gamma-\theta}{2i} \vspace{0.2cm}\\
f_2^{\kappa_1,\kappa_2}(v,\tau)-\dfrac{(Q^{\kappa_1})'(v)}{Z_{\kappa_1}(v)}z-\dfrac{z+Z_{\kappa_1,\kappa_2}(v,\tau)}{Z_{\kappa_1}(v)}\partial_v\gamma\vspace{0.2cm}\\
-f_2^{\kappa_1,\kappa_2}(v,\tau)+\dfrac{(Q^{\kappa_1})'(v)}{Z_{\kappa_1}(v)}z-\dfrac{z+Z_{\kappa_1,\kappa_2}(v,\tau)}{Z_{\kappa_1}(v)}\partial_v\theta
\end{array}\right).
\end{equation}
% We are interested in solving equations of type $\mathcal{L}_{\omega,\kappa_1}(z,\gamma,\theta)= (f,g,h)$. 
We show the existence of an inverse $\mathcal{G}_{\omega}^{\kappa_1}$ of $\mathcal{L}_{\omega,\kappa_1}$ in the  Banach spaces $\mathcal{X}_{\ag,\sigma}^{3}$ and $\mathcal{Y}_{\ag,\sigma}^{3}$ introduced in Section \ref{banachper}.

Given analytic functions $f$, $g$, and $h$ defined in $D^u\times\mathbb{T}_{\sigma}$, consider
\begin{equation}\label{inteqf}
\begin{array}{l}
F^{[k]}_{\kappa_1}(f)(v)= \displaystyle\int_{-\infty}^v\dfrac{e^{\omega i k(r-v)}Z_{\kappa_1}(r)}{Z_{\kappa_1}(v)}f^{[k]}(r)dr,%,\vspace{0.2cm}\\
%G^{[k]}(g)(v)=\displaystyle\int_{-\infty}^{v}e^{\omega i (k-1) (r-v)}g^{[k]}(r)dr\vspace{0.2cm},\\
%H^{[k]}(h)(v)=\displaystyle\int_{-\infty}^{v}e^{\omega i (k+1) (r-v)}h^{[k]}(r)dr,
\end{array}
\end{equation}
and $G^{[k]}(g)$, $H^{[k]}(h)$ given in \eqref{inteq}. Then, we define the linear operator $\mathcal{G}_{\omega}^{\kappa_1}$
\begin{equation}\label{Gof}
\mathcal{G}_{\omega}^{\kappa_1}(f,g,h)=\left(\begin{array}{l}
\displaystyle\sum_k F^{[k]}_{\kappa_1}(f)(v) e^{ik\tau}\vspace{0.2cm}\\
\displaystyle\sum_k G^{[k]}(g)(v) e^{ik\tau}\vspace{0.2cm}\\
\displaystyle\sum_k H^{[k]}(h)(v) e^{ik\tau}
\end{array}\right).
\end{equation}
% 
% We prove that $\mathcal{G}_{\omega}^{\kappa_1}$ is an inverse of $\mathcal{L}_{\omega,\kappa_1}$ and we show some properties of $\mathcal{G}_{\omega}^{\kappa_1}$.
% 
\begin{lemma}\label{opgoh}
	Fix $\alpha\geq 1$ and $\sigma>0$. There exists $\kappa_1^0>0$ sufficiently small, such that, for  $0<\e\leq \e_0$ and $0<\kappa_1\leq \kappa_1^0$, the operator $$\mathcal{G}_{\omega}^{\kappa_1}:
	\mathcal{X}_{\alpha+1,\sigma}^3\rightarrow \mathcal{Y}_{\alpha,\sigma}^3$$ is well-defined and satisfies:
	\begin{enumerate}
		\item $\mathcal{G}_{\omega}^{\kappa_1}$ is an inverse of the operator $\mathcal{L}_{\omega,\kappa_1}:\mathcal{Y}_{\ag,\sigma}^{3}\rightarrow \mathcal{X}_{\alpha+1,\sigma}^3$, i.e. $\mathcal{G}_{\omega}^{\kappa_1}\circ\mathcal{L}_{\omega,\kappa_1}=\mathcal{L}_{\omega,\kappa_1}\circ\mathcal{G}_{\omega}^{\kappa_1}= \mathrm{Id}$;
		\item $\llbracket\mathcal{G}_{\omega}^{\kappa_1}(f,g,h)\rrbracket_{\ag,\sigma}\leq M \|(f,g,h)\|_{\ag+1,\sigma}$;
		\item If $f^{[0]}=g^{[1]}=h^{[-1]}=0$, then $\llbracket \mathcal{G}_{\omega}^{\kappa_1}(f,g,h)\rrbracket_{\ag,\sigma}\leq \dfrac{M}{\omega} \llbracket (f,g,h)\rrbracket_{\ag,\sigma}$,
	\end{enumerate}
	where $M$ is a constant independent of $\kappa_1$ and $\e$.
	
	%	
	%	
	%	
	%	\begin{enumerate}
	%		\item [($i$)] 	
	%		If $h=(h_1,h_2)\in\chi_{0,\nu,\alpha+1,\sigma}^{\times}$, then $\mathcal{G}_{\omega}(h),\partial_v\mathcal{G}_\omega(h)\in\chi_{0,\nu,\alpha,\sigma}^{\times}$ and
	%		\begin{equation}
	%		\| \mathcal{G}_{\omega}(h)\|_{\nu,\alpha,\sigma}^{\times}\leq M \|h\|_{\nu,\alpha+1,\sigma}^{\times} \textrm{ and } \| \partial_v\mathcal{G}_{\omega}(h)\|_{\nu,\alpha,\sigma}^{\times}\leq M\omega \|h\|_{\nu,\alpha+1,\sigma}^{\times}.
	%		\end{equation}
	%		\item [($ii$)] 	
	%		If $h=(h_1,h_2)\in\chi_{0,\nu,\alpha+1,\sigma}^{\times}$, and $h_1^{[1]}=h_2^{[-1]}=0$, then $\mathcal{G}_{\omega}\in\chi_{1,\nu,\alpha,\sigma}^{\times}$ and
	%		\begin{equation}
	%		\llbracket \mathcal{G}_{\omega}(h)\rrbracket_{\nu,\alpha,\sigma}^{\times}\leq M \| h\|_{\nu,\alpha+1,\sigma}^{\times}.
	%		\end{equation}		
	%	\end{enumerate}
\end{lemma}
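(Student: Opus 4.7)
The plan is to mimic the proof of Lemma \ref{opgo} (which appears in \cite{BFGS12}) and localize the additional analysis to the first component of $\mathcal{G}_\omega^{\kappa_1}$, since its second and third components coincide with those of $\mathcal{G}_\omega$ and the bounds from Lemma \ref{opgo} apply verbatim. The core new ingredient is a uniform control of $Z_{\kappa_1}$ and $Z_{\kappa_1}'/Z_{\kappa_1}$ on $D^u$ as $\kappa_1\to 0$.

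First I would verify statement $(1)$ by Fourier decomposition. For each $k\in\Z$, the $k$-th Fourier mode of $\mathcal{L}_{\omega,\kappa_1}(z,\gamma,\theta)$ reads
$$
\bigl(\partial_v + ik\omega + Z'_{\kappa_1}/Z_{\kappa_1}\bigr) z^{[k]},\quad \bigl(\partial_v+ i(k-1)\omega\bigr)\gamma^{[k]},\quad \bigl(\partial_v+ i(k+1)\omega\bigr)\theta^{[k]}.
$$
The integrating factor for the first operator is $e^{ik\omega v}Z_{\kappa_1}(v)$, and a direct differentiation of $F^{[k]}_{\kappa_1}(f)(v)$ confirms that it inverts this operator with zero asymptotic condition at $\Rp v\to-\infty$. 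The other two components coincide with the operators inverted by $G^{[k]}$ and $H^{[k]}$ in Lemma \ref{opgo}. This establishes that $\mathcal{G}_\omega^{\kappa_1}$ is a right (and hence left) inverse of $\mathcal{L}_{\omega,\kappa_1}$ on the Banach spaces considered.

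Next, for the quantitative bounds $(2)$ and $(3)$, the components involving $\gamma$ and $\theta$ are handled exactly as in Lemma \ref{opgo}. For the $z$-component I need uniform estimates of the kernel $Z_{\kappa_1}(r)/Z_{\kappa_1}(v)$. From \eqref{par2} together with Lemma \ref{sinh}, one has that both $|\cosh(v\sqrt{\kappa_1}/2)|$ and $|\sinh(v\sqrt{\kappa_1}/2)|/\sqrt{\kappa_1}$ are bounded below by positive constants independent of $\kappa_1\leq \kappa_1^0$ for $v\in D^u$; combined with the analogue of the expansion used in Lemma \ref{Fs}, this yields
$$
\bigl|Z_{\kappa_1}(v)\bigr|\,|v^2+2|^{1/2}\asymp 1,\qquad \bigl|Z'_{\kappa_1}(v)\bigr|\,|v^2+2|\leq M
$$
uniformly in $\kappa_1$. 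With these, the proof of statement $(2)$ reduces to estimating
$$
\bigl|F^{[k]}_{\kappa_1}(f)(v)\bigr|\leq \frac{M}{|Z_{\kappa_1}(v)|}\int_{-\infty}^v \bigl|e^{ik\omega(r-v)}Z_{\kappa_1}(r)\bigr|\,|f^{[k]}(r)|\,dr,
$$
integrating along the horizontal ray from $-\infty$ to $v$ in $D^u$ and summing the Fourier series; the $|v^2+2|^{-(\alpha+1)/2}$ decay of $f^{[k]}$ together with the kernel estimate gives $|v^2+2|^{-\alpha/2}$ decay of $F^{[k]}_{\kappa_1}(f)$, and the derivative bound $\partial_v F^{[k]}_{\kappa_1}$ follows from the inversion identity. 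For statement $(3)$, since $f^{[0]}=0$, every mode in the first component is oscillatory with frequency $|k|\omega\geq \omega$; integrating by parts once as in \cite{BFGS12} and absorbing the harmless derivative terms coming from $Z'_{\kappa_1}/Z_{\kappa_1}$ (which are uniformly bounded) yields the extra factor $1/\omega$.

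The main obstacle I anticipate is the uniformity in $\kappa_1$ of the kernel estimate near the origin, since naively $Z_{\kappa_1}(r)/Z_{\kappa_1}(v)$ looks singular as $\kappa_1\to 0^+$. The resolution, however, is precisely Lemma \ref{sinh}, which gives the right lower bounds to conclude that $Z_{\kappa_1}$ behaves like $Z_0$ modulo harmless constants uniformly on $D^u$; once this is in place, the rest of the argument is a routine transcription of Lemma \ref{opgo}.
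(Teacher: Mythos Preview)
Your proposal is correct and follows precisely the approach implicit in the paper: the paper states Lemma \ref{opgoh} without proof, relying on the fact that the second and third components coincide with those of $\mathcal{G}_\omega$ (handled by Lemma \ref{opgo} via \cite{BFGS12}), while the first component requires only the uniform-in-$\kappa_1$ bounds on $Z_{\kappa_1}$ and $Z_{\kappa_1}'/Z_{\kappa_1}$ supplied by Lemmas \ref{sinh}, \ref{zh} and \ref{techf}. Your identification of the only genuine new ingredient --- the kernel control $|Z_{\kappa_1}(v)|\,|v^2+2|^{1/2}\asymp 1$ and $|Z_{\kappa_1}'/Z_{\kappa_1}|\leq M|v^2+2|^{-1/2}$ uniformly in $\kappa_1$ --- is exactly right.
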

The proof of the following lemma is analogous to that in Lemma \ref{Fcompacto} below.
\begin{lemma}\label{techf}
	Let $F$, $X_{\kappa_1},Z_{\kappa_1}$ be given by \eqref{exprUF} and \eqref{par2}. There exist $\kappa_1^0>0$ and a constant $M>0$  such that, for  $v\in D^u$ and $0<\kappa_1\leq \kappa^0_1$,
	\begin{enumerate}
		\item $|F(X_{\kappa_1}(v))''|\leq \dfrac{M}{|v^2+2|^{3/2}}$;\vspace{0.2cm}
		\item $\left|\dfrac{Z_{\kappa_1}'(v)}{Z_{\kappa_1}(v)}\right|\leq \dfrac{M}{|\sqrt{v^2+2}|}$. 		
	\end{enumerate}
\end{lemma}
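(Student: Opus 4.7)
The plan is to handle the two estimates separately, each mirroring the strategy already deployed in Lemmas \ref{Fs} and \ref{zh}.

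For item $(1)$, I would invoke the Cauchy-type bound of Lemma \ref{cauchy}. By Lemma \ref{Fs}$(1)$ applied with $h=\kappa_1$, the function $v\mapsto F(X_{\kappa_1}(v))$ is analytic on $D^u$ and satisfies $|F(X_{\kappa_1}(v))|\leq M/|\sqrt{v^2+2}|$, i.e.\ $m_1(F\circ X_{\kappa_1})\leq M$. Since the angle $\beta\in(0,\pi/4)$ used in the definition of $D^u$ is arbitrary, the entire proof of Lemma \ref{Fs} goes through with $\beta+\beta_0$ in place of $\beta$ for small $\beta_0>0$, so the bound in fact holds on the enlarged domain $D^u(\beta_0)$ of Lemma \ref{cauchy}$(1)$. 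Then Lemma \ref{cauchy}$(2)$ with $\alpha=1$, $n=2$ yields $\|F(X_{\kappa_1})''\|_{3}\leq M m_1(F\circ X_{\kappa_1})\leq M$, which is exactly item $(1)$.

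For item $(2)$, I would exploit the pendulum conservation law. From $H_{\mathrm{p}}(X_{\kappa_1}(v),Z_{\kappa_1}(v))=\kappa_1$ and the unperturbed equations $X_{\kappa_1}'=Z_{\kappa_1}/8$, $Z_{\kappa_1}'=-U'(X_{\kappa_1})$, together with the identity
\[
-U'(X)=4\sech^2(X)\tanh(X)=2\sech(X)F(X),
\]
(obtained directly from $U(X)=-2\sech^2(X)$ and $F(X)=-2\tanh(X)\sech(X)$), one gets
\[
\frac{Z_{\kappa_1}'(v)}{Z_{\kappa_1}(v)}=\frac{2\sech(X_{\kappa_1}(v))\,F(X_{\kappa_1}(v))}{Z_{\kappa_1}(v)}.
\]
Lemma \ref{Fs}$(1)$ gives $|F(X_{\kappa_1}(v))|\leq M/|\sqrt{v^2+2}|$, and Lemma \ref{zh} yields $|Z_{\kappa_1}(v)|^{-1}\leq M\,|\sqrt{v^2+2}|$ on $D^u$. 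The remaining piece is the bound $|\sech(X_{\kappa_1}(v))|\leq M/|\sqrt{v^2+2}|$, which I would establish by the same case analysis as in Lemma \ref{Fs}, using the explicit formula
\[
\sech^2(X_{\kappa_1}(v))=\frac{\kappa_1}{\kappa_1+(2+\kappa_1)\sinh^2(v\sqrt{\kappa_1}/2)}:
\]
for $|v|\geq (M^*)^{-1}$, Lemma \ref{sinh} controls the denominator from below by $M\kappa_1|v|^2$, giving $|\sech(X_{\kappa_1}(v))|\leq M/|v|$; for $|v|\leq (M^*)^{-1}$, a Taylor expansion in $v\sqrt{\kappa_1}$ shows the denominator stays bounded away from $0$ uniformly in small $\kappa_1$. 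Multiplying the three estimates produces $|Z_{\kappa_1}'/Z_{\kappa_1}|\leq M/|\sqrt{v^2+2}|$, as desired.

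The main technical obstacle is the intermediate bound on $\sech(X_{\kappa_1}(v))$, specifically ensuring that $\kappa_1+(2+\kappa_1)\sinh^2(v\sqrt{\kappa_1}/2)$ does not vanish or come too close to zero on $D^u$ uniformly in $\kappa_1\in(0,\kappa_1^0]$ after the Taylor expansion in the small-$|v|$ regime. However, this is precisely the singular-limit issue already addressed in the proof of Lemma \ref{Fs}, so no new machinery is required; choosing $\kappa_1^0$ sufficiently small suffices.
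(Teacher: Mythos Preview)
Your proposal is correct and matches what the paper indicates (the paper omits the proof, citing it as analogous to Lemma~\ref{Fcompacto}): Lemma~\ref{cauchy} for item~(1), exactly as in the passage from item~(1) to item~(2) of Lemma~\ref{Fs}, and the explicit formulas together with the case analysis of Lemmas~\ref{sinh}, \ref{Fs}, \ref{zh} for item~(2). Your use of the pendulum identity $Z_{\kappa_1}'=-U'(X_{\kappa_1})=2\sech(X_{\kappa_1})F(X_{\kappa_1})$ is a clean shortcut that avoids differentiating the explicit expression \eqref{par2} directly, but it lies entirely within the same circle of ideas.
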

\begin{lemma}\label{lipf}
	Fix $\sigma>0$ and $K>0$. There exist $\e_0>0$ and $h_0>0$ sufficiently small such that, for $0<\e<\e_0$,  $0\leq h\leq h_0$ and $\kappa_1,\kappa_2\geq 0$ with $\kappa_1+\kappa_2=h$, the operator 
	$\mathcal{P}_{\kappa_1,\kappa_2}: \mathcal{Y}_{1,\sigma}^{3}\rightarrow\mathcal{X}_{2,\sigma}^{3},$
	 is well defined and there exists a constant $M>0$ such that
	$$\|\mathcal{P}_{\kappa_1,\kappa_2}(0,0,0)\|_{2,\sigma}\leq M\dfrac{\dg}{\omega}.$$
	Moreover, given $(z_j,\gamma_j,\theta_j)\in\mathcal{B}_{0}(K\dg/\omega)\subset\mathcal{Y}_{1,\sigma}^{3},\ j=1,2$, 
% 	there exists a constant $M$ independent of $\e$, $\kappa_1$, $\kappa_2$, such that
% 	then for every $\kappa_1,\kappa_2\geq 0$ such that $\kappa_1+\kappa_2=h$ and $h\leq h_0$
	\begin{equation}
	\left\|	\mathcal{P}_{\kappa_1,\kappa_2}(z_1,\gamma_1,\theta_1)- \mathcal{P}_{\kappa_1,\kappa_2}(z_2,\gamma_2,\theta_2)\right\|_{2,\sigma}\leq M\left(\dg+\dfrac{\dg}{\omega^{3/2}}\sqrt{h}\right)\left\llbracket	(z_1,\gamma_1,\theta_1)- (z_2,\gamma_2,\theta_2)\right\rrbracket_{1,\sigma}.
	\end{equation}
\end{lemma}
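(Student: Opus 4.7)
The strategy mirrors that of Lemmas \ref{firstit} and \ref{lip} in Section \ref{parhper_sec}, with the difference that the unperturbed pendulum parameterization $(X_0,Z_0)$ is replaced by $(X_{\kappa_1},Z_{\kappa_1})$ from Lemma \ref{sep1h}(2), which depends singularly on $\kappa_1$ at $\kappa_1=0$. The plan is to first collect $\kappa_1$-uniform bounds on all the building blocks appearing in the operator $\mathcal{P}_{\kappa_1,\kappa_2}$ defined in \eqref{Fof}, and then to reproduce, inside the Banach spaces $\mathcal{X}^3_{2,\sigma}$ and $\mathcal{Y}^3_{1,\sigma}$, the purely algebraic manipulations that led to Lemmas \ref{firstit} and \ref{lip}.

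Building on Lemmas \ref{Fs}, \ref{zh} and \ref{techf}, I would first establish that there is a constant $M>0$ independent of $h$, $\e$, $\kappa_1$, $\kappa_2$ with
\[
\|F(X_{\kappa_1})\|_{1,\sigma},\ \|F'(X_{\kappa_1})\|_{1,\sigma},\ \|F''(X_{\kappa_1})\|_{2,\sigma},\ \|Z_{\kappa_1}\|_{1,\sigma},\ \|Z_{\kappa_1}'/Z_{\kappa_1}\|_{1,\sigma}\leq M,
\]
together with $|Z_{\kappa_1}(v)|^{-1}\leq M$ on $D^u$. Although $X_{\kappa_1}$ depends singularly on $\kappa_1$, the argument of Lemma \ref{Fs} splits $D^u$ into the bounded region $|v|\le (M^*)^{-1}$ (where a Taylor expansion in $\sqrt{\kappa_1}$ applies and denominators are uniformly bounded away from zero for $v\in D^u$) and the far region where the hyperbolic sines dominate via Lemma \ref{sinh}; both produce $\kappa_1$-uniform bounds. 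Combining these with the explicit formulas \eqref{firsth} and \eqref{Zfirstf} and with $|\Gamma_{\kappa_2}(\tau)|,|\Theta_{\kappa_2}(\tau)|\leq \sqrt{2\kappa_2/\omega}\leq \sqrt{2h/\omega}$ on $\mathbb{T}_\sigma$ then yields
\[
\|Q^{\kappa_1}\|_{1,\sigma},\ \|(Q^{\kappa_1})'\|_{2,\sigma}\leq M\frac{\dg}{\omega},\qquad \|Z_{\kappa_1,\kappa_2}\|_{1,\sigma},\ \|\partial_vZ_{\kappa_1,\kappa_2}\|_{2,\sigma}\leq M\frac{\dg\sqrt{h}}{\omega^{3/2}}.
\]

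Next, to obtain the bound on $\mathcal{P}_{\kappa_1,\kappa_2}(0,0,0)$, I would substitute $z=\gamma=\theta=0$ in \eqref{Fof}, so that $\mathcal{P}_{\kappa_1,\kappa_2}(0,0,0)=(f_1^{\kappa_1,\kappa_2},f_2^{\kappa_1,\kappa_2},-f_2^{\kappa_1,\kappa_2})$ with the $f_j^{\kappa_1,\kappa_2}$ given by \eqref{f1f}--\eqref{f2f}. Using the Banach-algebra property of Lemma \ref{properties}(2) together with the estimates of the previous paragraph, every term of $f_1^{\kappa_1,\kappa_2}$ is controlled by a quantity bounded by $\max\{\dg^2/\omega,\dg\sqrt{h}/\omega^{3/2}\}\le M\dg/\omega$, and each term of $f_2^{\kappa_1,\kappa_2}$ is likewise bounded by $M\dg/\omega$. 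This produces $\|\mathcal{P}_{\kappa_1,\kappa_2}(0,0,0)\|_{2,\sigma}\leq M\dg/\omega$.

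For the Lipschitz estimate I would expand $\mathcal{P}_{\kappa_1,\kappa_2}(z_1,\gamma_1,\theta_1)-\mathcal{P}_{\kappa_1,\kappa_2}(z_2,\gamma_2,\theta_2)$ component by component, exactly as in the proof of Lemma \ref{lip}: each resulting term factors as a product in which one factor is a difference such as $z_1-z_2$, $\partial_v(z_1-z_2)$, $\gamma_1-\gamma_2$, $\partial_v(\gamma_1-\gamma_2)$, $\theta_1-\theta_2$, $\partial_v(\theta_1-\theta_2)$ (controlled by $\llbracket\,\cdot\,\rrbracket_{1,\sigma}$), and the remaining factors are bounded coefficients of the form $F'(X_{\kappa_1})$, $(Q^{\kappa_1})'/Z_{\kappa_1}$, $(z_j+Z_{\kappa_1,\kappa_2})/Z_{\kappa_1}$, or $\partial_vZ_{\kappa_1,\kappa_2}/Z_{\kappa_1}$, whose sizes were estimated in the second paragraph; the hypothesis $(z_j,\gamma_j,\theta_j)\in\mathcal{B}_0(K\dg/\omega)$ absorbs the $z_j$ contribution into the $\dg$ scale. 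Collecting the dominant contributions yields the claimed Lipschitz constant $M(\dg+\dg\sqrt{h}/\omega^{3/2})$. The main obstacle is the $\kappa_1$-singular dependence of $X_{\kappa_1}$ and $Z_{\kappa_1}$; once the uniform-in-$\kappa_1$ estimates of the second paragraph are in place, the remaining steps are a routine repetition of the arguments of Section \ref{parhper_sec}.
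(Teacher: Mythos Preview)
Your proposal is correct and follows essentially the same approach as the paper: first establish $\kappa_1$-uniform bounds on the building blocks via Lemmas \ref{Fs}, \ref{zh}, \ref{techf}, then repeat verbatim the algebraic estimates of Lemmas \ref{firstit} and \ref{lip}. The only cosmetic difference is that the paper keeps track of $\sqrt{\kappa_2}$ rather than $\sqrt{h}$ in the intermediate bounds on $Z_{\kappa_1,\kappa_2}$, and that on $\mathbb{T}_\sigma$ the factor $|e^{\pm i\tau}|$ contributes an $e^{\sigma}$ that is absorbed into $M$; neither affects the argument.
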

\begin{proof}
	Recall that $\mathcal{P}_{\kappa_1,\kappa_2}(0,0,0)=(f_1^{\kappa_1,\kappa_2},f_2^{\kappa_1,\kappa_2},-f_2^{\kappa_1,\kappa_2})$, where $f_1^{\kappa_1,\kappa_2}, f_2^{\kappa_1,\kappa_2}$ are given in \eqref{f1f} and \eqref{f2f}, respectively, and involve the functions $F'(X_{\kappa_1}),$ $\ Z_{\kappa_1}'/Z_{\kappa_1},\ Q^{\kappa_1},\ (Q^{\kappa_1})',$ $Z_{\kappa_1,\kappa_2},\partial_v Z_{\kappa_1,\kappa_2}$ which can be computed using the expressions in \eqref{exprUF}, \eqref{par2}, \eqref{first0}, and \eqref{Zfirst}. By Lemmas \ref{Fs}, \ref{zh} and \ref{techf}, we have
	
%	Using the expressions of the functions appearing in $f_1^{\kappa_1,\kappa_2}$ and $f_2^{\kappa_1,\kappa_2}$ (see \eqref{exprUF}, \eqref{par1}, \eqref{firsth}, and \eqref{Zfirstf}), we obtain	
%	
%	%Now, using the expressions of the functions appearing in $f_1^{\kappa_1,\kappa_2}$ and $f_2^{\kappa_1,\kappa_2}$, we obtain that
%	$$\begin{array}{rcl}
%	Q^{\kappa_1}(v)&=& -i\dfrac{2\dg}{\omega\sqrt{\Omega}}F(X_{\kappa_1}(v)),\vspace{0.2cm}\\
%	(Q^{\kappa_1})'(v)&=& i\dfrac{2\dg}{\omega\sqrt{\Omega}}F(X_{\kappa_1}(v))',\vspace{0.2cm}\\	
%	F'(X_{\kappa_1})(v)&=&8\dfrac{F(X_{\kappa_1}(v))'}{Z_{\kappa_1(v)}},\vspace{0.2cm}\\		
%	Z_{\kappa_1,\kappa_2}(v,\tau)&=&\dfrac{2\dg}{\omega\sqrt{\Omega}}\sqrt{\dfrac{2\kappa_2}{\omega}}F'(X_{\kappa_1}(v))\cos(\tau),\vspace{0.2cm}\\
%	\partial_vZ_{\kappa_1,\kappa_2}(v,\tau)&=&\dfrac{2\dg}{\omega\sqrt{\Omega}}\sqrt{\dfrac{2\kappa_2}{\omega}}8\dfrac{F(X_{\kappa_1}(v))''Z_{\kappa_1}(v)-Z_{\kappa_1}'(v)F(X_{\kappa_1}(v))'}{Z_{\kappa_1}^2(v)}\cos(\tau).
%	
%	%,\vspace{0.2cm}\\	
%	%Z_0(v)&=&\dfrac{8}{\sqrt{v^2+2}},\vspace{0.2cm}\\
%	%Z_0'(v)&=&-\dfrac{8v}{(v^2+2)^{3/2}}.				
%	\end{array}$$
	
	%Now, it follows from Lemmas \ref{Fs}, \ref{zh} and \ref{techf} that
	$$
	\begin{array}{l}
	\|Q^{\kappa_1}\|_{1,\sigma}, \|(Q^{\kappa_1})'\|_{2,\sigma} \leq M\dfrac{\dg}{\omega},\vspace{0.2cm}\\	
	%Q^0(v)-\Theta_{1D}^0(v)
	%	\|b_1^*\|_{1,\sigma}^{\infty},\|(b_1^*)'\|_{2,\sigma}^{\infty}\leq M\dfrac{\dg}{\omega},\vspace{0.2cm}\\
	\left\|Z_{\kappa_1,\kappa_2}\right\|_{1,\sigma},\|\partial_vZ_{\kappa_1,\kappa_2}\|_{2,\sigma}\leq M\dfrac{\dg\sqrt{\kappa_2}}{\omega^{3/2}},\vspace{0.2cm}\\ \|Z_{\kappa_1}'/Z_{\kappa_1}\|_{1,\sigma},  \|F'(X_{\kappa_1})\|_{1,\sigma} \leq M.
	\end{array}
	$$
	Therefore, using also Lemma \ref{properties}, one has
	\[
	\begin{split}
	\|f_1^{\kappa_1,\kappa_2}\|_{2,\sigma}&\leq M\max\left\{\dfrac{\dg\sqrt{\kappa_2}}{\omega^{3/2}},\dfrac{\dg^2}{\omega},\dfrac{\dg^2}{\omega^3}\kappa_2\right\}=M\max\left\{\dfrac{\dg\sqrt{\kappa_2}}{\omega^{3/2}},\dfrac{\dg^2}{\omega}\right\},\\
	\|f_2^{\kappa_1,\kappa_2}\|_{2,\sigma}&\leq M\max\left\{\dfrac{\dg}{\omega},\dfrac{\dg^2}{\omega^{5/2}}\sqrt{\kappa_2}\right\}=M\dfrac{\dg}{\omega}.
	\end{split}
	\]
		Thus, $\|\mathcal{P}_{\kappa_1,\kappa_2}(0,0,0)\|_{2,\sigma}\leq M\dg/\omega$.
	
	Following the lines of the proof of Lemma \ref{lip}  one can complete the proof of Lemma \ref{lipf}.
\end{proof}

Now, we write Proposition \ref{existencef} in terms of Banach spaces. Then, it can be proved in the same way as Proposition  \ref{fixpointgob} by considering the operator $\overline{\mathcal{G}}_{\omega,\kappa_1,\kappa_2}=\mathcal{G}_{\omega}^{\kappa_1}\circ \mathcal{P}_{\kappa_1,\kappa_2}$.

\begin{prop}\label{fixpointgobf}
	Fix $\sigma>0$. There exist $h_0>0$ and $\e_0>0$ such that, for $0<\e\leq \e_0$,  $0<h\leq h_0$ and $\kappa_1,\kappa_2\geq 0$ with $\kappa_1+\kappa_2=h$, the operator $\overline{\mathcal{G}}_{\omega,\kappa_1,\kappa_2}=\mathcal{G}_{\omega}^{\kappa_1}\circ \mathcal{P}_{\kappa_1,\kappa_2}$, with $\mathcal{G}_{\omega}^{\kappa_1}$ and $\mathcal{P}_{\kappa_1,\kappa_2}$ given in \eqref{Gof} and \eqref{Fof}, respectively, has a  fixed point $(z_{\kappa_1,\kappa_2}^{u},\gamma^{u}_{\kappa_1,\kappa_2},\theta_{\kappa_1,\kappa_2}^u)\in\mathcal{Y}_{1,\sigma}^3$. Furthermore, there exists a constant $M>0$ independent of $\e$, $\kappa_1$ and $\kappa_2$ such that
	$$\llbracket(z_{\kappa_1,\kappa_2}^{u},\gamma^{u}_{\kappa_1,\kappa_2},\theta_{\kappa_1,\kappa_2}^u)\rrbracket_{1,\sigma}\leq M\dfrac{\dg}{\omega}.$$
\end{prop}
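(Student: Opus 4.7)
The plan is to mimic the Banach fixed point arguments used in Propositions \ref{fixpoint0}, \ref{fixpointgob} and \ref{fixpointh}, this time applied to the operator $\overline{\mathcal{G}}_{\omega,\kappa_1,\kappa_2}=\mathcal{G}_{\omega}^{\kappa_1}\circ\mathcal{P}_{\kappa_1,\kappa_2}$ in the space $\mathcal{Y}_{1,\sigma}^3$, with all estimates uniform in $\kappa_1,\kappa_2$ satisfying $\kappa_1+\kappa_2=h\leq h_0$. The two main ingredients are already in place: Lemma \ref{opgoh} provides a bounded right inverse $\mathcal{G}_{\omega}^{\kappa_1}:\mathcal{X}_{2,\sigma}^3\to\mathcal{Y}_{1,\sigma}^3$ with operator norm independent of $\kappa_1$, and Lemma \ref{lipf} controls both the zeroth iterate and the Lipschitz constant of $\mathcal{P}_{\kappa_1,\kappa_2}:\mathcal{Y}_{1,\sigma}^3\to\mathcal{X}_{2,\sigma}^3$.

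First, I would combine item $(2)$ of Lemma \ref{opgoh} (with $\alpha=1$) with the bound $\|\mathcal{P}_{\kappa_1,\kappa_2}(0,0,0)\|_{2,\sigma}\leq M\delta/\omega$ from Lemma \ref{lipf} to obtain a constant $b_4>0$, independent of $\varepsilon$, $h$, $\kappa_1$ and $\kappa_2$, such that
\[
\llbracket \overline{\mathcal{G}}_{\omega,\kappa_1,\kappa_2}(0,0,0)\rrbracket_{1,\sigma}\leq \dfrac{b_4}{2}\dfrac{\delta}{\omega}.
\]
Then I would restrict $\overline{\mathcal{G}}_{\omega,\kappa_1,\kappa_2}$ to the closed ball $\mathcal{B}_0(b_4\delta/\omega)\subset \mathcal{Y}_{1,\sigma}^3$. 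For any two triples $(z_j,\gamma_j,\theta_j)$, $j=1,2$, in this ball, taking $K=b_4$ in Lemma \ref{lipf}, together with linearity of $\mathcal{G}_{\omega}^{\kappa_1}$ and Lemma \ref{opgoh}(2), yields
\[
\llbracket \overline{\mathcal{G}}_{\omega,\kappa_1,\kappa_2}(z_1,\gamma_1,\theta_1)-\overline{\mathcal{G}}_{\omega,\kappa_1,\kappa_2}(z_2,\gamma_2,\theta_2)\rrbracket_{1,\sigma}\leq M\Bigl(\delta+\dfrac{\delta\sqrt{h}}{\omega^{3/2}}\Bigr)\llbracket (z_1,\gamma_1,\theta_1)-(z_2,\gamma_2,\theta_2)\rrbracket_{1,\sigma}.
\]
Since $\delta=\varepsilon^{3/4}$ and $h\leq h_0$, choosing $\varepsilon_0$ and $h_0$ small enough makes the Lipschitz constant strictly less than $1/2$. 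In particular the ball is mapped into itself (the center estimate above leaves room of $b_4\delta/(2\omega)$, which absorbs the image of any element of the ball under a contraction of rate $1/2$), and the Banach contraction principle produces a unique fixed point in $\mathcal{B}_0(b_4\delta/\omega)$, which automatically satisfies $\llbracket \cdot \rrbracket_{1,\sigma}\leq M\delta/\omega$.

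Two subtle points deserve attention. The first, and the one I would flag as the main (though mild) obstacle, is the preservation of the reality/conjugation structure built into the definition of $\mathcal{Y}_{1,\sigma}^3$: one must verify that $\overline{\mathcal{G}}_{\omega,\kappa_1,\kappa_2}$ sends triples with $z$ real-analytic and $\theta(v,\tau)=\overline{\gamma(v,\tau)}$ for real arguments to triples with the same property. This is done by a direct inspection of the integrals \eqref{inteqf} and of the explicit form of $\mathcal{P}_{\kappa_1,\kappa_2}$ in \eqref{Fof}, using that $\Gamma_{\kappa_2}(\tau)$ and $\Theta_{\kappa_2}(\tau)$ are complex conjugates on $\mathbb{R}$ and that $Q^{\kappa_1}$ is purely imaginary on the real axis; hence $\mathcal{B}_0(b_4\delta/\omega)$ intersected with the ``reality'' subspace is invariant and complete, so the fixed point lives there. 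The second subtlety is the uniformity of all constants in $\kappa_1\in(0,h_0]$ as $\kappa_1\to 0^+$: this is precisely what Lemma \ref{opgoh} and Lemma \ref{lipf} have been designed to provide (by means of the $\kappa_1$-independent bounds on $F(X_{\kappa_1})$, $Z'_{\kappa_1}/Z_{\kappa_1}$ and $Q^{\kappa_1}$ coming from Lemmas \ref{Fs}, \ref{zh} and \ref{techf}), so no further work is needed beyond invoking them.
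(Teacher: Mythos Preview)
Your proposal is correct and follows exactly the approach the paper intends: the paper does not write out a separate proof for this proposition but simply states that it ``can be proved in the same way as Proposition~\ref{fixpointgob}'' using the operator $\overline{\mathcal{G}}_{\omega,\kappa_1,\kappa_2}=\mathcal{G}_{\omega}^{\kappa_1}\circ\mathcal{P}_{\kappa_1,\kappa_2}$, with Lemmas~\ref{opgoh} and~\ref{lipf} playing the roles of Lemmas~\ref{opgo} and~\ref{firstit}/\ref{lip}. You have faithfully unpacked that template, and your remarks on the reality/conjugation structure and on the $\kappa_1$-uniformity are appropriate refinements that the paper leaves implicit.
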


This completes the proof of Theorem \ref{parameterization2Dk1k2}.

\section{Proof of Theorem \ref{approx}}

We compare the parameterizations of $W^{u}_{\e}(\Lambda_{\kappa_1,\kappa_2}^{-})$ obtained in Sections \ref{parhper_sec}, \ref{parh_sec} and \ref{parf_sec}, respectively, with the parameterization \eqref{N00} of $W^{u}_{\e}(p_0^{-})$ obtained in Section \ref{par0_sec}. %In sections \ref{approx1}, \ref{approx2} and\ref{approx3}, we compare \eqref{n2dh},  \eqref{n1dh} and \eqref{paraf} with \eqref{N00}, respectively.

%In order to prove Theorem $B$, we parameterize the invariant manifolds $W^{u,s}_\e(\Lambda_h^{\mp})$ in Section \ref{parhper_sec}, and we compare this parameterization with the parameterization \eqref{N00} of $W^{u,s}_{\e}(p_0^{\mp})$ obtained in Section \ref{par0_sec}. Finally, in Section \ref{periodic_sec} we compute $h_s$, which is the minimum value of $h$ where the manifolds $W^{u}_{\e}(\Lambda_h^{-})$ and $W_{\e}^{s}(\Lambda_h^{+})$ intersect.
%
%
%In section \ref{parh_sec} we parameterize the invariant manifolds $W^{u,s}_{\e}(p_h^{\mp})$ and we compare it with the parameterization of 
%$W^{u,s}_{\e}(p_0^{\mp})$ in Section \ref{approx2}. Finally, in Section \ref{critic_sec} we use these informations to compute the critical energy level $h_c$.
%
%
%Aiming to prove Theorem \ref{perpunh_thm} and Proposition\ref{corD}, we parameterize $W_{\e}^{u,s}(\Lambda_{\kappa_1,\kappa_2}^{\mp})$, for $\dg\neq 0$, in Section \ref{parf_sec} by combining techniques from Sections \ref{parhper_sec} and \ref{parh_sec}. Then, we approximate the manifolds $W_\e^{u,s}(\Lambda_{\kappa_1,\kappa_2}^{\mp})$ by $W_{\e}^{u,s}(p_0^{\mp})$ in Section \ref{approx3}. Finally, in
%Section \ref{finalv} we compute the final velocity of a solution starting with velocity $v_i\approx v_c$ (where $v_c=\sqrt{h_c}$).

\subsection{Approximation of $W^{u}_{\e}(\Lambda_h^{-})$ by $W_{\e}^{u}(p_0^{-})$ }
\label{approx1}

%Notice that the parameterizations $N^h_{2D}(v)$ and $N^0_{1D}(v)$ of $W_{0}^{u}(\Lambda_h^{-})=W_{0}^{s}(\Lambda_h^{+})$ and $W_{0}^{u}(p_0^{-})=W_{0}^{s}(p_0^{+})$, respectively, considered in Lemmas \ref{sep2h} and \ref{sep1h} satisfy the following property  
%$$\left|N^h_{2D}(v)-N^0_{1D}(v)\right|=\er(\sqrt{h}),$$
%for every $v\in D^u$.

We compare the parameterizations $N_{0,h}^{u}$ and $N_{0,0}^{u}$ of $W_{\e}^{u}(\Lambda_h^{-})$ and $W_{\e}^{u}(p_0^{-})$, obtained in Theorems \ref{parameterization2Dh} and \ref{parameterization1D0}, respectively. 
% We deal only with the unstable case, since the other case is analogous.
\begin{prop}\label{resta}
	Let $\Gamma_0^u(v),$ $\Theta_0^u(v)$ and $\Gamma_{0,h}^u(v,\tau),$ $\Theta_{0,h}^u(v,\tau)$ be given in \eqref{eq1d} and \eqref{eq2d}, respectively. Given $h_0>0$, 
	there exists $\e_0>0$ and a constant $M>0$,  such that 	for  $v\in D^{u}\cap\R$, $\tau\in\mathbb{T}$, $0\leq\e\leq\e_0$ and $0\leq h\leq h_0$,
	\begin{equation}
	\begin{array}{l}
	%	\left|Z^{s,u}(v,\tau;h)-Z_1^*(\tau;h)-Z^{s,u}(v,\tau;0)\right|\leq M\dfrac{\dg\sqrt{h}}{\omega},\vspace{0.2cm}\\	
	\left|\partial_\tau(\Gamma_{0,h}^{u}(v,\tau)-\Gamma_{0}^{u}(v))\right|,\left|\Gamma_{0,h}^{u}(v,\tau)-\Gamma_{0}^{u}(v)\right|\leq M\dfrac{\dg\sqrt{h}}{\omega^{3/2}},\vspace{0.2cm}\\
	\left|\partial_\tau(\Theta_{0,h}^{u}(v,\tau)-\Theta_{0}^{u}(v))\right|,\left|\Theta_{0,h}^{u}(v,\tau)-\Theta_{0}^{u}(v)\right|\leq M\dfrac{\dg\sqrt{h}}{\omega^{3/2}}.\\	
	\end{array}
	\end{equation}
\end{prop}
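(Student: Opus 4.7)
My plan is to recast both parameterizations as solutions of the same PDE \eqref{edp restada} with parameters $h$ and $0$, respectively, and then estimate the difference via the Lipschitz/contraction machinery of Section \ref{parhper_sec}.

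First, I reinterpret the solution $(\gamma_0^u,\theta_0^u)$ from Theorem \ref{parameterization1D0} within the framework of Theorem \ref{parameterization2Dh}. Define $\tilde z_0^u(v) := Z_0^u(v) - Z_0(v)$, where $Z_0^u$ comes from energy conservation $\mathcal H(N_{0,0}^u) = 0$. Using that at $h = 0$ we have $\Gamma_0\equiv\Theta_0\equiv 0$ and $Z_{0,0}\equiv 0$, a direct computation shows that $(\tilde z_0^u, \gamma_0^u, \theta_0^u)$ is a $\tau$-independent solution of \eqref{edp restada} at $h=0$. By uniqueness of the fixed point of $\overline{\mathcal G}_{\omega,0}$ in $\mathcal Y_{1,\sigma}^3$ (Proposition \ref{fixpointgob}), this triple must coincide with $(z_{0,0}^u, \gamma_{0,0}^u, \theta_{0,0}^u)$.

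Second, set $\Delta := (\Delta z, \Delta\gamma, \Delta\theta) = (z_{0,h}^u - z_{0,0}^u, \gamma_{0,h}^u - \gamma_0^u, \theta_{0,h}^u - \theta_0^u)$. Subtracting the functional equations $\mathcal L_\omega = \mathcal P_h$ and $\mathcal L_\omega = \mathcal P_0$ and applying the inverse $\mathcal G_\omega$ from Lemma \ref{opgo}, I obtain the fixed-point identity
\begin{equation*}
\Delta = \mathcal G_\omega\!\bigl[\mathcal P_h(z_{0,h}^u,\gamma_{0,h}^u,\theta_{0,h}^u) - \mathcal P_h(z_{0,0}^u,\gamma_0^u,\theta_0^u)\bigr] + \mathcal G_\omega\!\bigl[(\mathcal P_h - \mathcal P_0)(z_{0,0}^u,\gamma_0^u,\theta_0^u)\bigr].
\end{equation*}
By Lemma \ref{lip} the first summand contracts $\Delta$ with factor $O(\delta)$, which is absorbed on the left for $\e$ small. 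For the source term I show
\begin{equation*}
\|(\mathcal P_h - \mathcal P_0)(z_{0,0}^u,\gamma_0^u,\theta_0^u)\|_{2,\sigma} \leq M\,\delta\sqrt{h}/\omega^{3/2}.
\end{equation*}
Indeed, $\mathcal P_h - \mathcal P_0$ depends on $h$ only through the $Z_{0,h}$-terms and through $f_j^h - f_j^0$, all of which are controlled by $\|Z_{0,h}\|_{1,\sigma} + \|\partial_v Z_{0,h}\|_{2,\sigma} \leq M\delta\sqrt{h}/\omega^{3/2}$ (which follows from $|\Gamma_h|+|\Theta_h|=O(\sqrt{h/\omega})$ and \eqref{Zfirst}) combined with the boundedness of $(z_{0,0}^u,\gamma_0^u,\theta_0^u)$ in $\mathcal Y_{1,\sigma}^3$. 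Lemma \ref{opgo} then yields $\llbracket\Delta\rrbracket_{1,\sigma} \leq M\delta\sqrt{h}/\omega^{3/2}$. Finally, for $v \in D^u\cap\mathbb R$ and $\tau \in \mathbb T$, the bound $|\sqrt{v^2+2}| \geq \sqrt 2$ together with the definition of $\llbracket\cdot\rrbracket_{1,\sigma}$ (which simultaneously controls $\|\cdot\|_{1,\sigma}$ and $\|\partial_\tau\cdot\|_{1,\sigma}$) delivers the pointwise bounds on $\Delta\gamma,\partial_\tau\Delta\gamma,\Delta\theta,\partial_\tau\Delta\theta$ stated in the proposition.

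The main obstacle is the identification performed in the first step: verifying that $(\tilde z_0^u, \gamma_0^u, \theta_0^u)$ satisfies the full PDE system \eqref{edp restada} at $h=0$. Since Theorem \ref{parameterization1D0} uses energy conservation to eliminate $Z$ while Theorem \ref{parameterization2Dh} retains $z$ as an independent unknown, the $z$-equation of \eqref{edp restada} must be checked to hold automatically when $\tilde z_0^u$ is so defined; this follows from a Hamiltonian identity on the invariant level set $\{\mathcal H = 0\}$, but requires some bookkeeping. Once this identification is in place, the rest of the argument is routine given the technical machinery of Sections \ref{par0_sec} and \ref{parhper_sec}.
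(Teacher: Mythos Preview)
Your proposal is correct and follows essentially the same route as the paper: identify the $h=0$ solution of Theorem~\ref{parameterization2Dh} with the one from Theorem~\ref{parameterization1D0}, then split the difference $\mathcal E$ into a Lipschitz part (absorbed for small $\e$) and a source term $\mathcal P_h-\mathcal P_0$ at the $h=0$ solution, bounded by $M\dg\sqrt h/\omega^{3/2}$ via the $Z_{0,h}$ estimates.

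The only notable divergence is in your first step. You propose to verify computationally that $(\tilde z_0^u,\gamma_0^u,\theta_0^u)$ satisfies the full PDE \eqref{edp restada} at $h=0$ (including the $z$-equation, which you correctly flag as the nontrivial part) and then invoke uniqueness of the fixed point in $\mathcal Y_{1,\sigma}^3$. The paper bypasses this verification with a geometric one-liner: both $N_{0,0}^u(v)$ and $N_{0,0}^u(v,\tau)$ parameterize the same one-dimensional manifold $W^u_\e(p_0^-)$ as graphs over the same first coordinate $X_0(v)$, so they must coincide; in particular $\gamma_{0,0}^u,\theta_{0,0}^u$ are $\tau$-independent and equal to the $\gamma_0^u,\theta_0^u$ of Theorem~\ref{parameterization1D0}. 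This saves the Hamiltonian bookkeeping you mention and also avoids having to check that your $\tilde z_0^u$ lies in the right ball of $\mathcal Y_{1,\sigma}$ to trigger the uniqueness clause of Proposition~\ref{fixpointgob}.
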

\begin{proof}
	%Due to the similarity of the proof, we obtain only the bound of $\Gamma^{s,u}$.

	%First, notice that, we can use \eqref{n2dh}, Lemma \eqref{formabuenahp} and Theorem \eqref{existence} to write $N_{2D}^{h,u}$  as $$N_{2D}^{h,u}(v,\tau)=\left(
	%\begin{array}{c}
	%X_0(v)\vspace{0.2cm}\\Z_0(v)+Z_{2D}^h(v,\tau)+Z_{2D}^{*,h,u}(v,\tau)\vspace{0.2cm}\\\Gamma_0^h(\tau)+Q^0(v)+\Gamma_{2D}^{*,h,u}(v,\tau)\vspace{0.2cm}\\ \Theta_{0}^h(\tau)+\Theta_{1D}^0(v)+\Theta_{2D}^{*,h,u}(v,\tau)
	%\end{array}\right)$$  
	%for every $0\leq h\leq h_0$, where $(Z_{2D}^{*,h,u},\Gamma_{2D}^{*,h,u},\Theta_{2D}^{*,h,u})\in\mathcal{Y}_{1,\sigma}^3$ is a fixed point of $\overline{\mathcal{G}}_{\omega,h}$ given by Theorem \ref{fixpointgob}, $Z_0,\Gamma_0^h,\Theta_0^h$ are given by \eqref{par1} and \eqref{periodic}, and $Z_{2D}^h, Q^0,\Theta_{1D}^0$ are given by \eqref{Zfirst} and \eqref{first0}. 
	
	Considering $h=0$ in Theorem \ref{parameterization2Dh}, it follows that $N_{0,0}^{u}(v,\tau)$ is also a parameterization of $W^{u}_{\e}(p_0^-)$. Since $W^{u}_{\e}(p_0^-)$  is parameterized by both $N_{0,0}^{u}(v)$ (from Theorem \ref{parameterization1D0}) and $N_{0,0}^u(v,\tau)$ (from Theorem \ref{parameterization2Dh}) and both have the same first coordinate,  these parameterizations coincide. Therefore $\gamma_{0,0}^u$ and $\theta_{0,0}^u$ given in Theorem \ref{parameterization2Dh} with $h=0$ depend only on the variable $v$ and we can write
	$$\begin{array}{lcl}
	\Gamma_0^u(v)= Q^0(v)+\gamma_{0,0}^u(v),\vspace{0.2cm}\\
	\Theta_0^u(v)=-Q^0(v)+\theta_{0,0}^u(v).
	\end{array}$$

	%the parameterization $N_{0,0}^{u}$ given in Theorem \ref{parameterization2Dh} depends only on the variable $v$ and thus.
	
	Based on these arguments, we can use Theorem \ref{parameterization2Dh} and Proposition \ref{fixpointgob} to see that
	$$
	\begin{array}{lcl}
	\left(\begin{array}{c}
	\Gamma_{0,h}^{u}(v,\tau)-\Gamma_{0}^{u}(v)\vspace{0.2cm}\\
	\Theta_{0,h}^{u}(v,\tau)-\Theta_{0}^{u}(v)
	\end{array}\right) 
	&=& \left(\begin{array}{c}
	\gamma_{0,h}^{u}(v,\tau)-\gamma_{0,0}^{u}(v)\vspace{0.2cm}\\
	\theta_{0,h}^{u}(v,\tau)-\theta_{0,0}^{u}(v)
	\end{array}\right),
	\end{array}
	$$
	where $(z_{0,0}^u,\gamma_{0,0}^u,\theta_{0,0}^u)$ and $(z_{0,h}^u,\gamma_{0,h}^u,\theta_{0,h}^u)$ are fixed points of the operators $\overline{\mathcal{G}}_{\omega,0}$ and $\overline{\mathcal{G}}_{\omega,h}$ given in \eqref{gob}, respectively.

	Denoting $$\mathcal{E}=(z_{0,h}^{u}-z_{0,0}^{u},\gamma_{0,h}^{u}-\gamma_{0,0}^{u},\theta_{0,h}^{u}-\theta_{0,0}^{u}),$$
	we  compute $\|\mathcal{E}\|_{1,\sigma}$.
	
	Notice that
	$$
	\begin{array}{lcl}
	\mathcal{E} &=&(z_{0,h}^{u}-z_{0,0}^{u},\gamma_{0,h}^{u}-\gamma_{0,0}^{u},\theta_{0,h}^{u}-\theta_{0,0}^{u})\vspace{0.2cm}\\
	%	&=&\overline{\mathcal{G}}_{\omega,h}(z_{0,h}^{u},\gamma_{0,h}^{u},\theta_{0,h}^{u})-\overline{\mathcal{G}}_{\omega,0}(z_{0,0}^{u},\gamma_{0,0}^{u},\theta_{0,0}^{u})\vspace{0.2cm}\\
	&=&\overline{\mathcal{G}}_{\omega,h}(z_{0,h}^{u},\gamma_{0,h}^{u},\theta_{0,h}^{u})-\overline{\mathcal{G}}_{\omega,h}(z_{0,0}^{u},\gamma_{0,0}^{u},\theta_{0,0}^{u})\vspace{0.2cm}\vspace{0.2cm}\\
	&&+\overline{\mathcal{G}}_{\omega,h}(z_{0,0}^{u},\gamma_{0,0}^{u},\theta_{0,0}^{u})-\overline{\mathcal{G}}_{\omega,0}(z_{0,0}^{u},\gamma_{0,0}^{u},\theta_{0,0}^{u}).
	\end{array}
	$$ 
	For  $0\leq h\leq h_0$,  $(z_{0,h}^{u},\gamma_{0,h}^{u},\theta_{0,h}^{u})\in\mathcal{B}_0(M\dg/\omega)$  and $\overline{\mathcal{G}}_{\omega,h}$ is Lipschitz in this ball with $\mathrm{Lip}(\overline{\mathcal{G}}_{\omega,h})\leq M\dg$. Then, 
	$$\llbracket\overline{\mathcal{G}}_{\omega,h}(z_{0,h}^{u},\gamma_{0,h}^{u},\theta_{0,h}^{u})-\overline{\mathcal{G}}_{\omega,h}(z_{0,0}^{u},\gamma_{0,0}^{u},\theta_{0,0}^{u})\rrbracket_{1,\sigma}\leq M\dg\llbracket
	\mathcal{E}\rrbracket_{1,\sigma}.$$
	Choosing $\e_0$ sufficiently small such that $\mathrm{Lip}(\overline{\mathcal{G}}_{\omega,h})<1/2$, we obtain
	$$\llbracket
	\mathcal{E}\rrbracket_{1,\sigma}\leq M\llbracket\overline{\mathcal{G}}_{\omega,h}(z_{0,0}^{u},\gamma_{0,0}^{u},\theta_{0,0}^{u})-\overline{\mathcal{G}}_{\omega,0}(z_{0,0}^{u},\gamma_{0,0}^{u},\theta_{0,0}^{u})\rrbracket_{1,\sigma}.$$
	Now, denoting $	\mathcal{P}_h(z_{0,0}^{u},\gamma_{0,0}^{u},\theta_{0,0}^{u})-\mathcal{P}_0(z_{0,0}^{u},\gamma_{0,0}^{u},\theta_{0,0}^{u})= \Delta_h^0$, where $\mathcal{P}_h$ is given in \eqref{Fo}, and using that $\left\llbracket(z_{0,0}^{u},\gamma_{0,0}^{u},\theta_{0,0}^{u})\right\rrbracket_{1,\sigma}\leq M\dg/\omega$, we have that $\left\|\Delta_h^0\right\|_{2,\sigma}\leq M\frac{\dg\sqrt{h}}{\omega^{3/2}}.$ 
	%	$$
	%	\begin{array}{lcl}
	%	\Delta_h^0	&=& \left(\begin{array}{l}
	%	f_1^h(v,\tau) -f_1^0(v,\tau) -\dfrac{\partial_vZ_{0,h}(v,\tau)}{Z_0(v)}z_{0,0}^{u}-\dfrac{Z_{0,h}(v,\tau)}{Z_0(v)}\partial_vz_{0,0}^{u} \vspace{0.2cm}\\
	%	f_2^h(v,\tau)-f_2^0(v,\tau) -\dfrac{Z_{0,h}(v,\tau)}{Z_0(v)}\partial_v \gamma_{0,0}^u \vspace{0.2cm}\\
	%	f_3^h(v,\tau)+f_3^0(v,\tau) -\dfrac{Z_{0,h}(v,\tau)}{Z_0(v)}\partial_v\theta_{0,0}^u
	%	\end{array}\right). 
	%	\end{array}	
	%	$$
	%	
	%	Moreover,
	%	$$
	%	\begin{array}{l}
	%	f_1^h(v,\tau) -f_1^0(v,\tau)=-\partial_vZ_{0,h}(v,\tau)-\dfrac{Z_0'(v)}{Z_0(v)}Z_{0,h}(v,\tau)-\dfrac{Z_{0,h}(v,\tau)\partial_vZ_{0,h}(v,\tau)}{Z_0(v)},\vspace{0.2cm}\\
	%	f_2(v,\tau;h)-f_2(v,\tau;0)= -\dfrac{Z_{0,h}(v,\tau)(Q^0)'(v)}{Z_0(v)}.
	%	\end{array}
	%	$$
	%	

	It follows from the linearity of $\mathcal{G}_{\omega}$ and Lemma \ref{opgo} that
	$$\left\llbracket\overline{\mathcal{G}}_{\omega,h}(z_{0,0}^{u},\gamma_{0,0}^{u},\theta_{0,0}^{u})-\overline{\mathcal{G}}_{\omega,0}(z_{0,0}^{u},\gamma_{0,0}^{u},\theta_{0,0}^{u})\right\rrbracket_{1,\sigma}\leq M\dfrac{\dg\sqrt{h}}{\omega^{3/2}}.$$
	Thus, we conclude that $\left\llbracket\mathcal{E}\right\rrbracket_{1,\sigma}\leq M\dfrac{\dg\sqrt{h}}{\omega^{3/2}}$.
\end{proof}

\subsection{Approximation of $W_{\e}^{u}(p_h^{-})$ by $W_{\e}^{u}(p_0^{-})$ }\label{approx2}

%When $\dg=0$, considering the parameterizations $N^h_{1D}(v)$ and $N^0_{1D}(v)$ of $W_{0}^{u,s}(p_h^{\mp})$ and $W_{0}^{u,s}(p_0^{\mp})$, respectively, we have that  
%$$\left|N^h_{1D}(v)-N^0_{1D}(v)\right|=\er(\sqrt{h}),$$
%for every $v\in D^u$.
%
%In this section, we prove that the parameterizations of the invariant manifolds $W_{\e}^{u,s}(p_h^{\mp})$ and $W_{\e}^{u,s}(p_0^{\mp})$ obtained in the previous sections are still $\sqrt{h}$-close in some sense when $\dg\neq 0$.

We compare the parameterizations $N_{0,0}^{u}$ and $N_{h,0}^{u}$ of $W_{\e}^{u}(p_0^{-})$ and $W_{\e}^{u}(p_h^{-})$, obtained in Theorems \ref{parameterization1D0} and \ref{parameterization1Dh}, respectively. 
% e deal only with the unstable case, since the other case is completely analogous.

% In what follows, we provide some  lemmas which will be used to show the following result.

\begin{prop}\label{approxh0}
	Let $\Gamma_0^u(v),$ $\Theta_0^u(v)$ and $\Gamma_{h,0}^u(v),$ $\Theta_{h,0}^u(v)$ be given in  \eqref{eq1d} and \eqref{solh0}, respectively. There exist $\e_0>0$, $h_0>0$ and a constant $M>0$ 
	such that, for  $0<\e\leq \e_0$ and $0\leq h\leq h_0$,
% 	independent of $h$ and $\e$ such that
	\begin{enumerate}
		\item $	\left|\Gamma_{h,0}^{u}(0) - \Gamma_{0}^{u}(0)\right|\leq M\dfrac{\dg\sqrt{h}}{\omega^2};\vspace{0.2cm}$
		\item $	\left|\Theta_{h,0}^{u}(0) - \Theta_{0}^{u}(0)\right|\leq M\dfrac{\dg\sqrt{h}}{\omega^2}.$
	\end{enumerate}
\end{prop}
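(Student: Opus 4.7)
The plan is to mimic the strategy of Proposition \ref{resta} (from Section \ref{approx1}), adapted to the singular-at-$h=0$ setting of $W^u_\e(p_h^-)$ rather than the regular $h=0$ setting of $W^u_\e(\Lambda_h^-)$. First I observe that $X_0(0)=X_h(0)=0$ (by \eqref{par1}--\eqref{par2}) and $F(0)=0$ (by \eqref{exprUF}), so $Q^0(0)=Q^h(0)=0$ in view of \eqref{first0} and \eqref{firsth}. Consequently, from the decompositions \eqref{eq1d} and \eqref{solh0},
\[
\Gamma_{h,0}^u(0)-\Gamma_0^u(0)=\gamma_{h,0}^u(0)-\gamma_0^u(0),\qquad \Theta_{h,0}^u(0)-\Theta_0^u(0)=\theta_{h,0}^u(0)-\theta_0^u(0),
\]
so it suffices to estimate the $(\gamma,\theta)$-corrections. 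Both are defined on the outer domain $D^u$ (since $D^u\subset D^u_\e$), and on $D^u$ the norm $\|\cdot\|_2$ of Section~\ref{parh_sec} controls both fixed points.

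Next I set $\mathcal{E}=(\gamma_{h,0}^u-\gamma_0^u,\theta_{h,0}^u-\theta_0^u)$. By Propositions \ref{fixpoint0} and \ref{fixpointh}, $(\gamma_0^u,\theta_0^u)$ and $(\gamma_{h,0}^u,\theta_{h,0}^u)$ are fixed points of $\mathcal{G}_{\omega,0}=\mathcal{G}_\omega\circ\mathcal{F}_0$ and $\mathcal{G}_{\omega,h}=\mathcal{G}_\omega\circ\mathcal{F}_h$ respectively, where $\mathcal{G}_\omega$ is the same linear operator \eqref{g0}. I split
\[
\mathcal{E}=\mathcal{G}_\omega\bigl[\mathcal{F}_h(\gamma_{h,0}^u,\theta_{h,0}^u)-\mathcal{F}_h(\gamma_0^u,\theta_0^u)\bigr]+\mathcal{G}_\omega\bigl[(\mathcal{F}_h-\mathcal{F}_0)(\gamma_0^u,\theta_0^u)\bigr].
\]
Using the Lipschitz bound $\mathrm{Lip}(\mathcal{G}_{\omega,h})\le 1/2$ on the ball where both fixed points live (Proposition \ref{fixpointh}), the first summand is absorbed, yielding
\[
\|\mathcal{E}\|_2\le 2\,\bigl\|\mathcal{G}_\omega\bigl[(\mathcal{F}_h-\mathcal{F}_0)(\gamma_0^u,\theta_0^u)\bigr]\bigr\|_2\le \frac{2M}{\omega}\,\bigl\|(\mathcal{F}_h-\mathcal{F}_0)(\gamma_0^u,\theta_0^u)\bigr\|_2,
\]
where the last inequality uses Proposition \ref{gomegah}.

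The key step is then to show $\|(\mathcal{F}_h-\mathcal{F}_0)(\gamma_0^u,\theta_0^u)\|_2\le M\delta\sqrt{h}/\omega$. Inspecting \eqref{F0} and \eqref{Fh}, this difference has two contributions: the inhomogeneous term $(Q^h)'-(Q^0)'=-\tfrac{i\delta}{\omega\sqrt{2\Omega}}[F(X_h)'-F(X_0)']$, and the quasi-linear correction $\omega i\gamma_0^u[\eta_h-\eta_0]$ (and the analogous $\theta_0^u$ term). I would estimate each using the explicit formulas \eqref{par1}--\eqref{par2} and a careful comparison between $X_h$ and $X_0$: on a bounded set $|v|\le R$, Taylor expansion in the natural parameter (along with Lemma \ref{Fs} and Lemma \ref{zh}) gives pointwise smallness; on the complementary region one uses that $F(X_h)$ decays (exponentially in $|v|\sqrt{h}$) while $F(X_0)$ decays like $1/|v|$, so that the weighted $\|\cdot\|_2$-norm of the difference is controlled by $M\sqrt{h}$, with the worst case occurring precisely at the crossover scale $|v|\sim 1/\sqrt{h}$. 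Combining this with the factor $\delta/\omega$ from $Q$ yields the desired source bound.

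Conclusion then follows: $\|\mathcal{E}\|_2\le M\delta\sqrt{h}/\omega^2$, and since $|v^2+2|=2$ at $v=0$, $|\mathcal{E}(0)|\le \|\mathcal{E}\|_2/2\le M\delta\sqrt{h}/\omega^2$, which together with the first paragraph yields the two stated inequalities (the real-analytic symmetry $\theta^u_{h,0}=\overline{\gamma^u_{h,0}}$ and $\theta^u_0=\overline{\gamma^u_0}$ on the real line gives the companion bound automatically). The main obstacle is precisely the source-term estimate in the third paragraph: while $X_h\to X_0$ smoothly on compact sets of $v$, the parameterization $X_h$ depends singularly on $h$ as $|v|\to\infty$ (the asymptotic slope of $X_h$ is $\sqrt{h}/2$, which collapses to $0$ in the limit), so that $F(X_h)-F(X_0)$ is not small in the full weighted norm without a matched-asymptotics split — this is the technical heart of the proof and parallels the analysis leading to Lemma \ref{Fs}.
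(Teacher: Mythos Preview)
Your overall architecture (reduce to the $(\gamma,\theta)$-corrections via $Q^h(0)=Q^0(0)=0$, write $\mathcal{E}$ as a fixed-point difference, split into a Lipschitz part and a source part) matches the paper. The gap is in the source-term estimate: the bound $\|(\mathcal{F}_h-\mathcal{F}_0)(\gamma_0^u,\theta_0^u)\|_2\le M\delta\sqrt{h}/\omega$ is \emph{false}. Indeed, from the explicit formula $F(X_0(v))'=\dfrac{2\sqrt{2}(v^2-2)}{(v^2+2)^2}$ one has $(v^2+2)\,(Q^0)'(v)\to -\dfrac{2i\delta}{\omega\sqrt{\Omega}}$ as $|v|\to\infty$, while for any fixed $h>0$ the function $(v^2+2)\,(Q^h)'(v)$ decays exponentially to $0$. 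Hence $\|(Q^h)'-(Q^0)'\|_2\ge c\,\delta/\omega$ uniformly in $h$, and no $\sqrt{h}$ can be extracted from the source in the weighted norm. Your heuristic that ``the worst case is at the crossover $|v|\sim 1/\sqrt{h}$'' is incorrect: the worst case for the $\|\cdot\|_2$-norm is at infinity, where the two terms differ by an $O(1)$ constant after multiplying by $(v^2+2)$.

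The paper circumvents this by working in the \emph{unweighted} norm $\|\cdot\|_0$ and by not bounding $\mathcal{F}_h-\mathcal{F}_0$ globally at all. Instead it estimates $\mathcal{G}_{\omega,h}(\gamma_0^u,\theta_0^u)-\mathcal{G}_{\omega,0}(\gamma_0^u,\theta_0^u)$ directly (Proposition~\ref{gamma00}) via a matched split at scale $|v|\sim h^{-1/4}$: on $\{|h^{1/4}v|\ge 1\}$ one bounds each operator separately in $\|\cdot\|_2$ and converts to a pointwise bound using $|v^2+2|^{-1}\le M\sqrt{h}$; on $\{|h^{1/4}v|<1\}$ one uses Lemma~\ref{Fcompacto} (valid only on this region) to get the $\sqrt{h}$ factor from $(Q^h)'-(Q^0)'$ pointwise, and then integrates along an oblique ray to gain $1/\omega$. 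The Lipschitz half (Proposition~\ref{gammah0}) is likewise done in $\|\cdot\|_0$ with a direct ray-integration argument, since Proposition~\ref{gomegah} (which needs $\alpha>0$) is unavailable for $\|\cdot\|_0$. So the missing idea in your proposal is precisely this: the $\sqrt{h}$ gain must be harvested \emph{after} applying $\mathcal{G}_\omega$, region by region, not before.
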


\subsubsection{Technical Lemmas}
To prove Proposition \ref{approxh0}, we first state some lemmas.
%Roughly speaking, in this section we find the first order of the functions analyzed in Section \ref{techh_sec} when its variable $v$ is in a compact set with size depending on $h$.

\begin{lemma}\label{Fcompacto}
	Let $X_0,\ Z_0,\ X_h,\ Z_h,\ Q^0, \textrm{ and }Q^h$ be given in \eqref{par1}, \eqref{par2}, \eqref{first0} and \eqref{firsth} and fix $M_0>0$. There exist $h_0>0$ and a constant $M>0$ such that, for $0\leq h\leq h_0$ and  $v\in D^u$ with $|h^{1/4}v|\leq M_0$, 
	\begin{enumerate}
%		\item 	$\left|F'(X_h(v))- F'(X_0(v))\right|\leq \dfrac{M\sqrt{h}}{|\sqrt{v^2+2}|}$;\vspace{0.2cm}
		\item $\left|F(X_h(v))- F(X_0(v))\right|\leq \dfrac{M\sqrt{h}}{|\sqrt{v^2+2}|}$;\vspace{0.2cm}
		\item 	$\left|Z_h(v)- Z_0(v)\right|\leq \dfrac{M\sqrt{h}}{|\sqrt{v^2+2}|}$;\vspace{0.2cm}
		\item $\left|\dfrac{1}{Z_h(v)}- \dfrac{1}{Z_0(v)}\right|\dfrac{1}{|\sqrt{v^2+2}|}\leq M\sqrt{h}$;\vspace{0.2cm}
		\item $\left|(Q^h)'(v)- (Q^0)'(v)\right|\leq \dfrac{M\dg\sqrt{h}}{\omega|v^2+2|}.$
	\end{enumerate}
	
\end{lemma}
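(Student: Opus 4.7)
In the region $|h^{1/4}v|\leq M_0$, the quantity $\epsilon := v\sqrt{h}/2$ satisfies $|\epsilon|\leq M_0 h^{1/4}/2$, so $\sinh(\epsilon)$ and $\cosh(\epsilon)$ admit convergent Taylor expansions whose terms satisfy $v^{2k}h^k = (2\epsilon)^{2k} \leq M_0^{2k}\,h^{k/2}$. The crucial auxiliary inequality is
\[
\sqrt{h}\,(v^2+2) \leq M_0^2 + 2\sqrt{h_0},
\]
which follows from $v^2\leq M_0^2/\sqrt{h}$ and lets us convert estimates of the form $h\cdot f(v)$ into the target form $\sqrt{h}/g(v)$ whenever $f(v)\cdot g(v) \asymp v^2+2$. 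I would prove item (2) first, then deduce the others from it together with analogous Taylor-expansion arguments.

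\textbf{Item (2).} The plan is to work with $Z_h^2 - Z_0^2$ instead of $Z_h - Z_0$ directly. From the energy relation $Z_h^2/16 + U(X_h) = h$ together with $U=-2\sech^2$, one has
\[
Z_h^2 - Z_0^2 = 16h + 32\,\frac{\cosh^2(X_0) - \cosh^2(X_h)}{\cosh^2(X_h)\cosh^2(X_0)}.
\]
Using $\cosh^2(X_h) = 1+(2+h)\sinh^2(\epsilon)/h$ and expanding in $\epsilon$, a computation yields $\cosh^2(X_h) - \cosh^2(X_0) = v^2h(6+v^2)/24 + \er(h^2 + v^6 h^2)$, with the remainder $v^6h^2 \leq M_0^6\sqrt{h}$. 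After dividing by $\cosh^2(X_h)\cosh^2(X_0) \asymp (v^2+2)^2$ one obtains the uniform bound $|Z_h^2 - Z_0^2|\leq Mh$. Since $Z_h + Z_0 \geq Z_0 = 8/\sqrt{v^2+2}$ and the auxiliary inequality holds,
\[
|Z_h - Z_0| \leq \frac{|Z_h^2-Z_0^2|}{Z_h+Z_0} \leq \frac{Mh\sqrt{v^2+2}}{8} \leq \frac{M'\sqrt{h}}{\sqrt{v^2+2}}.
\]

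\textbf{Items (3), (1), (4).} Item (3) follows from $1/Z_h - 1/Z_0 = (Z_0-Z_h)/(Z_0 Z_h)$, item (2), and the lower bound $Z_0 Z_h \geq c/(v^2+2)$. For item (1) I would use $F(X) = -2\sinh(X)/(1+\sinh^2(X))$ to write
\[
F(X_h) - F(X_0) = -2(u_h - u_0)\,\frac{1 - u_0 u_h}{(1+u_h^2)(1+u_0^2)},
\]
with $u_h = \sqrt{(2+h)/h}\sinh(\epsilon)$ and $u_0 = v/\sqrt{2}$. A Taylor expansion gives $u_h - u_0 = vh/(4\sqrt{2}) + \sqrt{2}\,v^3 h/48 + \er(v^5 h^2)$, and the cancellation $|1 - u_0 u_h|/[(1+u_h^2)(1+u_0^2)]\leq M/(v^2+2)$ (since $1+u_j^2\asymp (v^2+2)/2$ and $1-u_0 u_h\asymp -v^2/2$) yields $|F(X_h) - F(X_0)|\leq M(|v|h + |v|^3 h)/(v^2+2)$, bounded by $M'\sqrt{h}/\sqrt{v^2+2}$ via the auxiliary inequality. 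Finally, since $X_h'=Z_h/8$, one has $(Q^h)'(v) = -i\dg F'(X_h)Z_h/(8\omega\sqrt{2\Omega})$, whence
\[
(Q^h)'-(Q^0)' = -\frac{i\dg}{8\omega\sqrt{2\Omega}}\bigl[F'(X_h)(Z_h-Z_0) + Z_0\bigl(F'(X_h)-F'(X_0)\bigr)\bigr].
\]
The first summand is controlled by item (2) together with $|F'(X_h)|\leq M/|v^2+2|$ from Lemma \ref{Fs}; the second is handled by the argument of item (1) applied to $F'$ instead of $F$, using $|Z_0|\leq 8/\sqrt{v^2+2}$. Both contributions are $\er(\dg\sqrt{h}/(\omega|v^2+2|^{3/2}))$, and hence bounded by $M\dg\sqrt{h}/(\omega|v^2+2|)$ since $|v^2+2|\geq 2$.

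\textbf{Main obstacle.} The delicate point is item (1): the raw Taylor estimate $|u_h - u_0| = \er(|v|^3 h)$ is only $\er(h^{1/4})$ at the boundary $|v|\sim h^{-1/4}$ and does not decay in $v$, so the target $\sqrt{h}/\sqrt{v^2+2}$ rate can only emerge after combining this bound with the $1/(v^2+2)$ factor from the cancellation in the prefactor $(1-u_0 u_h)/[(1+u_h^2)(1+u_0^2)]$ and with the auxiliary inequality $\sqrt h(v^2+2)\leq C(M_0, h_0)$. Checking that these three ingredients cooperate uniformly across the whole region, and not just pointwise at typical scales of $|v|$, is the main technical step.
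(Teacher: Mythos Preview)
Your approach is essentially the paper's: both Taylor-expand $\sinh(v\sqrt{h}/2)$ using $|v\sqrt{h}/2|\leq M_0 h^{1/4}/2\ll 1$ and track the resulting $\mathcal{O}(\sqrt{h})$ corrections; the paper writes this as $F(X_h)=F(X_0)(1+\mathcal{O}(\sqrt h))$ directly, while you factor differences algebraically, but the content is the same. One caveat on item~(2): since $v\in D^u$ is complex, the inequality ``$Z_h+Z_0\geq Z_0$'' is not literally valid; what you need is $|Z_h+Z_0|\geq c|Z_0|$, which follows from $Z_h=Z_0(1+\mathcal{O}(\sqrt h))$---but that is exactly the paper's direct expansion, so the detour through $Z_h^2-Z_0^2$ ultimately relies on the same estimate it was meant to replace.
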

\begin{proof}
	Using the formulas \eqref{exprUF}, \eqref{par1} and \eqref{par2}, we obtain
	
	$F(X_h(v))-F(X_0(v))=-2\left(\dfrac{\sqrt{\frac{2+h}{h}}\sinh(v\sqrt{h}/2)}{1+\frac{2+h}{h}\sinh^2(v\sqrt{h}/2)}-\sqrt{2}\dfrac{v}{v^2+2}\right).$
	
	Since $|vh^{1/4}|\leq M_0$, it follows that $|v\sqrt{h}/2|\leq Mh^{1/4}\ll 1$.

%	Then,
%% 	Since, in this domain, $(h^{1/4}v)^n=\er(1)$ 
%	for every $n\in \N$, one has 
%	$\dfrac{\er(|v|^n)}{v^n}=\er(1)$ and $\dfrac{\er(|v|^n)}{(v^2+2)^{n/2}}=\er(1)$.
%	\marginpar{Esto para que sirve?}

	Expanding $\sinh(z)$ at $0$, we have
	
	$$
	\begin{array}{lcl}
	\dfrac{\sqrt{\dfrac{2+h}{h}}\sinh(v\sqrt{h}/2)}{1+\frac{2+h}{h}\sinh^2(v\sqrt{h}/2)} &=& \dfrac{\sqrt{\dfrac{2+h}{h}}\left(\dfrac{v\sqrt{h}}{2}+\er(h^{3/2}|v|^3)\right)}{1+\dfrac{2+h}{h}\left(\dfrac{v^2h}{4}+\er(h^2|v|^4)\right)}\vspace{0.2cm}\\
%	&=& \dfrac{\dfrac{\sqrt{2}}{2}v+\er(h|v|)+\er(h|v|^3)}{1+v^2/2+\er(h|v|^2)+\er(h|v|^4)}\vspace{0.2cm}\\
	&=& \dfrac{\sqrt{2}v+\er(\sqrt{h}|v|)}{v^2+2+\er(\sqrt{h}|v|^2)}\vspace{0.2cm}\\
	%&=& \dfrac{\sqrt{2}v}{v^2+2}\left(\dfrac{1+\er(\sqrt{h})}{1+\er(\sqrt{h})}\right)\vspace{0.2cm}\\
	&=& \dfrac{\sqrt{2}v}{v^2+2}\left(1+\er(\sqrt{h})\right).\vspace{0.2cm}\\
	\end{array}
	$$
	
	Item $(1)$ follows directly from this expression, considering $h$ sufficiently small. Items $(2)$ and $(3)$ can be computed in an analogous way.
	
	Formulas \eqref{first0} and \eqref{firsth} imply $$\begin{array}{lcl}\left|(Q^h)'(v)-(Q^0)'(v)\right|%&\leq& M\dfrac{\dg}{\omega}|F(X_h(v))'-F(X_0(v))'|\vspace{0.2cm}\\
	&\leq&M\dfrac{\dg}{\omega}|F'(X_h(v))Z_h(v)-F'(X_0(v))Z_0(v)|.
	\end{array}$$
	
	Thus, it is enough to apply the bounds in items $(1)$ and $(2)$ to obtain item $(4)$.

	 %Now, a simple computation allows us to see that
	%	$$F'(X)=-2\dfrac{1-\sinh^2(X)}{(1+\sinh^{2}(X))^{3/2}}.$$
	%	
	%	Thus
	%	$$
	%	F'(X_h(v))-F'(X_0(v))=-2\left(\dfrac{1-\dfrac{2+h}{h}\sinh^2(v\sqrt{h}/2)}{\left(1+\dfrac{2+h}{h}\sinh^2(v\sqrt{h}/2)\right)^{3/2}}-\sqrt{2}\dfrac{2-v^2}{(2+v^2)^{3/2}}\right).
	%	$$
	%	
	%	Following the same idea we have that
	%	
	%	$$
	%	\begin{array}{lcl}
	%	\dfrac{1-\dfrac{2+h}{h}\sinh^2(v\sqrt{h}/2)}{\left(1+\dfrac{2+h}{h}\sinh^2(v\sqrt{h}/2)\right)^{3/2}}&=& \dfrac{1-\dfrac{2+h}{h}\left(\dfrac{v^2h}{4}+\er(h^2|v|^4)\right)}{\left(1+\dfrac{2+h}{h}\left(\dfrac{v^2h}{4}+\er(h^2|v|^4)\right)\right)^{3/2}}\vspace{0.2cm}\\
	%	&=& \dfrac{1-v^2/2+ \er(h|v|^2)+\er(h|v|^4)}{\left(1+v^2/2+ \er(h|v|^2)+\er(h|v|^4)\right)^{3/2}}\vspace{0.2cm}\\
	%	&=& \dfrac{\sqrt{2}(2-v^2)+ \er(\sqrt{h}|v|^2)}{(2+v^2)^{3/2}\left(1+ \er(\sqrt{h})\right)^{3/2}}\vspace{0.2cm}\\
	%	&=& \left(\dfrac{\sqrt{2}(2-v^2)}{(2+v^2)^{3/2}}+ \dfrac{\er(\sqrt{h})}{(2+v^2)^{1/2}}\right)\dfrac{1}{\left(1+ \er(\sqrt{h})\right)^{3/2}}\vspace{0.2cm}\\
	%	&=& \left(\dfrac{\sqrt{2}(2-v^2)}{(2+v^2)^{3/2}}+ \dfrac{\er(\sqrt{h})}{(2+v^2)^{1/2}}\right)(1+\er(\sqrt{h}))\vspace{0.2cm}\\
	%	&=& \sqrt{2}\dfrac{(2-v^2)}{(2+v^2)^{3/2}}+ \dfrac{\er(\sqrt{h})}{(2+v^2)^{1/2}}.
	%	\end{array}
	%	$$
	%	
	%	The proof is complete.
\end{proof}

\begin{lemma}\label{alphacompacto}
	Let $\eta_0$ and $\eta_h$ be given in \eqref{alpha01d} and \eqref{alphah1d}, respectively, and consider the functions $(\gamma_{0}^{u},\theta_{0}^{u})$ obtained in Proposition \ref{fixpoint0}. Fix $M_0>0$. There exist $\e_0>0$, $h_0>0$ and a constant $M>0$ 
% 	independent of $h$ and $\e$ 
	such that for $0<\e\leq \e_0$, $0\leq h\leq h_0$ and  $v\in D^u$ with $|h^{1/4}v|\leq M_0$,
	$$\left|\eta_h(v, \gamma_{0}^{u},\theta_{0}^{u})- \eta_0(v, \gamma_{0}^{u},\theta_{0}^{u})\right|\leq \dfrac{M\dg\sqrt{h}}{\omega}.$$
	\end{lemma}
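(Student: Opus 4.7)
The plan is to exploit that both $\eta_h$ and $\eta_0$ have the form $(1+A)^{-1/2}$ with $A=\er(\dg^2)$, and then reduce the lemma to estimating the difference of the two interior expressions.

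First I would introduce
\[
A_\bullet(v)=\frac{4\dg^2}{\Omega\omega}\left(\frac{F(X_\bullet(v))}{Z_\bullet(v)}\right)^{\!2}-8\omega\frac{\gamma_{0}^{u}(v)\theta_{0}^{u}(v)}{Z_\bullet(v)^{2}},\qquad \bullet\in\{0,h\},
\]
so that $\eta_\bullet(v,\gamma_0^u,\theta_0^u)=(1+A_\bullet(v))^{-1/2}$. Combining the uniform bound $|F(X_\bullet)/Z_\bullet|\le M$ (which follows from Lemma \ref{Fs}, Lemma \ref{zh} and the explicit formulas in \eqref{par1}, \eqref{par2}) with the bounds on $(\gamma_0^u,\theta_0^u)$ from Proposition \ref{fixpoint0} (in both regimes $\Rp v\le-\nu$ and $\Rp v\ge-\nu$, taking into account the respective weights), a short computation gives $|A_\bullet|\le M\dg^2/\omega$. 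Since this is uniformly small, the mean value theorem applied to $x\mapsto(1+x)^{-1/2}$ yields
\[
\bigl|\eta_h-\eta_0\bigr|\le M\bigl|A_h-A_0\bigr|,
\]
so it suffices to estimate $|A_h-A_0|$.

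Next I would split
\[
A_h-A_0=\frac{4\dg^2}{\Omega\omega}\!\left[\!\left(\tfrac{F(X_h)}{Z_h}\right)^{\!2}\!-\!\left(\tfrac{F(X_0)}{Z_0}\right)^{\!2}\right]-8\omega\,\gamma_0^u\theta_0^u\!\left[\tfrac{1}{Z_h^{2}}-\tfrac{1}{Z_0^{2}}\right]
\]
and bound each term. For the first term, writing
\[
\tfrac{F(X_h)}{Z_h}-\tfrac{F(X_0)}{Z_0}=\tfrac{F(X_h)-F(X_0)}{Z_h}+F(X_0)\!\left(\tfrac{1}{Z_h}-\tfrac{1}{Z_0}\right)
\]
and using items (1)--(3) of Lemma \ref{Fcompacto} together with $|1/Z_\bullet|\le M|\sqrt{v^2+2}|$ and $|F(X_\bullet)|\le M/|\sqrt{v^2+2}|$, the two summands are each $\er(\sqrt{h})$, so the first bracket is $\er(\sqrt{h})$ and the first term contributes $\er(\dg^2\sqrt{h}/\omega)$.

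For the second term, I would factor $1/Z_h^2-1/Z_0^2=(1/Z_h-1/Z_0)(1/Z_h+1/Z_0)$ and use item (3) of Lemma \ref{Fcompacto} plus $|1/Z_\bullet|\le M|\sqrt{v^2+2}|$ to obtain $|1/Z_h^2-1/Z_0^2|\le M\sqrt{h}|v^2+2|$. Multiplying against the bounds $|\gamma_0^u(v)\theta_0^u(v)|\le M\dg^2/(\omega^4|v^2+2|^2)$ for $\Rp v\ge-\nu$ (and against the analogous $|v|^{-4}$ bound for $\Rp v\le-\nu$, where one exploits $|v^2+2|\le M|v|^2$ for large $|v|$) yields a contribution of $\er(\dg^2\sqrt{h}/\omega^3)$. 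Summing the two contributions gives $|A_h-A_0|\le M\dg^2\sqrt{h}/\omega$, hence $|\eta_h-\eta_0|\le M\dg^2\sqrt{h}/\omega$, which is stronger than the claimed bound $M\dg\sqrt{h}/\omega$. The main technical nuisance is the careful juggling of the weight $|\sqrt{v^2+2}|$ in the two subregions of $D^u$ (polynomial in $|v|$ versus polynomial in $|\sqrt{v^2+2}|$), which has to be handled separately to get uniform estimates; all the needed weighted bounds are available from the earlier lemmas and propositions, so beyond this bookkeeping the proof is routine.
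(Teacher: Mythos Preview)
Your proof is correct and follows the same route as the paper: both reduce to bounding $|A_h-A_0|$ via the mean value inequality for $(1+x)^{-1/2}$ and then estimate the two pieces using Lemmas \ref{Fs}, \ref{zh}, \ref{Fcompacto} and the size of $(\gamma_0^u,\theta_0^u)$. One small simplification: since you are working on $D^u\subset D^u_\e$, you can use the single bound $\|(\gamma_0^u,\theta_0^u)\|_2\le M\dg/\omega^2$ (as noted at the start of Section \ref{approx2}) and avoid the two-regime split in $\Rp v$; your observation that the argument in fact yields $M\dg^2\sqrt{h}/\omega$ rather than $M\dg\sqrt{h}/\omega$ is also correct.
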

\begin{proof}
	
	%	Recall that
	%	
	Using the expression of $\eta_h$ in \eqref{alphah1d} 
	%	$$\eta_h(v,\Gamma,\Theta)=\left(1+\dfrac{4\dg}{\omega\Omega}\left(\dfrac{F(X_h)}{Z_h}\right)^2-8\omega\dfrac{\Gamma\Theta}{Z_h^2}\right)^{-1/2},$$
	and that $\left\|(\gamma_{0}^{u},\theta_{0}^{u})\right\|_2\leq M\dg/\omega^2\ll1$, it follows from Lemmas \ref{Fs}, \ref{zh} and \ref{Fcompacto} that
	$$
	\begin{array}{lcl}
	\left|\eta_h(v, \gamma_{0}^{u},\theta_{0}^{u})- \eta_0(v, \gamma_{0}^{u},\theta_{0}^{u})\right|&\leq& M\dfrac{\dg}{\omega}\left|\left(\dfrac{F(X_h)}{Z_h}\right)^2-\left(\dfrac{F(X_0)}{Z_0}\right)^2\right|+ M\omega\left|\gamma_{0}^{u}\theta_{0}^{u}\right|\left|\dfrac{1}{Z_h^2}-\dfrac{1}{Z_0^2}\right|\vspace{0.2cm}\\
	%	&\leq& M\dfrac{\dg}{\omega}\left|\dfrac{F(X_h)}{Z_h}-\dfrac{F(X_0)}{Z_0}\right|\left|\dfrac{F(X_h)}{Z_h}+\dfrac{F(X_0)}{Z_0}\right|\vspace{0.2cm}\\
	%	&+& M\dfrac{\dg}{\omega|v^2+2|^2}\left|\dfrac{1}{Z_h}-\dfrac{1}{Z_0}\right|\left|\dfrac{1}{Z_h}+\dfrac{1}{Z_0}\right|\vspace{0.2cm}\\
	%	&\leq& M\dfrac{\dg}{\omega}\left|\dfrac{F(X_h)}{Z_h}-\dfrac{F(X_0)}{Z_0}\right|+ M\dfrac{\dg}{\omega|v^2+2|^{3/2}}\left|\dfrac{1}{Z_h}-\dfrac{1}{Z_0}\right|\vspace{0.2cm}\\
	%	&\leq& M\dfrac{\dg}{\omega}\left(|F(X_h)|\left|\dfrac{1}{Z_h}-\dfrac{1}{Z_0}\right|+\left|\dfrac{1}{Z_0}\right||F(X_h)-F(X_0)|\right)\vspace{0.2cm}\\
	%	&+& M\dfrac{\dg\sqrt{h}}{\omega|v^2+2|}\vspace{0.2cm}\\
	%	&\leq& M\dfrac{\dg}{\omega}\left(\dfrac{1}{|\sqrt{v^2+2}|}\left|\dfrac{1}{Z_h}-\dfrac{1}{Z_0}\right|+\dfrac{|\sqrt{v^2+2}|}{8}|F(X_h)-F(X_0)|\right)\vspace{0.2cm}\\
	%	&+& M\dfrac{\dg\sqrt{h}}{\omega|v^2+2|}\vspace{0.2cm}\\
	&\leq&\dfrac{M\dg\sqrt{h}}{\omega}.
	\end{array}
	$$	
\end{proof}

\subsubsection{Proof of Proposition \ref{approxh0}}
The domain $D^u$ defined in \eqref{outerdomain} is contained in the domain $D^u_{\e}$ defined in \eqref{domainlocal}. Therefore, the restriction of the fixed point obtained in Section \ref{par0_sec} can be seen as an element of the space $\mathcal{X}_2^2$ with the same bound.

\begin{prop}\label{gammah0}
	Consider $(\gamma_{0}^{u},\theta_{0}^{u})$ and  $(\gamma_{h,0}^{u},\theta_{h,0}^{u})$ obtained in Theorems \ref{fixpoint0}  and \ref{fixpointh}, respectively, and the operator $\mathcal{G}_{\omega,h}$ given by \eqref{fixh}. Then, there exist $\e_0>0$, $h_0>0$ and a constant $M>0$
% 	independent of $h$ and $\e$ 
	such that for $0\leq h\leq h_0$ and $0<\e\leq \e_0$,
	$$\left\|\mathcal{G}_{\omega,h}(\gamma_{h,0}^{u},\theta_{h,0}^{u}) -\mathcal{G}_{\omega,h}(\gamma_{0}^{u},\theta_{0}^{u})\right\|_0\leq M\dfrac{\dg^2}{\omega} \left\|(\gamma_{h,0}^{u},\theta_{h,0}^{u}) -(\gamma_{0}^{u},\theta_{0}^{u})\right\|_0.$$
	
\end{prop}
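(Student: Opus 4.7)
My plan is to combine the factorization $\mathcal{G}_{\omega,h}=\mathcal{G}_{\omega}\circ\mathcal{F}_h$, the linearity of $\mathcal{G}_\omega$, and the estimates of Proposition \ref{lipschitzh}. By linearity,
$$\mathcal{G}_{\omega,h}(\gamma_{h,0}^{u},\theta_{h,0}^{u})-\mathcal{G}_{\omega,h}(\gamma_{0}^{u},\theta_{0}^{u})=\mathcal{G}_{\omega}(W),\qquad W:=\mathcal{F}_h(\gamma_{h,0}^{u},\theta_{h,0}^{u})-\mathcal{F}_h(\gamma_{0}^{u},\theta_{0}^{u}).$$
The first step is to decompose each component of $W$ by the standard bilinear trick: writing $\eta_h^{\,h}=\eta_h(v,\gamma_{h,0}^{u},\theta_{h,0}^{u})$, $\eta_h^{\,0}=\eta_h(v,\gamma_{0}^{u},\theta_{0}^{u})$ and $D=(\gamma_{h,0}^{u}-\gamma_{0}^{u},\theta_{h,0}^{u}-\theta_{0}^{u})$, one has
$$\pi_1 W(v)=\omega i\,(\gamma_{h,0}^{u}-\gamma_{0}^{u})(v)\,(\eta_h^{\,h}-1)+\omega i\,\gamma_{0}^{u}(v)\,(\eta_h^{\,h}-\eta_h^{\,0}),$$
and an analogous identity for $\pi_2 W$.

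Next, items (1) and (2) of Proposition \ref{lipschitzh} provide the pointwise bounds $|\eta_h^{\,h}-1|\le M\dg^2/\omega$ and $|\eta_h^{\,h}-\eta_h^{\,0}|\le M(\dg/\omega)\|D\|_0$. The a priori bound $\|\gamma_{0}^{u}\|_0\le M\dg/\omega^2$, which follows from Proposition \ref{fixpoint0} together with the fact that $|v^2+2|$ is bounded below by a positive constant on $D^u$, controls the second factor in the second piece; combining these yields
$$\|\pi_1 W\|_0\le M\dg^2\|D\|_0+M(\dg^2/\omega^2)\|D\|_0\le M\dg^2\|D\|_0,$$
and analogously for $\pi_2 W$, so that $\|W\|_0\le M\dg^2\|D\|_0$. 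Moreover, since $\gamma_{0}^{u},\gamma_{h,0}^{u}\in\mathcal{X}_{2}^{2}$, each factor in $W$ inherits quadratic decay in $v$, producing a companion estimate for $W$ in $\|\cdot\|_2$ whose prefactor is again $\dg^2\|D\|_0$ once one uses that $|\gamma_{0}^{u}(v)|\le\|\gamma_{0}^{u}\|_2/|v^2+2|$.

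Finally, Proposition \ref{gomegah} gives the smoothing estimate $\|\mathcal{G}_\omega(W)\|_2\le(M/\omega)\|W\|_2$, and the continuous inclusion $\|\cdot\|_0\le C\|\cdot\|_2$ on $D^u$ then delivers the claimed bound $\|\mathcal{G}_\omega(W)\|_0\le M(\dg^2/\omega)\|D\|_0$. The main obstacle is precisely this norm transfer: the sup norm $\|\cdot\|_0$ in which the statement is phrased does not by itself control the integral defining $\mathcal{G}_\omega$, so one cannot simply chain the $\|\cdot\|_0$ Lipschitz bound for $\mathcal{F}_h$ with a $\|\cdot\|_0\to\|\cdot\|_0$ version of $\mathcal{G}_\omega$. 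The bilinear splitting above is designed so that the factor in $W$ that is controlled only through $\|D\|_0$ (namely $\eta_h^{\,h}-\eta_h^{\,0}$) is multiplied by $\gamma_{0}^{u}$, whose $\mathcal{X}_2$-norm is of order $\dg/\omega^2$ and supplies both the decay required by Proposition \ref{gomegah} and the smallness needed to close the estimate with constant $M\dg^2/\omega<1$.
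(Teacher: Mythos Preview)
Your bilinear decomposition of $W$ and the bound $\|W\|_0\le M\dg^2\|D\|_0$ are correct and match the paper. The gap is in the claimed companion estimate $\|W\|_2\le M\dg^2\|D\|_0$. In the first piece $\omega i\,(\gamma_{h,0}^{u}-\gamma_{0}^{u})(\eta_h^{\,h}-1)$, the factor $\eta_h^{\,h}-1$ does \emph{not} decay as $|v|\to\infty$: the contribution $\tfrac{4\dg^2}{\Omega\omega}\bigl(F(X_h)/Z_h\bigr)^2$ in \eqref{alphah1d} is merely bounded, since $F(X_h)$ and $Z_h$ both behave like $1/|v|$ and their ratio tends to a nonzero constant. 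Hence the quadratic decay required by the $\|\cdot\|_2$ norm must come entirely from $\gamma_{h,0}^{u}-\gamma_{0}^{u}$, which produces a prefactor $\|D\|_2$, not $\|D\|_0$. Since $D^u$ is unbounded there is no inequality $\|D\|_2\le C\|D\|_0$, and the argument does not close as stated.

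The paper avoids the $\|\cdot\|_2$ detour altogether. After establishing $\|W\|_0\le M\dg^2\|D\|_0$, it estimates $\mathcal{G}_\omega(W)$ directly by rotating the contour of integration: with $s=e^{-i\bg}\xi$ the oscillatory factor becomes $|e^{\omega i e^{-i\bg}\xi}|=e^{\omega\sin(\bg)\xi}$, which is integrable over $(-\infty,0]$ with integral $1/(\omega\sin\bg)$. The tilted ray emanating from any $v\in D^u$ stays in $D^u$ by the very definition of that domain, so analyticity together with the sup bound $\|W\|_0$ suffices; membership of $W$ in $\mathcal{X}_2^2$ is invoked only to justify the contour deformation, not for the final estimate. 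This yields $\|\mathcal{G}_\omega(W)\|_0\le(M/\omega)\|W\|_0$ directly, and hence the proposition.
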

\begin{proof}
	By Proposition \ref{lipschitzh}, we have
	$$\left|\eta_h(v,\gamma_{h,0}^{u},\theta_{h,0}^{u})-\eta_h(v,\gamma_{0}^{u},\theta_{0}^{u})\right|\leq M\dfrac{\dg}{\omega}\left\|(\gamma_{h,0}^{u},\theta_{h,0}^{u})-(\gamma_{0}^{u},\theta_{0}^{u})\right\|_0.$$
	Thus, using the expression of $\mathcal{F}_h$ in \eqref{Fh} and Proposition \ref{lipschitzh}, 
	$$
	\begin{array}{lcl}
	\left\|\pi_1(\mathcal{F}_h(\gamma_{h,0}^{u},\theta_{h,0}^{u})-\mathcal{F}_h(\gamma_{0}^{u},\theta_{0}^{u}))\right\|_0 &\leq &\omega\left\| \eta_h(v,\gamma_{h,0}^{u},\theta_{h,0}^{u})-1\right\|_0\left\|\gamma_{h,0}^{u}-\gamma_{0}^{u}\right\|_0\vspace{0.2cm}\\
	&&+\omega\left\|\gamma_{0}^{u}\right\|_0\left\| \eta_h(v,\gamma_{h,0}^{u},\theta_{h,0}^{u})-\eta_h(v,\gamma_{0}^{u},\theta_{0}^{u})\right\|_0\vspace{0.2cm}\\	
	&\leq &M\dg^2\left\|\gamma_{h,0}^{u}-\gamma_{0}^{u}\right\|_0\vspace{0.2cm}\\
	&&+M\dg\left\|\gamma_{0}^{u}\right\|_2\left\|(\gamma_{h,0}^{u},\theta_{h,0}^{u})-(\gamma_{0}^{u},\theta_{0}^{u})\right\|_0\vspace{0.2cm}\\	
	&\leq &M\left(\dg^2+\dfrac{\dg^2}{\omega^2}\right)\left\|(\gamma_{h,0}^{u},\theta_{h,0}^{u})-(\gamma_{0}^{u},\theta_{0}^{u})\right\|_0.
	\end{array}
	$$
The same bound can be obtained for the second coordinate of $\mathcal{F}_h$. Thus	
	$$\left\|\mathcal{F}_h(\gamma_{h,0}^{u},\theta_{h,0}^{u})-\mathcal{F}_h(\gamma_{0}^{u},\theta_{0}^{u})\right\|_0 \leq M\dg^2\left\|(\gamma_{h,0}^{u},\theta_{h,0}^{u})-(\gamma_{0}^{u},\theta_{0}^{u})\right\|_0.$$
	Now, denote $\Delta_h^j=\pi_j\left(\mathcal{F}_h(\gamma_{h,0}^{u},\theta_{h,0}^{u})-\mathcal{F}_h(\gamma_{0}^{u},\theta_{0}^{u})\right)$, $j=1,2$, and $\Delta_h=(\Delta_h^1,\Delta_h^2)$. Then,
	$$
	\begin{array}{lcl}
	\left|\pi_1\left(\mathcal{G}_{\omega,h}(\gamma_{h,0}^{u},\theta_{h,0}^{u}) -\mathcal{G}_{\omega,h}(\gamma_{0}^{u},\theta_{0}^{u})\right)(v)\right|&=&\left|\displaystyle\int_{-\infty}^0e^{\omega is}\Delta_h^1(s+v)ds\right|.
	\end{array}
	$$	
	Since $\Delta_h\in\mathcal{X}_2^2$, we can  change the path of integration to  obtain
	$$
	\begin{array}{lcl}
	\left|\displaystyle\int_{-\infty}^0e^{\omega is}\Delta_h^1(s+v)ds\right|&=&\left|\displaystyle\int_{-\infty}^0e^{\omega ie^{-i\bg}\xi}\Delta_h^1(e^{-i\bg}\xi+v)e^{i\bg}d\xi\right|\vspace{0.2cm}\\
	&\leq&\displaystyle\int_{-\infty}^0e^{\omega\sin(\bg)\xi}|\Delta_h^1(e^{-i\bg}\xi+v)|d\xi\vspace{0.2cm}\\
	&\leq&\|\Delta_h\|_0\displaystyle\int_{-\infty}^0e^{\omega\sin(\bg)\xi}d\xi\vspace{0.2cm}\\
	&\leq&\dfrac{M}{\omega}\|\Delta_h\|_0.
	\end{array}
	$$
	The same argument holds for the second coordinate of $\mathcal{G}_{\omega,h}(\gamma_{h,0}^{u},\theta_{h,0}^{u}) -\mathcal{G}_{\omega,h}(\gamma_{0}^{u},\theta_{0}^{u})$.
\end{proof}

\begin{lemma}\label{Fhcompacto}
	Let $\mathcal{F}_0$ and $\mathcal{F}_h$ be given in \eqref{F0} and \eqref{Fh}, respectively, and consider the functions $(\gamma_{0}^{u},\theta_{0}^{u})$ obtained in Theorem \ref{fixpoint0}. Given $M_0>0$ fixed, there exist $\e_0$, $h_0>0$ and a constant $M>0$
% 	independent of $h$ and $\e$ 
	such that for $0\leq h\leq h_0$, $0<\e\leq \e_0$ and  $v\in D^u$ with $|h^{1/4}v|\leq M_0$, 
	$$\left|\pi_j\circ\mathcal{F}_h(\gamma_{0}^{u},\theta_{0}^{u})(v)- \pi_j\circ\mathcal{F}_0(\gamma_{0}^{u},\theta_{0}^{u})(v)\right|\leq \dfrac{M\dg\sqrt{h}}{\omega|v^2+2|},\quad  j=1,2.$$
	\end{lemma}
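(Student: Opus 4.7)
The plan is to subtract $\mathcal{F}_0(\gamma_0^u,\theta_0^u)$ from $\mathcal{F}_h(\gamma_0^u,\theta_0^u)$ componentwise and then bound the two natural pieces that appear using the technical results already established. From the definitions \eqref{F0} and \eqref{Fh}, for $j=1$ I would write
\[
\pi_1\circ\mathcal{F}_h(\gamma_0^u,\theta_0^u)(v)-\pi_1\circ\mathcal{F}_0(\gamma_0^u,\theta_0^u)(v)
=\omega i\,\gamma_0^u(v)\bigl[\eta_h(v,\gamma_0^u,\theta_0^u)-\eta_0(v,\gamma_0^u,\theta_0^u)\bigr]
-\bigl[(Q^h)'(v)-(Q^0)'(v)\bigr],
\]
and an entirely analogous identity for $j=2$ with $\gamma_0^u$ replaced by $\theta_0^u$ (and an overall sign).

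To bound the first term, recall that Proposition \ref{fixpoint0} produces $(\gamma_0^u,\theta_0^u)\in\mathcal{X}_{2,\nu}^2$ with $\|(\gamma_0^u,\theta_0^u)\|_{2,\nu}\leq M\dg/\omega^2$, which in particular yields $|\gamma_0^u(v)|,|\theta_0^u(v)|\leq M\dg/(\omega^2|v^2+2|)$ on the portion of $D^u$ with $|h^{1/4}v|\leq M_0$ (this is a compact-in-$h^{1/4}v$ region where the weight $|v^2+2|$ is controlled). Combining this with Lemma \ref{alphacompacto}, which gives $|\eta_h-\eta_0|\leq M\dg\sqrt{h}/\omega$ on the same set, I obtain
\[
\left|\omega i\,\gamma_0^u(v)(\eta_h-\eta_0)\right|\leq \omega\cdot\frac{M\dg}{\omega^2|v^2+2|}\cdot\frac{M\dg\sqrt{h}}{\omega}
=\frac{M\dg^2\sqrt{h}}{\omega^2|v^2+2|},
\]
which is $\er(\e)$ times the target bound since $\dg/\omega=\er(\e^{5/4})$.

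For the second piece, Lemma \ref{Fcompacto}(4) gives directly
\[
\bigl|(Q^h)'(v)-(Q^0)'(v)\bigr|\leq \frac{M\dg\sqrt{h}}{\omega|v^2+2|}.
\]
Adding the two estimates, the second dominates and produces exactly the desired bound $M\dg\sqrt{h}/(\omega|v^2+2|)$; the argument for $j=2$ is identical. The main (and only) subtlety is ensuring that all the estimates of Lemmas \ref{Fcompacto} and \ref{alphacompacto}, which were proved on $\{|h^{1/4}v|\leq M_0\}\cap D^u$, apply simultaneously on the same subdomain, which is guaranteed by choosing $h_0$ and $\e_0$ small enough so that the hypotheses of those lemmas (and of Proposition \ref{fixpoint0}) all hold with the fixed $M_0$. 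I expect no genuine obstacle here: the lemma is a straightforward bookkeeping of previously established $h$-comparison estimates.
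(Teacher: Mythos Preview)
Your proof is correct and follows essentially the same route as the paper: both decompose the difference into the $(Q^h)'-(Q^0)'$ term (bounded via Lemma~\ref{Fcompacto}(4)) and the $\omega\gamma_0^u(\eta_h-\eta_0)$ term (bounded via Lemma~\ref{alphacompacto} together with the size of $\gamma_0^u$), and then observe that the first term dominates. The only cosmetic point is that the bound $|\gamma_0^u(v)|\le M\dg/(\omega^2|v^2+2|)$ on $D^u$ is most cleanly obtained by viewing $(\gamma_0^u,\theta_0^u)$ as an element of $\mathcal{X}_2^2$ after restriction from $D^u_\e$ to $D^u$, as the paper notes just before this lemma.
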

\begin{proof}
	Lemmas \ref{Fcompacto} and \ref{alphacompacto} imply
	$$
	\begin{array}{lcl}
	\left|\pi_1(\mathcal{F}_h(\gamma_{0}^{u},\theta_{0}^{u})(v)- \mathcal{F}_0(\gamma_{0}^{u},\theta_{0}^{u}))(v)\right| &\leq& |(Q^h)'(v)-(Q^0)'(v)|\vspace{0.2cm}\\
	&&+\omega\left|\gamma_{0}^{u}\right| \left|\eta_h(v, \gamma_{0}^{u},\theta_{0}^{u})- \eta_0(v, \gamma_{0}^{u},\theta_{0}^{u})\right|\vspace{0.2cm}\\
	%	&\leq& M\dfrac{\dg\sqrt{h}}{\omega|v^2+2|}+M\dfrac{\dg}{\omega|v^2+2|} \dfrac{\dg\sqrt{h}}{\omega}\vspace{0.2cm}\\
	&\leq& M\dfrac{\dg\sqrt{h}}{\omega|v^2+2|}.
	\end{array}
	$$
	The same holds for the second coordinate.	
\end{proof}

\begin{prop}\label{gamma00}
	Consider the functions $(\gamma_{0}^{u},\theta_{0}^{u})$ obtained in Proposition \ref{fixpoint0} and the operators $\mathcal{G}_{\omega,0}$ and $\mathcal{G}_{\omega,h}$  given in \eqref{fix0} and \eqref{fixh}, respectively. There exist $\e_0>$, $h_0>0$ and a constant $M>0$ such that, for $0<\e\leq\e_0$ and  $0<h\leq h_0$
	$$\left\|\mathcal{G}_{\omega,h}(\gamma_{0}^{u},\theta_{0}^{u}) -\mathcal{G}_{\omega,0}(\gamma_{0}^{u},\theta_{0}^{u})\right\|_0\leq \dfrac{M\dg\sqrt{h}}{\omega^2}.$$
	\end{prop}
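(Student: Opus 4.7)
The plan is to exploit the linearity of $\mathcal{G}_\omega$ to reduce the statement to a pointwise control of
$$
\Delta_h:=\mathcal{F}_h(\gamma_0^u,\theta_0^u)-\mathcal{F}_0(\gamma_0^u,\theta_0^u)
$$
on $D^u$, and then recover the extra factor $1/\omega$ by the same oscillatory integral deformation used at the end of the proof of Proposition \ref{gammah0}. Since $\mathcal{G}_\omega$ is linear,
$$
\mathcal{G}_{\omega,h}(\gamma_0^u,\theta_0^u)-\mathcal{G}_{\omega,0}(\gamma_0^u,\theta_0^u)=\mathcal{G}_\omega(\Delta_h),
$$
so everything reduces to proving the uniform bound $|\pi_j\Delta_h(v)|\leq M\dg\sqrt{h}/\omega$ on $D^u$.

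To prove this bound I would split $D^u$ into the inner region $D^u_{\mathrm{in}}:=\{v\in D^u:|h^{1/4}v|\leq M_0\}$ and the outer region $D^u_{\mathrm{out}}:=D^u\setminus D^u_{\mathrm{in}}$. In $D^u_{\mathrm{in}}$, Lemma \ref{Fhcompacto} yields directly $|\pi_j\Delta_h(v)|\leq M\dg\sqrt{h}/(\omega|v^2+2|)$; since $|v^2+2|$ is bounded below on $D^u$, this gives $|\pi_j\Delta_h(v)|\leq M'\dg\sqrt{h}/\omega$ in $D^u_{\mathrm{in}}$. In $D^u_{\mathrm{out}}$ the Taylor approximation behind Lemma \ref{Fhcompacto} breaks down, and I would instead use the separate (but larger) bounds
$$
|\pi_j\mathcal{F}_h(\gamma_0^u,\theta_0^u)(v)|,\;|\pi_j\mathcal{F}_0(\gamma_0^u,\theta_0^u)(v)|\leq \frac{M\dg}{\omega|v^2+2|},
$$
which follow from Propositions \ref{lipschitz0} and \ref{lipschitzh} (item (1) in each), the control $\|(\gamma_0^u,\theta_0^u)\|_2\leq M\dg/\omega^2$ on $D^u$, and the pointwise bounds on $(Q^0)'$ and $(Q^h)'$ from Lemma \ref{Fs}. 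The defining condition $|v|\geq M_0 h^{-1/4}$ of $D^u_{\mathrm{out}}$ implies $|v^2+2|^{-1}\leq 2\sqrt{h}/M_0^2$ (for $h$ small enough), so that $|\pi_j\Delta_h(v)|\leq M'\dg\sqrt{h}/\omega$ also on $D^u_{\mathrm{out}}$. Combining the two regions gives the claimed uniform bound on $D^u$.

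It remains to gain the extra factor $1/\omega$ from the integral operator $\mathcal{G}_\omega$. For $v\in D^u$ I would deform the contour of the first component, for a fixed $0<\beta<\pi/4$ and $s\in(-\infty,0]$, to $r=v+e^{i\beta}s$, so that $|e^{i\omega(v-r)}|=e^{\omega s\sin\beta}$ is integrable on $s\leq 0$; the path stays in $D^u$ and $\Delta_h^1$ is analytic there. The uniform bound from the previous paragraph then gives
$$
|\pi_1\mathcal{G}_\omega(\Delta_h)(v)|\leq \frac{M\dg\sqrt{h}}{\omega}\int_{-\infty}^0 e^{\omega s\sin\beta}\,ds=\frac{M\dg\sqrt{h}}{\omega^2\sin\beta}.
$$
The analogous estimate for $\pi_2$ uses the mirror deformation $r=v+e^{-i\beta}s$ into the upper half-plane. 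Taking the supremum in $v\in D^u$ yields $\|\mathcal{G}_\omega(\Delta_h)\|_0\leq M\dg\sqrt{h}/\omega^2$, as desired.

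The main obstacle is the outer region $D^u_{\mathrm{out}}$: the approximation in Lemma \ref{Fhcompacto} is essentially a Taylor expansion in $\sqrt{h}v$ and becomes worthless precisely where $\sqrt{h}|v|$ is of order one. The cheap bound $|\pi_j\Delta_h|\leq|\pi_j\mathcal{F}_h|+|\pi_j\mathcal{F}_0|$ does not produce any $\sqrt{h}$ by itself, so the only way to conclude in this region is to trade the decay $|v^2+2|^{-1}$ for the needed $\sqrt{h}$ via the inequality $|v|\geq M_0 h^{-1/4}$. Once this trade-off is made the oscillatory integral argument of Proposition \ref{gammah0} goes through verbatim.
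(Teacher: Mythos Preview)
Your argument is correct, and it is in fact a cleaner organization than the paper's own proof. Both proofs rest on the same ingredients --- Lemma \ref{Fhcompacto} in the region $|h^{1/4}v|\le M_0$, the $\|\cdot\|_2$-decay of $\mathcal{F}_0(\gamma_0^u,\theta_0^u)$ and $\mathcal{F}_h(\gamma_0^u,\theta_0^u)$ to trade $|v^2+2|^{-1}$ for $\sqrt{h}$ in the region $|h^{1/4}v|\ge M_0$, and the tilted contour to gain the extra factor $1/\omega$ --- but they are assembled differently. The paper splits according to the \emph{evaluation point} $v$: for $|h^{1/4}v|\ge 1$ it bounds the two output functions $\mathcal{G}_{\omega,h}(\gamma_0^u,\theta_0^u)$ and $\mathcal{G}_{\omega,0}(\gamma_0^u,\theta_0^u)$ separately in $\|\cdot\|_2$ and trades decay for $\sqrt{h}$; for $|h^{1/4}v|<1$ it further splits the integration path at the circle $|r|=h^{-1/4}$, identifies the tail with the value of the difference at an outer point $v_0^*(v)$, and uses Lemma \ref{Fhcompacto} on the remaining finite piece. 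You instead establish the uniform bound $|\Delta_h|\le M\dg\sqrt{h}/\omega$ on all of $D^u$ first (splitting the argument of $\Delta_h$, not the evaluation point) and then integrate once. Your route avoids the path-splitting trick and the auxiliary point $v_0^*(v)$, at the modest cost of checking the separate decay $|\pi_j\mathcal{F}_h(\gamma_0^u,\theta_0^u)(v)|,\,|\pi_j\mathcal{F}_0(\gamma_0^u,\theta_0^u)(v)|\le M\dg/(\omega|v^2+2|)$, which indeed follows from Lemma \ref{Fs} and items (1) of Propositions \ref{lipschitz0} and \ref{lipschitzh}.
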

\begin{proof}
	It follows from the proof of Proposition \ref{fixpointh} that the Lipschitz constant of $\mathcal{G}_{\omega,h}$ in a ball $\mathcal{B}_0(K\dg/\omega^2)$, for some $K>0$ fixed, satisfies $\mathrm{Lip}(\mathcal{G}_{\omega,h})\leq M\dg^2/\omega$. Moreover, $\|\mathcal{G}_{\omega,h}(0,0)\|_2\leq M\dg/\omega^2$ and $\|(\gamma_{0}^{u},\theta_{0}^{u})\|_2\leq M\dg/\omega^2$. Thus
	$$
	\|\mathcal{G}_{\omega,h}(\gamma_{0}^{u},\theta_{0}^{u})\|_2 \leq \|\mathcal{G}_{\omega,h}(\gamma_{0}^{u},\theta_{0}^{u})-\mathcal{G}_{\omega,h}(0,0)\|_2+\|\mathcal{G}_{\omega,h}(0,0)\|_2
	\leq M\dfrac{\dg}{\omega^2}.
	$$
	Moreover, $\|\mathcal{G}_{\omega,0}(\gamma_{0}^{u},\theta_{0}^{u})\|_2=\|(\gamma_{0}^{u},\theta_{0}^{u})\|_2\leq M\dg/\omega^2$.
	
	Let $v\in D^u$ and first assume that $|h^{1/4}v|\geq 1$, hence
	$$
	\begin{array}{lcl}
	\left|\pi_j(\mathcal{G}_{\omega,h}(\gamma_{0}^{u},\theta_{0}^{u})(v) -\mathcal{G}_{\omega,0}(\gamma_{0}^{u},\theta_{0}^{u})(v))\right|&\leq&\dfrac{\left\|\mathcal{G}_{\omega,h}(\gamma_{0}^{u},\theta_{0}^{u})\right\|_2}{|v^2+2|}+\dfrac{\left\|\mathcal{G}_{\omega,0}(\gamma_{0}^{u},\theta_{0}^{u})\right\|_2}{|v^2+2|}\vspace{0.2cm}\\
%	&\leq& M\dfrac{\dg}{\omega^2|v^2+2|}\vspace{0.2cm}\\
	&\leq& M\dfrac{\dg}{\omega^2||v|^2-2|}\vspace{0.2cm}\\	
	&\leq& M\dfrac{\dg}{\omega^2(1/\sqrt{h}-2)}\vspace{0.2cm}\\		
	&\leq& M\dfrac{\dg}{\omega^2}\sqrt{h},		
	\end{array}
	$$
	for $h>0$ sufficiently small, $j=1,2$.
	
	Now, assume that $|h^{1/4}v|<1$, and denote $\Delta_h^j=\pi_j(\mathcal{F}_{h}(\gamma_{0}^{u},\theta_{0}^{u})-\mathcal{F}_{0}(\gamma_{0}^{u},\theta_{0}^{u}))$, $j=1,2$.
	
	Consider the  path $s=e^{-i\beta}\xi$ (since $\Delta_h\in\mathcal{X}_2^2$) and let $\xi_0(v)\in\R$ be such that $v_0(v)=v+e^{-i\bg}\xi_0(v)$ is the unique point of intersecion between the curve $\gamma(\xi)=v+e^{-i\bg}\xi$ and the circle $S_h$ of radius $h^{-1/4}$ centered at the origin.
	$$
	\begin{array}{lcl}
	\left|\pi_1(\mathcal{G}_{\omega,h}(\gamma_{0}^{u},\theta_{0}^{u}) -\mathcal{G}_{\omega,0}(\gamma_{0}^{u},\theta_{0}^{u}))(v)\right|&=&\left|\displaystyle\int_{-\infty}^0e^{\omega is}\Delta_h^1(s+v)ds\right|\vspace{0.2cm}\\
	&=&\left|\displaystyle\int_{-\infty}^0e^{-\omega ie^{-i\bg}\xi}\Delta_h^1(v+e^{-i\bg}\xi)e^{-i\bg}d\xi\right|\vspace{0.2cm}\\
	&\leq&\left|\displaystyle\int_{-\infty}^{\xi_0(v)}e^{-\omega ie^{-i\bg}\xi}\Delta_h^1(v+e^{-i\bg}\xi)e^{-i\bg}d\xi\right|\vspace{0.2cm}\\
	&&+\left|\displaystyle\int_{\xi_0(v)}^0e^{-\omega ie^{-i\bg}\xi}\Delta_h^1(v+e^{-i\bg}\xi)e^{-i\bg}d\xi\right|\vspace{0.2cm}\\
	\end{array}$$
	
	Notice that the points in the path $\gamma(\xi)=v+e^{-i\bg}\xi$ satisfy that $|\gamma(\xi)h^{1/4}|\geq 1$ for every $\xi\leq \xi_0(v)$ and $|\gamma(\xi)h^{1/4}|< 1$ for every $\xi_0(v)<\xi<0$. Also, let $v_0^*(v)=e^{i\beta}v_0(v)$, and notice that $\Ip(v_0^*(v))=\Ip(v)$ and $|h^{1/4}v_0^*(v)|=1$.
	
	\begin{figure}[H]	\label{path}
		\centering
		\begin{overpic}[width=7cm]{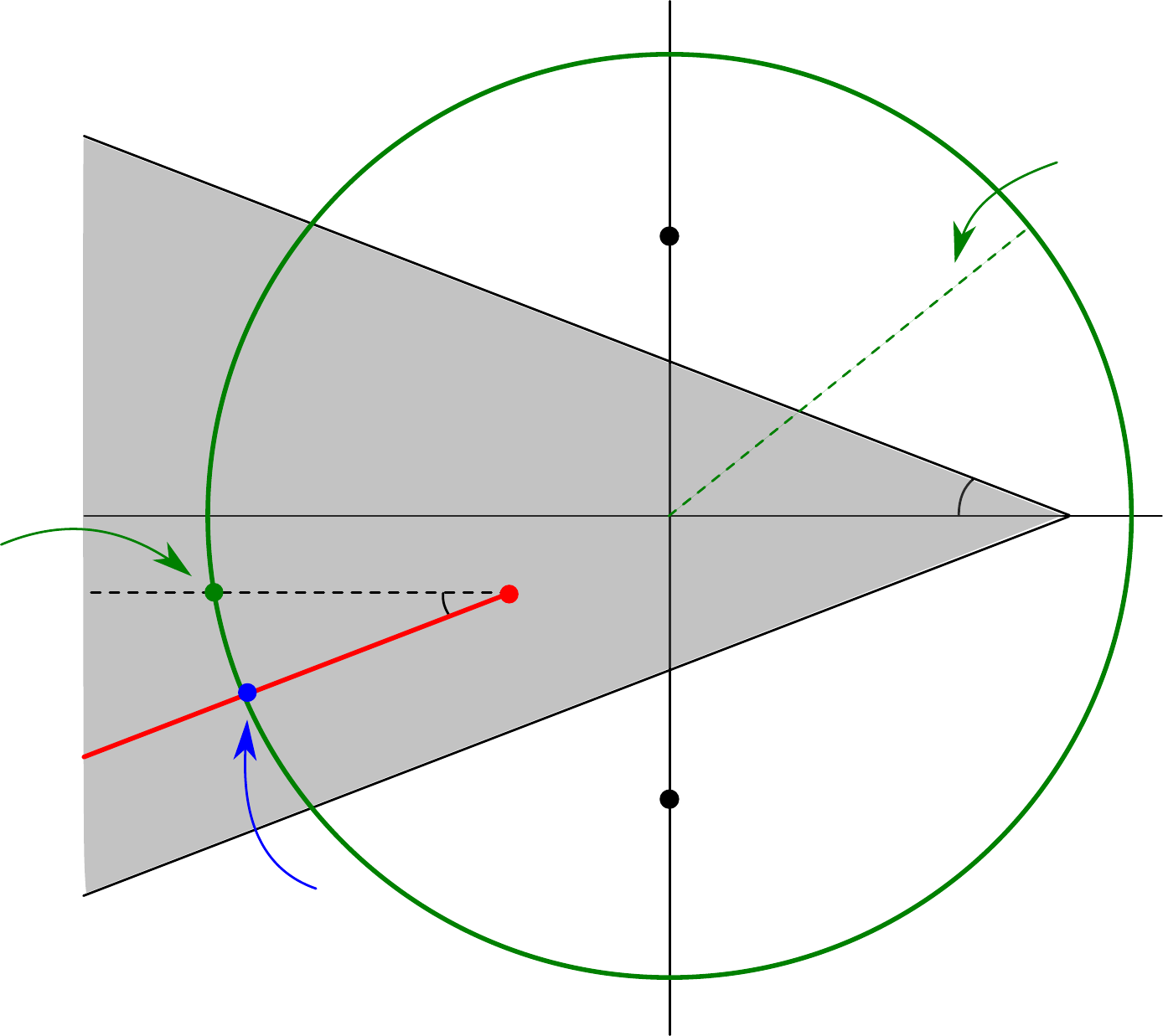}
			%	\begin{overpic}[grid,tics=5,width=7cm]{Figures/path.pdf}								
			\put(91,75){{\footnotesize $h^{-1/4}$}}			
			\put(70,84){{\footnotesize $S_h$}}	
			\put(8,21){{\footnotesize $\gamma(\xi)$}}	
			\put(-10,40){{\footnotesize $v_0^*(v)$}}	
			\put(25.5,9){{\footnotesize $v_0(v)$}}		
			\put(46,36.5){{\footnotesize $v$}}			
			\put(35,35.5){{\scriptsize $\bg$}}			
			\put(78.5,45.5){{\footnotesize $\bg$}}																				
		\end{overpic}
		\bigskip
		\caption{ Definition of the points $v_0(v)$ and $v_0^*(v)$. }	
	\end{figure} 
	
	Thus the first integral satisfies that
	$$
	\begin{array}{lcl}
	\left|\displaystyle\int_{-\infty}^{\xi_0(v)}e^{-\omega ie^{-i\bg}\xi}\Delta_h^1(v+e^{-i\bg}\xi)e^{-i\bg}d\xi\right|&=& \left|\displaystyle\int_{-\infty}^{v_0^*(v)}e^{\omega i(v-r)}\Delta_h^1(r)dr\right|\vspace{0.2cm}\\
	&=& \left|e^{\omega i(v-v_0^*(v))}\displaystyle\int_{-\infty}^{v_0^*(v)}e^{\omega i(v_0^*(v)-r)}\Delta_h^1(r)dr\right|\vspace{0.2cm}\\
	&=& \left|\pi_1(\mathcal{G}_{\omega,h}(\gamma_{0}^{u},\theta_{0}^{u})(v_0^*(v)) -\mathcal{G}_{\omega,0}(\gamma_{0}^{u},\theta_{0}^{u})(v_0^*(v)))\right|\vspace{0.2cm}\\	
	&\leq&M\dfrac{\dg\sqrt{h}}{\omega^2}.
	\end{array}
	$$
	
	Now, since $|\gamma(\xi)h^{1/4}|< 1$ for every $\xi_0(v)<\xi<0$, we can use Lemma \ref{Fhcompacto} to see that the second integral satisfies
	$$
	\begin{array}{lcl}
	\left|\displaystyle\int_{\xi_0(v)}^0e^{-\omega ie^{-i\bg}\xi}\Delta_h^1(v+e^{-i\bg}\xi)e^{-i\bg}d\xi\right| &\leq&\displaystyle\int_{\xi_0(v)}^{0}e^{\omega\sin(\bg)\xi}|\Delta_h^1(v+e^{-i\bg}\xi)|d\xi\vspace{0.2cm}\\
	&\leq&\dfrac{M\dg\sqrt{h}}{\omega}\displaystyle\int_{-\infty}^{0}e^{\omega\sin(\bg)\xi}\dfrac{1}{|(v+e^{-i\bg}\xi)^2+2|}d\xi\vspace{0.2cm}\\	
	&\leq&\dfrac{M\dg\sqrt{h}}{\omega|v^2+2|}\displaystyle\int_{-\infty}^{0}e^{\omega\sin(\bg)\xi}d\xi\vspace{0.2cm}\\	
	&\leq&\dfrac{M\dg\sqrt{h}}{\omega^2|v^2+2|}.
	\end{array}
	$$
	
	The result follows from these bounds.	
\end{proof}

Now, define $\mathcal{E}(v)=( \gamma_{h,0}^{u}(v)- \gamma_{0}^{u}(v), \theta_{h,0}^{u}(v)- \theta_{0}^{u}(v))$  
and notice that 
$$
\left(\begin{array}{lcl}
\Gamma_{h,0}^{u}(0)- \Gamma_{0}^{u}(0)\vspace{0.2cm}\\
\Theta_{h,0}^{u}(0)- \Theta_{0}^{u}(0)
\end{array}\right)= \left(\begin{array}{lcl}
Q^{h}(0)- Q^{0}(0)\vspace{0.2cm}\\
-Q^{h}(0)+ Q^{0}(0)
\end{array}\right)+ \mathcal{E}(0)^{T}.
$$
Using  \eqref{first0} and \eqref{firsth}, we have $Q^{h}(0)=Q^{0}(0)=0$. Hence, to prove Proposition \ref{approxh0}, it is enough to bound  $\|\mathcal{E}\|_0$.  Since $(\gamma_{h,0}^{u},\theta_{h,0}^{u})$ and $(\gamma_{0}^{u},\theta_{0}^{u})$ are fixed points of $\mathcal{G}_{\omega,h}$ and $\mathcal{G}_{\omega,0}$, respectively, 
$$
\begin{array}{lcl}
\mathcal{E} &=& (\gamma_{h,0}^{u},\theta_{h,0}^{u}) -(\gamma_{0}^{u},\theta_{0}^{u}) \vspace{0.2cm}\\
%&=& \mathcal{G}_{\omega,h}(\gamma_{h,0}^{u},\theta_{h,0}^{u}) -\mathcal{G}_{\omega,0}(\gamma_{0}^{u},\theta_{0}^{u}) \vspace{0.2cm}\\
&=& \mathcal{G}_{\omega,h}(\gamma_{h,0}^{u},\theta_{h,0}^{u}) -\mathcal{G}_{\omega,h}(\gamma_{0}^{u},\theta_{0}^{u})+ \mathcal{G}_{\omega,h}(\gamma_{0}^{u},\theta_{0}^{u}) -\mathcal{G}_{\omega,0}(\gamma_{0}^{u},\theta_{0}^{u}).
\end{array}
$$
It follows from Propositions \ref{gammah0} and \ref{gamma00} that
$$
\begin{array}{lcl}
\|\mathcal{E}\|_{0} &\leq& \|\mathcal{G}_{\omega,h}(\gamma_{h,0}^{u},\theta_{h,0}^{u}) -\mathcal{G}_{\omega,h}(\gamma_{0}^{u},\theta_{0}^{u}) \|_0+\| \mathcal{G}_{\omega,h}(\gamma_{0}^{u},\theta_{0}^{u}) -\mathcal{G}_{\omega,0}(\gamma_{0}^{u},\theta_{0}^{u}) \|_0\vspace{0.2cm}\\
&\leq& M\dg^2\|\mathcal{E} \|_0+ \dfrac{M\dg\sqrt{h}}{\omega^2}.
\end{array}
$$
Thus, for $\e_0$ sufficiently small, we have that $\|\mathcal{E}\|_0\leq2 \dfrac{M\dg\sqrt{h}}{\omega^2}$. This completes the proof.

\subsection{Approximation of $W_{\e}^{u}(\Lambda^{-}_{\kappa_1,\kappa_2})$ by $W_{\e}^{u}(p_0^{-})$}\label{approx3}
In this section, we obtain an approximation of $N_{\kappa_1,\kappa_2}^{u}$ by $N_{0,0}^{u}$, by approximating $N_{\kappa_1,\kappa_2}^{u}$ by $N^u_{\kappa_1,0}$ and $N^{u}_{\kappa_1,0}$ by $N_{0,0}^{u}$. 

Proceeding as for Proposition \ref{resta} and Lemma \ref{lipf}, one can obtain the next result.

\begin{prop}\label{restaf}
	Let $\Gamma_{\kappa_1,0}^u(v),$ $\Theta_{\kappa_1,0}^u(v)$ and $\Gamma_{\kappa_1,\kappa_2}^u(v,\tau),$ $\Theta_{\kappa_1,\kappa_2}^u(v,\tau)$ be given in \eqref{solh0} and \eqref{form311}, respectively. There exist  $\e_0>0$, $h_0>0$ and a constant $M>0$  such that, for $v\in D^{u}\cap\R$, $\tau\in\mathbb{T}$, $0\leq\e\leq\e_0$, $0\leq h\leq h_0$ $\kappa_1,\kappa_2\geq0$ with $\kappa_1+\kappa_2=h$,
	\begin{equation}
	\begin{array}{l}
	%	\left|Z^{s,u}(v,\tau;h)-Z_1^*(\tau;h)-Z^{s,u}(v,\tau;0)\right|\leq M\dfrac{\dg\sqrt{h}}{\omega},\vspace{0.2cm}\\	
	\left|\partial_\tau(\Gamma_{\kappa_1,\kappa_2}^{u}(v,\tau)-\Gamma_{\kappa_1,0}^{u}(v))\right|,\left|\Gamma_{\kappa_1,\kappa_2}^{u}(v,\tau)-\Gamma_{\kappa_1,0}^{u}(v)\right|\leq M\dfrac{\dg\sqrt{\kappa_2}}{\omega^{3/2}},\vspace{0.2cm}\\
	\left|\partial_\tau(\Theta_{\kappa_1,\kappa_2}^{u}(v,\tau)-\Theta_{\kappa_1,0}^{u}(v))\right|,\left|\Theta_{\kappa_1,\kappa_2}^{u}(v,\tau)-\Theta_{\kappa_1,0}^{u}(v)\right|\leq M\dfrac{\dg\sqrt{\kappa_2}}{\omega^{3/2}}.
	\end{array}
	\end{equation}
	\end{prop}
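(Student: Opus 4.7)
The plan is to mimic the proof of Proposition \ref{resta} verbatim, but with the r\^ole of the parameter $h$ played by $\kappa_2$ and the ``base'' parameterization taken to be $N^u_{\kappa_1,0}$ instead of $N^u_{0,0}$. The starting observation is that evaluating Theorem \ref{parameterization2Dk1k2} at $\kappa_2=0$ gives another parameterization of $W^u_\e(p^-_{\kappa_1})$ whose first component is $X_{\kappa_1}(v)$; by the uniqueness statement used in Proposition \ref{resta}, it must coincide with the one from Theorem \ref{parameterization1Dh}. Consequently, $z^u_{\kappa_1,0}$ is independent of $\tau$ and $(\gamma^u_{\kappa_1,0},\theta^u_{\kappa_1,0})$ agrees with the functions constructed in Section \ref{parh_sec}, so that
\[
\Gamma^u_{\kappa_1,\kappa_2}(v,\tau)-\Gamma^u_{\kappa_1,0}(v)=\gamma^u_{\kappa_1,\kappa_2}(v,\tau)-\gamma^u_{\kappa_1,0}(v),
\]
and similarly for $\Theta$.

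Next I would write both triples $(z^u_{\kappa_1,\kappa_2},\gamma^u_{\kappa_1,\kappa_2},\theta^u_{\kappa_1,\kappa_2})$ and $(z^u_{\kappa_1,0},\gamma^u_{\kappa_1,0},\theta^u_{\kappa_1,0})$ as fixed points of the operators $\overline{\mathcal{G}}_{\omega,\kappa_1,\kappa_2}=\mathcal{G}^{\kappa_1}_\omega\circ\mathcal{P}_{\kappa_1,\kappa_2}$ and $\overline{\mathcal{G}}_{\omega,\kappa_1,0}=\mathcal{G}^{\kappa_1}_\omega\circ\mathcal{P}_{\kappa_1,0}$, respectively (see \eqref{Gof},\eqref{Fof}). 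Denoting
\[
\mathcal{E}=(z^u_{\kappa_1,\kappa_2}-z^u_{\kappa_1,0},\gamma^u_{\kappa_1,\kappa_2}-\gamma^u_{\kappa_1,0},\theta^u_{\kappa_1,\kappa_2}-\theta^u_{\kappa_1,0}),
\]
I split
\[
\mathcal{E}=\overline{\mathcal{G}}_{\omega,\kappa_1,\kappa_2}(z^u_{\kappa_1,\kappa_2},\dots)-\overline{\mathcal{G}}_{\omega,\kappa_1,\kappa_2}(z^u_{\kappa_1,0},\dots)+\overline{\mathcal{G}}_{\omega,\kappa_1,\kappa_2}(z^u_{\kappa_1,0},\dots)-\overline{\mathcal{G}}_{\omega,\kappa_1,0}(z^u_{\kappa_1,0},\dots).
\]
Lemma \ref{lipf} (together with Lemma \ref{opgoh}) gives $\mathrm{Lip}(\overline{\mathcal{G}}_{\omega,\kappa_1,\kappa_2})\le 1/2$ in the ball $\mathcal{B}_0(b\dg/\omega)\subset\mathcal{Y}^3_{1,\sigma}$ for $\e_0$ small, so the first difference is absorbed by the left-hand side and one obtains
\[
\llbracket\mathcal{E}\rrbracket_{1,\sigma}\le 2\,\llbracket\mathcal{G}^{\kappa_1}_\omega\bigl(\mathcal{P}_{\kappa_1,\kappa_2}(z^u_{\kappa_1,0},\dots)-\mathcal{P}_{\kappa_1,0}(z^u_{\kappa_1,0},\dots)\bigr)\rrbracket_{1,\sigma}.
\]
By Lemma \ref{opgoh}(2) it then suffices to bound $\|\mathcal{P}_{\kappa_1,\kappa_2}(z^u_{\kappa_1,0},\dots)-\mathcal{P}_{\kappa_1,0}(z^u_{\kappa_1,0},\dots)\|_{2,\sigma}$ by $M\dg\sqrt{\kappa_2}/\omega^{3/2}$.

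To get this last bound I would inspect the defining formulas \eqref{f1f}--\eqref{f2f} and \eqref{Fof}: the difference $\mathcal{P}_{\kappa_1,\kappa_2}-\mathcal{P}_{\kappa_1,0}$ only involves the terms containing $Z_{\kappa_1,\kappa_2}(v,\tau)$ (and its $v$-derivative), because $F'(X_{\kappa_1})$, $Q^{\kappa_1}$, $(Q^{\kappa_1})'$, $Z'_{\kappa_1}/Z_{\kappa_1}$ do not depend on $\kappa_2$. Since $Z_{\kappa_1,\kappa_2}=\frac{\dg}{\omega\sqrt{2\Omega}}F'(X_{\kappa_1}(v))\frac{\Gamma_{\kappa_2}(\tau)+\Theta_{\kappa_2}(\tau)}{2}$ and $\Gamma_{\kappa_2},\Theta_{\kappa_2}$ have modulus $\sqrt{2\kappa_2/\omega}$, Lemma \ref{properties} and the estimates of Lemma \ref{techf} yield $\|Z_{\kappa_1,\kappa_2}\|_{1,\sigma},\|\partial_vZ_{\kappa_1,\kappa_2}\|_{2,\sigma}\le M\dg\sqrt{\kappa_2}/\omega^{3/2}$ (exactly the estimate used in the proof of Lemma \ref{lipf} with $h$ replaced by $\kappa_2$). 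Since the already-controlled bound $\llbracket(z^u_{\kappa_1,0},\gamma^u_{\kappa_1,0},\theta^u_{\kappa_1,0})\rrbracket_{1,\sigma}\le M\dg/\omega$ keeps the nonlinear terms quadratically small, a direct term-by-term inspection gives
\[
\|\mathcal{P}_{\kappa_1,\kappa_2}(z^u_{\kappa_1,0},\dots)-\mathcal{P}_{\kappa_1,0}(z^u_{\kappa_1,0},\dots)\|_{2,\sigma}\le M\dfrac{\dg\sqrt{\kappa_2}}{\omega^{3/2}}.
\]
Combining this with the Lipschitz reduction above yields $\llbracket\mathcal{E}\rrbracket_{1,\sigma}\le M\dg\sqrt{\kappa_2}/\omega^{3/2}$, and since $\llbracket\cdot\rrbracket_{1,\sigma}$ controls both the function and its $\tau$-derivative in the sup-norm on $D^u\cap\mathbb{R}\times\mathbb{T}$, the pointwise estimates claimed in the statement follow.

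The main technical nuisance is the bookkeeping of every term in $\mathcal{P}_{\kappa_1,\kappa_2}-\mathcal{P}_{\kappa_1,0}$: the cross terms of the form $\frac{Z_{\kappa_1,\kappa_2}}{Z_{\kappa_1}}\partial_v\gamma$ and $\frac{Z_{\kappa_1,\kappa_2}\partial_vZ_{\kappa_1,\kappa_2}}{Z_{\kappa_1}}$ are where one has to be careful, but thanks to $1/Z_{\kappa_1}$ being bounded on $D^u$ and to the quadratic smallness of $Z_{\kappa_1,\kappa_2}$ in the worst term, no new ideas beyond those of Lemma \ref{lipf} and Proposition \ref{resta} are required; the only change is that the small parameter weighting the difference is $\sqrt{\kappa_2}$ instead of $\sqrt{h}$.
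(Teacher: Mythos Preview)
Your proposal is correct and follows exactly the approach the paper indicates (``Proceeding as for Proposition \ref{resta} and Lemma \ref{lipf}''): you reduce to the fixed-point identity, use the Lipschitz bound from Lemma \ref{lipf} to absorb one difference, and then bound $\|\mathcal{P}_{\kappa_1,\kappa_2}-\mathcal{P}_{\kappa_1,0}\|_{2,\sigma}$ by $M\dg\sqrt{\kappa_2}/\omega^{3/2}$ via the $Z_{\kappa_1,\kappa_2}$ estimates. The identification of the $\kappa_2=0$ parameterization with the one of Theorem \ref{parameterization1Dh} by matching first coordinates is exactly the argument of Proposition \ref{resta}, so nothing further is needed.
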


%\begin{prop}\label{restaf}
%	Let $$N_{\kappa_1,\kappa_2}^{u}(v,\tau)=(X_{\kappa_1}(v),Z_{\kappa_1}(v)+Z^{\kappa_1,\kappa_2,u}_{2D}(v,\tau),\Gamma_{0}^{\kappa_2}(\tau)+\Gamma^{\kappa_1,\kappa_2,u}_{2D}(v,\tau),\Theta_{0}^{\kappa_2}(\tau)+ \Theta^{\kappa_1,\kappa_2,u}_{2D}(v,\tau))$$ and 
%	$$N_{1D}^{\kappa_1,u}(v)=(X_{\kappa_1}(v),Z_{1D}^{\kappa_1,u}(v),\Gamma_{1D}^{\kappa_1,u}(v),\Theta_{1D}^{\kappa_1,u}(v))$$ be the parameterizations of $W^{u}_{\e}(\Lambda_h^{-}(\kappa_1,\kappa_2))$ and $W^u_{\dg}(p_{\kappa_1}^-)$ given by Theorems \ref{parameterization2Dk1k2} and \ref{parameterization1Dh}, respectively.
%	There exist $h_0>0$, $\e_0>0$ and a constant $M$ independent of $\e$, $\kappa_1$ and $\kappa_2$ such that
%	\begin{equation}
%	\begin{array}{l}
%	%	\left|Z^{s,u}(v,\tau;h)-Z_1^*(\tau;h)-Z^{s,u}(v,\tau;0)\right|\leq M\dfrac{\dg\sqrt{h}}{\omega},\vspace{0.2cm}\\	
%	\left|\partial_\tau(\Gamma_{2D}^{\kappa_1,\kappa_2,u}(v,\tau)-\Gamma_{1D}^{\kappa_1,u}(v))\right|,\left|\Gamma_{2D}^{\kappa_1,\kappa_2,u}(v,\tau)-\Gamma_{1D}^{\kappa_1,u}(v)\right|\leq M\dfrac{\dg\sqrt{\kappa_2}}{\omega^{3/2}},\vspace{0.2cm}\\
%	\left|\partial_\tau(\Theta_{2D}^{\kappa_1,\kappa_2,u}(v,\tau)-\Theta_{1D}^{\kappa_1,u}(v))\right|,\left|\Theta_{2D}^{\kappa_1,\kappa_2,u}(v,\tau)-\Theta_{1D}^{\kappa_1,u}(v)\right|\leq M\dfrac{\dg\sqrt{\kappa_2}}{\omega^{3/2}},\\	
%	\end{array}
%	\end{equation}
%	for each $v\in D^{u}\cap\R$, $\tau\in\mathbb{T}_{0}$, $\e\leq\e_0$, $\kappa_1+\kappa_2=h$ and $0\leq h\leq h_0$.
%\end{prop}

Notice that Proposition \ref{approxh0} allows us to approximate $N^u_{\kappa_1,0}$ by $N_{0,0}^{u}$, for $\kappa_1$ sufficiently small. Thus, we can combine this fact with Proposition \ref{restaf} to obtain the following proposition.

\begin{prop}\label{restaff}
	Let $\Gamma_{0}^u(v),$ $\Theta_{0}^u(v)$ and $\Gamma_{\kappa_1,\kappa_2}^u(v,\tau),$ $\Theta_{\kappa_1,\kappa_2}^u(v,\tau)$ be given in \eqref{eq1d} and \eqref{form311}, respectively. There exist  $\e_0>0$, $h_0>0$ and a constant $M>0$ such that, 	for  $v\in D^{u}\cap\R$, $\tau\in\mathbb{T}$, $0\leq\e\leq\e_0$, $0\leq h\leq h_0$ and $\kappa_1,\kappa_2\geq0$ with $\kappa_1+\kappa_2=h$,
	\begin{equation}
	\begin{array}{l}
	%	\left|Z^{s,u}(v,\tau;h)-Z_1^*(\tau;h)-Z^{s,u}(v,\tau;0)\right|\leq M\dfrac{\dg\sqrt{h}}{\omega},\vspace{0.2cm}\\	
	\left|\Gamma_{\kappa_1,\kappa_2}^{u}(v,\tau)-\Gamma_{0}^{u}(v)\right|,\left|\Theta_{\kappa_1,\kappa_2}^{u}(v,\tau)-\Theta_{0}^{u}(v)\right|\leq M\dfrac{\dg\sqrt{\kappa_2}}{\omega^{3/2}}+M\dfrac{\dg\sqrt{\kappa_1}}{\omega^2},\vspace{0.2cm}\\
	\left|\partial_\tau(\Gamma_{\kappa_1,\kappa_2}^{u}(v,\tau)-\Gamma_{0}^{u}(v))\right|,
	\left|\partial_\tau(\Theta_{\kappa_1,\kappa_2}^{u}(v,\tau)-\Theta_{0}^{u}(v))\right|\leq M\dfrac{\dg\sqrt{\kappa_2}}{\omega^{3/2}}.	
	\end{array}
	\end{equation}
	%
	%	
	%	
	%	
	%	
	%	Let $$N_{\kappa_1,\kappa_2}^{u}(v,\tau)=(X_{\kappa_1}(v),Z_{\kappa_1}(v)+Z^{\kappa_1,\kappa_2,u}_{2D}(v,\tau),\Gamma_{0}^{\kappa_2}(\tau)+\Gamma^{\kappa_1,\kappa_2,u}_{2D}(v,\tau),\Theta_{0}^{\kappa_2}(\tau)+ \Theta^{\kappa_1,\kappa_2,u}_{2D}(v,\tau))$$ and 
	%	$$N_{0,0}^{u}(v)=(X_{0}(v),Z_{1D}^{0,u}(v),\Gamma_{1D}^{0,u}(v),\Theta_{1D}^{0,u}(v))$$ be the parameterizations of $W^{u}_{\e}(\Lambda_h^{-}(\kappa_1,\kappa_2))$ and $W^u_{\dg}(p_{0}^-)$ given by Theorems \ref{parameterization2Dk1k2} and \ref{parameterization1D0}, respectively.
	%	There exist $h_0>0$, $\e_0>0$ and a constant $M$ independent of $\e$, $\kappa_1$ and $\kappa_2$ such that
	%
	%	for each $v\in D^{u}\cap\R$, $\tau\in\mathbb{T}_{0}$, $\e\leq\e_0$, $\kappa_1+\kappa_2=h$ and $0\leq h\leq h_0$.
\end{prop}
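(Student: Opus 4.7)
The plan is to apply the triangle inequality by inserting the intermediate parameterization $N^u_{\kappa_1,0}$, which corresponds to the case $\kappa_2=0$, and then estimate the two resulting pieces using the two immediately preceding results. Concretely, for each $v\in D^u\cap\mathbb{R}$ and $\tau\in\mathbb{T}$, I will write
\[
\Gamma_{\kappa_1,\kappa_2}^u(v,\tau)-\Gamma_0^u(v)
=\bigl[\Gamma_{\kappa_1,\kappa_2}^u(v,\tau)-\Gamma_{\kappa_1,0}^u(v)\bigr]
+\bigl[\Gamma_{\kappa_1,0}^u(v)-\Gamma_0^u(v)\bigr],
\]
and estimate each bracket separately; the same decomposition applies to $\Theta$.

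The first bracket is exactly the object controlled by Proposition \ref{restaf}, which yields the $M\dg\sqrt{\kappa_2}/\omega^{3/2}$ contribution. Moreover, since $\Gamma_{\kappa_1,0}^u(v)$ and $\Gamma_0^u(v)$ are both independent of $\tau$, one has $\partial_\tau\bigl[\Gamma_{\kappa_1,\kappa_2}^u(v,\tau)-\Gamma_0^u(v)\bigr]=\partial_\tau\bigl[\Gamma_{\kappa_1,\kappa_2}^u(v,\tau)-\Gamma_{\kappa_1,0}^u(v)\bigr]$, so the $\tau$-derivative estimate in the proposition is inherited verbatim from Proposition \ref{restaf}, giving the second displayed inequality of the conclusion.

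For the second bracket, the plan is to upgrade Proposition \ref{approxh0} from a pointwise statement at $v=0$ to a uniform bound on $v\in D^u$. Inspecting its proof, the error $\mathcal{E}=(\gamma_{\kappa_1,0}^u-\gamma_0^u,\theta_{\kappa_1,0}^u-\theta_0^u)$ is controlled in the supremum norm $\|\cdot\|_0$ via the contractive identity
\[
\mathcal{E}=\mathcal{G}_{\omega,\kappa_1}(\gamma_{\kappa_1,0}^u,\theta_{\kappa_1,0}^u)-\mathcal{G}_{\omega,\kappa_1}(\gamma_0^u,\theta_0^u)+\mathcal{G}_{\omega,\kappa_1}(\gamma_0^u,\theta_0^u)-\mathcal{G}_{\omega,0}(\gamma_0^u,\theta_0^u),
\]
and Propositions \ref{gammah0}--\ref{gamma00} (applied with $h$ replaced by $\kappa_1$) deliver $\|\mathcal{E}\|_0\leq M\dg\sqrt{\kappa_1}/\omega^2$, which is valid for every $v\in D^u$. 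Combined with the vanishing $Q^{\kappa_1}(0)-Q^0(0)=0$ together with Lemma \ref{Fcompacto}(1) to control the remaining $Q^{\kappa_1}-Q^0$ contribution, this produces the $M\dg\sqrt{\kappa_1}/\omega^2$ term. Adding the two bracket bounds completes the proof.

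The main obstacle is the bookkeeping in the second bracket: the naive estimate $|Q^{\kappa_1}(v)-Q^0(v)|=O(\dg\sqrt{\kappa_1}/(\omega\sqrt{v^2+2}))$ from Lemma \ref{Fcompacto} carries only one factor of $\omega^{-1}$, which is one power of $\omega$ weaker than the exponent claimed. To recover the extra $\omega^{-1}$ I intend to reuse the deformation-of-path argument introduced in the proof of Proposition \ref{gamma00}, where the oscillatory factor $e^{\omega i s}$ in the integral defining $\mathcal{G}_\omega$ is integrated along a ray $s=e^{-i\beta}\xi$ and produces, after multiplication by the $|v^2+2|^{-1}$ decay of the integrand, an additional $\omega^{-1}$ from $\int_{-\infty}^0 e^{\omega\sin(\beta)\xi}\,d\xi$. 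Once this extra averaging is in place, the triangle inequality closes and both the function-valued and $\tau$-derivative estimates of the proposition follow.
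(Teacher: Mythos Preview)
Your decomposition through the intermediate $N^u_{\kappa_1,0}$ is exactly the paper's approach: it simply says ``combine Proposition~\ref{approxh0} with Proposition~\ref{restaf}''. The first bracket is handled by Proposition~\ref{restaf}, and your observation that the $\tau$-derivative estimate reduces entirely to that proposition (since $\Gamma_{\kappa_1,0}^u$ and $\Gamma_0^u$ are $\tau$-independent) is correct.

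There is, however, a genuine gap in your treatment of the second bracket. You are right that the proof of Proposition~\ref{approxh0} gives the uniform bound $\|\mathcal{E}\|_0=\|(\gamma_{\kappa_1,0}^u-\gamma_0^u,\theta_{\kappa_1,0}^u-\theta_0^u)\|_0\leq M\dg\sqrt{\kappa_1}/\omega^2$ on all of $D^u$, and you are also right that the remaining piece $Q^{\kappa_1}(v)-Q^0(v)$ is only $O(\dg\sqrt{\kappa_1}/\omega)$ via Lemma~\ref{Fcompacto}(1), one power of $\omega$ short. But your proposed remedy cannot work: the oscillatory averaging in Proposition~\ref{gamma00} gains $\omega^{-1}$ because the quantity being estimated is an \emph{integral} $\int e^{\omega i s}(\cdots)\,ds$ against the oscillating kernel of $\mathcal{G}_\omega$. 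The term $Q^{\kappa_1}(v)-Q^0(v)=-\tfrac{i\dg}{\omega\sqrt{2\Omega}}\bigl[F(X_{\kappa_1}(v))-F(X_0(v))\bigr]$ is a pointwise difference with no oscillatory factor to integrate; there is nothing to deform and no extra $\omega^{-1}$ to extract.

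What actually rescues the argument is that the only use of Proposition~\ref{restaff} in the paper is through Theorem~\ref{approx}, which is stated \emph{at $v=0$}. Since $X_{\kappa_1}(0)=X_0(0)=0$ and $F(0)=0$, one has $Q^{\kappa_1}(0)=Q^0(0)=0$, so the problematic term vanishes there and $\Gamma_{\kappa_1,0}^u(0)-\Gamma_0^u(0)=\mathcal{E}(0)$ is bounded by $M\dg\sqrt{\kappa_1}/\omega^2$ directly. In other words, the $\kappa_1$-half of the estimate with the claimed $\omega^{-2}$ power is really only available (and only needed) at $v=0$; for general $v\in D^u\cap\R$ the best you can expect from this decomposition is $M\dg\sqrt{\kappa_1}/\omega$. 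Your triangle-inequality strategy is the right one, but drop the oscillatory-averaging fix and instead use the vanishing of $Q^{\kappa_1}(0)-Q^0(0)$ at the section $\Sigma_h$.
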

%\begin{proof}
%	Write $$\Gamma_{\kappa_1,\kappa_2}^{u}(v,\tau)-\Gamma_{0}^{u}(v)= \left(\Gamma_{\kappa_1,\kappa_2}^{u}(v,\tau)-\Gamma_{\kappa_1,0}^{u}(v)\right)+\left(\Gamma_{\kappa_1,0}^{u}(v)-\Gamma_{0}^{u}(v)\right),$$
%	and apply Proposition \ref{approxh0} and Proposition \ref{restaf}.
%\end{proof}

\end{document}